\renewcommand{\qedsymbol}{\ensuremath{\blacksquare}}
\theoremstyle{plain}
\newtheorem{theorem}{\bf Theorem}
\newtheorem{remark}{\bf Remark}
\newtheorem{proposition}{\bf Proposition}
\newtheorem{lem}{\bf Lemma}
\definecolor{cardinal}{rgb}{0.77, 0.12, 0.23}
\definecolor{darkspringgreen}{rgb}{0.09, 0.45, 0.27}
\definecolor{princetonorange}{rgb}{1.0, 0.56, 0.0}
\definecolor{plum}{rgb}{0.3,0,0.7}
\def \P {{\mathbf P}}
\def \v {{\mathbf v}}
\def \u {{\mathbf u}}
\def \a {{\mathbf a}}
\def \b {{\mathbf b}}
\def \e {{\mathbf e}}
\def \x {{\mathbf x}}
\def \y {{\mathbf y}}
\def \A {{\mathbf A}}
\def \B {{\mathbf B}}
\def \X {{\mathbf X}}
\def \Z {{\mathbf Z}}
\def \B {{\mathbf B}}
\def \I {{\mathbf I}}
\newcommand{\probP}{\text{I\kern-0.15em P}}
\newcommand{\cmark}{\ding{51}}%
\newcommand{\xmark}{\ding{55}}
\newcommand{\norm}[1]{\ensuremath{\left\|#1\right\|}}
\DeclarePairedDelimiter\floor{\lfloor}{\rfloor}
\DeclareMathOperator*{\argmin}{arg\,min}
\DeclarePairedDelimiter{\abs}{\lvert}{\rvert}
\begin{document}

\title{Boundary Conditions for Linear Exit Time Gradient Trajectories Around Saddle Points: Analysis and Algorithm}

\author{Rishabh Dixit, Mert G\"urb\"uzbalaban, and Waheed U.\ Bajwa
%
\thanks{R.\ Dixit (Department of Electrical and Computer Engineering), M.\ G\"urb\"uzbalaban (Departments of Management Science and Information Systems and Electrical \& Computer Engineering), and W.\ U.\ Bajwa (Departments of Electrical \& Computer Engineering and Statistics) are at Rutgers University--New Brunswick, NJ 08854 (Emails: {\tt \{rishabh.dixit,~mg1366,~waheed.bajwa\}@rutgers.edu}).}%
%
\thanks{This work was supported in part by the National Science Foundation under grants CCF-1453073, CCF-1907658, CCF-1814888, DMS-2053485, and CCF-1910110, by the Army Research Office under grants W911NF-17-1-0546 and W911NF-21-1-0301, by the Office of Naval Research under grant N00014-21-1-2244 and by the DARPA Lagrange Program under ONR/SPAWAR contract N660011824020.}}

\maketitle

\begin{abstract}
Gradient-related first-order methods have become the workhorse of large-scale numerical optimization problems. Many of these problems involve nonconvex objective functions with multiple saddle points, which necessitates an understanding of the behavior of discrete trajectories of first-order methods within the geometrical landscape of these functions. This paper concerns convergence of first-order discrete methods to a local minimum of nonconvex optimization problems that comprise strict-saddle points within the geometrical landscape. To this end, it focuses on analysis of discrete gradient trajectories around saddle neighborhoods, derives sufficient conditions under which these trajectories can escape strict-saddle neighborhoods in linear time, explores the contractive and expansive dynamics of these trajectories in neighborhoods of strict-saddle points that are characterized by gradients of moderate magnitude, characterizes the non-curving nature of these trajectories, and highlights the inability of these trajectories to re-enter the neighborhoods around strict-saddle points after exiting them. Based on these insights and analyses, the paper then proposes a simple variant of the vanilla gradient descent algorithm, termed Curvature Conditioned Regularized Gradient Descent (CCRGD) algorithm, which utilizes a check for an initial boundary condition to ensure its trajectories can escape strict-saddle neighborhoods in linear time. Convergence analysis of the CCRGD algorithm, which includes its rate of convergence to a local minimum, is also presented in the paper. Numerical experiments are then provided on a test function as well as a low-rank matrix factorization problem to evaluate the efficacy of the proposed algorithm.
\end{abstract}

\begin{IEEEkeywords}
Boundary conditions, gradient descent, linear-time exit, Morse function, nonconvex optimization, saddle escape, strict-saddle property.
\end{IEEEkeywords}	
	
\section{Introduction}
The gradient descent method and its (stochastic) variants have been at the forefront of nonconvex optimization for nearly a decade. Many of these variants stem from the earliest works like \cite{curry1944method, hestenes1952methods, rosenbrock1960automatic}, the interior-point method \cite{karmarkar1984new, mehrotra1992implementation, nesterov1994interior}, and their stochastic counterparts. But the highly complicated geometrical landscape of many nonconvex functions often puts the efficacy of these algorithms to question, which otherwise have robust performance in convex settings. Indeed, problems involving matrix factorization \cite{lee2001algorithms}, neural networks \cite{sanger1989optimal}, rank minimization \cite{broyden1970convergence}, etc., can be highly nonconvex, wherein the function geometry can possess many saddle points that create regions of very small magnitude gradients, something which the gradient-related methods rely upon heavily. As a consequence, travel times for trajectories generated by these methods in such regions could be exponentially large, thereby defeating the purpose of optimization. However, the large travel times around saddle points for gradient-based methods is not always the case; see, e.g., \cite{dixit2022exit} that gives a linear exit-time bound for first-order approximations of gradient trajectories provided some necessary boundary conditions are satisfied by the trajectories. Such analysis suggests existence of gradient-based methods capable of `fast' traversal of geometrical landscapes of nonconvex functions under appropriate conditions. Development of such methods, however, necessitates a deeper geometric analysis of the saddle neighborhoods so as to leverage any initial boundary conditions required by the faster gradient trajectories around saddle points in order to reduce the total travel time on the entire function landscape.

To this end, we \emph{first} study in this paper the problem of developing sufficient boundary conditions for gradient trajectories around any saddle point $\x^*$ of some nonconvex function $f(\x)$ that can guarantee linear exit time, i.e., $K_{exit} = \mathcal{O}(\log(\epsilon^{-1}))$, from the open saddle neighborhood $\mathcal{B}_{\epsilon}(\x^*)$. This problem focuses on a closed neighborhood $\mathcal{\bar{B}}_{\epsilon}(\x^*)$ around the saddle point $\x^*$, with the current iterate $\x_0$ sitting on the boundary of this neighborhood, i.e., $\x_0  \in \mathcal{\bar{B}}_{\epsilon}(\x^*) \backslash \mathcal{B}_{\epsilon}(\x^*)$. Suppose also that the gradient trajectory starting at $\x_0$ has approximately linear exit time from this region $\mathcal{{B}}_{\epsilon}(\x^*)$. (Existence of such trajectories is guaranteed because of the analysis in \cite{dixit2022exit}.) Then, the question posed here is what are the sufficient conditions on $\x_0$ such that the trajectory can escape $\mathcal{B}_{\epsilon}(\x^*)$ in almost linear time of order $\mathcal{O}(\log({\epsilon^{-1}}))$. Once the sufficient conditions have been derived, we \emph{next} study the question of whether it is possible to get linear rates of travel by the same gradient trajectory in some bigger neighborhood $\mathcal{{B}}_{\xi}(\x^*) \supset \mathcal{{B}}_{\epsilon}(\x^*)$. Note that unlike the matrix perturbation-based analysis in \cite{dixit2022exit}, the radius $\xi$ of the bigger neighborhood needs to be characterized by a fundamentally different proof technique. This is since the eigenspace of the Hessian $\nabla^2 f(\x)$ for any $\x \in \mathcal{{B}}_{\xi}(\x^*) \backslash \mathcal{{B}}_{\epsilon}(\x^*)$ cannot be obtained by perturbing the eigenspace of $\nabla^2 f(\x^*)$ since the series expansion of $\nabla^2 f(\x)$ about $\nabla^2 f(\x^*)$ may not necessarily converge from matrix perturbation theory. \emph{Third}, after such linear rates have been obtained, we then study whether it is possible to develop a robust algorithm that leverages the boundary conditions so as to steer the gradient trajectory away from $ \mathcal{{B}}_{\epsilon}(\x^*)$ in almost linear time. \emph{Finally}, we seek an answer to the question of whether the developed algorithm converges to a neighborhood of a local minimum and, if so, what would be its rate of convergence within the global landscape of the nonconvex function.

To address all these problems effectively, we engage in a rigorous analysis of trajectories of the vanilla gradient descent method, starting off directly where we left in \cite{dixit2022exit}.\footnote{Since this work is a continuation of \cite{dixit2022exit}, we refrain from elaborating certain terminologies and definitions that were covered in detail in \cite{dixit2022exit}, though a summary of all the required concepts is provided in Sec.~\ref{ssec:preface} to make this a self-contained paper.} First, we utilize tools from the matrix perturbation theory to develop sufficient conditions on $\x_0 \in \mathcal{\bar{B}}_{\epsilon}(\x^*) \backslash \mathcal{B}_{\epsilon}(\x^*)$ for which the subsequent gradient trajectory has linear exit time from $\mathcal{B}_{\epsilon}(\x^*)$. Next, we prove a rather intuitive yet extremely powerful result, termed the \emph{sequential monotonicity of gradient trajectories}, which establishes that the gradient trajectories in a neighborhood of the saddle point first exhibit contractive dynamics up to some point and there onward strictly expansive dynamics. Next, we provide an analysis of the travel time for the gradient trajectory in the region $\mathcal{{B}}_{\xi}(\x^*) \backslash \mathcal{{B}}_{\epsilon}(\x^*)$ using the sequential monotonicity result. Finally, we develop a novel gradient-based algorithm, termed Curvature Conditioned Regularized Gradient Descent (CCRGD), around the idea of sufficient boundary conditions with a robust check condition guaranteeing almost linear exit time from $\mathcal{{B}}_{\epsilon}(\x^*)$. In doing so, we also prove certain qualitative lemmas about the local behavior of gradient trajectories around saddle points. Thereafter, the asymptotic convergence and the rate of convergence for CCRGD to a local minimum is proved using these lemmas. Finally, the performance of CCRGD is evaluated on two problems: a test function for nonconvex optimization and a low-rank matrix factorization problem.

\subsection{Relation to Prior Work}
Since this work directly extends the results in \cite{dixit2022exit}, we steer away from repeating the discussion in \cite[Sec.~1.1]{dixit2022exit} in relation to existing convergence guarantees for gradient-related methods in nonconvex settings. Instead, we primarily focus in this section on presenting comparisons and highlighting key differences between our contributions and the existing literature. In addition, given the vast interest of the optimization community in nonconvex optimization using gradient-related methods, we also discuss some additional relevant works in here.

{Similar to \cite{lee2017first}, which focuses on the gradient descent method, we prove in Theorem \ref{thmglobmain} that the trajectories generated by the proposed CCRGD algorithm (see Algorithm~\ref{algo1}) converge to a local minimum. But unlike \cite{lee2017first}, which fundamentally uses the Stable Manifold Theorem \cite{kelley1966stable}, we also develop in this paper a proof of convergence of CCRGD to a local minimum and obtain algorithmic convergence rates using the geometry of function landscape near saddle points and in regions that have {sufficiently large} gradient magnitudes.} Though this idea of rate analysis has been well summarized in \cite{polyak1964some} for gradient-related sequences and more recently in \cite{nesterov2006cubic} for Newton-type methods, yet these works do not utilize the nonconvex geometry to its fullest extent. Specifically, we categorize the function geometry in our work into `\emph{regions near}' and `\emph{regions away}' from the stationary points so as to better analyze `\emph{escape conditions}' from saddle neighborhoods and at the same time generate convergence guarantees to a local minimum.  Within the regions of `moderate gradients' around saddle points, i.e., the \emph{shell} $\mathcal{B}_{\xi}(\x^*)\backslash \mathcal{B}_{\epsilon}(\x^*)$, we show using the sequential monotonicity property (detailed in Theorem \ref{thm2}) that the sequence $\{\norm{\x_k-\x^*}\}$ is {strictly} monotonic whenever the iterate $\{\x_k\}$ has expansive dynamics with respect to $\x^*$, while the function value sequence $\{f(\x_k)\}$ satisfies the Polyak--{\L}ojasiewicz (PL) condition \cite{karimi2016linear} whenever the iterate sequence $\{\x_k\}$ has contractive dynamics with respect to $\x^*$ (see Lemma~\ref{polyaklem}). Consequently, linear rates of contraction to a point on the boundary $\mathcal{\bar{B}}_{\epsilon}(\x^*)\backslash \mathcal{B}_{\epsilon}(\x^*)$ are derived using the PL condition and linear rates of expansion to a point on the boundary  $\mathcal{\bar{B}}_{\xi}(\x^*)\backslash \mathcal{B}_{\xi}(\x^*)$ are obtained using the sequential monotonicity property from Theorem \ref{thm2}, both of which aid in our convergence analysis. Note that the PL condition cannot be applied directly around a saddle point since that would yield a trivial lower bound of $0$ on the gradient norm (see Lemma \ref{polyaklem}). This particular analytical approach of separately analyzing the contractive and expansive dynamics locally around a saddle point and exploiting the PL condition restricted to contractive dynamics is in contrast to the existing works {that focus on the problem} of escaping saddle points for nonconvex optimization. In {addition, while} the PL condition or the more general Kurdyka--{\L}ojasiewicz property \cite{Lojasiewicz1959probleme} are often used for local or even global analysis such as in \cite{attouch2013convergence} and \cite{bolte2007Lojasiewicz}{, they have not} been used in the context of analyzing local contractive dynamics of iterates w.r.t. a strict saddle point. 
In terms of the analytical tools used, regions near the saddle points in this work are analysed using the matrix perturbation theory, yielding {sharp} bounds ('sharp' in terms of the condition number, problem dimension, and spectral gap) on the initial conditions, whereas regions away from the saddle points utilize properties like the sequential monotonicity (cf.~Theorem~\ref{thm2}). {Such local analysis distinguishing sufficiently small saddle neighborhoods from moderately small saddle neighborhoods} seems to be quite novel and has not been carried out in any previous work to our knowledge.

Next, to the best of our knowledge, no other work has provided sufficient boundary conditions for escape from saddle neighborhoods for  the case of \emph{discrete-time} gradient descent-related algorithms. Though the idea is not necessarily new and has been explored while dealing with continuous-time dynamical systems, specifically the boundary value problems, yet it is still nascent when it comes to analyzing saddle points. The continuous-time works such as \cite{kifer1981exit, hu2017fast, bolte2007Lojasiewicz} have been discussed in detail in \cite{dixit2022exit}. However even these works do not analyze the boundary conditions for continuous trajectories. The work \cite{hu2017fast} does take into account cascaded saddles encountered by continuous trajectories, which gets a detailed treatment in our work in Theorems \ref{thm4} and \ref{thm5} for discrete trajectories.

The Stochastic Differential Equation (SDE) setup has also been utilized in a recent work~\cite{ShiSuEtAl.arxiv20} to study gradient-based (stochastic) methods for nonconvex optimization in the continuous-time setting. Interestingly, this work considers the set of \emph{index-$1$ saddle points} in the function's geometry and thereby obtains a stochastic rate of convergence to a global minimum, where the rate is of the order `a constant term plus a geometric term'. While the rate is linear/geometric, \cite{ShiSuEtAl.arxiv20} assumes the \emph{coercivity condition} (sufficient growth condition on the function away from the origin) and the \emph{Villani condition} (growth of gradient's norm), whereas only the former condition of coercivity is assumed in our work. Also, the constant in the non-geometric term of the rate is dependent on the horizon $T$ obtained from discretization of the SDE, which could be large. Moreover, it is not clear how the SDE approach in \cite{ShiSuEtAl.arxiv20} would apply to the discrete-time setting of this paper.

Recently, within the class of discrete-time non-acceleration-based methods, \cite{du2017gradient, jin2017escape} provide the rates for escaping saddles using perturbed gradient descent, \cite{zhou2017stochastic} utilizes the notion of variational coherence between stochastic mirror gradient and descent direction in quasi convex and nonconvex problems for obtaining ergodic rates of convergence to a local/global minimum (under certain conditions), and \cite{daneshmand2018escaping} provides rates and escape guarantees under certain strong assumptions of high correlation between the negative curvature direction and a random perturbation vector. However, none of these stochastic variants explore the idea of initial boundary conditions near saddle points so as to obtain linear rates. It should be noted that the work in \cite{du2017gradient} shows the time to escape cascaded saddles scales {exponentially} with dimension, whereas we show in Theorem \ref{thm5} that the time to escape cascaded saddles is not exponential in dimension. Rather, the number of cascaded saddles encountered by the trajectory is upper bounded and this bound scales only linearly with the inverse of the gradient norms in regions away from the stationary points of the objective. Further, this upper bound on the number of saddles encountered is independent of the problem dimension.

The next set of related discrete-time gradient-based methods includes first-order methods leveraging acceleration and momentum techniques. For instance, the work in \cite{reddi2017generic} provides an extension of SGD to methods like the Stochastic Variance Reduced Gradient (SVRG) algorithm for escaping saddles. Recently, methods approximating the second-order information of the function that preserve the first-order nature of the algorithm have also been employed to escape the saddles. Examples include \cite{jin2017accelerated}, where the authors prove that an acceleration step in gradient descent guarantees escape from saddle points, and the method in \cite{xu2018first}, which utilizes the second-order nature of the acceleration step combined with a stochastic perturbation to guarantee escape rates. Moreover, both \cite{allen2018natasha, allen2018neon2} build on the idea of utilizing acceleration as a source of finding the negative curvature direction. Due to the low computational cost of evaluating gradients, we also make use of such connections between the curvature magnitude and the gradient difference in our proposed algorithm (Algorithm \ref{algo1}). In the class of first-order algorithms, there also exist trust region-based methods. The work in \cite{fang2019sharp} is one such method that presents a novel stopping criterion with a heavy ball controlled mechanism for escaping saddles using the SGD method. If the SGD iterate escapes some neighborhood in a certain number of iterations, the algorithm is restarted with the next round of SGD, else the ergodic average of the iterate sequence is designated to be a second-order stationary solution. In a similar vein, we formally derive in Lemma \ref{lemma5} the escape guarantees from a neighborhood around a saddle point and utilize that result within the proposed Algorithm \ref{algo1}.

Lastly, higher-order methods are discussed in \cite{paternain2019newton, mokhtari2018escaping}, which utilize either Hessian-based {approaches} 
or a second-order step combined with first-order algorithms so as to reach local minimum with fast speed while trading off with computational costs. Going a step even further, the work in \cite{anandkumar2016efficient} poses the escape problem with second-order saddles, thereby {motivating the use of higher-order methods}. 
Though these techniques optimize well over certain pathological functions like those having `degenerate' saddles or very ill-conditioned geometries, yet they suffer heavily in terms of complexity; e.g., the work \cite{anandkumar2016efficient} requires third-order methods to solve for a feasible descent direction. This further motivates us to develop a hybrid algorithm for the saddle escape problem that captures the advantages of a Hessian-based method and at the same time is low on computational complexity.

Table \ref{table:2} draws comparisons between our work and other existing works within the realm of {saddle escape in deterministic nonconvex optimization problems}. Though there {is a} plethora of works that {study the} saddle escape {problem}, only those works are listed here that address the simple unconstrained optimization problem of minimizing a smooth nonconvex function $f(\cdot)$ and propose {perturbation of} deterministic gradient-based methods {for saddle escape}. Many of the other related works discussed in this section tackle stochastic optimization problems and are therefore not included in the table.

\begin{table}[ht]
\centering
\renewcommand\thempfootnote{\arabic{mpfootnote}}
\begin{minipage}{\textwidth}
\caption{\small Summary of the similarities and differences between this work and some related prior works.}
\label{table:2}
\resizebox{1\columnwidth}{!}{%
\begin{tabular}{||c c c c c c||}
 \hline
 \multirow{2}*{\textbf{References}} & \multirow{2}*{\textbf{Method of saddle escape} } &
 \multirow{2}*{\textbf{Base algorithm}} & \textbf{Explicit dependence on}  &
 \multirow{2}*{\textbf{Convergence rate}} & \multirow{2}*{\textbf{Type of convergence rate}}
 \\
 & & & \textbf{number of saddles}  & &
 \\ [0.5ex]
 \hline\hline
 \cite{jin2017escape} & One-step noise & Gradient descent method & \xmark  &  $\mathcal{O}\bigg(\frac{1}{\epsilon^2} \log^4\bigg(\frac{1}{\epsilon^2}\bigg) \bigg)$ & probabilistic
   \\
 \hline
 \cite{jin2017accelerated} & One-step noise with & Accelerated gradient method & \xmark & $\mathcal{O}\bigg(\frac{1}{\epsilon^{7/4}} \log^6\bigg(\frac{1}{\epsilon}\bigg) \bigg)$  & probabilistic
   \\
   & negative curvature search &  & &  &\\
 \hline
 \cite{paternain2019newton} &  One-step noise with &  Second-order Newton method & \cmark & $\mathcal{O}\bigg( T\log\bigg(\frac{1}{\epsilon}\bigg) + T\log \log\bigg(\frac{1}{\epsilon}\bigg) \bigg)$; &  probabilistic  \\ & negative curvature search &  & & $T$ is the number of saddles encountered &\\
 \hline
  \cite{carmon2018accelerated} &  Multi-step noise with &  Accelerated gradient method & \xmark & $\mathcal{O}\bigg(\frac{1}{\epsilon^{7/4}} \log\bigg(\frac{1}{\epsilon}\bigg) \bigg)$ & probabilistic \\ & negative curvature search & & &  & \\
 \hline
  \cite{liu2018adaptive} &  Multi-step noise with &  Adaptive negative curvature descent & \xmark & $\mathcal{O}\bigg(\frac{1}{\epsilon^{2}}  \bigg)$ & probabilistic \\ & negative curvature search & & &  & \\
   \hline
  \cite{zhang2021escape} &  One-step noise followed by  &  Accelerated gradient method & \xmark & $\mathcal{O}\bigg(\frac{1}{\epsilon^{7/4}} \log\bigg( \frac{1}{\epsilon}\bigg)  \bigg)$ & probabilistic \\ & multi-step negative curvature search & & &  & \\
 \hline
\textbf{This work} &  One second-order step \emph{only} &  Gradient descent method & \cmark & $\mathcal{O}\bigg(T\log\bigg(\frac{1}{\epsilon}\bigg) \bigg)  + \mathcal{O} \bigg(T\log \bigg(\frac{\xi}{\epsilon}\bigg)\bigg)  + \mathcal{O}\bigg(\frac{1}{\epsilon^{2{\upsilon}}}\bigg)$;  & deterministic \\
 & when curvature condition fails & &  &  for locally analytic, coercive Morse functions;  &    \\
   & & &  &   $T = \mathcal{O}\bigg(\frac{1}{\epsilon^{\upsilon}}\bigg)$ is the number of saddles and \footnote{The parameter $\upsilon$ is defined in Proposition \ref{proposition3} and it controls the function geometry in regions away from its critical points.}$\upsilon \in [0,1)$  &    \\
 \hline
\end{tabular}%
}
\end{minipage}
\end{table}
	
\subsection{Our Contributions}

This work starts off directly from the point where we left off in \cite{dixit2022exit}, where we obtained exit time bounds for $\epsilon$-precision gradient descent trajectories around saddle points and derived a necessary condition on the initial unstable subspace projection value for linear exit time. The first novel result in this work is the development of a bound on the initial unstable subspace projection value in Theorem~\ref{thm1} that approximately guarantees the linear exit time bound from \cite[Theorem 3.2]{dixit2022exit}. Our second contribution is Theorem~\ref{thm2}, in which we analyze the behavior of gradient descent trajectories in some region $\mathcal{B}_{\xi}(\x^*) \supset \mathcal{B}_{\epsilon}(\x^*)$ where the approximate analysis from matrix perturbation theory may not necessarily hold. In such augmented neighborhood of the strict saddle point $\x^*$, we prove that the gradient descent trajectories have a sequential monotonic behavior, i.e., there exists some $\xi$ such that the trajectory inside $\mathcal{B}_{\xi}(\x^*) $  first exhibits contractive dynamics moving towards $\x^*$ and then has expansive dynamics for the remainder of the time as long as it stays inside $\mathcal{B}_{\xi}(\x^*) $. Though this property may appear to be trivial for trajectories around saddle points, yet it is extremely important in developing improved rates/travel times of the gradient descent trajectories inside $\mathcal{B}_{\xi}(\x^*)$, which follows from our next contribution. Our third contribution is Theorem~\ref{thm3}, in which we obtain upper bounds on the travel time of gradient trajectory inside the shell $\mathcal{B}_{\xi}(\x^*) \backslash \mathcal{B}_{\epsilon}(\x^*)$ that we denote by $K_{shell}$. This particular region is specifically of great importance since we can categorize it as a region of ``moderate'' gradients (gradient magnitude not too small) that still inherits certain geometric properties such as the minimum curvature from the smaller saddle neighborhood $\mathcal{B}_{\epsilon}(\x^*)$. Without taking such properties into consideration, the journey time in this shell could only be naively upper bounded as $K_{shell} = \mathcal{O}(\epsilon^{-2})$ using the gradient Lipschitz condition. Hence, it is imperative to separately analyze the journey time inside the shell $\mathcal{B}_{\xi}(\x^*) \backslash \mathcal{B}_{\epsilon}(\x^*)$ so as to improve upon the standard nonconvex rate of $\mathcal{O}(\epsilon^{-2})$.

Our next set of contributions corresponds to Lemmas~\ref{lemma1}--\ref{lemma5}, in which we provide insights into certain qualitative properties of the gradient descent trajectories around saddle points. Lemma~\ref{lemma1} talks about the approximate hyperbolic nature of the gradient trajectories near saddle points, while Lemma~\ref{lemma2} proves that trajectories with linear exit time approximately never curve around saddle points. Lemma~\ref{lemma3} shows that the gradient trajectory can only exit $\mathcal{B}_{\epsilon}(\x^*) $ at those points where the function value is strictly less than $f(\x^*)$. Lemma~\ref{lemma4} establishes that the gradient trajectory, once it exits the neighborhood $\mathcal{B}_{\epsilon}(\x^*)$, can never re-enter it, while Lemma \ref{lemma5} extends the same result to the bigger neighborhood $\mathcal{B}_{\xi}(\x^*)$ under certain stricter conditions. Our next contribution 
is the development of the Curvature Conditioned Regularized Gradient Descent (CCRGD) algorithm (cf.~Algorithm~\ref{algo1}) that provably escapes saddle neighborhoods and gives second-order stationary solutions. The asymptotic convergence of the proposed algorithm is established from Theorem \ref{thmglobmain}, which is proved using Lemmas \ref{lem6}, \ref{lem7}, \ref{lem8} and the Global Convergence Theorem (Theorem \ref{thmglob}) from \cite{luenberger1984linear}. The algorithm checks for a curvature condition near the saddle neighborhood and makes the decision of whether to perform a second-order iteration for one step or continue using the vanilla gradient descent method. The curvature condition (Step~\ref{algocurvaturecondition} in Algorithm~\ref{algo1}) is derived from our proof of convergence of the algorithm; in addition, Algorithm~\ref{algo1} is tested for its efficacy on a modified Rastrigin function (a test function for nonconvex optimization) and the matrix factorization problem as part of numerical experiments. Last, but not the least, the final contribution of this work is derivation of the rate of convergence of an iterate sequence generated from Algorithm~\ref{algo1} to a local minimum. The rates are obtained for a more general setting of cascaded saddles where the number of saddles encountered and the total time of convergence are bounded from Theorems~\ref{thm4} and~\ref{thm5}, {respectively}.

\subsection{Notations}
All vectors in the paper are in bold lower-case letters, all matrices are in bold upper-case letters, $\mathbf{0}$ is the $n$-dimensional null vector, $\mathbf{I}$ represents the $n \times n$ identity matrix, and $\langle\cdot, \cdot\rangle $ represents the inner product of two vectors. In addition, unless otherwise stated, all vector norms $\norm{\cdot}$ are $\ell_2$ norms, while the matrix norm $\|\cdot\|_2$ denotes the operator norm. Further, the symbol $ (\cdot)^T$ is the transpose operator, the symbol $\mathcal{O}$ represents the Big-O notation and sometimes we use $a \ll b \iff a = \mathcal{O}(b)$, the symbol $\Omega$ is the Big-Omega notation and $\Theta$ represents the Big-Theta notation, $\otimes$ represents the kronecker product, i.o. means infinitely often,  $\mathrm{id}$ represents the identity map, and $W(\cdot)$ is the Lambert $W$ function~\cite{corless1996lambertw}. Throughout the paper, $k$ and $K$ are used for the discrete time. {Next, $\gtrapprox$ and $\lessapprox$ represent the `approximately greater than' and `approximately less than' symbols, respectively, where $a \lessapprox b $ implies $a \leq  b + g(\epsilon) $
 and $a \gtrapprox b $ implies $a + g(\epsilon)\geq  b  $ for some absolutely continuous function $g(\cdot)$ of $\epsilon$ where $g(\cdot) \geq 0$ and $g(\epsilon) \to 0$ as $\epsilon \to 0$.} Also, for any matrix expressed as $\Z+\mathcal{O}(c)$ with $c$ being a scalar, the matrix-valued perturbation term $\mathcal{O}(c)$ is with respect to the Frobenius norm. Finally, the operator $\mathbf{dist}(\cdot,\cdot)$ gives the distance between two sets whereas $\mathbf{diam}(\cdot)$ gives the diameter of a set.
	
\section{Problem Formulation}
Consider a nonconvex smooth function $f(\cdot)$ that has strict first-order saddle points in its geometry. By strict first-order saddle points, we mean that the Hessian of function $f(\cdot)$ at these points has at least one negative eigenvalue, i.e., the function has negative curvature. Next, consider some (open) neighborhood $\mathcal{B}_{\epsilon}(\x^*)$ around a given saddle point $\x^*$, where the neighborhood radius $\epsilon$ is bounded above by $\Theta({L}{M^{-1}})$ (see \cite[Theorem~3.2]{dixit2022exit} for the exact form) with $L$ and $M$ being the gradient and Hessian Lipschitz constants of $f(\cdot)$. Also, it is given that the initial iterate $\x_0$ of the gradient trajectory sits on the boundary of the neighborhood, i.e.,  $\x_{0} \in \mathcal{\bar{B}}_{\epsilon}(\x^*) \backslash \mathcal{B}_{\epsilon}(\x^*)$, and the gradient trajectory exits $\mathcal{B}_{\epsilon}(\x^*)$ in linear time bounded by \cite[Theorem~3.2]{dixit2022exit}. With this information, we are first interested in finding the sufficient conditions on $\x_0$ that guarantee the linear exit time. In addition, we need to analyze the gradient trajectories in some larger neighborhood $\mathcal{B}_{\xi}(\x^*) \supset \mathcal{B}_{\epsilon}(\x^*)$ such that the trajectories first contract towards the saddle point and then expand away from it. More importantly, we are interested in finding such $\xi > \epsilon$ for which the gradient trajectory has linear travel time in the shell  $\mathcal{B}_{\xi}(\x^*) \backslash \mathcal{B}_{\epsilon}(\x^*)$. Next, we are required to find certain local properties of $f(\cdot)$ for which the gradient trajectories, having escaped it once, can never re-enter the neighborhood $\mathcal{B}_{\xi}(\x^*) $.  Finally, we have to develop a robust low-complexity algorithm that utilizes the sufficient conditions to traverse the {landscape} of saddle neighborhoods in linear time and also provide its rate of convergence to some local minimum.

Having briefly stated the problem, we now formally state the set of assumptions that are required for this problem to be tackled in this work.
	\subsection{Assumptions}
		\begin{itemize}
	\item[] \textbf{A1.} \textit{The function $f:\mathbb{R}^n \to \mathbb{R}$ is coercive, i.e., $ \lim_{\norm{\x}\to \infty} f(\x) = \infty$, is globally $\mathcal{C}^2$, i.e., twice continuously differentiable, and locally $\mathcal{C}^{\omega}$ in sufficiently large neighborhoods of its saddle points, i.e., all the derivatives of this function are continuous around saddle points and the function $f(\cdot)$ also admits Taylor series expansion in these neighborhoods.\footnote{{By sufficiently large neighborhoods, we mean that the diameter of such neighborhoods is $\Omega(1)$.}}
	}
		\item[] \textbf{A2.} \textit{The gradient of function $f(\cdot)$ is $L-$Lipschitz continuous:} $ \norm{\nabla f(\x) - \nabla f(\y)} \leq L \norm{\x - \y}$.
		\item[] \textbf{A3.} \textit{The Hessian of function $f(\cdot)$ is $M-$Lipschitz continuous:} $\norm{\nabla^{2} f(\x) - \nabla^{2} f(\y)}_2 \leq M \norm{\x - \y}$.		
		\item[] \textbf{A4.} \textit{The function $f(\cdot)$ has only well-conditioned first-order stationary points, i.e., no eigenvalue of the function's Hessian is close to zero {around} these points. Formally, if $\x^{*}$ is the first-order stationary point for $f(\cdot)$, then}
		\begin{align}
		\nabla f(\x^{*}) &= \mathbf{0},  \ \text{and}\nonumber \\
		\min_i \abs{\lambda_i(\nabla^{2}f(\x^*))} &> \beta, \nonumber
		\end{align}
		\textit{where $\lambda_i(\nabla^{2}f(\x^*))$ denotes the $i^{th}$ eigenvalue of the matrix $ \nabla^{2}f(\x^*)$ and $\beta > 0$. Note that such a function is termed a Morse function. Also, there exists {an open} neighborhood $ \mathcal{W}$ of $\x^*$ such that}
		\begin{align}
		\forall \x \in \mathcal{W}, \ \min_i \abs{\lambda_i(\nabla^{2}f(\x))} &> \beta. \nonumber
		\end{align}
	\end{itemize}
	
\begin{remark}
The coercivity of $f(\cdot)$ is only required from Section \ref{algosection} onward, where we prove the convergence of Algorithm \ref{algo_1}. Also, Section \ref{mainsec2} requires $f(\cdot)$ to be only $\mathcal{C}^2$ {Hessian-Lipschitz} Morse function, unlike Section \ref{mainsec1} in which the additional assumption of local analyticity is required around saddle points.
\end{remark}

Note that Assumption \textbf{A1} may seem too restrictive since it requires $f(\cdot)$ to be locally real analytic, while the theory of nonconvex optimization is often developed around only the assumption that $f \in \mathcal{C}^2$ with Lipschitz-continuous Hessian. It is worth reminding the reader, however, that many practical nonconvex problems such as quadratic programs, low-rank matrix completion, phase retrieval, etc., with appropriate smooth regularizers satisfy this assumption of real analyticity around the saddle neighborhoods; see, e.g., the formulations discussed in \cite{ma2020implicit}.  Similarly, 
{many} of the loss functions in nonconvex optimization are coercive, i.e., they grow arbitrarily large asymptotically due to the presence of some form of regularization. As for the other assumptions, gradient Lipschitz continuity (Assumption \textbf{A2}) and Hessian Lipschitz continuity (Assumption \textbf{A3}) are {invoked routinely in} the nonconvex optimization literature, while Assumption \textbf{A4} implies $f(\cdot)$ is a Morse function. In particular, since Morse functions are dense in the class of $\mathcal{C}^2$ functions \cite{matsumoto2002introduction}, we are not giving up much by {making this assumption}. We now state two propositions that {follow from our assumptions and that} will be routinely used in our analysis. 

\begin{proposition}\label{morseprop}
{Under Assumption~\textbf{A4}, the function $f(\cdot)$ has only first-order saddle points in its geometry. Moreover, these first-order saddle points are strict saddle, i.e., for any first-order saddle point $\x^*$, there exists at least one eigenvalue $\lambda_i$ of $\nabla^2 f (\x^*)$ that satisfies $\lambda_i(\nabla^{2}f(\x^*)) < - \beta$.}
\end{proposition}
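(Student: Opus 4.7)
The plan is to decompose the proposition into two separable claims and dispatch each by directly unpacking Assumption~\textbf{A4} at an arbitrary first-order stationary point $\x^*$. Recall that \textbf{A4} gives us the uniform spectral separation $\min_i |\lambda_i(\nabla^2 f(\x^*))| > \beta > 0$, so no eigenvalue of $\nabla^2 f(\x^*)$ can be close to (or equal to) zero.

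First, I would argue that every critical point of $f$ is non-degenerate (Morse): since each eigenvalue of $\nabla^2 f(\x^*)$ is separated from zero by at least $\beta$, the Hessian is invertible. This immediately rules out any higher-order/degenerate saddle point whose saddle character would only manifest beyond the second-order Taylor term. Consequently, if $\x^*$ is not a local extremum, the saddle behavior is already detected at the Hessian level and $\x^*$ qualifies as a \emph{first-order} saddle in the sense of the paper's earlier definition. This is the content of the first claim.

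Second, I would show that any such non-degenerate critical point that is not a strict local minimum must have a Hessian eigenvalue strictly below $-\beta$. Write the second-order Taylor expansion
\begin{equation*}
f(\x^* + \h) = f(\x^*) + \tfrac{1}{2}\h^T \nabla^2 f(\x^*)\h + o(\norm{\h}^2).
\end{equation*}
If every eigenvalue of $\nabla^2 f(\x^*)$ were positive, then by \textbf{A4} each such eigenvalue would in fact exceed $\beta$, the quadratic form would be positive definite, and $\x^*$ would be a strict local minimum. Hence at any first-order saddle there is some $\lambda_i(\nabla^2 f(\x^*)) < 0$; combining this with $|\lambda_i(\nabla^2 f(\x^*))| > \beta$ from \textbf{A4} yields $\lambda_i(\nabla^2 f(\x^*)) < -\beta$, which is the strict-saddle property with explicit curvature gap $\beta$.

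None of the steps is technically delicate; the entire argument is essentially a clean unpacking of Assumption~\textbf{A4}, which simultaneously encodes spectral non-vanishing (the Morse/non-degeneracy content needed for the first claim) and quantitative separation (the explicit $-\beta$ gap needed for the ``moreover'' clause). The only conceptual point one must be careful about is to recognize that these two ingredients of \textbf{A4} play distinct roles and should be invoked at the corresponding stages of the proof.
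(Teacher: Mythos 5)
Your proof is correct and follows essentially the same route as the paper's: both arguments rest on the observation that a second- or higher-order saddle point necessarily has a degenerate Hessian (at least one zero eigenvalue), which Assumption~\textbf{A4}'s spectral gap $\min_i \abs{\lambda_i(\nabla^2 f(\x^*))} > \beta$ immediately rules out. The only difference is one of thoroughness: the paper's proof stops after dispatching the first claim and treats the ``moreover'' clause as self-evident, whereas you spell out the short Taylor-expansion argument showing that a non-degenerate critical point which is not a local minimum must have a negative eigenvalue, which combined with \textbf{A4} gives the quantitative bound $\lambda_i < -\beta$. That added step is a welcome clarification but does not change the underlying reasoning.
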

\begin{proof}
For any $ \mathcal{C}^m$-smooth function $f(\cdot)$ with $m \geq 2$, if $\x^*$ is its second- or higher-order saddle point then it must necessarily satisfy $ \nabla f(\x^*) = \mathbf{0}$ and $\nabla^2 f(\x^*) \succeq \mathbf{0}$, where at least one of the eigenvalues of $ \nabla^2 f(\x^*)$ is $0$. {But this is not possible in our case because of Assumption~\textbf{A4}.}
\end{proof}

\begin{proposition}\label{eigenprop}
Under Assumption \textbf{A4}, for any sufficiently small $\epsilon$ where $\epsilon \ll \beta$, we can group the eigenvalues of the Hessian $ \nabla^{2}f(\x^*)$ at any strict saddle point $\x^*$ into $m$ disjoint sets $\{\mathcal{G}_1, \mathcal{G}_2, \dots, \mathcal{G}_m \}$ with $2\leq m\leq n$ based on the level of degeneracy of eigenvalues (closeness to one another) such that for some $\delta = \Omega(\epsilon^{1-a})$ where $a \in (0,1]$, we have the following conditions:
\begin{align}
			\mathbf{dist}(\mathcal{G}_p,\mathcal{G}_q) &\geq \delta \hspace{0.2cm} \forall \hspace{0.2cm} \mathcal{G}_p,\mathcal{G}_q \hspace{0.2cm} \text{s.t.} \hspace{0.2cm} p \neq q, \ \text{and}
		\\
		\max_{p}\{\mathbf{diam} (\mathcal{G}_p)\}  &= \mathcal{O}(\epsilon^{1-a}).			\end{align}
\end{proposition}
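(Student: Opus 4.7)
The plan is to construct the grouping by a simple gap-thresholding procedure on the sorted spectrum. Order the eigenvalues of $\nabla^2 f(\x^*)$ as $\lambda_1 \leq \lambda_2 \leq \cdots \leq \lambda_n$ and let $g_i := \lambda_{i+1} - \lambda_i$ denote the $n-1$ consecutive gaps. Fix any $a \in (0,1]$ and set the threshold $\delta := c\,\epsilon^{1-a}$ for a suitable constant $c > 0$. I would declare that $\lambda_i$ and $\lambda_{i+1}$ lie in the same group iff $g_i < \delta$, and split the index set at every gap with $g_i \geq \delta$; the resulting blocks of consecutive indices define the disjoint groups $\mathcal{G}_1,\ldots,\mathcal{G}_m$.

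First I would verify the two quantitative bounds. The between-group separation is immediate from the construction: any two distinct groups $\mathcal{G}_p \neq \mathcal{G}_q$ are separated by at least one cut gap of size $\geq \delta$, so $\mathbf{dist}(\mathcal{G}_p,\mathcal{G}_q) \geq \delta = \Omega(\epsilon^{1-a})$. For the within-group diameter, if $\mathcal{G}_p = \{\lambda_i,\lambda_{i+1},\ldots,\lambda_j\}$ then every internal gap satisfies $g_\ell < \delta$, so telescoping yields
\begin{equation*}
\mathbf{diam}(\mathcal{G}_p) \;=\; \lambda_j - \lambda_i \;=\; \sum_{\ell = i}^{j-1} g_\ell \;<\; (j - i)\,\delta \;\leq\; (n-1)\,\delta \;=\; \mathcal{O}\!\left(\epsilon^{1-a}\right).
\end{equation*}

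Next I would establish $m \geq 2$. At a strict saddle $\x^*$ of a Morse function, Proposition \ref{morseprop} yields at least one eigenvalue strictly less than $-\beta$; and in the regime of interest (a saddle with both stable and unstable directions relevant for saddle escape), at least one eigenvalue is also strictly greater than $\beta$. Consequently there exists an index $i^*$ at which $\lambda_{i^*} < -\beta$ and $\lambda_{i^*+1} > \beta$, giving $g_{i^*} > 2\beta$. Because $\epsilon \ll \beta$ and $a \in (0,1]$, a small enough choice of the constant $c$ (or simply taking $\epsilon$ small when $a<1$) makes $\delta = c\,\epsilon^{1-a} < 2\beta$, so this sign-crossing gap is cut, forcing at least two groups. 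The upper bound $m \leq n$ is trivial since there are only $n$ eigenvalues.

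The only conceptual subtlety -- and hence the main obstacle to spell out carefully -- is that the admissible exponent $a$ is meant to be tuned to the intrinsic degeneracy structure of the spectrum so as to align the group diameter with the precision $\epsilon$ used in later results: taking $a$ close to $0$ produces a fine grouping separating nearly distinct eigenvalues, while taking $a$ close to $1$ absorbs near-degenerate clusters together. For the existence claim stated in the proposition, however, any fixed $a \in (0,1]$ works uniformly through the construction above, and the proof thus reduces to the elementary gap-thresholding argument described, with Assumption \textbf{A4} pinning down the eigenvalues to the compact shells $[-L,-\beta]\cup[\beta,L]$ and the strict-saddle property supplying the macroscopic sign-crossing gap.
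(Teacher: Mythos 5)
Your proof is correct and rests on the same fundamental idea as the paper's: partition the sorted spectrum at consecutive gaps exceeding a threshold of order $\Theta(\epsilon^{1-a})$, so that between-group separation is automatic and the within-group diameter is controlled by telescoping. The only presentational difference is that the paper begins from the positive/negative split (guaranteed nonempty by the strict-saddle property) and then recursively refines groups whose diameter is too large, whereas you do a single-pass thresholding and establish $m\geq 2$ separately via the macroscopic sign-crossing gap $>2\beta$; your version is slightly cleaner since the telescoping bound $\mathbf{diam}(\mathcal{G}_p)<(n-1)\delta$ is made explicit, but the two constructions yield the same grouping.
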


\begin{proof}
From Assumption \textbf{A4}, the eigenvalues of the Hessian $ \nabla^{2}f(\x^*)$ at any strict saddle point $\x^*$ can always be separated into two distinct groups, one consisting of positive eigenvalues and the other comprising negative eigenvalues. By this construction, the distance between these groups will be at least $2 \beta$. Since $\epsilon \ll \beta$, we get a $\delta = 2 \beta$ for this construction which satisfies the constraint $\delta = \Omega(1)$. Next, we check whether the diameter of these two groups is larger than $\Theta(\epsilon^{1-a})$; if yes then we split that particular group into two more groups at the first eigenvalue where the consecutive eigenvalue gap within that group exceeds $\Theta(\epsilon^{1-a})$. This eigenvalue gap becomes our new $\delta$ and by construction it will satisfy the constraint $\delta = \Omega(\epsilon^{1-a})$ for some $a > 0$ since $\delta > \Theta(\epsilon^{1-a})$. Repeating this process recursively, we would have constructed the disjoint sets $\{\mathcal{G}_1, \mathcal{G}_2, ..., \mathcal{G}_m \}$ with $2\leq m\leq n$. Since $n$ is finite, this process will terminate in finite steps (maximum $n-1$ steps) and therefore after the final splitting, we will obtain $\delta = \Omega(\epsilon^{1-a})$ for some $a \in (0,1]$ such that $\max_{p}\{\mathbf{diam} (\mathcal{G}_p)\}  = \mathcal{O}(\epsilon^{1-a}) $.
\end{proof}

Proposition \ref{eigenprop} describes a fundamental property of any $\mathcal{C}^2$ function that arises due to the algebraic multiplicity / (approximate) degeneracy of the eigenvalues of its Hessian at the saddle points. Note that, as a consequence of the strict-saddle property (Assumption~\textbf{A4} / Proposition~\ref{morseprop}) and Proposition~\ref{eigenprop}, we get the following necessary condition:
\begin{align}
	\beta \geq \frac{\delta}{2}.
\end{align}

\section{Boundary Conditions for Linear Exit Time From a Saddle Neighborhood}\label{mainsec1}
\subsection{Preface}\label{ssec:preface}
Given a saddle neighborhood $\mathcal{B}_{\epsilon}(\x^*)$ for some strict saddle point $\x^*$ {and $\varepsilon>0$}, the goal is selecting those gradient trajectories in $\mathcal{B}_{\epsilon}(\x^*)$ for which the exit time is of the order $K_{exit} = \mathcal{O}(\log (\epsilon^{-1}))$, i.e., of linear rate. Formally, the exit time for an iterate sequence $\{\x_k\}$ of some trajectory in the ball $\mathcal{B}_{\epsilon}(\x^*)$ is defined as the smallest positive index $K$ such that  $\norm{\x_K-\x^*} \geq \epsilon $ and we are required to obtain such sequence $\{\x_k\}$ generated by the gradient descent method for which the exit time from the saddle neighborhood $\mathcal{B}_{\epsilon}(\x^*)$ is linear. To conduct such analysis, certain essential concepts and definitions need to be elaborated, most of which were developed in a previous work (for reference see \cite{dixit2022exit}).

First, due to the strict-saddle property, for any $\x$ in an $\epsilon$-neighborhood of $\x^*$, i.e., $\x \in \mathcal{B}_{\epsilon}(\x^*)$, the vector $\x - \x^*$ belongs to a vector space $\mathcal{E} = \mathcal{E}_{S} \bigoplus \mathcal{E}_{US}$, where
{
\begin{align*}
  	\mathcal{E}_{S} &= span\{\v_i | \lambda_{i} > 0\}, \hspace{0.05cm} \mathcal{N}_{S} = \{i | \lambda_{i} > 0\}, \\
  		\mathcal{E}_{US} &= span\{\v_i  | \lambda_{i} < 0\}, \hspace{0.05cm} \mathcal{N}_{US} = \{j | \lambda_{j} < 0\},
\end{align*}
and} $(\lambda_{i}, \v_i)$ are the $i^{th}$ eigenvalue--eigenvector pair of the Hessian $\nabla^2 f(\x^*)$.

Second, using the `degenerate' matrix perturbation theory \cite{MBT, MBT1}, the Hessian $\nabla^2 f(\x)$ at any point $\x = \x^* + p \u$, where $p \in [0,1]$ and $\norm{\u} \leq \epsilon$, can be given as
\begin{align}
      	\nabla^2 f(\x) &= \nabla^2 f(\x^*) + p \norm{\u} \mathbf{H}(\hat{\u}) + \mathcal{O}(\epsilon^2),
\label{perturbation1paper1}      	\end{align}
      	where  $\u := \x - \x^*$ is termed the \textbf{radial vector}, $\hat{\u}= \frac{\u}{\norm{\u}}$ is the unit radial vector and we have that
      	\begin{align}		\mathbf{H}(\hat{\u}) &=  \sum_{i=1}^{n} \bigg(\langle \v_i, \mathbf{H}(\hat{\u})\v_i\rangle \v_i\v_i^{T} +  \lambda_i\sum_{l \not\in \mathcal{G}_i} \frac{\langle \v_l, \mathbf{H}(\hat{\u})\v_i\rangle}{\lambda_{i}-\lambda_{l}}\bigg(\v_l\v_i^{T} + \v_i\v_l^{T}\bigg) \bigg)
      \label{perturbation2paper1}	\end{align}      	
      	with $\mathcal{G}_i = \{\hspace{0.2cm} j \hspace{0.2cm}\vert \hspace{0.2cm} \lambda_{j}=\lambda_{i} \pm   \mathcal{O}(\epsilon)\}$. For details, see Lemma 3.3 from \cite{dixit2022exit}.
      	
      	The third concept can be regarded as the most important tool for developing the proof machinery of linear exit time; see Lemmas 3.4 and 3.5 from \cite{dixit2022exit} for details. Specifically, it can be summarized as the ``Approximation Lemma" for a linear dynamical system. Given some initialization of the radial vector $\u_{0}$ and sufficiently small $\epsilon$, we have for any iteration $K$ that
{ $\u_{K} = \prod_{k=0}^{K-1} \bigg[\A_k + \epsilon \P_k   \bigg]\u_{0}$}, where $\epsilon \P_k = \B_k+\mathcal{O}(\epsilon^2)$, $\B_k = \mathcal{O}(\epsilon)$ for $\x_{k} \in \mathcal{B}_{\epsilon}(\x^*)$, $\{\A_k\}$ and $ \{\B_k\}$ are sequences of real symmetric matrices, and $\A_k$'s are invertible.

When $K\epsilon \ll 1$ and $\epsilon <  \norm{\A^{-1}}_2^{-1}\norm{\P}_2^{-1}$, we have the condition
{
\begin{align*}
\small \norm{\A^{-1}}_2^{-K} \bigg( 1- K\epsilon \frac{\norm{\P}_2}{\norm{\A^{-1}}^{-1}_2} - \mathcal{O}\bigg((K\epsilon)^2\bigg) \bigg) \leq \nu_n \leq \dots \leq \nu_1  \leq  \norm{\A}_2^K \bigg( 1+ K\epsilon \frac{\norm{\P}_2}{\norm{\A}_2} + \mathcal{O}\bigg((K\epsilon)^2\bigg) \bigg),
\end{align*}
where} $\nu_n \leq \dots  \leq \nu_1$ are absolute values of the eigenvalues of matrix $\prod_{k=0}^{K-1} \bigg[\A_k + \epsilon \P_k \bigg]$ and $ \sup_{0\leq k \leq K-1}\norm{\A_k}_2 = \norm{\A}_2$, $ \sup_{0\leq k \leq K-1}\norm{\A_k^{-1}}_2 = \norm{\A^{-1}}_2$, $\sup_{0\leq k \leq K-1}\norm{\P_k}_2 = \norm{\P}_2$ for some matrices $\A$ and $\P$. {Hence, $\u_K = \prod_{k=0}^{K-1} \bigg[\A_k + \epsilon \P_k   \bigg]\u_{0}$ can be expanded to first order in $\epsilon$ with the first-order approximation called $\tilde{\u}_K$ and the trajectory generated by the sequence $\{\tilde{\u}_K\}$ is termed $\epsilon$--precision trajectory}. Thus the gradient update $ \x_{K+1} = \x_K - \alpha \nabla f(\x_K)$ near $\x^*$ can be written as $ \u_{K} = \prod_{k=0}^{K-1} \bigg[\A_k + \epsilon \P_k   \bigg]\u_{0}$ for $\u_K = \x_K- \x^*$, $\A_K = \mathbf{I}-\alpha \nabla^2 f(\x^*)$ and $\epsilon \P_K = -\frac{\alpha\norm{\u_K}}{2}\mathbf{H}(\hat{\u}_K) + \mathcal{O}(\epsilon^2)$.

Fourth, from Lemma 3.6 of \cite{dixit2022exit}, the `minimal' $\epsilon$--precision trajectory has the maximum exit time. More rigorously, let $S_{\epsilon}=\Big\{  \{\tilde{\u}_K^{\tau}\}_{K=1}^{K_{exit}^{\tau}}  \Big| \u_{0} \Big\}$ be the set of $\tau$-parametrized $\epsilon$--\textbf{precision trajectories} generated by expanding $\u_K$ to first order in $\epsilon$, where $\tau$ varies with variations in the perturbation sequence $\{\P_k\}_{k=0}^{K}$. Let $K_{exit}^{\tau}$ be the exit time of the $\tau$-parametrized trajectory $\{\tilde{\u}_K^{\tau}\}_{K=1}^{K_{exit}^{\tau}}$ from the ball $\mathcal{B}_{\epsilon}(\x^*)$, where we have $ K_{exit}^{\tau} = \inf_{K\geq 1} \bigg\{K \hspace{0.2cm}\bigg| \hspace{0.2cm} \norm{\tilde{\u}_K^{\tau}}^2 > \epsilon^2 \bigg\}.$
	Let $K^{\iota}$ be defined as
	\begin{align}
	K^{\iota} & = \inf_{K\geq 1} \bigg\{K \hspace{0.2cm}\bigg| \hspace{0.2cm} \inf_{\tau}\bigg\{\norm{\tilde{\u}_K^{\tau}}^2 \bigg \}> \epsilon^2 \bigg\}.
\label{kiota}	\end{align}
Then the following inequality holds:
	\begin{align*}
	K^{\iota} &\geq \sup_{\tau} \bigg\{K_{exit}^{\tau} \bigg\}  = \sup_{\tau} \inf_{K\geq 1} \bigg\{K \hspace{0.2cm}\bigg| \hspace{0.2cm} \norm{\tilde{\u}_K^{\tau}}^2 > \epsilon^2 \bigg\}.
	\end{align*}
	
Finally, the linear exit time theorem for the $\epsilon$--precision trajectories (Theorem 3.2 in \cite{dixit2022exit}) states that for gradient descent with $\alpha = \frac{1}{L}$ where $\epsilon < \frac{2\beta}{M}$, and some \textbf{minimum projection value $ \sum_{j \in \mathcal{N}_{US}} ({\theta}^{us}_j)^2 \geq  \Delta$ of the initial radial vector $\u_{0}$ on $\mathcal{E}_{US}$} with $ \u_{0} = \epsilon\sum_{i \in \mathcal{N}_{S} } {\theta}^{s}_{i} \v_i + \epsilon\sum_{j \in \mathcal{N}_{US} } {\theta}^{us}_{j} \v_j
$, there exist $\epsilon$--precision trajectories $\{\tilde{\u}_{K}\}_{K=1}^{K_{exit}}$ with linear exit time. Moreover their exit time $K_{exit}$ from $\mathcal{B}_{\epsilon}(\x^*)$ is approximately upper bounded as
\begin{align}
K_{exit} < K^{\iota} & \lessapprox \frac{\log \bigg(\bigg(2 + \frac{ \epsilon M}{2L}\bigg) \log\bigg(\frac{2 + \frac{ \epsilon M}{2L}}{1 + \frac{\beta}{L} - \frac{\epsilon M}{2L}}\bigg)\frac{2 \delta}{\epsilon M n}\bigg)}{2\log\bigg(\frac{2 + \frac{ \epsilon M}{2L}}{1 + \frac{\beta}{L} - \frac{\epsilon M}{2L}}\bigg)}. \label{linearexittimebound}
\end{align}	
In \cite[Theorem~3.2]{dixit2022exit}, we provide a necessary initial condition for the linear exit time bound, which is \begin{align*}
	\Delta >\epsilon \frac{ M Ln}{ \delta (L+ \beta )} = \mathcal{O}(\epsilon),
	\end{align*}
	where it is required that $ \sum_{j \in \mathcal{N}_{US}} ({\theta}^{us}_j)^2 \geq  \Delta$. In this work we provide the sufficient boundary conditions for linear exit time $\epsilon$--precision trajectories.
	
	Before moving to the next section that details the sufficient conditions, we show that the $\epsilon$--precision trajectory $\{\tilde{\u}_K\}_{K=0}^{K_{exit}}$ generated by expanding the matrix product in the expression $ \u_{K} = \prod_{k=0}^{K-1} \bigg[\A_k + \epsilon \P_k   \bigg]\u_{0}  $ to first order in $\epsilon$ has a very small relative error compared to the exact trajectory.
	
	\subsubsection{Relative Error Margin in the $\epsilon$--Precision Trajectory}\label{sectionrelativeerror}
	By the definition of the $\epsilon$--precision trajectory, we have that
	\begin{align}
	    \tilde{\u}_K = \prod_{k=0}^{K-1} \A_k \u_{0} + \epsilon \sum\limits_{r=0}^{K-1}\prod_{k=0}^{r} \A_k \P_r  \prod_{k=r+1}^{K-1} \A_k   \u_{0},
	\end{align}
	which is obtained by expanding the matrix product $ \prod_{k=0}^{K-1} \bigg[\A_k + \epsilon \P_k   \bigg]$ to first order in $\epsilon$. Now using the ``Approximation Lemma" discussed above for $K \epsilon \ll 1$ and $ \epsilon < \norm{\A^{-1}}_2^{-1} \norm{\P}_2^{-1}$ where $ \sup_{0\leq k \leq K-1}\norm{\A_k}_2 = \norm{\A}_2$, $ \sup_{0\leq k \leq K-1}\norm{\A_k^{-1}}_2 = \norm{\A^{-1}}_2$, $\sup_{0\leq k \leq K-1}\norm{\P_k}_2 = \norm{\P}_2$ for some matrices $\A$ and $\P$, we get that:
	\begin{align}
	    \u_K &= \prod_{k=0}^{K-1} \bigg[\A_k + \epsilon \P_k   \bigg] \u_0\\
	    &= \prod_{k=0}^{K-1} \A_k \u_{0} + \epsilon \sum\limits_{r=0}^{K-1}\prod_{k=0}^{r} \A_k \P_r  \prod_{k=r+1}^{K-1} \A_k   \u_{0} + \mathcal{O}\bigg(\norm{\A}_2^K(K\epsilon)^2 \frac{\norm{\P}_2^2}{\norm{\A}_2^2} \norm{\u_0} \bigg)\\
	   &=  \tilde{\u}_K + \mathcal{O}\bigg(\norm{\A}_2^K(K\epsilon)^2 \epsilon \bigg).  \label{errormargin1}
	\end{align}
	Next, from the proof of \cite[Lemma~3.4]{dixit2022exit} we recall that $\A_k = \sum\limits_{i \in \mathcal{N}_S} c_i^s(k)\v_i \v_i^T + \sum\limits_{j \in \mathcal{N}_{US}} c_j^{us}(k)\v_j \v_j^T $ where  $c_i^s(k) = 1 - \alpha \lambda_i^{s} + \mathcal{O}(\epsilon)  $, $c_j^{us}(k) = 1 - \alpha \lambda_j^{us} + \mathcal{O}(\epsilon)  $ and $\lambda_i^{s}, \v_i$ and $\lambda_j^{us},\v_j$ are the eigenvalue-eigenvector pairs corresponding to the stable and unstable subspaces of $\nabla^2 f(\x^*)$, respectively. Also, $ \u_{0} = \epsilon\sum_{i \in \mathcal{N}_{S} } {\theta}^{s}_{i} \v_i + \epsilon\sum_{j \in \mathcal{N}_{US} } {\theta}^{us}_{j} \v_j
$ and for $\alpha = \frac{1}{L}$ we have the bounds $ 1 + \frac{\beta}{L} - \frac{\epsilon M}{2L}  \leq c_j^{us}(k) \leq 2 + \frac{\epsilon M}{2L}$ and $  -\frac{\epsilon M}{2L} \leq c_i^{s}(k) \leq 1 - \frac{\beta}{L} + \frac{\epsilon M}{2L}$ (see \cite[Lemma~3.4]{dixit2022exit}). Hence we have that:
\begin{align}
    \norm{\u_K} &= \norm{\prod_{k=0}^{K-1} \bigg[\A_k + \epsilon \P_k   \bigg] \u_0}\\
	    & \geq \norm{\prod_{k=0}^{K-1} \A_k \u_{0}} - \norm{ \epsilon \sum\limits_{r=0}^{K-1}\prod_{k=0}^{r} \A_k \P_r  \prod_{k=r+1}^{K-1} \A_k   \u_{0}} - \mathcal{O}\bigg(\norm{\A}_2^K(K\epsilon)^2 \frac{\norm{\P}_2^2}{\norm{\A}_2^2} \norm{\u_0} \bigg)\\
	     & \geq \norm{\prod_{k=0}^{K-1} \A_k \u_{0}} -  \mathcal{O}\bigg(\norm{\A}_2^K(K\epsilon) \frac{\norm{\P}_2}{\norm{\A}_2} \norm{\u_0} \bigg)\\
	    & = \norm{\bigg(\prod_{k=0}^{K-1}c_i^{s}(k)\bigg) \epsilon\sum_{i \in \mathcal{N}_{S} } {\theta}^{s}_{i} \v_i  +  \bigg(\prod_{k=0}^{K-1}c_j^{us}(k)\bigg) \epsilon\sum_{j \in \mathcal{N}_{US} } {\theta}^{us}_{j} \v_j } - \mathcal{O}\bigg(\norm{\A}_2^{K}(K\epsilon) \epsilon \bigg) \\
	    & \geq  \epsilon\bigg(\inf\{c_j^{us}(k)\}\bigg)^K \sqrt{\bigg(\frac{\inf\{c_i^{s}(k)\}}{\inf\{c_j^{us}(k)\}}\bigg)^{2K} \sum_{i \in \mathcal{N}_{S} } ({\theta}^{s}_{i})^2   +  \sum_{j \in \mathcal{N}_{US} } ({\theta}^{us}_{j})^2} - \mathcal{O}\bigg(\norm{\A}_2^{K}(K\epsilon) \epsilon \bigg) \\
	    & \approx \epsilon\bigg( 1 + \frac{\beta}{L} - \frac{\epsilon M}{2L}\bigg)^K \sqrt{\sum_{j \in \mathcal{N}_{US} } ({\theta}^{us}_{j})^2}  - \mathcal{O}\bigg(\norm{\A}_2^{K}(K\epsilon) \epsilon \bigg), \label{errormargin2}
\end{align}
where we used $ \inf\{c_j^{us}(k)\} = \bigg( 1 + \frac{\beta}{L} - \frac{\epsilon M}{2L}\bigg) $, $ \inf\{c_i^{s}(k)\} =  - \frac{\epsilon M}{2L}$ and $\epsilon^{2K} \approx 0$ (here $\epsilon \ll 1$ since $K \epsilon \ll 1$). Simplifying \eqref{errormargin1} by using the substitution $ \norm{\A}_2 = \sup\{\norm{\A_k}_2\}= \sup\{c_j^{us}(k)\} = 2 + \frac{\epsilon M}{2 L}$ and taking norm yields
\begin{align}
 \norm{ \u_K - \tilde{\u}_K} = \mathcal{O}\bigg(\norm{\A}_2^K(K\epsilon)^2 \epsilon \bigg) = \mathcal{O}\bigg(\bigg(2 + \frac{\epsilon M}{2 L}\bigg)^K(K\epsilon)^2 \epsilon \bigg). \label{errormargin3}
\end{align}
Finally, dividing \eqref{errormargin3} by \eqref{errormargin2} we get the following bound on the relative error:
\begin{align}
    \frac{\norm{ \u_K - \tilde{\u}_K}}{\norm{\u_K}} &\leq \frac{1}{\epsilon\bigg( 1 + \frac{\beta}{L} - \frac{\epsilon M}{2L}\bigg)^K \sqrt{\sum_{j \in \mathcal{N}_{US} } ({\theta}^{us}_{j})^2}  - \mathcal{O}\bigg(\norm{\A}_2^{K}(K\epsilon) \epsilon \bigg)}\mathcal{O}\bigg(\bigg(2 + \frac{\epsilon M}{2 L}\bigg)^K(K\epsilon)^2 \epsilon \bigg) \\
    &\leq \frac{1}{ \sqrt{\sum_{j \in \mathcal{N}_{US} } ({\theta}^{us}_{j})^2}  - \mathcal{O}\bigg(\frac{\bigg(2 + \frac{\epsilon M}{2 L}\bigg)^K}{\bigg( 1 + \frac{\beta}{L} - \frac{\epsilon M}{2L}\bigg)^K}(K\epsilon) \bigg)}\mathcal{O}\bigg(\frac{\bigg(2 + \frac{\epsilon M}{2 L}\bigg)^K}{\bigg( 1 + \frac{\beta}{L} - \frac{\epsilon M}{2L}\bigg)^K}(K\epsilon)^2  \bigg) \\
    &\leq \frac{1}{ \sqrt{\sum_{j \in \mathcal{N}_{US} } ({\theta}^{us}_{j})^2}  - \mathcal{O}\bigg(\frac{1}{\sqrt{\epsilon}}\bigg(\log\bigg(\frac{1}{\epsilon} \bigg)\epsilon\bigg)  \bigg)}\mathcal{O}\bigg(\frac{1}{\sqrt{\epsilon}}\bigg(\log\bigg(\frac{1}{\epsilon} \bigg)\epsilon\bigg)^2  \bigg), \label{relativeerrorprojectionbound}
\end{align}
where we have substituted the upper bound on $K_{exit}$ from \eqref{linearexittimebound} into $K$. Now, if $ \sqrt{\sum_{j \in \mathcal{N}_{US} } ({\theta}^{us}_{j})^2} > \mathcal{O}\bigg(\frac{1}{\sqrt{\epsilon}}\bigg(\log\bigg(\frac{1}{\epsilon} \bigg)\epsilon\bigg)  \bigg)$ then the relative error is of the order $\mathcal{O}\bigg(\frac{1}{\sqrt{\epsilon}}\bigg(\log\bigg(\frac{1}{\epsilon} \bigg)\epsilon\bigg)^2  \bigg)$, which goes to $0$ as $\epsilon \to 0$.

\subsection{Sufficient Conditions for Linear Exit Time}

Our first theorem states that the first order approximation of any gradient descent trajectory starting from an $\epsilon$ neighborhood of any strict saddle point $\x^*$ will escape this neighborhood in linear time, i.e., $\mathcal{O}(\log(\epsilon^{-1}) )$,  provided the projection value of its initialization on the unstable subspace of $\nabla^2 f(\x^*)$ is lower bounded.

\begin{theorem} \label{thm1}
{The $\epsilon$--precision trajectory $\{\tilde{\u}_K\}_{K=0}^{K_{exit}}$ generated by the gradient descent method for step-size $\alpha = \frac{1}{L}$ {on any function satisfying Assumptions \textbf{A1-A4}} has linear exit time \eqref{linearexittimebound} from the strict saddle neighborhood $\mathcal{B}_{\epsilon}(\x^*)$ {provided the projection value} of the initialization $\u_0$ onto the unstable subspace $\mathcal{E}_{US}$ of the Hessian $\nabla^2 f(\x^*)$, given by $ \sum_{j \in \mathcal{N}_{US}} ({\theta}^{us}_j)^2$, is lower bounded as:}
\begin{align}
\hspace{0.2cm}\sum_{j \in \mathcal{N}_{US}} ({\theta}^{us}_j)^2     \gtrapprox \frac{\bigg(2 + \frac{ \epsilon M}{2L}\bigg)\bigg(\frac{2 \delta \mu\log\bigg(1 + \frac{\beta}{L} - \frac{\epsilon M}{2L}\bigg)}{ Mn}
	\bigg)}{\frac{1}{a}\log \bigg(\frac{2 \delta\bigg(2 + \frac{ \epsilon M}{2L}\bigg)\log\bigg(1 + \frac{\beta}{L} - \frac{\epsilon M}{2L}\bigg)\log \bigg(\frac{2 + \frac{ \epsilon M}{2L}}{1 + \frac{\beta}{L} - \frac{\epsilon M}{2L}}\bigg)}{   \epsilon M n \log\bigg(2 + \frac{ \epsilon M}{2L}\bigg)}\bigg)+ 1},
\end{align}
{where $\sqrt[a]{\mu} =  \frac{    M n \log\bigg(2 + \frac{ \epsilon M}{2L}\bigg)}{2 \delta\bigg(2 + \frac{ \epsilon M}{2L}\bigg)\log\bigg(1 + \frac{\beta}{L} - \frac{\epsilon M}{2L}\bigg)\log \bigg(\frac{2 + \frac{ \epsilon M}{2L}}{1 + \frac{\beta}{L} - \frac{\epsilon M}{2L}}\bigg)}$, $a=\frac{\log \bigg(2+ \frac{\epsilon M}{2L}\bigg)}{\log\bigg(2 + \frac{ \epsilon M}{2L}\bigg) - \log \bigg({1 + \frac{\beta}{L} - \frac{\epsilon M}{2L}}\bigg)}$ and we require that}:  \begin{align}
   \epsilon <\min \bigg\{\inf_{{\norm{\u}=1}}\bigg(\limsup_{j\to \infty} \sqrt[j]{\frac{r_j(\u)}{j!}}\bigg)^{-1},\frac{2L\delta}{M(2Ln^2 -\delta )} + \mathcal{O}(\epsilon^2), \frac{2 \beta}{M}\bigg\}, \label{epbound}
\end{align}
{where $r_j(\u) = \norm{\bigg(\frac{d^j }{dw^j}\nabla^2 f(\x^*+w\u)\bigg\vert_{w=0}\bigg)}_2$, $ \u_{0} = \epsilon\sum_{i \in \mathcal{N}_{S} } {\theta}^{s}_{i} \v_i + \epsilon\sum_{j \in \mathcal{N}_{US} } {\theta}^{us}_{j} \v_j
$ and $\v_i, \v_j$ are the eigenvectors of the Hessian $\nabla^2 f(\x^*)$ and {$\delta$ is as in Proposition \ref{eigenprop}}.}

{In terms of order notation, we require the following lower bound on the projection} $ \sum_{j \in \mathcal{N}_{US}} ({\theta}^{us}_j)^2 $:
\begin{align}
\sum_{j \in \mathcal{N}_{US}} ({\theta}^{us}_j)^2  \gtrapprox  \mathcal{O}{}\bigg(\frac{1}{\log(\epsilon^{-1})}\bigg).
\end{align}

\end{theorem}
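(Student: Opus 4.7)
The plan is to lower-bound $\norm{\tilde{\u}_K}$ by the growth of its unstable-subspace component, impose the escape condition $\norm{\tilde{\u}_{K^{\iota}}}^2>\epsilon^2$ at the linear exit time $K^{\iota}$ from \eqref{linearexittimebound}, and solve the resulting inequality for $\sum_{j\in\mathcal{N}_{US}}(\theta^{us}_j)^2$. First I would decompose $\u_0=\epsilon\sum_{i\in\mathcal{N}_S}\theta^s_i\v_i+\epsilon\sum_{j\in\mathcal{N}_{US}}\theta^{us}_j\v_j$ as in the preface, and expand $\tilde{\u}_K$ via the first-order-in-$\epsilon$ expansion of $\prod_{k=0}^{K-1}[\A_k+\epsilon\P_k]\u_0$ exactly as in Section~\ref{sectionrelativeerror}. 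Since $\A_k=\I-\alpha\nabla^2 f(\x^*)$ is diagonal in the eigenbasis of $\nabla^2 f(\x^*)$ with eigenvalues $c^s_i(k),c^{us}_j(k)$, the dominant contribution of $\prod_k\A_k\u_0$ on the unstable subspace is $\epsilon\,\prod_{k=0}^{K-1}c^{us}_j(k)\,\theta^{us}_j\v_j$, while the $\epsilon$-order cross term involving $\P_r$ is controlled by the Approximation Lemma summarized in the preface.

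Second I would use the uniform lower bound $c^{us}_j(k)\geq 1+\beta/L-\epsilon M/(2L)$ from \cite[Lemma 3.4]{dixit2022exit} together with the chain of inequalities already appearing around \eqref{errormargin2} to obtain the lower bound
\begin{equation*}
\norm{\tilde{\u}_{K}}\;\gtrapprox\;\epsilon\bigl(1+\tfrac{\beta}{L}-\tfrac{\epsilon M}{2L}\bigr)^{K}\sqrt{\sum_{j\in\mathcal{N}_{US}}(\theta^{us}_j)^2}.
\end{equation*}
The escape condition $\norm{\tilde{\u}_{K^{\iota}}}^{2}>\epsilon^{2}$ then reduces, after dividing by $\epsilon^{2}$ and squaring, to
\begin{equation*}
\bigl(1+\tfrac{\beta}{L}-\tfrac{\epsilon M}{2L}\bigr)^{2K^{\iota}}\sum_{j\in\mathcal{N}_{US}}(\theta^{us}_j)^{2}\;\gtrapprox\;1.
\end{equation*}

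Third I would substitute the explicit expression for $K^{\iota}$ from \eqref{linearexittimebound}. Writing $A=2+\epsilon M/(2L)$ and $B=1+\beta/L-\epsilon M/(2L)$ for brevity, one has $2K^{\iota}\log(A/B)\approx\log\bigl(A\log(A/B)\cdot 2\delta/(\epsilon Mn)\bigr)$, hence $B^{2K^{\iota}}=\bigl(A\log(A/B)\cdot 2\delta/(\epsilon Mn)\bigr)^{\log B/\log(A/B)}$. Solving the escape inequality for the unstable projection and taking reciprocals yields the claimed bound; the exponent $1/a=\log(A/B)/\log A$ appearing in the denominator of the stated sufficient condition arises naturally by re-expressing the ratio $\log B/\log(A/B)$ via $\log A-\log(A/B)=\log B$, and the prefactor $\mu$ simply collects the remaining problem constants, as is verified by expanding the definitions of $a$ and $\mu$ in the theorem statement. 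In the regime $\epsilon\to 0$ this simplifies to $\sum(\theta^{us}_j)^2\gtrapprox\mathcal{O}(1/\log(\epsilon^{-1}))$, giving the stated order-notation corollary.

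Finally, the three bounds on $\epsilon$ in \eqref{epbound} are verified as follows. The first, $\epsilon<\inf_{\norm{\u}=1}\bigl(\limsup_{j\to\infty}\sqrt[j]{r_j(\u)/j!}\bigr)^{-1}$, is precisely the Cauchy--Hadamard radius of convergence of the Taylor series $w\mapsto\nabla^{2}f(\x^{*}+w\u)$ at $w=0$, which is required so that the perturbative Hessian expansion \eqref{perturbation1paper1} converges throughout $\mathcal{B}_{\epsilon}(\x^{*})$; it is finite by the local analyticity granted in Assumption~\textbf{A1}. The remaining two bounds are inherited from the matrix-perturbation analysis in \cite[Theorem~3.2]{dixit2022exit}, ensuring that the perturbation sequence $\{\epsilon\P_k\}$ stays sub-dominant to $\{\A_k\}$ and that the exit time falls in the linear regime. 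The main obstacle will be carefully propagating the first-order $\mathcal{O}(K\epsilon)$ and second-order $\mathcal{O}((K\epsilon)^{2})$ corrections through the exponentiation so that they remain negligible against the main growth factor $B^{2K}$ throughout the entire linear-exit window; this is ensured by $K^{\iota}=\mathcal{O}(\log(\epsilon^{-1}))$ together with $\epsilon\ll 1$, which makes $K^{\iota}\epsilon\to 0$ and thereby justifies the $\gtrapprox$ approximations appearing throughout the derivation.
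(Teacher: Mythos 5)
There is a genuine gap in your argument, and it concerns precisely the mechanism that makes the theorem's bound nontrivial. Your proposed lower bound $\norm{\tilde{\u}_K}\gtrapprox\epsilon\bigl(1+\tfrac{\beta}{L}-\tfrac{\epsilon M}{2L}\bigr)^K\sqrt{\sum_j(\theta^{us}_j)^2}$ is read off from only the zeroth-order unstable growth of $\prod_k\A_k\u_0$, but the chain around \eqref{errormargin2} that you invoke actually carries a negative correction of order $\mathcal{O}\bigl(\norm{\A}_2^K(K\epsilon)\epsilon\bigr)$, and the first-order-in-$\epsilon$ cross terms involving $\{\P_k\}$ — the very terms that define the $\epsilon$--precision trajectory — enter the paper's trajectory function $\Psi(K)$ with minus signs and with exponential base $c_3\approx 2+\tfrac{\epsilon M}{2L}$, which is strictly larger than $c_4\approx 1+\tfrac{\beta}{L}$. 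Because of this larger base, those negative terms eventually overtake $c_4^{2K}\sum_j(\theta^{us}_j)^2$ for any fixed projection value; this competition is exactly what drives the eventual decline of $\underline{\Psi}(K)$ established in the paper's Lemma~\ref{suplem_1}, and it is the source of the nontrivial constraint on $\sum_j(\theta^{us}_j)^2$. If you drop these terms, escape would be guaranteed for any positive projection — there would be nothing to prove.

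Concretely, imposing $\norm{\tilde{\u}_{K^{\iota}}}>\epsilon$ with your optimistic bound yields $\sum_j(\theta^{us}_j)^2\gtrsim B^{-2K^{\iota}}$ with $B=1+\tfrac{\beta}{L}-\tfrac{\epsilon M}{2L}$; substituting $K^{\iota}$ from \eqref{linearexittimebound} gives $B^{-2K^{\iota}}\propto\epsilon^{\log B/\log(A/B)}$ with $A=2+\tfrac{\epsilon M}{2L}$, i.e., a sufficient condition that decays \emph{polynomially} in $\epsilon$. This is strictly weaker than the theorem's $\mathcal{O}\bigl(1/\log(\epsilon^{-1})\bigr)$ and therefore cannot reproduce the stated bound, contrary to your assertion that "solving the escape inequality ... yields the claimed bound". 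The paper's route is structurally different: it uses the full expression $\underline{\Psi}(K)$ (which retains the competing $c_3^{2K}$ terms), extends it to a smooth function, differentiates it to locate a local maximum at some $K_0\in G_{\underline{\Psi}}$ with $K_0=\mathcal{O}(\log(\epsilon^{-1}))$ (Lemma~\ref{suplem_2}), and then imposes $\underline{\Psi}(K_0)\geq 1$ (Lemma~\ref{suplem_3}). The $1/\log(\epsilon^{-1})$ rate arises because the coefficient of $\sum_j(\theta^{us}_j)^2$ in the solvability condition contains a factor of $K_0$, with the additional delicate choice $\chi=1/a$ in setting $K_0=\chi\hat{K}_0$ so that the discarded remainder $F_1$ stays $o(\epsilon^2)$ while the relative-error constraint from \eqref{relativeerrorprojectionbound} is respected. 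Your sketch does not have a substitute for this maximization step, and imposing the escape condition at the single time $K^{\iota}$ cannot generate it. (Your discussion of the three $\epsilon$-bounds in \eqref{epbound} — the Cauchy--Hadamard radius for local analyticity, plus the two inherited from \cite[Theorem~3.2]{dixit2022exit} — is correct, but this is peripheral to the main issue.)
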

The proof of this theorem is given in Appendix \ref{Appendix A}.

Recall from \eqref{relativeerrorprojectionbound} that for relative error in the $\epsilon$--precision trajectory to be bounded, we require that $ \sqrt{\sum_{j \in \mathcal{N}_{US} } ({\theta}^{us}_{j})^2} > \mathcal{O}\bigg(\frac{1}{\sqrt{\epsilon}}\bigg(\log\bigg(\frac{1}{\epsilon} \bigg)\epsilon\bigg) \bigg)$. However, this condition is already satisfied by the sufficient condition $ \sum_{j \in \mathcal{N}_{US}} ({\theta}^{us}_j)^2 \gtrapprox \mathcal{O}\bigg(\frac{1}{\log (\epsilon^{-1})}\bigg)$ in terms of order since $ \mathcal{O}\bigg(\sqrt{\frac{1}{\log (\epsilon^{-1})}}\bigg) > \mathcal{O}\bigg(\frac{1}{\sqrt{\epsilon}}\bigg(\log\bigg(\frac{1}{\epsilon} \bigg)\epsilon\bigg)  \bigg)$ as $\epsilon \to 0$.

{The above result can be interpreted as follows: for any sufficiently small $\epsilon$ bounded from \eqref{epbound} if a gradient descent trajectory at the surface of any saddle neighborhood $\mathcal{B}_{\epsilon}(\x^*)$ has a projection value of order $\Theta\bigg(\frac{1}{\log(\epsilon^{-1})}\bigg)$ on the unstable subspace of $\nabla^2 f(\x^*)$, then this trajectory is guaranteed to exit the saddle neighborhood in linear time. This result is crucial since it furthers the findings of the state of the art \cite{lee2016gradient} where a non-zero projection value guarantees almost sure escape from the saddle point but does not provide any insights into whether a non-zero projection value could lead to fast escaping trajectories, something which Theorem \ref{thm1} establishes rigorously. Moreover the projection value bound in Theorem \ref{thm1} is {insightful} in the sense that it {illustrates the dependency to the} quantities like condition number, problem dimension, spectral gap, etc.  Since this result ensures that fast escaping gradient trajectories are indeed dense with respect to random initialization on the surface of the ball $ \mathcal{B}_{\epsilon}(\x^*)$, we can safely say that fast escaping trajectories for gradient descent method from small saddle neighborhoods of Morse functions will be a {generic} 
phenomenon. In case if the sufficient condition is not satisfied, one can perform a single step perturbation to land on a point which satisfies this condition. Then reverting back to gradient descent update, linear exit time from the saddle neighborhood will be guaranteed. This particular idea will serve as a basis for the development of a single step perturbation based gradient descent method for escaping saddle points {faster}.
}

{We now move to the next section which provides a rate analysis in regions outside the small saddle neighborhood $\mathcal{B}_{\epsilon}(\x^*)$ where the local analyticity property no longer exists and we are only left with the class of $\mathcal{C}^2$ gradient and Hessian Lipschitz, Morse functions, i.e., functions satisfying assumptions \textbf{A2-A4}.}

\section{Sequential Monotonicity}\label{mainsec2}

{The first theorem in this section {establishes a monotonicity property of the}  
gradient descent trajectories in a strict saddle neighborhood. This property is termed as ``sequential monotonicity" which implies that within some neighborhood of the strict saddle point $\x^*$ any gradient trajectory, which does not converge to $\x^*$, first continuously contracts towards $\x^*$ up to some point and from there onward expands continuously away from $\x^*$ until it escapes this neighborhood. }
\begin{theorem} \label{thm2}
{{On the class of $\mathcal{C}^2$ gradient and Hessian Lipschitz, Morse functions,} if a gradient trajectory with respect to some stationary point $\x^*$ has non-contractive dynamics at any iteration $k=K$, then it has expansive dynamics for all iterations $k>K$ provided $\norm{\x_k - \x^*}$ is bounded above by some $\xi>0$ where $\{\x_k\}$ is the sequence that generates the gradient trajectory. This property of the sequence of radial distances $\{\norm{\x_k-\x^*}\}$ can be termed as the sequential monotonicity.}

{Moreover, in the case of $\x^*$ being a strict saddle point, we have for gradient trajectories with step-size $\alpha = \frac{1}{L}$ that $\xi<\frac{1}{ \varsigma M } \frac{\bigg((1+\frac{\beta}{L})^2+ \frac{1}{4 (1+\frac{\beta}{L})^2}-\frac{5}{4}\bigg)}{6}$ for some $\varsigma >2$. Specifically, consider the tuple $ (\x, \x^+, \x^{++} )$ that is equivalent to the tuple $ (\x_k, \x_{k+1}, \x_{k+2} )$ for any $k$. Let $\norm{\x^+ - \x^*} > \norm{\x-\x^*}$ and $\norm{\x - \x^*}<\xi$. Then the following holds: }
\begin{align}
\textbf{a.} \hspace{0.5cm} \norm{\x^{++}-\x^*} &\geq \bar{\rho}(\x)\norm{\x^+-\x^*} - \sigma(\x), \quad \text{and}\\
\textbf{b.} \hspace{0.5cm}  \norm{\x^{++}-\x^*} &> \norm{\x^+-\x^*},
\end{align}
{where $\sigma(\x) = \mathcal{O}(\norm{\x-\x^*}^2)$ and $\bar{\rho}(\x)> 1 + \frac{\bigg((1+\frac{\beta}{L})^2+ \frac{1}{4 (1+\frac{\beta}{L})^2}-\frac{5}{4}\bigg)}{12}$.}
\end{theorem}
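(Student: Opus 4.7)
My approach centers on a Taylor expansion of the gradient descent update around $\x^*$ (using only Hessian--Lipschitzness), combined with a decomposition of the displacement in the eigenbasis of $\nabla^2 f(\x^*)$ and a Cauchy--Schwarz variance estimate.

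First I would set $\u_k := \x_k - \x^*$ and, using $\nabla f(\x^*) = \mathbf{0}$ together with Assumption \textbf{A3}, write $\nabla f(\x_k) = \nabla^2 f(\x^*)\u_k + \e_k$ with $\|\e_k\| \leq \tfrac{M}{2}\|\u_k\|^2$, so the iteration becomes $\u_{k+1} = \A \u_k - \alpha \e_k$ with $\A := \I - \alpha\nabla^2 f(\x^*)$ and $\alpha = 1/L$. Spectrally decomposing $\A = \sum_i c_i \v_i \v_i^T$ with $c_i = 1 - \lambda_i/L$, the Morse--strict-saddle structure (Proposition~\ref{morseprop}) forces $c_i \in [0,1-\beta/L]$ for $i \in \mathcal{N}_S$ and $c_i \in [1+\beta/L,2]$ for $i \in \mathcal{N}_{US}$. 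Writing $\u = \sum_i \theta_i \v_i$, the non-contractive hypothesis $\|\u^+\| \geq \|\u\|$ translates (up to the error term $\mathcal{O}(\|\u\|^3)$ that will be absorbed into $\sigma(\x)$) into
\begin{align*}
\sum_{i \in \mathcal{N}_{US}} (c_i^2 - 1)\theta_i^2 \;\gtrsim\; \sum_{i \in \mathcal{N}_S}(1 - c_i^2)\theta_i^2,
\end{align*}
which forces a quantitative lower bound on the unstable-subspace share of $\u$.

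For part~(a), applying $\A$ once more gives $\|\u^{++}\|^2 = \sum_i c_i^4 \theta_i^2 + \mathcal{O}(\|\u\|^3)$. A Cauchy--Schwarz inequality on the weights $\theta_i^2$ and the values $c_i^2$ yields $\|\u^{++}\|^2\,\|\u\|^2 \geq \|\u^+\|^4$ up to a nonnegative slack equal to the weighted variance $\sum_{i<j}(c_i^2-c_j^2)^2\theta_i^2\theta_j^2$. The constraint from the preceding paragraph forces the admissible weight distribution to place nontrivial mass on both $\mathcal{N}_S$ and $\mathcal{N}_{US}$, so this variance is bounded below by an explicit positive function of $\beta/L$; minimizing it over the admissible split at the boundary of the hypothesis (where $c_s^2$ saturates at $(1-\beta/L)^2$ and $c_{us}^2$ at $(1+\beta/L)^2$) produces the constant $(1+\beta/L)^2 + \tfrac{1}{4(1+\beta/L)^2} - \tfrac{5}{4} = (4a-1)(a-1)/(4a)$ with $a := (1+\beta/L)^2$, and dividing by $12$ after absorbing the $\mathcal{O}(\|\u\|^2)$ Hessian--Lipschitz corrections into $\sigma(\x)$ gives the stated lower bound on $\bar\rho(\x)$. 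Part~(b) is then immediate: with $\|\u\| \leq \xi$ and $\xi$ chosen as in the statement, the perturbation $\sigma(\x) = \Theta(M\|\u\|^2)$ is strictly smaller than the linear expansion gap $(\bar\rho(\x)-1)\|\u^+\|$; the safety factor $\varsigma > 2$ is precisely what allows this strict dominance to hold uniformly. Iterating the implication ``non-contractive at step $k$ implies strictly expansive at step $k+1$'' gives sequential monotonicity by induction as long as the iterate remains in $\mathcal{B}_\xi(\x^*)$.

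The main obstacle will be the constrained variance minimization in part~(a). Since the only constraint on the projection profile $\{\theta_i^2\}$ is the single scalar inequality from the non-contractive hypothesis, the minimizer is a two-atom distribution and a direct Lagrangian calculation is needed to pin down the worst-case configuration and to confirm that the resulting minimum variance is exactly $(4a-1)(a-1)/(4a)$ up to the overall $1/12$ factor. A secondary subtlety is tracking the $\mathcal{O}(M\|\u\|^2)$ error terms through two gradient steps and verifying that the $\xi$-bound stated in the theorem is the sharp threshold at which the linear expansion gap strictly dominates these perturbations, which is what fixes the denominator $6$ and the safety factor $\varsigma > 2$ in the stated upper bound on $\xi$.
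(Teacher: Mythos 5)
Your high-level strategy is close to the paper's — non-contraction at one step becomes a constraint on the projection split between stable and unstable eigenspaces, one more gradient step is analyzed, and a two-variable constrained minimization pins down the constant $(1+\tfrac{\beta}{L})^2 + \tfrac{1}{4(1+\beta/L)^2} - \tfrac{5}{4}$. There are, however, two genuine gaps.

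First, the decomposition. You expand about $\x^*$ to first order and write $\u_{k+1} = \A\u_k - \alpha\e_k$ with $\A = \I - \alpha\nabla^2 f(\x^*)$. The paper instead uses the exact identity $\u^+ = \mathbf{D}(\x)\u$ with $\mathbf{D}(\x) = \I - \alpha\int_0^1 \nabla^2 f(\x^* + p\u)\,dp$. This matters: in your setup the non-contraction hypothesis $\|\u^+\|\geq\|\u\|$ does not translate cleanly into a constraint on $\{\theta_i^2\}$ in the eigenbasis of $\nabla^2 f(\x^*)$ because of the $\e_k$ cross-terms, whereas with $\mathbf{D}(\x)$ the constraint is exactly $\langle\hat\u, \mathbf{D}(\x)^2\hat\u\rangle \geq 1$. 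Your version forces you to push Hessian-Lipschitz error terms into the constraint itself, which you would need to argue does not shift the minimizer; the paper avoids this entirely. The Hessian-Lipschitz correction only enters at the second step via $\mathbf{D}(\x^+) = \mathbf{D}(\x) + \mathbf{P}(\x)$ with $\|\mathbf{P}(\x)\|_2 \leq \tfrac{M\alpha}{2}\|\nabla f(\x)\|$, which is where the $\sigma(\x) = \mathcal{O}(\|\u\|^2)$ term comes from.

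Second, and more substantively, the Cauchy--Schwarz variance argument as stated does not go through. Your claim that the non-contraction constraint ``forces the admissible weight distribution to place nontrivial mass on both $\mathcal{N}_S$ and $\mathcal{N}_{US}$'' is false: placing all mass in $\mathcal{N}_{US}$ trivially satisfies $\sum_j(c_j^2-1)\theta_j^2 \geq \sum_i(1-c_i^2)\theta_i^2$ (the right side vanishes), and if additionally all $c_j$ coincide on the support, the variance $\sum_{i<j}(c_i^2-c_j^2)^2\theta_i^2\theta_j^2$ is exactly zero. In that regime Cauchy--Schwarz gives only $\bar\rho \geq \|\u^+\|/\|\u\|$, which is fine there but not via the variance. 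So a uniform lower bound on $\bar\rho - 1$ over the whole constraint set would require a two-case argument (either $\|\u^+\|/\|\u\|$ is large or the variance is large), and the minimizer of the variance subject to the constraint is not what produces the paper's constant in one shot. The paper sidesteps this by working not with the variance $E[Y^2]-(E[Y])^2$ but directly with the difference $\langle\hat\u,\mathbf{D}^4\hat\u\rangle - \langle\hat\u,\mathbf{D}^2\hat\u\rangle$ (i.e., $E[Y^2]-E[Y]$ with $Y := c^2$), first proving it is positive from the algebraic identity $\mathbf{D}^4 - \mathbf{D}^2 = (\mathbf{D}^2 - \I)^2 + (\mathbf{D}^2 - \I)$ under $\langle\hat\u,(\mathbf{D}^2-\I)\hat\u\rangle \geq 0$, and then lower bounding it using $(\nu_i^s)^4 - (\nu_i^s)^2 \geq -\tfrac14$ (the global minimum of $y^2-y$ on $[0,1)$, not the value at $c_s^2 = (1-\beta/L)^2$ as you describe) for the stable directions and $(\nu_j^{us})^4 - (\nu_j^{us})^2 \geq (1+\beta/L)^4 - (1+\beta/L)^2$ for the unstable ones, which is where the two-variable linear program you identify actually appears. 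The constants then arrive via $\sqrt{\langle\hat\u,\mathbf{D}^4\hat\u\rangle} + \sqrt{\langle\hat\u,\mathbf{D}^2\hat\u\rangle} < 6$ (denominator) and $\sqrt{\langle\hat\u,\mathbf{D}^2\hat\u\rangle} < 2$ (extra factor giving the $12$), not from absorbing Hessian-Lipschitz corrections as you suggest.
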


The proof of this theorem is given in Appendix \ref{Appendix B}.

\begin{remark}\label{proprem}
The upper bound on $\xi$ given by the quantity $\frac{1}{ \varsigma M } \frac{\bigg((1+\frac{\beta}{L})^2+ \frac{1}{4 (1+\frac{\beta}{L})^2}-\frac{5}{4}\bigg)}{6}$ for $\varsigma>2$ is always positive and  is equal to $0$ only when $ \beta = 0$. Moreover, for Morse functions that are {well conditioned at their stationary points, i.e., $0 \ll \frac{\beta}{L}< 1$,} this quantity can be treated as a constant. Moreover this bound on $\xi$ also makes sure that there cannot be any other critical point within a radius of $\frac{1}{ \varsigma M } \frac{\bigg((1+\frac{\beta}{L})^2+ \frac{1}{4 (1+\frac{\beta}{L})^2}-\frac{5}{4}\bigg)}{6}$ for $\varsigma>2$ from $\x^*$. If another stationary point did exist within this radius of $\x^*$  say $\x^*_1$ then $ \norm{\nabla f(\x^*_1)} \geq \beta \norm{\x^*- \x^*_1} > 0$ from \eqref{interimbound4} which contradicts the fact that $\x^*_1$ is a critical point of $f$. This seemingly trivial result will be of utility in Proposition \ref{proposition1} where we define separation between critical points.
\end{remark}

{In words, Theorem \ref{thm2} states that within any $\xi$ neighborhood of the saddle point where $\xi<\frac{1}{ \varsigma M } \frac{\bigg((1+\frac{\beta}{L})^2+ \frac{1}{4 (1+\frac{\beta}{L})^2}-\frac{5}{4}\bigg)}{6}$ for some $\varsigma >2$, every gradient descent trajectory first contracts continuously towards $\x^*$. The first iteration after the end of contraction phase is either marked by expansion or preservation of radial distance, i.e., no expansion or contraction. In both cases the trajectory from here onward expands continuously till it exits $ \mathcal{B}_{\xi}(\x^*)$ where in the latter case it is assumed that the trajectory didn't already contract to $\x^*$. Furthermore expansion happens at an almost geometric rate as evident from part \textbf{(a.)} of the theorem which can be leveraged to obtain linear rate for the expansion phase of trajectories inside  $ \mathcal{B}_{\xi}(\x^*)$.    }

{So far we have been able to develop a machinery that will help us in providing linear rate of expansion inside  $ \mathcal{B}_{\xi}(\x^*)$. It remains to develop a proof technique which can generate linear rates of contraction inside $ \mathcal{B}_{\xi}(\x^*)$. In order to do so we introduce certain terms that are required for better understanding the contraction and expansion dynamics of the trajectory.} In this regard, let $\hat{K}_{exit} $ be the first exit time of the gradient descent trajectory from the ball $\mathcal{B}_{\xi}(\x^*)$, where we assume that the trajectory starts at the boundary of the ball $\mathcal{B}_{\xi}(\x^*)$, i.e., $\x_0 \in \mathcal{\bar{B}}_{\xi}(\x^*)\backslash \mathcal{B}_{\xi}(\x^*) $ and $\xi$ is bounded from Theorem \ref{thm2}. Next, for any $\epsilon < \xi$, let $\bar{\mathcal{B}}_{\xi}(\x^*) \backslash \mathcal{B}_{\epsilon}(\x^*)$ be a compact shell centered at $\x^*$. Let $k=K_c$ be the last iteration for which the gradient trajectory has contractive dynamics inside the shell and $k=K_e$ be the first iteration for which the gradient trajectory has expansive dynamics inside the shell. {Note that $K_c$ and $K_e$ are equal iff either the trajectory starts expanding before reaching the ball $\mathcal{B}_{\epsilon}(\x^*)$ or the trajectory just touches the surface of the ball $\mathcal{B}_{\epsilon}(\x^*)$ and then expands from there onward.}

{The next lemma provides 
{further insights} into the behavior of function sequence $ \{f(\x_k)\}_{k=0}^{K_c} $ associated with iterate sequence $ \{\x_k\}_{k=0}^{K_c}$ where $0 \leq k \leq K_c$ are the iterations with contraction dynamics.}

\begin{lem}\label{polyaklem}
{{On the class of $\mathcal{C}^2$ gradient and Hessian Lipschitz, Morse functions}, the function sequence $ \{f(\x_k)\}_{k=0}^{K_c} $ associated with iterate sequence $ \{\x_k\}_{k=0}^{K_c}$ for $\norm{\x_{K_c} - \x^*} < \frac{3\beta^2}{4ML}$ and $K_c < K_e$ satisfies the Polyak--{\L}ojasiewicz condition \cite{karimi2016linear} where for any $0 \leq k \leq K_c$ we have that:
\begin{align}
    0 < f(\x_k) - f(\x^*) \leq  \frac{L}{2 \beta^2} \norm{\nabla f(\x_k)}^2. \nonumber
\end{align}}
\end{lem}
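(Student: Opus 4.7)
The proof plan combines Taylor expansion around $\x^*$ with the Morse property to derive a Polyak--\L ojasiewicz-type inequality, and uses the contractive dynamics to secure the positivity $f(\x_k) > f(\x^*)$. I will write $\u_k := \x_k - \x^*$; since $\nabla f(\x^*) = \ze$, Taylor's theorem with integral remainder gives
\begin{align*}
f(\x_k) - f(\x^*) = \tfrac{1}{2}\u_k^T \mathbf{\tilde H}_k \u_k, \qquad \nabla f(\x_k) = \mathbf{H}_k^{\mathrm{avg}} \u_k,
\end{align*}
where $\mathbf{\tilde H}_k := 2\int_0^1 (1-s)\nabla^2 f(\x^* + s\u_k)\,ds$ and $\mathbf{H}_k^{\mathrm{avg}} := \int_0^1 \nabla^2 f(\x^* + s\u_k)\,ds$. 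Hessian-Lipschitzness (Assumption \textbf{A3}) will then yield $\norm{\mathbf{\tilde H}_k - \nabla^2 f(\x^*)}_2 \leq M\norm{\u_k}/3$, $\norm{\mathbf{H}_k^{\mathrm{avg}} - \nabla^2 f(\x^*)}_2 \leq M\norm{\u_k}/2$, and $\norm{\mathbf{\tilde H}_k - \mathbf{H}_k^{\mathrm{avg}}}_2 \leq M\norm{\u_k}/4$, so all three Hessians agree with $\nabla^2 f(\x^*)$ up to $\mathcal{O}(M\norm{\u_k})$.

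For the upper bound in the PL inequality I will invoke the descent lemma (Assumption \textbf{A2}) at $\x^*$ to get $f(\x_k) - f(\x^*) \leq \tfrac{L}{2}\norm{\u_k}^2$. For the matching lower bound on the gradient, the key observation is the Morse property (Assumption \textbf{A4}), which forces $\abs{\lambda_i(\nabla^2 f(\x^*))} > \beta$ for every $i$, hence $(\nabla^2 f(\x^*))^2 \succeq \beta^2 \mathbf{I}$. Expanding $\norm{\nabla f(\x_k)}^2 = \u_k^T(\mathbf{H}_k^{\mathrm{avg}})^2\u_k$ around $\nabla^2 f(\x^*)$ with the perturbation bound above and absorbing the $\mathcal{O}(M^2\norm{\u_k}^4)$ tail gives $\norm{\nabla f(\x_k)}^2 \geq \beta^2\norm{\u_k}^2 - LM\norm{\u_k}^3$. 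Under the hypothesis $\norm{\u_k} < \tfrac{3\beta^2}{4ML}$ the cubic correction is strictly dominated, producing $\norm{\nabla f(\x_k)}^2 \geq \beta^2\norm{\u_k}^2$ to leading order; chaining with the descent-lemma bound delivers $f(\x_k) - f(\x^*) \leq \tfrac{L}{2\beta^2}\norm{\nabla f(\x_k)}^2$.

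The strict positivity $f(\x_k) - f(\x^*) > 0$ is where the contractive dynamics are indispensable. Expanding $\norm{\x_{k+1} - \x^*}^2 = \norm{\u_k}^2 - 2\alpha\langle\nabla f(\x_k), \u_k\rangle + \alpha^2 \norm{\nabla f(\x_k)}^2$ and using $\norm{\x_{k+1} - \x^*} \leq \norm{\x_k - \x^*}$ with $\alpha = 1/L$ yields $\u_k^T \mathbf{H}_k^{\mathrm{avg}}\u_k \geq \tfrac{1}{2L}\norm{\nabla f(\x_k)}^2 \geq \tfrac{\beta^2}{2L}\norm{\u_k}^2$, the second inequality re-using the gradient lower bound. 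It will then follow that
\begin{align*}
\u_k^T \mathbf{\tilde H}_k \u_k \;\geq\; \u_k^T \mathbf{H}_k^{\mathrm{avg}}\u_k - \tfrac{M\norm{\u_k}^3}{4} \;\geq\; \tfrac{\beta^2}{2L}\norm{\u_k}^2 - \tfrac{M\norm{\u_k}^3}{4} \;>\; 0,
\end{align*}
where the strict positivity holds whenever $\norm{\u_k} < \tfrac{2\beta^2}{LM}$, a condition implied by the stated norm hypothesis since $3/4 < 2$. The assumption $K_c < K_e$ guarantees that iteration $k$ is genuinely contractive and not merely marginal.

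The main obstacle is that the quadratic form appearing in $f(\x_k) - f(\x^*)$ uses $\mathbf{\tilde H}_k$, while the contraction condition furnishes a quantitative bound only on the \emph{different} quadratic form $\u_k^T \mathbf{H}_k^{\mathrm{avg}}\u_k$; reconciling the mismatch via the Hessian-Lipschitz perturbation bound together with the norm hypothesis to dominate the cubic error is the delicate technical step. A minor bookkeeping concern is that the hypothesis $\norm{\x_{K_c} - \x^*} < \tfrac{3\beta^2}{4ML}$ is stated at the terminal contraction index $K_c$; since $\norm{\u_k}$ is non-increasing on $[0, K_c]$, propagating the bound backward requires imposing the same bound at $\x_0$, or equivalently taking the ambient ball $\mathcal{B}_\xi(\x^*)$ with $\xi \leq \tfrac{3\beta^2}{4ML}$.
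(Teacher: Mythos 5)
Your plan has the right shape---descent lemma for the upper bound, Morse spectral bound for the gradient, contraction dynamics for positivity---but two steps fail as written, and the paper's version of each is where the missing ideas live.

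The gradient lower bound you derive is too weak to yield the stated constant. You invoke the Morse property only at $\x^*$ and then perturb, obtaining $\norm{\nabla f(\x_k)}^2 \geq \beta^2\norm{\u_k}^2 - LM\norm{\u_k}^3$. Under $\norm{\u_k} < \frac{3\beta^2}{4ML}$ the cubic term satisfies $LM\norm{\u_k}^3 < \frac{3\beta^2}{4}\norm{\u_k}^2$, so you can only conclude $\norm{\nabla f(\x_k)}^2 > \frac{\beta^2}{4}\norm{\u_k}^2$: the correction is not negligible ``to leading order,'' it eats up to three quarters of the principal term. Chaining with $f(\x_k) - f(\x^*) \leq \frac{L}{2}\norm{\u_k}^2$ then gives the constant $\frac{2L}{\beta^2}$, a factor of four off. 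The paper avoids this entirely: Assumption~\textbf{A4} asserts $\min_i\abs{\lambda_i(\nabla^2 f(\x))} > \beta$ on an open neighborhood $\mathcal{W}$ of $\x^*$, not merely at $\x^*$. Since $\nabla f(\x_k) = \bigl(\int_0^1 \nabla^2 f(\x^* + s\u_k)\,ds\bigr)\u_k$ and every Hessian in the integrand has all eigenvalues at distance at least $\beta$ from zero, the paper gets the exact inequality $\norm{\nabla f(\x_k)} \geq \beta\norm{\u_k}$ (equation~\eqref{interimbound4}) with no perturbation loss, which is precisely what $\frac{L}{2\beta^2}$ requires.

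Your positivity argument is also structured in a way that does not match the hypothesis. You establish $f(\x_k) - f(\x^*) > 0$ by a per-iteration estimate requiring $\norm{\u_k}$ small at that $k$, and you correctly flag that the hypothesis only controls $\norm{\u_{K_c}}$ while the earlier radial distances on the contraction phase are larger; your proposed fix is to strengthen the hypothesis. But the lemma can be proved as stated. The paper lower bounds $f(\x_{K_c}) - f(\x^*) \geq \frac{\beta^2}{2L}\norm{\u_{K_c}}^2 - \frac{2M}{3}\norm{\u_{K_c}}^3$ using strict contraction at $K_c$ (guaranteed by $K_c < K_e$) together with a third-order Taylor bound; this is positive exactly under $\norm{\u_{K_c}} < \frac{3\beta^2}{4ML}$. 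Positivity for $k < K_c$ then follows not from any per-$k$ estimate but from monotone decrease of $\{f(\x_k)\}$ along gradient descent: $f(\x_k) \geq f(\x_{K_c}) > f(\x^*)$. That backward propagation via monotonicity is the missing idea that keeps the norm hypothesis local to $K_c$.
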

The proof of this lemma is given in Appendix \ref{Appendix C}. {Using this lemma, it can be readily checked that the function sequence $\{f(\x_k)\}_{k=0}^{K_c}$ is strongly monotonic in the contraction phase of the trajectory. Formally, for $0 \leq k \leq K_c$ using Lemma \ref{polyaklem} and the gradient Lipschitz condition we will have the inequality $ f(\x_{k+1}) - f(\x^*)  \leq \bigg( 1 - \frac{\beta^2}{L^2}\bigg) \bigg( f(\x_{k}) - f(\x^*) \bigg)$. Therefore linear rates for the contraction phase of trajectory can be recovered using this result. It should however be noted that the function sequence $\{f(\x_k)\}_{k=K_e}^{\hat{K}_{exit}}$ associated with the expansion phase of the trajectory does not satisfy the Polyak--{\L}ojasiewicz condition from Lemma \ref{polyaklem} and therefore we require Theorem \ref{thm2} to generate linear rates of expansion for the trajectory in its expansion phase (see discussion within the proof of {Lemma} \ref{polyaklem} for details).}

{Before stating the final theorem of this section we introduce the term '\textit{sojourn time}'. It is defined as the time the trajectory spends inside the shell $\bar{\mathcal{B}}_{\xi}(\x^*) \backslash \mathcal{B}_{\epsilon}(\x^*)$ before leaving this region. The sojourn time will be the sum of contraction time (derived using Lemma \ref{polyaklem}) and the expansion time (derived using Theorem \ref{thm1}) for any trajectory inside the shell $\bar{\mathcal{B}}_{\xi}(\x^*) \backslash \mathcal{B}_{\epsilon}(\x^*)$. We are now ready to state the theorem.}

\begin{theorem}\label{thm3}
{The sojourn time $K_{shell}$ for a gradient trajectory inside the compact shell $\bar{\mathcal{B}}_{\xi}(\x^*) \backslash \mathcal{B}_{\epsilon}(\x^*)$ for a strict saddle point $\x^*$ {of any $\mathcal{C}^2$ gradient and Hessian Lipschitz, Morse function} is bounded by
\begin{align}
  \hspace{-0.3cm} K_{shell} \leq    \frac{\log\bigg(\frac{L}{2}\xi^2\bigg) - \log\bigg(\frac{\beta^2}{2L}\epsilon^2 - \frac{2M}{3}\epsilon^3\bigg) }{\log\bigg( 1 - \frac{\beta^2}{L^2}\bigg)^{-1}} + \frac{\log (\xi) - \log (\epsilon)}{\log \bigg(\frac{\inf\{\bar{\rho}(\x_{k-2})\}}{1 + M \xi}\bigg)} + 3, \nonumber
\end{align} where  $K_{shell} = \hat{K}_{exit} + K_c - K_e $ with $K_c \leq \frac{\log\bigg(\frac{L}{2}\xi^2\bigg) - \log\bigg(\frac{\beta^2}{2L}\epsilon^2 - \frac{2M}{3}\epsilon^3\bigg) }{\log\bigg( 1 - \frac{\beta^2}{L^2}\bigg)^{-1}} +1$, $\hat{K}_{exit}  - K_e \leq \frac{\log (\xi) - \log (\epsilon)}{\log \bigg(\frac{\inf\{\bar{\rho}(\x_{k-2})\}}{1 + M \xi}\bigg)} + 2$, and infimum in the term $ \inf\{\bar{\rho}(\x_{k-2})\}$ is {taken} over {the indices} $K_e+2 \leq k \leq \hat{K}_{exit}$. Further, $\hat{K}_{exit} - K_e$ is the time for which the gradient trajectory has expansive dynamics inside the shell $\bar{\mathcal{B}}_{\xi}(\x^*) \backslash \mathcal{B}_{\epsilon}(\x^*)$, and $K_c$ is the time for which the gradient trajectory has contractive dynamics inside the shell $\bar{\mathcal{B}}_{\xi}(\x^*) \backslash \mathcal{B}_{\epsilon}(\x^*)$. Also, $\xi  \leq   \frac{1}{ \varsigma M } \frac{\bigg((1+\frac{\beta}{L})^2+ \frac{1}{4 (1+\frac{\beta}{L})^2}-\frac{5}{4}\bigg)}{6}  $ with $ \varsigma > 2$, $\epsilon < \frac{3\beta^2}{4ML}$ and $\frac{\inf\{\bar{\rho}(\x_{k-2})\}}{1 + M \xi}> 1$.}

{In terms of order notation, $ K_{shell}$ has the following rate:}
\begin{align}
    K_{shell} =  \mathcal{O} \bigg(\log \bigg(\frac{\xi}{\epsilon}\bigg)\bigg) + \mathcal{O} (1),  \label{shellratelaw}
\end{align}
{where $K_c =  \mathcal{O} \bigg(\log \bigg(\frac{\xi}{\epsilon}\bigg)\bigg)$, $ \hat{K}_{exit} - K_e = \mathcal{O} \bigg(\log \bigg(\frac{\xi}{\epsilon}\bigg)\bigg) + \mathcal{O} (1)$.}
\end{theorem}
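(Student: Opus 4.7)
The plan is to split the sojourn time $K_{shell}$ into two contributions governed by two entirely different tools: a contractive portion bounded by the PL-type argument of Lemma \ref{polyaklem}, and an expansive portion bounded by the quasi-geometric growth estimate of Theorem \ref{thm2}\textbf{(a)}. Sequential monotonicity guarantees that these are the only two regimes inside $\mathcal{B}_\xi(\x^*)$, so that $K_{shell} = \hat{K}_{exit} + K_c - K_e$ decomposes cleanly into the contractive duration $K_c$, the expansive duration $\hat{K}_{exit} - K_e$, and a small additive constant coming from book-keeping at the boundaries of the two phases.

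For the contractive portion, I would combine the PL inequality $f(\x_k)-f(\x^*) \leq (L/(2\beta^2))\|\nabla f(\x_k)\|^2$ from Lemma \ref{polyaklem} with the descent inequality $f(\x_{k+1}) \leq f(\x_k) - (1/(2L))\|\nabla f(\x_k)\|^2$ (which follows from Assumption \textbf{A2} at step-size $\alpha = 1/L$) to obtain the per-step linear decay $f(\x_{k+1})-f(\x^*) \leq (1-\beta^2/L^2)(f(\x_k)-f(\x^*))$ for every $0 \leq k \leq K_c$. Chaining and then inverting for $K_c$ requires an upper bound on $f(\x_0)-f(\x^*)$ and a lower bound on $f(\x_{K_c})-f(\x^*)$. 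The upper bound $f(\x_0)-f(\x^*) \leq (L/2)\xi^2$ comes directly from the gradient-Lipschitz descent lemma applied between $\x^*$ and $\x_0$, using $\nabla f(\x^*) = \mathbf{0}$ and $\|\x_0-\x^*\| \leq \xi$. The lower bound $f(\x_{K_c}) - f(\x^*) \geq (\beta^2/(2L))\epsilon^2 - (2M/3)\epsilon^3$ is obtained by bounding $\|\nabla f(\x_{K_c})\| \geq \beta \epsilon - \mathcal{O}(M\epsilon^2)$ via Taylor expansion of $\nabla f$ around $\x^*$ (using Assumption \textbf{A4} to extract the factor $\beta$ and Assumption \textbf{A3} for the cubic remainder) and then converting the gradient-norm bound into a function-value bound using the trajectory-specific structure of the contractive phase together with the positivity of $f(\x_{K_c})-f(\x^*)$ guaranteed by Lemma \ref{polyaklem}. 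Solving yields the first term of the stated inequality.

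For the expansive portion, I apply Theorem \ref{thm2}\textbf{(a)} to the consecutive triple $(\x_{k-2},\x_{k-1},\x_k)$, which gives $\|\x_k-\x^*\| \geq \bar{\rho}(\x_{k-2})\|\x_{k-1}-\x^*\| - \sigma(\x_{k-2})$ for every $k \geq K_e + 2$, with $\sigma(\x_{k-2}) = \mathcal{O}(\|\x_{k-2}-\x^*\|^2) \leq M\xi \|\x_{k-2}-\x^*\|$ since $\|\x_{k-2}-\x^*\| \leq \xi$. Monotone expansion gives $\|\x_{k-2}-\x^*\| \leq \|\x_k-\x^*\|$, which lets me transpose the $\sigma$-term to the left-hand side and read off the effective rate $(1+M\xi)\|\x_k-\x^*\| \geq \bar{\rho}(\x_{k-2})\|\x_{k-1}-\x^*\|$. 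By hypothesis $\inf\{\bar{\rho}(\x_{k-2})\}/(1+M\xi) > 1$, so iterating this inequality from $k = K_e+2$ up to the iterate just before exit (which still satisfies distance $\leq \xi$), seeded with $\|\x_{K_e+1}-\x^*\|\geq \|\x_{K_e}-\x^*\|\geq \epsilon$, and inverting gives $\hat{K}_{exit}-K_e \leq \log(\xi/\epsilon)/\log(\inf\{\bar{\rho}(\x_{k-2})\}/(1+M\xi)) + 2$. Summing with the contractive bound and collecting constants yields the explicit bound on $K_{shell}$; the order-notation rate $\mathcal{O}(\log(\xi/\epsilon)) + \mathcal{O}(1)$ follows because both numerators are $\Theta(\log(\xi/\epsilon))$ while the denominators are strictly positive problem-dependent constants independent of $\epsilon$.

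The main technical obstacle is the lower bound on $f(\x_{K_c})-f(\x^*)$. The PL inequality only bounds suboptimality from above in terms of the gradient, and at a saddle the suboptimality can \emph{a priori} have either sign, so the reverse-direction estimate cannot be read off from Lemma \ref{polyaklem} alone and must be extracted from the trajectory-specific structure in the contractive regime, combined with careful Taylor bookkeeping that exploits the strict-saddle gap $\beta$ and the Hessian Lipschitz constant $M$. Once this estimate is in hand, the remainder of the proof amounts to chaining two linear recursions and absorbing the indexing offsets into an additive constant of order $3$.
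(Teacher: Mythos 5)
Your high-level decomposition is exactly the paper's: bound the contraction phase by chaining the PL inequality from Lemma~\ref{polyaklem} with the descent lemma to get the linear recursion $f(\x_{k+1})-f(\x^*)\leq(1-\beta^2/L^2)(f(\x_k)-f(\x^*))$; bound the expansion phase by iterating Theorem~\ref{thm2}\textbf{(a)}. The expansion part of your write-up matches the paper step for step (transposing $\sigma(\x_{k-2})$, using monotone expansion to replace $\|\x_{k-2}-\x^*\|$ by $\|\x_k-\x^*\|$, reading off the effective ratio $\bar{\rho}(\x_{k-2})/(1+M\xi)$, seeding with $\|\x_{K_e+1}-\x^*\|\geq\epsilon$).

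There is, however, a genuine gap in your contractive portion. The crucial lower bound $f(\x_{K_c})-f(\x^*)\geq\tfrac{\beta^2}{2L}\epsilon^2-\tfrac{2M}{3}\epsilon^3$ is not actually derived. Your proposed route --- first bound $\|\nabla f(\x_{K_c})\|$ from below via Taylor, then ``convert the gradient-norm bound into a function-value bound'' --- cannot work as stated: near a strict saddle, a large gradient norm says nothing about the sign or size of $f(\x_{K_c})-f(\x^*)$, because the quadratic form $\tfrac12\langle\x_{K_c}-\x^*,\nabla^2 f(\x^*)(\x_{K_c}-\x^*)\rangle$ in the Taylor expansion can be strongly negative, and the positivity assertion in Lemma~\ref{polyaklem} is qualitative, not quantitative. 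The specific lever the paper uses (inside the proof of Lemma~\ref{polyaklem}) is that contraction at $K_c$, i.e.\ $\|\x_{K_c+1}-\x^*\|<\|\x_{K_c}-\x^*\|$, expands algebraically to the inner-product inequality $\langle\x_{K_c}-\x^*,\nabla f(\x_{K_c})\rangle>\tfrac{\alpha}{2}\|\nabla f(\x_{K_c})\|^2$; a Hessian-Lipschitz linearization of $\nabla f(\x_{K_c})$ then turns this into a \emph{lower} bound on the (possibly negative) quadratic form $\langle\x_{K_c}-\x^*,\nabla^2 f(\x^*)(\x_{K_c}-\x^*)\rangle$, which, substituted into the third-order Taylor expansion of $f$ about $\x^*$ and followed by $\|\nabla f(\x_{K_c})\|\geq\beta\|\x_{K_c}-\x^*\|\geq\beta\epsilon$, produces the stated quantitative lower bound. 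You correctly label this as ``the main technical obstacle'' and gesture at ``trajectory-specific structure,'' but the proposal never identifies the inner-product inequality that is doing the work, and without it the contraction rate does not close. (A minor additional omission: the paper separately handles the case $K_c=K_e$, when the trajectory never enters $\mathcal{B}_{\epsilon}(\x^*)$ and Lemma~\ref{polyaklem} does not apply directly; that case is what costs the $+1$ in the $K_c$ bound, which your ``additive constant of order $3$'' absorbs without argument.)
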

The proof of this theorem is given in Appendix \ref{Appendix CC}.

{Theorem \ref{thm3} provides an upper bound on the travel time of the trajectory inside the shell $\bar{\mathcal{B}}_{\xi}(\x^*) \backslash \mathcal{B}_{\epsilon}(\x^*)$. The upper bound is linear since it is the sum of rates in the contraction and expansion phase of the trajectory and both these rates are linear by virtue of Lemma \ref{polyaklem} and Theorem \ref{thm2} respectively. In contrast to the linear exit time bound \eqref{linearexittimebound} which only holds for very small values of $\epsilon$ from Theorem \ref{thm1}, this rate holds for much bigger $\xi$ neighborhoods and at the same time does not require the function to be analytic. The power of Theorem \ref{thm3} will become more apparent once we develop a {fast} algorithm
for escaping strict saddle points of Morse functions. 
This theorem will facilitate in keeping the algorithm very close to the gradient descent method since it proves that any escaping gradient descent trajectory from some small ball $ \mathcal{B}_{\epsilon}(\x^*)$ will leave a larger ball $ \mathcal{B}_{\xi}(\x^*)$ at a linear rate irrespective of its exit point  on $ \mathcal{B}_{\epsilon}(\x^*)$. Hence any algorithm, which exits some small ball $ \mathcal{B}_{\epsilon}(\x^*)$ using the gradient descent update, can keep on performing gradient descent updates so as to have linear rate of escape from a larger ball $ \mathcal{B}_{\xi}(\x^*)$. }

\section{Additional Lemmas}
We now discuss some additional yet important lemmas instrumental in analysing the gradient trajectory/approximate trajectory behavior in saddle neighborhoods of any strict saddle point $\x^*$. Also, in the remainder of this section, we do not consider the effects of {first-order perturbations}, i.e., $\mathcal{O}(\epsilon)$ terms, in the Hessian (see \cite[Lemma~3.3]{dixit2022exit}) since we no longer quantify the exit times / boundary conditions and {are only interested in approximate trajectory behavior.} Hence most of the {results in this section are qualitative.} {Assumptions \textbf{A2-A4} hold for all the lemmas in this section where Lemmas \ref{lemma1}, \ref{lemma2} use the extra assumption of local analyticity around the strict saddle point.} The proofs of the lemmas in this section are given in Appendix \ref{Appendix D}.

\begin{lem}\label{lemma1}
The gradient trajectories $\{{\u}_K\}_{K=0}^{K_{exit}}$  inside the ball $\mathcal{B}_{\epsilon}(\x^*)$ with linear exit time and satisfying the initial condition $\sqrt{\sum_{j \in \mathcal{N}_{US} } ({\theta}^{us}_{j})^2}  > \mathcal{O}\bigg(\frac{1}{\sqrt{\epsilon}}\bigg(\log\bigg(\frac{1}{\epsilon} \bigg)\epsilon\bigg)  \bigg) $ approximately exhibit hyperbolic behavior {in the sense} 
that they first move exponentially fast towards the saddle point $\x^*$, reach some point of minimum distance from $\x^*$, denoted by $\x_{critical}$, and then move exponentially fast away from $\x^*$ for some iterations so as to escape the saddle region. For the case when $\x_{critical} \to \x^*$, their first-order approximation or the $\epsilon$--precision trajectories can take very large time to exit the ball $\mathcal{B}_{\epsilon}(\x^*)$, i.e., $K^{\iota} \to \infty$ where $K^{\iota}$ is defined in \eqref{kiota}. When $\x_{critical} = \x^*$, we have  $K_{exit} = K^{\iota} = \infty$, which implies that the $\epsilon$--precision trajectories and hence the gradient trajectory can never escape the saddle region.
\end{lem}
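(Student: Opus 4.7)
The plan is to work directly with the $\epsilon$-precision trajectory representation from the preface and decompose it along the invariant subspaces of $\nabla^2 f(\x^*)$. Recall from the preface that $\A_k$ is simultaneously diagonalizable with $\nabla^2 f(\x^*)$, so the dominant (zeroth-order in $\epsilon$) part of the linear dynamics $\prod_k \A_k \u_0$ preserves the splitting $\mathcal{E} = \mathcal{E}_S \oplus \mathcal{E}_{US}$. Using $\alpha = 1/L$ and the bounds $|c_i^s(k)| \leq 1 - \beta/L + \epsilon M/(2L) < 1$ and $c_j^{us}(k) \geq 1 + \beta/L - \epsilon M/(2L) > 1$ already established in \cite{dixit2022exit}, I would write $\tilde{\u}_K = \tilde{\u}_K^s + \tilde{\u}_K^{us}$ with
\begin{align*}
\|\tilde{\u}_K^s\|^2 \;\leq\; \epsilon^2\Bigl(1-\tfrac{\beta}{L}+\tfrac{\epsilon M}{2L}\Bigr)^{2K}\!\!\sum_{i \in \mathcal{N}_S}(\theta_i^s)^2,\qquad \|\tilde{\u}_K^{us}\|^2 \;\geq\; \epsilon^2\Bigl(1+\tfrac{\beta}{L}-\tfrac{\epsilon M}{2L}\Bigr)^{2K}\!\!\sum_{j \in \mathcal{N}_{US}}(\theta_j^{us})^2,
\end{align*}
with the mixing contributions from the $\epsilon\P_k$ perturbation absorbed into an additive $\mathcal{O}$-term whose magnitude has already been shown in Section~\ref{sectionrelativeerror} to be negligible relative to $\|\tilde{\u}_K\|$ under the hypothesized lower bound on the unstable projection.

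Next I would treat $\|\tilde{\u}_K\|^2$ as essentially $A\rho^{2K}+B\sigma^{2K}$ with $\rho<1<\sigma$, where $A = \epsilon^2\sum_i(\theta_i^s)^2$ and $B=\epsilon^2\sum_j(\theta_j^{us})^2$. This expression is strictly convex in $K$ (extended to a real variable), is decreasing where $A\rho^{2K}|\ln\rho|>B\sigma^{2K}\ln\sigma$ and increasing where the inequality flips, and attains a unique minimum at
\begin{align*}
K_{critical}\;=\;\frac{\ln\bigl(A|\ln\rho|/(B\ln\sigma)\bigr)}{\ln(\sigma/\rho)}.
\end{align*}
This precisely captures the hyperbolic picture — the trajectory contracts geometrically while the stable component dominates, hits the closest-approach point $\x_{critical}$ near the level set on which the two energies balance, and then expands geometrically once the unstable component takes over. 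The logarithms in $\rho,\sigma$ contribute the $\beta/L + \mathcal{O}(\epsilon)$ rate constants, which gives the claimed exponential speed of approach and departure.

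For the limiting claim that $K^\iota\to\infty$ as $\x_{critical}\to\x^*$, I would observe that $\x_{critical}\to\x^*$ forces $\|\tilde{\u}_{K_{critical}}\|\to 0$, which in the $A\rho^{2K}+B\sigma^{2K}$ model occurs exactly when $B\to 0$. Substituting the formula for $K_{critical}$ into the definition of $K^\iota$ in \eqref{kiota}, $K^\iota$ is the first $K$ for which $A\rho^{2K}+B\sigma^{2K}>\epsilon^2$, and as $B\to 0^+$ this threshold-crossing time diverges logarithmically, $K^\iota=\Theta\bigl(\log(1/B)/\ln\sigma\bigr)\to\infty$. For the degenerate case $\x_{critical}=\x^*$, i.e.\ $\sum_j(\theta_j^{us})^2=0$, the initial vector lies in $\mathcal{E}_S$; since $\prod_k\A_k$ leaves $\mathcal{E}_S$ invariant and every factor has spectral radius on $\mathcal{E}_S$ strictly less than one, $\|\tilde{\u}_K\|\to 0$ monotonically and the ball is never exited, giving $K_{exit}=K^\iota=\infty$.

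The main obstacle I anticipate is making the word ``approximately'' in the lemma statement precise: the $\epsilon\P_k$ perturbation injects cross-terms between $\mathcal{E}_S$ and $\mathcal{E}_{US}$, so the clean two-term decomposition is only approximate. I would handle this by using the relative-error bound \eqref{relativeerrorprojectionbound} established in Section~\ref{sectionrelativeerror}, which is exactly why the hypothesis $\sqrt{\sum_j(\theta_j^{us})^2}>\mathcal{O}\bigl(\sqrt{\epsilon}\log(1/\epsilon)\bigr)$ is imposed: it guarantees that the unstable component, once it overtakes the stable component, dominates the perturbation error by a factor that vanishes as $\epsilon\to 0$, so all qualitative statements — geometric contraction, geometric expansion, existence and uniqueness of $\x_{critical}$, and divergence of $K^\iota$ as $\x_{critical}\to\x^*$ — survive the first-order approximation up to a vanishing relative error.
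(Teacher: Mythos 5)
Your proof follows essentially the same strategy as the paper's: both reduce the dynamics to the zeroth-order approximation $\|\u_K\| \approx \|(\mathbf{I}-\alpha\nabla^2 f(\x^*))^K\u_0\|$ using the relative-error control established under the hypothesized lower bound on the unstable projection, and then read off the hyperbolic (two-exponential) picture from the eigendecomposition of $\nabla^2 f(\x^*)$. Your version is somewhat more explicit: you actually solve for $K_{critical}$ in the $A\rho^{2K}+B\sigma^{2K}$ model and trace the $B\to 0^+$ limit to get $K^\iota = \Theta(\log(1/B)/\log\sigma)\to\infty$, whereas the paper argues by appealing to the exit-time bound from a shrinking ball $\mathcal{B}_{\epsilon'}(\x^*)$ with $\epsilon'=\|\x_{critical}-\x^*\|\to 0$. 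Both are correct, but your direct estimate is arguably cleaner: it shows the exit time itself diverges rather than only exhibiting a divergent upper bound.

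One point where you diverge in substance is the terminal case $\x_{critical}=\x^*$. The paper reads this as the trajectory \emph{actually reaching} the fixed point in finite time, i.e.\ $\u_K=\mathbf{0}$ for some $K$, at which point $\nabla f(\x_K)=\mathbf{0}$ and the iterate is stuck forever — giving $K_{exit}=K^\iota=\infty$ trivially from the fixed-point property of the exact gradient map. You instead read it as $B=0$ (initial vector in $\mathcal{E}_S$) and argue contraction to $\x^*$ without ever reaching it in finite time. Both readings give $K_{exit}=\infty$, but they are describing different degenerate situations, and only the paper's reading makes the phrase ``$\x_{critical}$ is attained'' literally true. For the $B=0$ scenario you also need to be a little careful: the $\epsilon$-precision approximation ceases to be justified once the unstable component drops below the perturbation floor, so the monotone contraction of $\tilde{\u}_K$ does not rigorously carry over to the exact $\u_K$ over an infinite horizon; the paper's fixed-point argument side-steps this, which is why it is stated in terms of the exact trajectory rather than the first-order model. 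The conclusion is the same, so this is a difference in route, not a gap, but it is worth being aware that the paper's case $\x_{critical}=\x^*$ is an exact statement about the gradient map, not a statement within the linearized model.
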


\begin{lem}\label{lemma2}
In the ball $\mathcal{B}_{\epsilon}(\x^*)$, gradient descent trajectories with linear exit time and satisfying the initial condition $\sqrt{\sum_{j \in \mathcal{N}_{US} } ({\theta}^{us}_{j})^2}  > \mathcal{O}\bigg(\frac{1}{\sqrt{\epsilon}}\bigg(\log\bigg(\frac{1}{\epsilon} \bigg)\epsilon\bigg)  \bigg) $ approximately{\footnote{{When we say this condition holds approximately, we mean that it holds for a first-order approximation of the gradient descent trajectory (see the proof of Lemma \ref{lemma2} for further details).}}} never curve around the stationary point $\x^*$. Moreover, all the linear exit time gradient descent trajectories lie approximately inside some orthant of the ball $\mathcal{B}_{\epsilon}(\x^*)$, i.e., the entry and exit point approximately subtend an angle less than or equal to $\frac{\pi}{2}$ at the point $\x^*$.
\end{lem}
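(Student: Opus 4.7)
The plan is to run the entire argument on the first-order approximation $\{\tilde{\u}_K\}$ rather than the true trajectory $\{\u_K\}$, since Section~\ref{sectionrelativeerror} establishes that the relative error $\norm{\u_K-\tilde{\u}_K}/\norm{\u_K}$ vanishes as $\epsilon\to 0$ precisely when the initial-projection hypothesis of the lemma holds. Every qualitative geometric statement established for the $\epsilon$--precision trajectory then transfers to the actual gradient trajectory, matching the ``approximately'' qualifier of the claim. Expanding $\tilde{\u}_K$ in the eigenbasis of $\nabla^{2}f(\x^*)$ via the closed form of $\A_k$ from \cite[Lemma~3.4]{dixit2022exit}, the coefficient of $\tilde{\u}_K$ along a stable eigenvector $\v_i$ equals $\epsilon\theta_i^s\prod_{k=0}^{K-1}c_i^s(k)$ and along an unstable eigenvector $\v_j$ equals $\epsilon\theta_j^{us}\prod_{k=0}^{K-1}c_j^{us}(k)$, where $|c_i^s(k)|\leq 1-\beta/L+\epsilon M/(2L)<1$ (possibly of either sign but with magnitude at most $\mathcal{O}(\epsilon)$ on the negative side) and $c_j^{us}(k)\geq 1+\beta/L-\epsilon M/(2L)>1$ is strictly positive.

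For the orthant claim, positivity of every $c_j^{us}(k)$ implies that the sign of the coefficient along each $\v_j$, $j\in\mathcal{N}_{US}$, is preserved across iterations, so the unstable part of $\tilde{\u}_K$ lies in the orthant fixed by the sign pattern of $\{\theta_j^{us}\}_{j\in\mathcal{N}_{US}}$. The squared magnitudes of the stable coefficients decay at least like $(1-\beta/L+\epsilon M/(2L))^{2K}\epsilon^2(\theta_i^s)^2$, so combined with the lower bound $\sum_{j\in\mathcal{N}_{US}}(\theta_j^{us})^2\gtrapprox \mathcal{O}(1/\log(\epsilon^{-1}))$ inherited from Theorem~\ref{thm1}, the stable coordinates become dominated by the unstable ones well within the linear exit window, giving the orthant statement up to a vanishing perturbation. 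For the non-curving/angle statement, I would decompose $\langle\tilde{\u}_K,\tilde{\u}_{K+1}\rangle$ into its stable and unstable contributions in the same eigenbasis: the unstable contribution is strictly positive and grows like $(1+\beta/L-\epsilon M/(2L))^{2K+1}\epsilon^2\sum(\theta_j^{us})^2$, while the worst-case negative stable contribution is bounded by $(1-\beta/L+\epsilon M/(2L))^{2K+1}\epsilon^2\sum(\theta_i^s)^2$; the projection hypothesis forces the first to dominate throughout $0\leq K\leq K_{exit}$, so consecutive iterates subtend a non-obtuse angle at $\x^*$ and the trajectory does not curve around $\x^*$. Running the identical computation on the pair $(\u_0,\tilde{\u}_{K_{exit}})$ and substituting~\eqref{linearexittimebound} for $K_{exit}$ then yields $\langle\u_0,\tilde{\u}_{K_{exit}}\rangle\gtrapprox 0$, i.e., the chord joining the entry and exit points subtends an angle of at most $\pi/2$ at $\x^*$.

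The main obstacle will be ensuring that the unstable-over-stable dominance holds \emph{uniformly} in $K$ throughout the entire exit window rather than merely in the limit $K\to\infty$, since transient sign alternations of $c_i^s(k)$ can sum constructively on the stable side. Controlling this requires aligning three ingredients already available in the paper: the projection lower bound from Theorem~\ref{thm1}, the exit-time upper bound~\eqref{linearexittimebound}, and the relative-error estimate~\eqref{relativeerrorprojectionbound}. Once they are combined, every residual error term scales as a power of $\epsilon$ or $1/\log(\epsilon^{-1})$ and vanishes as $\epsilon\to 0$, which is exactly the scope implied by the ``approximately'' qualifier of the lemma.
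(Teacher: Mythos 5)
Your scaffolding---pass to the $\epsilon$-precision (zeroth-order) approximation, note that the relative error vanishes under the projection hypothesis, and work in the eigenbasis of $\nabla^2 f(\x^*)$---matches the paper's setup. But your mechanism for showing the relevant inner products are nonnegative is genuinely different from the paper's, and as written it has a gap.

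You propose to split $\langle\tilde{\u}_{K_1},\tilde{\u}_{K_2}\rangle$ into a stable and an unstable piece and argue that the unstable piece (positive) dominates a ``worst-case negative'' stable piece, invoking the projection hypothesis. There are two problems. First, the only lower bound you have is $\sum_j(\theta_j^{us})^2\gtrsim 1/\log(\epsilon^{-1})$, which vanishes as $\epsilon\to 0$, whereas the stable projection $\sum_i(\theta_i^{s})^2$ can be as large as $1-\mathcal{O}(1/\log(\epsilon^{-1}))$. At $K_1=K_2=0$ the unstable term is $\Omega(1/\log(\epsilon^{-1}))$ while your claimed stable bound is $\Theta(1)$, so the dominance you need fails precisely at the entry point, which is where the orthant statement must begin. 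Second, and more fundamentally, the negative stable contribution you fear does not occur at the magnitude you assign it: in the zeroth-order approximation, the stable eigenvalues of $\mathbf{I}-\alpha\nabla^2 f(\x^*)$ are $1-\alpha\lambda_i^{s}\in[0,\,1-\beta/L]$ for $\alpha=1/L$, hence nonnegative, so every stable term in the eigen-expansion is itself nonnegative. Negative $c_i^{s}(k)$ can arise only from the $\mathcal{O}(\epsilon)$ perturbation and is therefore of size at most $\mathcal{O}(\epsilon^{K_1+K_2})$, already buried in the ``approximately'' error budget.

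The paper's proof avoids the comparison entirely with a one-line structural fact: for $\alpha\leq 1/L$ the matrix $\mathbf{I}-\alpha\nabla^2 f(\x^*)$ has spectrum in $[0,2]$ and so is positive semi-definite, hence so is $(\mathbf{I}-\alpha\nabla^2 f(\x^*))^{K_1+K_2}$, and therefore
\begin{align}
\langle\u_{K_1},\u_{K_2}\rangle\approx\u_0^{T}(\mathbf{I}-\alpha\nabla^2 f(\x^*))^{K_1+K_2}\u_0\geq 0
\end{align}
holds with every term in the eigenbasis expansion nonnegative on its own, uniformly in $0\leq K_1+K_2\leq K_{exit}$. The projection hypothesis is used only to control the relative error of the approximation (via \eqref{relativeerrorprojectionbound} and \eqref{relerrornew}), not to establish the sign of the inner product. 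You should replace your dominance argument with this PSD observation; your framing as a stable-versus-unstable race introduces a dependence on the size of the unstable projection that the result simply does not have.
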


\begin{lem}\label{lemma3}
The function value at the exit point on the ball $\mathcal{B}_{\epsilon}(\x^*)$ for any gradient descent trajectory is strictly less than $f(\x^*)$ provided $\epsilon$ is sufficiently small.
\end{lem}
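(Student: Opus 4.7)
The plan is to prove $f(\x_{K_{exit}}) < f(\x^*)$ by Taylor-expanding $f$ around $\x^*$ at the exit point and showing the resulting quadratic form is negative of order $\Omega(\epsilon^2)$, which dominates the $O(\epsilon^3)$ cubic remainder afforded by Assumption~\textbf{A3}. Write $K := K_{exit}$, $\u_K := \x_K - \x^*$, and $H := \nabla^2 f(\x^*)$. Since $\|\u_K\| \leq \|\u_{K-1}\| + \tfrac{1}{L}\|\nabla f(\x_{K-1})\| \leq 2\epsilon$, Taylor's theorem gives
\[
f(\x_K) - f(\x^*) = \tfrac{1}{2}\u_K^{T} H \u_K + R(\u_K), \qquad |R(\u_K)| \leq \tfrac{M}{6}\|\u_K\|^3 = O(\epsilon^3),
\]
so the task reduces to lower-bounding $-\u_K^{T} H \u_K$ by $\Omega(\epsilon^2)$.

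The key geometric input is that the final step must be strictly expansive: $\|\u_K\|^2 > \|\u_{K-1}\|^2$ (since $\|\u_K\|^2 > \epsilon^2 \geq \|\u_{K-1}\|^2$). Plugging the gradient update with $\alpha = 1/L$ and the Taylor approximation $\nabla f(\x_{K-1}) = H\u_{K-1} + O(\epsilon^2)$ into this inequality gives
\[
\tfrac{1}{L}\u_{K-1}^{T} H^2 \u_{K-1} > 2 \u_{K-1}^{T} H \u_{K-1} + O(\epsilon^3),
\]
which in the eigenbasis of $H$ (with stable eigenvalues $\lambda_i^{s} \geq \beta$ and unstable eigenvalues $-|\lambda_j^{us}|$, $|\lambda_j^{us}| \geq \beta$, by Assumption~\textbf{A4}) decouples into
\[
\sum_{j \in \mathcal{N}_{US}} |\lambda_j^{us}|\bigl(2 + |\lambda_j^{us}|/L\bigr)(u_{K-1})_j^2 > \sum_{i \in \mathcal{N}_S} \lambda_i^{s}\bigl(2 - \lambda_i^{s}/L\bigr)(u_{K-1})_i^2 + O(\epsilon^3).
\]
Writing the same step as $\u_K = (\I - \tfrac{1}{L}H)\u_{K-1} + O(\epsilon^2)$ and expanding in the same basis yields
\[
-\u_K^{T} H \u_K = \sum_{j \in \mathcal{N}_{US}} |\lambda_j^{us}|\bigl(1 + |\lambda_j^{us}|/L\bigr)^2 (u_{K-1})_j^2 - \sum_{i \in \mathcal{N}_S}\lambda_i^{s}\bigl(1 - \lambda_i^{s}/L\bigr)^2 (u_{K-1})_i^2 + O(\epsilon^3).
\]

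The crux of the argument is the purely algebraic inequality
\[
\min_{\lambda \in [\beta, L]} \frac{(1 + \lambda/L)^2}{2 + \lambda/L} > \max_{\lambda \in [\beta, L]} \frac{(1 - \lambda/L)^2}{2 - \lambda/L},
\]
whose positive gap $\gamma > 0$ is made explicit by the identity $(1+\beta/L)^2(2-\beta/L) - (1-\beta/L)^2(2+\beta/L) = 2(\beta/L)\bigl(3 - (\beta/L)^2\bigr)$. Applying this ratio inequality term by term (factoring each unstable summand through $A_j := |\lambda_j^{us}|(2 + |\lambda_j^{us}|/L)$ and each stable one through $B_i := \lambda_i^{s}(2 - \lambda_i^{s}/L)$) and combining with the expansion inequality $\sum_j A_j (u_{K-1})_j^2 > \sum_i B_i (u_{K-1})_i^2$ produces $-\u_K^{T} H \u_K \geq \gamma \sum_j A_j (u_{K-1})_j^2 \geq 2\beta\gamma\|\u_{K-1}^{us}\|^2$. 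To convert this into an $\Omega(\epsilon^2)$ bound I would use that the stable subspace is a strict contraction, so $\|\u_K^{s}\|^2 \leq (1-\beta/L)^2\epsilon^2 + O(\epsilon^3)$, and hence $\|\u_K^{us}\|^2 \geq \|\u_K\|^2 - \|\u_K^s\|^2 = \Omega(\epsilon^2\beta/L)$; the coarse expansion bound $\|\u_K^{us}\|^2 \leq 4\|\u_{K-1}^{us}\|^2$ then gives $\|\u_{K-1}^{us}\|^2 = \Omega(\epsilon^2\beta/L)$, whence $-\u_K^{T} H \u_K = \Omega(\epsilon^2 \beta^2\gamma/L)$ and $f(\x_K) < f(\x^*)$ for all $\epsilon$ below an explicit threshold depending only on $\beta, L, M$.

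The main obstacle I anticipate is careful bookkeeping of the $O(\epsilon)$ Hessian perturbations propagated through the two displayed identities, confirming that they enter only as lower-order corrections and do not spoil the strict algebraic gap once $\epsilon$ is sufficiently small. A secondary subtlety is the borderline edge case when $\x_0 \in \partial\mathcal{B}_\epsilon(\x^*)$ and $K_{exit} = 1$, where the expansion inequality becomes non-strict; the same calculation still applies verbatim as long as $\|\u_0^{us}\| > 0$, and trajectories initialized purely in the stable subspace never exit the ball, so are vacuously covered by the statement.
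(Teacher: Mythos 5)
Your proposal is mathematically correct, but it takes a genuinely different (and substantially heavier) route than the paper. The paper's proof works from the last iterate \emph{inside} the ball, call it $\x_K$, with $\x_{K+1}$ the exit point. The key observation there is the chain: Taylor at $\x^*$ gives $f(\x_K) - f(\x^*) = \frac{1}{2}\langle\u_K, H\u_K\rangle + \mathcal{O}(\epsilon^3) = \frac{1}{2}\langle\u_K, \nabla f(\x_K)\rangle + \mathcal{O}(\epsilon^3)$; the expansion inequality $\|\u_{K+1}\| > \|\u_K\|$ expanded as a squared norm immediately gives $\langle\u_K, \nabla f(\x_K)\rangle < \frac{\alpha}{2}\|\nabla f(\x_K)\|^2$; and the gradient-Lipschitz descent lemma contributes a separate drop $f(\x_{K+1}) \leq f(\x_K) - \frac{1}{2L}\|\nabla f(\x_K)\|^2$. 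Combining, $f(\x_{K+1}) \leq f(\x^*) - \frac{1}{4L}\|\nabla f(\x_K)\|^2 + \mathcal{O}(\epsilon^3)$, and the only spectral fact needed is $\|\nabla f(\x_K)\| \geq \beta\|\u_K\| \geq \beta\epsilon/2$. Your argument instead works directly at the exit iterate, decomposes everything in the Hessian eigenbasis, and establishes a strict gap between the ratios $(1+\lambda/L)^2/(2+\lambda/L)$ and $(1-\lambda/L)^2/(2-\lambda/L)$, plus subspace contraction/expansion bounds to show the unstable projection at the pre-exit step is $\Omega(\epsilon^2\beta/L)$. Both are sound and use the same three ingredients (Taylor expansion, strict-saddle bound $\beta$, strict expansion at the exit step), but the paper's coupling of the expansion inner-product bound with the one-step descent lemma collapses the eigenbasis bookkeeping entirely; your version trades that compactness for an explicit spectral description of the gap, which may be of independent interest but is more than the lemma requires. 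One small correction: the edge case you flag at the end is not quite right as stated --- the lemma is about \emph{any} gradient descent trajectory, and a trajectory initialized purely on the stable manifold never exits, so the claim is vacuous there; but the degenerate $K_{exit}=1$ case with non-strict expansion at $k=0$ is still handled by the paper's version since it only needs the strict inequality $\|\u_{K+1}\| > \|\u_K\|$ at the actual crossing step, which always holds by the definition of exit time.
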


\begin{lem}\label{lemma4}
For any $\epsilon \ll 2^{-\frac{2}{\kappa^2}} $ where $\kappa = \frac{\beta}{L}$, a gradient trajectory having exited the ball $\mathcal{B}_{{\epsilon}}(\x^*)$ can never re-enter this ball.
\end{lem}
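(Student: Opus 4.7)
The plan is a proof by contradiction that combines Lemma~\ref{lemma3} with the descent property of gradient descent and the expansion dynamics of Theorems~\ref{thm2}--\ref{thm3}. The core observation is that Lemma~\ref{lemma3} gives only $f(\x_K)<f(\x^*)$ at the exit iterate $\x_K$, while any candidate re-entry iterate $\x_J\in\mathcal{B}_\epsilon(\x^*)$ satisfies $f(\x_J)\geq f(\x^*)-\frac{L}{2}\epsilon^2-\frac{M}{6}\epsilon^3$ from the second-order Taylor expansion of $f$ around $\x^*$ (with Hessian-Lipschitz remainder given by Assumption~\textbf{A3}). Thus the immediate exit-value bound is not by itself quantitatively strong enough to beat the well-depth $\frac{L}{2}\epsilon^2$ when $\beta<L$, and one must accumulate additional descent along the post-exit expansion phase.

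Supposing for contradiction that re-entry occurs at iteration $J>K$, I would sum the gradient-Lipschitz descent inequality $f(\x_{k+1})\leq f(\x_k)-\frac{1}{2L}\|\nabla f(\x_k)\|^2$ along $k=K,\dots,J-1$. Immediately after exit, the trajectory is in the expansion phase of Theorem~\ref{thm2} (since $\|\x_K-\x^*\|>\|\x_{K-1}-\x^*\|$ at the exit step), and the sequential-monotonicity property rules out re-entry while the iterates remain in the enlarged ball $\mathcal{B}_\xi(\x^*)$. The geometric growth factor $\bar{\rho}$ from Theorem~\ref{thm2}(a), together with the Taylor lower bound $\|\nabla f(\x)\|\geq \frac{\beta}{2}\|\x-\x^*\|$ (valid near $\x^*$ under Assumption~\textbf{A4}), yields per-step function-value decreases that grow geometrically in $t$. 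Summing these across the shell traversal time $K_{\text{shell}}=\mathcal{O}(\log(\xi/\epsilon))$ from Theorem~\ref{thm3} produces an accumulated descent of order $\Omega(\beta\xi^2)$---an $\epsilon$-independent quantity. After the trajectory exits $\mathcal{B}_\xi(\x^*)$, monotone descent propagates this bound to all later iterates, so $f(\x_J)\leq f(\x^*)-\Omega(\beta\xi^2)$ for any $J>K+K_{\text{shell}}$, contradicting $f(\x_J)\geq f(\x^*)-\frac{L}{2}\epsilon^2-\mathcal{O}(M\epsilon^3)$ whenever $\epsilon$ is small enough.

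The hardest step will be the quantitative one: tracking constants carefully so that the closed-form accumulated-descent bound, when equated to the well-depth $\frac{L}{2}\epsilon^2$, yields precisely the claimed threshold $\epsilon\ll 2^{-2/\kappa^2}$ as a function of the condition number $\kappa=\beta/L$. The naive exit bound alone (of order $-\frac{\beta}{2}\epsilon^2$) fails for $\beta<L$, so the essential ingredient is the geometric growth guaranteed by Theorem~\ref{thm2}, which amplifies the per-step descent and produces a total that scales with $\xi^2$ rather than $\epsilon^2$; verifying that this scaling collapses to exactly $2^{-2/\kappa^2}$ after carrying through the constants of Theorems~\ref{thm2}--\ref{thm3} is where the delicate accounting lies.
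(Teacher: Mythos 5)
Your proposal is a genuine proof strategy, but it is structurally different from the paper's and, as you yourself suspect, does not reproduce the precise threshold $\epsilon \ll 2^{-2/\kappa^2}$. The paper's argument is an \emph{iteration-counting} contradiction: it uses the gradient-Lipschitz inequality to show the function-value gap between any two points of $\bar{\mathcal{B}}_\epsilon(\x^*)$ is at most $L\epsilon^2$, telescopes the standard descent inequality with the lower bound $\norm{\nabla f(\x_k)}\geq \beta\epsilon$ to conclude that re-entry would have to occur within $\hat K<2/\kappa^2$ steps, and then contradicts this by noting that $\norm{\u_{k+1}}\leq 2\norm{\u_k}$ per step, so exiting the (purposely small) auxiliary ball of radius $\xi=2^{2/\kappa^2}\epsilon(1+b)$ requires at least $J=2/\kappa^2$ steps. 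The threshold $2^{-2/\kappa^2}$ arises exactly from matching the count $2/\kappa^2$ with the per-step doubling bound, not from an energy balance. Your approach is instead an \emph{energy-accumulation} contradiction: accumulate descent of order $\Omega(\beta^2\xi^2/L)$ along the expansion phase inside $\mathcal{B}_\xi(\x^*)$ (with $\xi$ the $\Theta(1/M)$ radius of Theorem~\ref{thm2}) and contrast it with the well depth $\frac{L}{2}\epsilon^2+\mathcal{O}(M\epsilon^3)$ at any re-entry point. This argument is sound in spirit (monotonicity rules out immediate re-entry inside $\mathcal{B}_\xi$; the propagated descent rules out later re-entry), but the contradiction condition it yields is $\epsilon\lesssim\kappa\xi$, which is a different, $\xi$-dependent threshold; the ``delicate accounting'' you flag would not, in fact, collapse to $2^{-2/\kappa^2}$ because that constant has no natural role in the energy balance. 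Also note the paper's proof does not invoke Lemma~\ref{lemma3} at all; the function-gap bound $L\epsilon^2$ it needs is obtained directly from gradient Lipschitzness, which is both simpler and avoids the Hessian-Lipschitz Taylor remainder you carry. Your route buys a possibly less restrictive smallness condition on $\epsilon$ (especially for ill-conditioned $\kappa\ll 1$, where $2^{-2/\kappa^2}$ is extraordinarily small), at the cost of relying on the heavier machinery of Theorems~\ref{thm2} and~\ref{thm3} and losing the clean closed form of the threshold.
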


\begin{lem}\label{lemma5}
The gradient descent trajectories exiting the ball $\mathcal{B}_{\xi}(\x^*)$, where $\xi$ is defined in Theorem \ref{thm3}, can never re-enter this ball provided the gradient magnitudes outside the ball $\mathcal{B}_{\xi}(\x^*)$ are sufficiently large with $\norm{\nabla f(\x)} \geq \gamma >  \frac{1}{\sqrt{2}}L \xi$.
\end{lem}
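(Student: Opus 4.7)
The plan is to argue by contradiction. Suppose the trajectory, having exited $\mathcal{B}_{\xi}(\x^*)$, re-enters it for the first time at iteration $k+1$; equivalently, $\x_k \notin \mathcal{B}_{\xi}(\x^*)$ with $\norm{\nabla f(\x_k)} \geq \gamma$, and $\x_{k+1} = \x_k - \nabla f(\x_k)/L \in \mathcal{B}_{\xi}(\x^*)$. The workhorse identity is the squared-distance decomposition
\begin{align*}
    \norm{\x_{k+1}-\x^*}^2 = \norm{\x_k-\x^*}^2 - \tfrac{2}{L}\langle \nabla f(\x_k), \x_k-\x^*\rangle + \tfrac{1}{L^2}\norm{\nabla f(\x_k)}^2.
\end{align*}
Applying Cauchy--Schwarz to the cross-term together with the gradient-Lipschitz inequality $\norm{\nabla f(\x_k)} \leq L\norm{\x_k-\x^*}$ (which follows since $\nabla f(\x^*)=\ze$) yields the geometric lower bound $\norm{\x_{k+1}-\x^*}^2 \geq \bigl(\norm{\x_k-\x^*} - \norm{\nabla f(\x_k)}/L\bigr)^2$, which already precludes re-entry whenever the right-hand side is at least $\xi^2$.

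Next, I would plug in the uniform lower bound $\norm{\nabla f(\x_k)}/L \geq \gamma/L > \xi/\sqrt{2}$ on the step size and $\norm{\x_k-\x^*}\geq \xi$ from the outside-ball hypothesis to translate this geometric inequality into the claimed no-re-entry statement. The constant $1/\sqrt{2}$ in the hypothesis arises naturally from requiring $\norm{\x_{k+1}-\x_k}^2 = \norm{\nabla f(\x_k)}^2/L^2 > \xi^2/2$, which is precisely the step-size threshold matched to the ball radius $\xi$ in the squared-distance estimate.

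The main obstacle is the boundary-layer regime where $\norm{\x_k-\x^*}$ is only marginally above $\xi$: there, Cauchy--Schwarz is almost tight (gradient nearly radial toward $\x^*$) and the one-step geometric bound is too weak on its own. I would close this gap with a complementary descent-based function-value argument. By the descent lemma each external iterate satisfies $f(\x_{k+1}) \leq f(\x_k) - \norm{\nabla f(\x_k)}^2/(2L) \leq f(\x_k) - L\xi^2/4$, while second-order Taylor expansion around $\x^*$ combined with the Hessian operator-norm bound $\norm{\nabla^2 f}_2 \leq L$ gives the uniform bound $\abs{f(\x)-f(\x^*)} \leq L\xi^2/2$ throughout $\mathcal{B}_{\xi}(\x^*)$. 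Chaining these shows that after a bounded number of external steps $f(\x_k)$ falls strictly below $\inf_{\x\in \mathcal{B}_{\xi}(\x^*)} f(\x)$, ruling out all subsequent re-entry; the remaining finitely many candidate re-entry iterations are then handled by the single-step geometric estimate, completing the argument.
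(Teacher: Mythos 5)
The function-value half of your plan is sound and matches the paper's own telescoping argument: each external step costs at least $\norm{\nabla f(\x_k)}^2/(2L) > L\xi^2/4$ in function value, while the gradient-Lipschitz bound around $\x^*$ gives $|f(\x)-f(\x^*)|\leq L\xi^2/2$ throughout the ball, and combining these shows the return time $\hat{K}$ must satisfy $\hat{K}<4$. The gap is in the other half. You claim the "remaining finitely many candidate re-entry iterations are then handled by the single-step geometric estimate," but that estimate does not rule them out. Your Cauchy--Schwarz lower bound reads $\norm{\x_{k+1}-\x^*}\geq \norm{\x_k-\x^*}-\norm{\nabla f(\x_k)}/L$, and since Lipschitz continuity forces $\norm{\nabla f(\x_k)}/L\leq\norm{\x_k-\x^*}$ and the hypothesis only gives $\norm{\nabla f(\x_k)}/L > \xi/\sqrt{2}$, at the boundary $\norm{\x_k-\x^*}=\xi$ this lower bound can be as small as $0$ and in any case is strictly less than $\bigl(1-1/\sqrt{2}\bigr)\xi<\xi$. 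A radially-inward gradient of magnitude $\gamma$ at distance $\xi$ is consistent with all of your assumptions and lands $\x_{k+1}$ well inside the ball, so the bound gives no contradiction for $\hat{K}\in\{1,2,3\}$ --- exactly the cases you deferred to it.

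What closes this hole in the paper's proof is directional information that your Cauchy--Schwarz step discards. The paper does not argue "the step is too long to fit inside"; instead it shows that once the trajectory exits, the \emph{sign} of the radial component is favorable for the next few steps. Concretely, it invokes Theorem~\ref{thm2} (sequential monotonicity) to get $\norm{\x^+-\x^*}>\norm{\x-\x^*}\geq\xi$ at the first post-exit iterate, and then proves the inner-product condition $\langle \x^{++}-\x^+,\x^+-\x^*\rangle\geq 0$ via the cosine rule together with the positive semi-definiteness of $\mathbf{I}-\alpha\int_0^1\nabla^2 f\,dp$, so that $\norm{\x^{++}-\x^*}^2 = \norm{\x^{++}-\x^+}^2+\norm{\x^+-\x^*}^2+2\langle\x^{++}-\x^+,\x^+-\x^*\rangle \geq \xi^2$. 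That sign information, not a magnitude bound on the step, is what precludes the short-time return, and it is missing from your proposal. Without a substitute for the sequential monotonicity/inner-product mechanism, the argument does not close.
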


{Note that Lemma \ref{lemma3} is
used in {our analysis for} establishing that the function sequence $\{f(\x_k)\}_{k=K_e}^{\hat{K}_{exit}}$ associated with the expansion phase of the trajectory inside $\mathcal{B}_{\xi}(\x^*)$ does not satisfy the Polyak--{\L}ojasiewicz condition from Lemma \ref{polyaklem}. Lemmas \ref{lemma4} and \ref{lemma5} are termed as the ``no-return conditions" to $\epsilon$ and $\xi$ radius saddle neighborhoods respectively. Choosing $\epsilon$ from Lemma \ref{lemma4} will guarantee that any gradient trajectory can visit the saddle neighborhood $\mathcal{B}_{\epsilon}(\x^*)$ at most once. In particular, if the function satisfies the condition of large gradient magnitudes for certain $\xi$ from Lemma \ref{lemma5} then any gradient trajectory can visit the saddle neighborhood $\mathcal{B}_{\xi}(\x^*)$ at most once, and such a function is called a \textit{well-structured} function (see discussion after Proposition \ref{proposition3} for details).}

\section{Proposed Algorithm}\label{algosection}

{Since we have established the preliminaries on our unstable projection value and the sequential monotonicity property}, we propose a method called the Curvature Conditioned {Regularized} Gradient Descent (CCRGD) (Algorithm \ref{algo_1}) that can guarantee escaping saddle points in approximately linear time for Morse functions, by virtue of  {Theorems~\ref{thm1} and \ref{thm3}}, and that is also guaranteed to converge to a local minimum.

\begin{algorithm}[h]
	\caption{Curvature Conditioned {Regularized} Gradient Descent (CCRGD)}\label{algo_1}
	\begin{algorithmic}[1]
		\State {\textbf{Initialize}} $\{\x_{0}, \y_0, \y_1\}$ to $\mathbf{0}$, a  radius $\epsilon$ bounded by Theorem \ref{thm1}, constants $L,M, \beta, \delta$, minimum unstable  projection value $P_{min}(\epsilon)$ from the lower bound in $\eqref{projectionwellconditioned}$, condition flag $\Xi=0$, $\kappa = \frac{\beta}{L}$ and step-size $\alpha = \frac{1}{L}$
		\State {\textbf{for} $k=0,1,\cdots,K$} \textbf{do}
		\State \ \ \ \ \textbf{Obtain} $\nabla f(\x_k)$ \textbf{from first-order oracle}
		\State \ \ \ \ \textbf{If} $\norm{\nabla {f}(\x_k)} > L\epsilon$  	\textbf{then}	
		\State \ \ \ \ \ \ \ \ \textbf{Update} $ \x_{k+1} \gets \x_{k} - \alpha \nabla {f}(\x_k) $
		\State \ \ \ \ \ \ \ \ \textbf{If} $\Xi = 1$ \textbf{then update condition flag} $\Xi \gets 0$
		\State \ \ \ \ \textbf{Else}
		\State \ \ \ \ \ \ \ \ \textbf{If}
		$\norm{\nabla {f}(\x_k)} \leq L\epsilon$ \textbf{and} $\Xi = 1$	\textbf{then}
			\State \ \ \ \ \ \ \ \ \ \ \textbf{Update} $ \x_{k+1} \gets \x_{k} - \alpha \nabla {f}(\x_k) $
				\State \ \ \ \ \ \ \ \ \textbf{Else If} $\norm{\nabla {f}(\x_k)} \leq L\epsilon$ \textbf{and} $\Xi = 0$	\textbf{then}
		\State \ \ \ \ \ \ \ \ \ \ \textbf{Set} $\y_0 \gets \x_k$
		\State \ \ \ \ \ \ \ \ \ \ \textbf{Update} $\y_1 \gets \y_0 -\alpha \nabla {f}(\y_0) $	
		\State \ \ \ \ \ \ \ \ \ \ \textbf{Compute} $V_1 \gets \langle \y_1 - \y_0, \y_1 - \y_0\rangle$
		\State \ \ \ \ \ \ \ \ \ \ \textbf{Compute} $V_2 \gets \alpha \langle \y_1 - \y_0, \nabla {f}(\y_1) - \nabla {f}(\y_0)\rangle$
		\State \ \ \ \ \ \ \ \ \ \ \ \ \ \ \textbf{If}  $\frac{4\epsilon^2}{27\kappa^2}< V_1 - V_2 < \bigg(\frac{50 P_{min}(\epsilon) +4}{27}\bigg)\frac{\epsilon^2}{\kappa^2}$ \textbf{then} \hfill \Comment{Curvature Check Condition} \label{algocurvaturecondition}
						\State \ \ \ \ \ \ \ \ \ \ \ \ \ \ \ \ \textbf{Obtain} $\textbf{H} \gets \alpha\nabla^2 f(\x_k)$ \textbf{from second-order oracle}
\State \ \ \ \ \ \ \ \ \ \ \ \ \ \ \ \ \textbf{Solve the constrained eigenvalue problem:}
    $\x_{k+1} \in \argmin_{\norm{\x -\x_k}= \frac{\norm{\nabla f(\x_k)}}{\beta}  } \bigg( \frac{1}{2}(\x - \x_k)^{T}\textbf{H}(\x - \x_k)  \bigg)$ \label{ballconstraint}
	\State \ \ \ \ \ \ \ \ \ \ \ \ \ \ \textbf{Else If} $ 0< V_1 - V_2 \leq  \frac{4\epsilon^2}{27\kappa^2}$ \textbf{then} \Comment{Curvature Check Condition}
						\State \ \ \ \ \ \ \ \ \ \ \ \ \ \ \ \ \textbf{Obtain} $\textbf{H} \gets \alpha\nabla^2 f(\x_k)$ \textbf{from second-order oracle}
												\State \ \ \ \ \ \ \ \ \ \ \ \ \ \ \ \ \textbf{Compute} $\lambda_{min}(\textbf{H})$
																								\State \ \ \ \ \ \ \ \ \ \ \ \ \ \ \ \ \textbf{If} $\lambda_{min}(\textbf{H}) < 0$ \textbf{then}
\State \ \ \ \ \ \ \ \ \ \ \ \ \ \ \ \ \ \ \textbf{Solve the constrained eigenvalue problem:}
$\x_{k+1} \in \arg \min_{\norm{\x -\x_k}= \frac{\norm{\nabla f(\x_k)}}{\beta}  } \bigg( \frac{1}{2}(\x - \x_k)^{T}\textbf{H}(\x - \x_k)  \bigg)$
\State \ \ \ \ \ \ \ \ \ \ \ \ \ \ \ \ \textbf{Else}  \textbf{break}
		\State \ \ \ \ \ \ \ \ \ \ \textbf{Update condition flag} $\Xi \gets 1$
		\State \textbf{end for }
				\State \textbf{Second-Order Stationary Solution } $ = \x_k$
	\end{algorithmic}
	\label{algo1}
\end{algorithm}

{We first establish that the proposed algorithm escapes any saddle point of a function satisfying assumptions \textbf{A1-A4} at a linear rate and the function values generated by the algorithm decrease monotonically.}

\begin{lem}\label{saddleescapelemma}
{The trajectory generated by the CCRGD algorithm \ref{algo_1} in some $\epsilon$ neighborhood $\mathcal{B}_{\epsilon}(\x^*)$ of any strict saddle point $\x^*$ of a function satisfying assumptions \textbf{A1-A4} where $\epsilon$ is bounded by Theorem \ref{thm1}, exits  $\mathcal{B}_{\epsilon}(\x^*)$ in approximately linear time\footnote{The term ``\textit{approximately linear time}" implies that $K_{exit} \leq \mathcal{O}(\log(\epsilon^{-1})) + g(\epsilon)$ where $g(\cdot)$ is some absolutely continuous positive function such that $g(\epsilon) \to 0$ as $\epsilon \to 0$. See the exact expression for $g(\cdot)$ in \eqref{gfunction} from Appendix \ref{Appendix F}.} where the exit time is bounded by \eqref{linearexittimebound}.}
\end{lem}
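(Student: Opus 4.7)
The plan is to show that within at most a constant number of iterations after the trajectory enters the small-gradient region $\{\x:\|\nabla f(\x)\|\leq L\epsilon\}$ inside $\mathcal{B}_\epsilon(\x^*)$, the CCRGD algorithm produces an iterate whose displacement $\x - \x^*$ satisfies the unstable-subspace projection lower bound $\sum_j(\theta_j^{us})^2 \gtrapprox P_{min}(\epsilon)$ required by Theorem \ref{thm1}. From that iterate onward the algorithm executes pure gradient descent with $\alpha=1/L$ (thanks to the flag $\Xi$ remaining $1$ while the gradient stays small), so Theorem \ref{thm1} directly yields a remaining exit time bounded by \eqref{linearexittimebound}. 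Adding the constant curvature-check and second-order overhead delivers the ``approximately linear'' bound claimed in the lemma.

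The first technical step is a spectral interpretation of the curvature-check quantity $V_1 - V_2$. Using $\y_1 - \y_0 = -\alpha\nabla f(\y_0)$ and $\nabla f(\y_1) = \nabla f(\y_0) - \alpha\nabla^2 f(\y_0)\nabla f(\y_0) + \mathcal{O}(\alpha^2\|\nabla f(\y_0)\|^2)$, I obtain
\begin{align*}
V_1 - V_2 \;=\; \alpha^{2}\,\nabla f(\y_0)^{T}\!\bigl(\I - \alpha\nabla^{2}f(\y_0)\bigr)\nabla f(\y_0) \;+\; \mathcal{O}(\epsilon^{3}).
\end{align*}
Because the small-gradient condition $\|\nabla f(\y_0)\|\leq L\epsilon$ combined with Assumption \textbf{A4} forces $\|\y_0-\x^*\|=\mathcal{O}(\epsilon/\kappa)$, Assumption \textbf{A3} lets me replace $\nabla^{2}f(\y_0)$ by $\nabla^{2}f(\x^*)$ up to $\mathcal{O}(\epsilon)$, and a linearization yields $\nabla f(\y_0)\approx\nabla^{2}f(\x^*)(\y_0-\x^*)$. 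Expanding $\y_0-\x^* = \epsilon\sum_i\theta_i^s\v_i + \epsilon\sum_j\theta_j^{us}\v_j$ in the eigenbasis of $\nabla^{2}f(\x^*)$ gives
\begin{align*}
V_{1}-V_{2} \;\approx\; \alpha^{2}\epsilon^{2}\!\!\sum_{i\in\mathcal{N}_{S}}(\lambda_i^{s})^{2}(1-\alpha\lambda_i^{s})(\theta_i^{s})^{2} \;+\; \alpha^{2}\epsilon^{2}\!\!\sum_{j\in\mathcal{N}_{US}}(\lambda_j^{us})^{2}(1-\alpha\lambda_j^{us})(\theta_j^{us})^{2},
\end{align*}
where the stable coefficient lies in $[0, L^{2}(1-\kappa)]$ and the unstable coefficient lies in $[\beta^{2}(1+\kappa),\,4L^{2}]$. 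This yields a two-sided bound relating $V_1-V_2$ to $\sum_j(\theta_j^{us})^2$ and isolates three regimes of this projection separated precisely by the thresholds $\tfrac{4\epsilon^{2}}{27\kappa^{2}}$ and $\bigl(\tfrac{50P_{min}(\epsilon)+4}{27}\bigr)\tfrac{\epsilon^{2}}{\kappa^{2}}$ in Step \ref{algocurvaturecondition}.

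With this spectral tool in hand, I would perform a three-way case analysis on the curvature check. (a) If $V_1-V_2$ exceeds the upper threshold, the spectral inequality forces $\sum_j(\theta_j^{us})^2\gtrapprox P_{min}(\epsilon)$ at the current iterate, so Theorem \ref{thm1} applies immediately and subsequent gradient-descent iterations exit $\mathcal{B}_\epsilon(\x^*)$ within the bound \eqref{linearexittimebound}. (b) If $V_1-V_2$ lies in the intermediate band, the constrained eigenvalue subproblem on the sphere $\|\x-\x_k\|=\|\nabla f(\x_k)\|/\beta$ is solved by the unit eigenvector of $\lambda_{\min}(\mathbf{H})$; by Hessian continuity (Assumption \textbf{A3}) and matrix perturbation in the spirit of \cite[Lemma~3.3]{dixit2022exit}, this direction is $\mathcal{O}(\epsilon)$-close to a unit vector in $\mathcal{E}_{US}$ of $\nabla^{2}f(\x^*)$, so the post-step displacement $\x_{k+1}-\x^*$ carries unstable projection within $\mathcal{O}(\epsilon)$ of unity--far above $P_{min}(\epsilon)=\mathcal{O}(1/\log\epsilon^{-1})$. (c) If $V_1-V_2$ is below the lower threshold, the algorithm explicitly inspects $\lambda_{\min}(\mathbf{H})$: if negative, argument (b) applies verbatim, while if nonnegative the algorithm terminates at what is effectively a local minimum and no saddle escape is needed. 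After setting $\Xi=1$ the curvature check cannot re-fire, so every subsequent iterate inside $\mathcal{B}_\epsilon(\x^*)$ is a pure gradient step from a starting point with adequate unstable projection. Summing the at-most-three overhead iterations with the Theorem \ref{thm1} bound, and absorbing the residual $\epsilon$-precision error from \eqref{relativeerrorprojectionbound} into the function $g(\epsilon)$ (which vanishes as $\epsilon\to 0$ because the projection threshold of Section~\ref{sectionrelativeerror} is met), gives $K_{exit}\leq\mathcal{O}(\log\epsilon^{-1})+g(\epsilon)$ with the leading term bounded by \eqref{linearexittimebound}.

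The hardest step is (b): certifying that after the second-order subproblem, the displacement $\x_{k+1}-\x^*$ (not merely $\x_{k+1}-\x_k$) satisfies the Theorem \ref{thm1} projection bound. The subproblem is solved at $\mathbf{H}=\alpha\nabla^{2}f(\x_k)$, whose eigenvectors differ from those of $\alpha\nabla^{2}f(\x^*)$ by $\mathcal{O}(\epsilon)$ terms that must be carried through the decomposition of $\x_{k+1}-\x^*$ in the eigenbasis of $\nabla^{2}f(\x^*)$. In addition, the step size $\|\nabla f(\x_k)\|/\beta$ may place $\x_{k+1}$ either inside or already outside $\mathcal{B}_\epsilon(\x^*)$; both sub-cases must be reconciled with the bound \eqref{linearexittimebound}. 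Likewise in (a), matching the specific constants $4/27$ and $(50P_{min}(\epsilon)+4)/27$ to the spectral decomposition above requires carefully tracking the $\mathcal{O}(\epsilon)$ Hessian perturbation and the $\mathcal{O}(\epsilon^{3})$ Taylor residual in $V_1-V_2$, which is the tightest part of the computation.
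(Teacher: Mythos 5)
Your overall strategy mirrors the paper's own Appendix~E proof: derive a spectral interpretation of $V_1-V_2$ in the eigenbasis of $\nabla^2 f(\x^*)$ using $\nabla f(\y_0)\approx\nabla^2 f(\x^*)(\y_0-\x^*)$, turn this into a two-sided relationship between $V_1-V_2$ and $\sum_j(\theta_j^{us})^2$, and then do a case analysis on the curvature check, showing that whichever branch fires the iterate entering the subsequent gradient-descent phase satisfies the Theorem~\ref{thm1} lower bound. Your reformulation $V_1-V_2=\alpha^2\nabla f(\y_0)^T(\I-\alpha\nabla^2 f(\y_0))\nabla f(\y_0)+\mathcal{O}(\epsilon^3)$ is a clean equivalent of the paper's $(\x_k-\x^*)^T(\alpha\nabla^2 f(\x^*))^2(\I-\alpha\nabla^2 f(\x^*))(\x_k-\x^*)$.

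However, there is a genuine gap in your case~(b). You claim that after the second-order step ``the post-step displacement $\x_{k+1}-\x^*$ carries unstable projection within $\mathcal{O}(\epsilon)$ of unity.'' This conflates the projection of $\x_{k+1}-\x_k$ (which is indeed $\mathcal{O}(\epsilon)$-aligned with $\mathcal{E}_{US}$) with the projection of $\x_{k+1}-\x^*=(\x_k-\x^*)+(\x_{k+1}-\x_k)$. The stable component of $\x_k-\x^*$ is not removed by the eigenvector step; it survives at magnitude $\Theta(\|\x_k-\x^*\|)$. After renormalization, the new unstable projection value is approximately $\frac{b^2}{1+b^2}$ with $b=\|\nabla f(\x_k)\|/(\beta\|\x_k-\x^*\|)\in[\kappa,1/\kappa]$, which is a $\kappa$-dependent constant bounded away from $1$, not $1-\mathcal{O}(\epsilon)$. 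The paper spends several lines of the proof on exactly this computation (the $\tilde\theta^{us}_j$ decomposition and the constraint \eqref{algohessian9} on $b$), and it is where the extra requirement $\beta/L\gg 0$ enters. The conclusion that the projection is $\geq P_{min}(\tilde\epsilon)$ still holds, but it needs the paper's explicit lower bound $\frac{b^2}{1+b^2}>P_{min}(\cdot)$, not the near-unity claim. You flag this discrepancy yourself in the final paragraph, but the argument as written in step~(b) is not correct as stated.

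A second, smaller inaccuracy: the coefficient ranges you state do not match the algorithm's thresholds. Writing $\mu=\alpha\lambda$, the stable contribution $L^2\mu^2(1-\mu)$ on $\mu\in[\kappa,1]$ has maximum $\frac{4L^2}{27}$ (at $\mu=2/3$), not $L^2(1-\kappa)$; the unstable contribution on $\mu\in[-1,-\kappa]$ has maximum $2L^2$ (at $\mu=-1$), not $4L^2$. These two numbers, $4/27$ and $2=54/27$, are precisely what yield the constants $\frac{4}{27}$ and $\frac{50}{27}=\frac{54-4}{27}$ in Step~\ref{algocurvaturecondition}, so they need to be correct for the claimed case split to align with the algorithm.
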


The proof of this lemma is given in Appendix \ref{Appendix E}.

\begin{lem}\label{functionseqmonotonic}
{The function value sequence $\{f(\x_k)\}$ generated by the CCRGD algorithm \ref{algo_1} in some $\epsilon$ neighborhood $\mathcal{B}_{\epsilon}(\x^*)$ of any strict saddle point $\x^*$ of a function satisfying assumptions \textbf{A1-A4} where $\epsilon$ is bounded by Theorem \ref{thm1} decreases monotonically.} 
\end{lem}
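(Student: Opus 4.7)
The plan is to case-split on the two update rules present in Algorithm~\ref{algo_1}: (i) the vanilla gradient descent step $\x_{k+1}=\x_k-\alpha\nabla f(\x_k)$ (lines 5 and 9), and (ii) the constrained eigenvalue negative-curvature step (lines 16--22). For each case I would show $f(\x_{k+1})<f(\x_k)$ whenever $\x_k\in\mathcal{B}_{\epsilon}(\x^*)$ is not a stationary point; strict monotonicity then follows from concatenation. The gradient step is immediate: Assumption~\textbf{A2} together with the step-size $\alpha=1/L$ yields the standard descent lemma
\begin{align*}
    f(\x_{k+1})\le f(\x_k)-\tfrac{1}{2L}\norm{\nabla f(\x_k)}^2 < f(\x_k),
\end{align*}
the strict inequality holding whenever $\nabla f(\x_k)\neq\mathbf{0}$.

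The nontrivial case is the second-order step, for which $\x_{k+1}-\x_k=\pm r\,\v_{\min}$ with $r=\norm{\nabla f(\x_k)}/\beta$ and $\v_{\min}$ the unit eigenvector of $\mathbf{H}=\alpha\nabla^2 f(\x_k)$ corresponding to its most negative eigenvalue. First, I would argue that this eigenvalue is bounded away from zero by a negative quantity throughout $\mathcal{B}_{\epsilon}(\x^*)$: Proposition~\ref{morseprop} gives $\lambda_{\min}(\nabla^2 f(\x^*))\le-\beta$, and Assumption~\textbf{A3} with $\norm{\x_k-\x^*}\le\epsilon$ yields $\lambda_{\min}(\nabla^2 f(\x_k))\le-\beta+M\epsilon$, which remains strictly negative since $\epsilon<2\beta/M$ is already imposed by Theorem~\ref{thm1}. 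Next, I would exploit the sign freedom in the argmin and select the branch for which $\langle\nabla f(\x_k),\x_{k+1}-\x_k\rangle\le0$ (the standard convention in negative-curvature methods). Taylor's theorem combined with the Hessian Lipschitz bound then gives
\begin{align*}
    f(\x_{k+1}) &\le f(\x_k)+\langle\nabla f(\x_k),\x_{k+1}-\x_k\rangle+\tfrac{1}{2}(\x_{k+1}-\x_k)^T\nabla^2 f(\x_k)(\x_{k+1}-\x_k)+\tfrac{M}{6}\norm{\x_{k+1}-\x_k}^3\\
    &\le f(\x_k)-\tfrac{\beta-M\epsilon}{2}r^2+\tfrac{M}{6}r^3.
\end{align*}
Since the curvature branch only triggers when $\norm{\nabla f(\x_k)}\le L\epsilon$, we have $r\le L\epsilon/\beta$, so the cubic-to-quadratic ratio is $Mr/(3(\beta-M\epsilon))\le ML\epsilon/(3\beta(\beta-M\epsilon))$, which is strictly less than one for $\epsilon$ in the regime of Theorem~\ref{thm1}. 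Hence $f(\x_{k+1})<f(\x_k)$ strictly.

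Concatenating the two cases while $\x_k\in\mathcal{B}_{\epsilon}(\x^*)$ delivers monotonicity of $\{f(\x_k)\}$. The main obstacle is entirely in the second-order case and has two facets: (a) verifying that $\lambda_{\min}(\nabla^2 f(\x_k))$ stays appreciably negative throughout the neighborhood, which follows from the strict-saddle property at $\x^*$ propagated via Hessian Lipschitz continuity once $\epsilon$ is sufficiently small; and (b) resolving the sign ambiguity in the argmin so that the linear term is non-positive, because without this choice the worst-case linear contribution of magnitude $\norm{\nabla f(\x_k)}^2/\beta$ would exceed the quadratic gain $\beta r^2/2=\norm{\nabla f(\x_k)}^2/(2\beta)$ and monotonicity could fail.
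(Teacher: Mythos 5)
Your proposal is correct in structure and in case~(i), but for the second-order step it substitutes a genuinely different mechanism for controlling the linear term $\langle\nabla f(\x_k),\x_{k+1}-\x_k\rangle$ than the one the paper uses. The paper never appeals to the sign freedom in the constrained $\argmin$. Instead it observes that the second-order step is only invoked when the curvature check at Step~\ref{algocurvaturecondition} has \emph{failed}, which by the estimate in Lemma~\ref{saddleescapelemma} means the unstable-subspace projection of $\x_k-\x^*$ is below $P_{min}(\epsilon)=\mathcal{O}(1/\log(\epsilon^{-1}))$. Combining this with $\nabla f(\x_k)\approx\nabla^2 f(\x^*)(\x_k-\x^*)$ and $\nabla^2 f(\x_k)\e_j^{us}=\lambda_j^{us}\e_j^{us}$, the paper shows the linear term has magnitude $\mathcal{O}\big(\epsilon^2/\sqrt{\log(\epsilon^{-1})}\big)$ regardless of sign, which is dominated by the strictly negative quadratic contribution of order $\epsilon^2$. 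Your argument instead fixes the sign so the linear term is $\le 0$; this is cleaner, avoids any projection estimate, and is the standard convention in negative-curvature methods, but it implicitly modifies the algorithm: Step~\ref{ballconstraint} as written returns an unspecified element of a sign-symmetric $\argmin$ set, and the paper's bound is precisely what makes the lemma hold for \emph{either} choice the implementation might make. So your proof is correct for the descending-sign implementation, while the paper's proof is sign-agnostic. You might also note that your estimate $\lambda_{\min}(\nabla^2 f(\x_k))\le-\beta+M\epsilon$ needs $\epsilon<\beta/M$ (not just $\epsilon<2\beta/M$) to yield a strictly negative quadratic term, but this is already implied by the far tighter bounds on $\epsilon$ required elsewhere (e.g.\ $\epsilon<3\beta^2/(4ML)$ from Lemma~\ref{polyaklem}).
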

The proof of this lemma is given in Appendix \ref{Appendix E}.

\begin{remark}
Note that the second-order step after the curvature check condition \ref{algocurvaturecondition} of Algorithm \ref{algo_1} can be replaced by Perturbed Gradient Descent (GD) type of update from \cite{jin2017escape} since one-step noise injection is known to escape saddle points. However there is no guarantee that such replacement will provably generate trajectories that exit the saddle neighborhood in linear time. The best one can achieve with a Perturbed GD type of update is fast escape with high probability. Since the focus of this work is to develop a deterministic algorithm that generates trajectories with linear exit time, we refrain from analyzing the class of Perturbed GD type methods, which are designed for saddle escape but not necessarily with a linear rate.
\end{remark}

\section{Convergence Rates to a Minimum}
Now that we have developed an algorithm that escapes saddle neighborhoods in approximately linear time, our goal is to show that it (Algorithm \ref{algo_1}) converges to some local minimum and obtain its rate of convergence.

\subsection{Asymptotic convergence}
First, we show that the iterate sequence $\{\x_k\}$ generated by Algorithm \ref{algo_1} avoids strict saddle points.
\begin{lem}\label{lem6}
The iterate sequence $\{\x_k\}$ generated by Algorithm \ref{algo_1} or any of its subsequence on the class of $\mathcal{C}^2$ gradient and Hessian Lipschitz, Morse functions does not converge to a strict saddle point.
\end{lem}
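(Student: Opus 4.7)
The plan is to proceed by contradiction. Suppose there exists a subsequence $\{\x_{k_j}\}$ of $\{\x_k\}$ generated by Algorithm~\ref{algo_1} that converges to a strict saddle point $\x^*$. I would begin by choosing a radius $\epsilon' \leq \epsilon$, where $\epsilon$ is the input parameter of the algorithm bounded by Theorem~\ref{thm1}, such that $\epsilon'$ additionally satisfies the condition $\epsilon' \ll 2^{-2/\kappa^2}$ required by Lemma~\ref{lemma4}. Since $\x_{k_j} \to \x^*$, for all sufficiently large $j$ the iterate $\x_{k_j}$ lies in the open ball $\mathcal{B}_{\epsilon'}(\x^*) \subseteq \mathcal{B}_{\epsilon}(\x^*)$.

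Next, I would invoke Lemma~\ref{saddleescapelemma}, which guarantees that whenever the trajectory generated by Algorithm~\ref{algo_1} enters the neighborhood $\mathcal{B}_{\epsilon}(\x^*)$ of any strict saddle $\x^*$, it exits $\mathcal{B}_{\epsilon}(\x^*)$ in approximately linear (hence finite) time. Because $\mathcal{B}_{\epsilon'}(\x^*) \subseteq \mathcal{B}_{\epsilon}(\x^*)$, the trajectory must also exit the smaller ball $\mathcal{B}_{\epsilon'}(\x^*)$ in finite time following each entry. Then I would apply Lemma~\ref{lemma4} with radius $\epsilon'$: once the trajectory exits $\mathcal{B}_{\epsilon'}(\x^*)$, it can never re-enter this ball. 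Combining these two facts, the iterate sequence $\{\x_k\}$ can contain at most finitely many elements inside $\mathcal{B}_{\epsilon'}(\x^*)$, which directly contradicts the existence of the subsequence $\{\x_{k_j}\}$ lying eventually inside $\mathcal{B}_{\epsilon'}(\x^*)$ and converging to $\x^*$.

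The main obstacle I anticipate is reconciling the algorithm's occasional second-order step (Step~\ref{ballconstraint}) with the no-return property of Lemma~\ref{lemma4}, whose underlying proof is based on gradient descent iterations. The second-order step moves the iterate to a point at distance $\|\nabla f(\x_k)\|/\beta$ from $\x_k$, which may already push the iterate outside $\mathcal{B}_{\epsilon'}(\x^*)$; this only strengthens the escape argument but requires checking that Lemma~\ref{saddleescapelemma} in fact covers the subsequent trajectory whether or not a second-order step was taken. After any such step, the flag $\Xi$ is reset and Algorithm~\ref{algo_1} reverts to plain gradient descent for at least one iteration, so the exit-time guarantee together with Lemma~\ref{lemma4}'s no-return condition can be applied to the resulting pure-gradient segment. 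A secondary technical point is to verify that the chosen $\epsilon'$ simultaneously satisfies the bound in Theorem~\ref{thm1} (so that saddle escape applies), the condition $\epsilon' \ll 2^{-2/\kappa^2}$ in Lemma~\ref{lemma4}, and the additional ``well-conditioning'' conditions used to establish Lemma~\ref{saddleescapelemma}; since all three are upper bounds on the radius, any sufficiently small $\epsilon'$ works. Finally, since the argument is made for an arbitrary subsequence, it rules out convergence of \emph{any} subsequence of $\{\x_k\}$ to a strict saddle point, which is precisely the statement of Lemma~\ref{lem6}.
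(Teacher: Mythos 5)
Your proof is correct and rests on the same two pillars as the paper's: Lemma~\ref{saddleescapelemma} gives finite (in fact approximately linear) exit time from the ball around any strict saddle, and Lemma~\ref{lemma4} gives the no-return property. Where you diverge is in how the subsequence case is handled, and your route is actually cleaner. The paper proves the full-sequence claim first (escape plus no-return implies $\x_k \not\to \x^*$) and then argues the subsequence case separately by asserting that if a subsequence converges to $\x^*$ then $\x^* \in \{\x_k\}$ infinitely often, invoking the fixed-point property of the gradient map. That step is suspect: convergence of a subsequence to a limit does not imply the limit is ever attained, so the fixed-point reasoning does not apply without further justification. Your argument avoids this entirely by observing that subsequence convergence to $\x^*$ forces infinitely many iterates into $\mathcal{B}_{\epsilon'}(\x^*)$, which the escape-then-never-return property rules out after a finite index; this covers the full sequence and all subsequences simultaneously and needs no appeal to exact attainment of $\x^*$.

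Two small remarks on the refinements you added. First, introducing the auxiliary radius $\epsilon'$ to simultaneously meet the constraints of Theorem~\ref{thm1}, Lemma~\ref{lemma4}, and Lemma~\ref{saddleescapelemma} is good hygiene; the paper instead asserts ``$\epsilon$ can be chosen'' without making the compatibility explicit, which is the same content but less carefully stated. Second, you are right to flag the interaction between the one-time second-order step and Lemma~\ref{lemma4}: the no-return argument's descent bookkeeping is written for pure gradient steps. Your observation that at most one non-gradient step occurs per visit to the small-gradient region, after which $\Xi=1$ forces pure gradient descent until the iterate is far from the saddle, is the right resolution — and is a point the paper's proof of Lemma~\ref{lem6} passes over silently. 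Overall your proposal is a faithful and slightly more rigorous rendering of the intended argument.
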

The proof of this lemma is given in Appendix \ref{Appendix J}.

The next 2 lemmas establish that the function sequence $\{f(\x_k)\}$ converges to a limit within a compact set in $\mathbb{R}^n$ and the trajectory of $\{\x_k\}$ generated by Algorithm \ref{algo_1} encounters at most finitely many saddle points. These lemmas will also be instrumental in providing global rates of convergence.

\begin{lem}\label{lem7}
The sequence $\{f(\x_k)\}$, where $\{\x_k\}$ is the iterate sequence generated by Algorithm \ref{algo_1} on the class of $\mathcal{C}^2$ gradient and Hessian Lipschitz, coercive functions, converges {to a limit value while the iterates $\x_k$ stay in} a compact set in $\mathbb{R}^n$.
\end{lem}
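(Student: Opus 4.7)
The plan is to prove this lemma in two stages: first, establish that the function value sequence $\{f(\x_k)\}$ is monotonically non-increasing, then combine this with the coercivity assumption to conclude both the compactness of the iterate trajectory and the convergence of $\{f(\x_k)\}$ via the monotone convergence theorem. The overall architecture is standard once monotonicity is in place, so the bulk of the effort will be in carefully handling the different branches of Algorithm~\ref{algo_1} to ensure monotonic decrease across every iteration.

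For the monotonicity, I would partition the iterations into three cases according to which branch of Algorithm~\ref{algo_1} is executed. Case (i): pure gradient descent steps triggered when $\norm{\nabla f(\x_k)} > L\epsilon$ or when the condition flag $\Xi = 1$. Here the standard descent lemma, using the $L$-gradient-Lipschitz assumption and step-size $\alpha = 1/L$, yields $f(\x_{k+1}) \leq f(\x_k) - \tfrac{1}{2L}\norm{\nabla f(\x_k)}^2$, so $f$ strictly decreases whenever the gradient is nonzero. Case (ii): second-order steps inside a strict-saddle neighborhood triggered by the curvature check condition (Step~\ref{algocurvaturecondition}). Monotonic decrease in this regime has already been established by Lemma~\ref{functionseqmonotonic}, so I can invoke it directly. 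Case (iii): the terminating \textbf{break} branch, which corresponds to $\lambda_{min}(\textbf{H}) \geq 0$ at a near-stationary point — no further iterate is produced, so there is nothing to check. Stringing these together gives $f(\x_{k+1}) \leq f(\x_k)$ for every $k$ for which $\x_{k+1}$ is defined.

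Next, I would invoke coercivity. Since $f \in \mathcal{C}^2$ is coercive, for any scalar $c$ the sublevel set $\mathcal{L}_c := \{\x : f(\x) \leq c\}$ is closed (by continuity) and bounded (by coercivity), hence compact. Taking $c = f(\x_0)$, the monotonicity established above gives $f(\x_k) \leq f(\x_0)$ for all $k$, so $\{\x_k\} \subset \mathcal{L}_{f(\x_0)}$, a compact subset of $\mathbb{R}^n$. This is precisely the compactness claim of the lemma.

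Finally, for convergence of $\{f(\x_k)\}$, note that $f$ restricted to the compact set $\mathcal{L}_{f(\x_0)}$ attains its infimum $f_{\min}$, so $f(\x_k) \geq f_{\min}$ for all $k$. Combining this lower bound with the non-increasing property, the monotone convergence theorem for real sequences guarantees that $f(\x_k)$ converges to some limit $f^{\infty} \in [f_{\min}, f(\x_0)]$. The main subtlety, and the piece I expect to require the most care, is verifying descent for Case (ii) uniformly — i.e., ensuring the constrained eigenvalue subproblem \ref{ballconstraint} together with the curvature gate guarantees $f(\x_{k+1}) < f(\x_k)$ without appealing to properties that fail when $\x_k$ is extremely close to $\x^*$; this is where the already-stated Lemma~\ref{functionseqmonotonic} is doing the heavy lifting and must be applied with the correct radius $\epsilon$ chosen per Theorem~\ref{thm1}. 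Once descent holds uniformly across all branches, the rest of the argument is a direct application of coercivity and the monotone convergence theorem.
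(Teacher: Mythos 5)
Your proposal is correct and follows essentially the same route as the paper: invoke Lemma~\ref{functionseqmonotonic} (together with the standard descent lemma for the gradient-descent branches) to establish monotone decrease of $\{f(\x_k)\}$, use coercivity and continuity of $f$ to conclude that the sublevel set $\{\x : f(\x) \leq f(\x_0)\}$ is compact and contains the entire trajectory, and then apply the monotone convergence theorem to get convergence of $\{f(\x_k)\}$. The only cosmetic difference is that you spell out the case decomposition over the algorithm's branches more explicitly than the paper does (the paper simply cites Lemma~\ref{functionseqmonotonic}), and you obtain the lower bound on $f(\x_k)$ from the Weierstrass extreme value theorem on the compact sublevel set rather than from lower semi-continuity plus coercivity directly, but these are equivalent and do not change the argument.
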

The proof of this lemma is given in Appendix \ref{Appendix J}.

\begin{lem}\label{lem8}
The iterate sequence $\{\x_k\}$ generated by Algorithm \ref{algo_1} on the class of $\mathcal{C}^2$ gradient and Hessian Lipschitz, coercive Morse functions stays within a compact subset of $\mathbb{R}^n$ and encounters at most finitely many saddle points.
\end{lem}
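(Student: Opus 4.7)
The plan is to prove the two assertions sequentially, with the first (compactness) supplying the infrastructure needed for the second (finitely many saddle encounters). Both parts rely on combining coercivity with the monotonicity-type properties already established for Algorithm \ref{algo_1}.

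First I would establish that the function-value sequence $\{f(\x_k)\}$ is non-increasing globally. Away from saddle neighborhoods, the update is a standard gradient descent step with $\alpha = 1/L$, and the gradient-Lipschitz descent lemma gives $f(\x_{k+1}) \le f(\x_k) - \frac{1}{2L}\|\nabla f(\x_k)\|^2$. Within $\epsilon$-saddle neighborhoods, Lemma \ref{functionseqmonotonic} guarantees monotonic decrease, including for the single second-order step triggered by the curvature check. Hence $f(\x_k) \le f(\x_0)$ for all $k$, and by coercivity (Assumption \textbf{A1}) the sublevel set $S_0 := \{\x \in \mathbb{R}^n : f(\x) \le f(\x_0)\}$ is bounded; continuity of $f$ makes it closed, hence compact. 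This proves the first assertion since $\{\x_k\} \subset S_0$.

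Next, to bound the number of saddle encounters, I would invoke the Morse property. By Assumption \textbf{A4}, every critical point of $f$ has a non-degenerate Hessian with eigenvalue gap at least $\beta$, and is therefore isolated. The compact set $S_0$ can thus contain only finitely many critical points, and in particular only finitely many strict saddles $\{\x^*_1, \ldots, \x^*_T\}$ for some finite $T$. Then I would apply the no-return property of Lemma \ref{lemma4}: for $\epsilon \ll 2^{-2/\kappa^2}$, once a gradient trajectory exits $\mathcal{B}_{\epsilon}(\x^*_i)$ it can never re-enter. Combined with Lemma \ref{saddleescapelemma} (which shows that Algorithm \ref{algo_1} exits each visited $\mathcal{B}_{\epsilon}(\x^*_i)$ in approximately linear time) and the fact that the algorithm reverts to vanilla gradient updates immediately after any second-order step, this implies each $\x^*_i$ can be visited at most once along the trajectory. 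Therefore the total number of saddle encounters is at most $T < \infty$.

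The main obstacle is ensuring the no-return property transfers cleanly to the hybrid dynamics of Algorithm \ref{algo_1}: after a second-order step the iterate lands on the sphere of radius $\|\nabla f(\x_k)\|/\beta$ around $\x_k$, and one must confirm that subsequent pure-gradient updates do not bring it back into any $\mathcal{B}_{\epsilon}(\x^*_i)$ already visited. Lemma \ref{lemma4} handles re-entry to the same neighborhood, while re-entry to a different saddle's neighborhood is harmless since that would only constitute a new, first-time encounter among the finitely many saddles in $S_0$. A small additional care is to verify that the second-order step itself remains within $S_0$, which follows from the monotonicity established in Lemma \ref{functionseqmonotonic}. Assembling these pieces yields both claims.
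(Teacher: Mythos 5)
Your proof is correct and its core is the same as the paper's: establish monotonic decrease of $\{f(\x_k)\}$, invoke coercivity and continuity to get compactness of the sublevel set $S_0 = \{\x : f(\x) \le f(\x_0)\}$ (the paper appeals to Lemma~\ref{lem7} for this step; you rebuild it directly from the descent lemma and Lemma~\ref{functionseqmonotonic}, which is essentially the same content), and then cite the Morse property to conclude that the compact set $S_0$ contains only finitely many critical points. The paper stops there, reading ``encounters at most finitely many saddle points'' as a statement about the finite cardinality of the saddle set inside $S_0$. You go further and add a dynamical argument — Lemma~\ref{lemma4} (no-return to $\mathcal{B}_\epsilon(\x^*)$) combined with Lemma~\ref{saddleescapelemma} (linear-time exit) — to show that each of the finitely many saddles is visited at most once, so that the total count of encounter events is also finite. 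This is a genuinely stronger conclusion and closes a gap that the paper's terse proof leaves open if one reads ``encounters'' as counting events rather than distinct saddles; the paper relies on the no-return property elsewhere (e.g., in the discussion after Proposition~\ref{proposition3} and in Theorem~\ref{thm4}) but does not fold it into this lemma. Your caveat about the hybrid dynamics after a second-order step is well placed; that the second-order step keeps the iterate in $S_0$ does indeed follow from Lemma~\ref{functionseqmonotonic}, and re-entry into a different saddle's neighborhood is, as you note, just a new first-time encounter among the finite pool, so the bound $T$ on encounter events holds.
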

The proof of this lemma is given in Appendix \ref{Appendix J}.

It is needless to state that finite critical points imply isolated critical points\footnote{The condition of isolated critical points means that there is some separation between the critical points.}. {The condition of isolated critical points however holds in general for the class of Morse functions.} We now state the Global Convergence Theorem from \cite{luenberger1984linear} which is instrumental in establishing the asymptotic convergence of Algorithm \ref{algo_1} to a local minimum. Its proof is detailed in section 7.7 of \cite{luenberger1984linear} so we do not present its proof here and directly use this theorem.

\begin{theorem}[\textbf{Global Convergence Theorem~\cite{luenberger1984linear}}]\label{thmglob}
Let $\A$ be an algorithm on a vector space ${X}$, and suppose that, given $\x_0$ the
sequence
$\{\x_k\}_{k=0}^{\infty}$
is generated satisfying
$\x_{k+1} \in \A(\x_k)$.
Let a solution set ${S} \subset {X}$ be given, and suppose
\begin{enumerate}
    \item all points $\x_k$ are contained in a compact set $ {D} \subset {X}$,
    \item there is a continuous function $Z$ on ${X}$ such that:
    \begin{itemize}
        \item if $\x \not\in {S}$, then $Z(\y) < Z(\x)$ for all $\y \in \A(\x)$,
        \item if $\x \in {S}$, then $Z(\y) \leq Z(\x)$ for all $\y \in \A(\x)$,
    \end{itemize}
    \item the mapping $\A$ is closed at points outside ${S}$.
\end{enumerate}
Then the limit of any convergent subsequence of $\{\x_k\}$ is a solution. If under the conditions of the Global Convergence Theorem, $S$ consists of a
single point $\bar{\x}$, then the sequence $\{\x_k\}$ converges to $\bar{\x}$.
\end{theorem}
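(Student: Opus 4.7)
The plan is to exploit the monotonicity of $Z$ along the iterates combined with compactness of $D$ to extract convergent subsequences, and then use the closedness of $\A$ at non-solution points to derive a contradiction if the limit were outside $S$. First I would observe that conditions $2$(a)--(b) imply $Z(\x_{k+1}) \leq Z(\x_k)$ for every $k$, so the real sequence $\{Z(\x_k)\}$ is monotonically non-increasing; since all $\x_k$ lie in the compact set $D$ and $Z$ is continuous, $Z$ is bounded on $D$, and hence $\{Z(\x_k)\}$ converges to some limit $Z^{*} \in \mathbb{R}$.

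Next, I would take an arbitrary convergent subsequence $\x_{k_j} \to \x^{**}$ (guaranteed to exist by the Bolzano--Weierstrass property of the compact set $D$) and argue by contradiction that $\x^{**} \in S$. Suppose instead that $\x^{**} \notin S$. Because $\x_{k_j + 1} \in \A(\x_{k_j})$ and $\{\x_{k_j + 1}\}$ also lies in $D$, we can extract a further subsequence $\x_{k_{j_l} + 1} \to \y^{**}$ for some $\y^{**} \in D$. Now apply condition $3$: the closedness of $\A$ at the point $\x^{**} \notin S$ together with $\x_{k_{j_l}} \to \x^{**}$ and $\x_{k_{j_l} + 1} \to \y^{**}$ forces $\y^{**} \in \A(\x^{**})$. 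By condition $2$(a), applied at $\x^{**} \notin S$, this gives the strict inequality $Z(\y^{**}) < Z(\x^{**})$. On the other hand, continuity of $Z$ yields $Z(\x_{k_{j_l}}) \to Z(\x^{**})$ and $Z(\x_{k_{j_l} + 1}) \to Z(\y^{**})$, but both of these are subsequences of $\{Z(\x_k)\}$ and therefore must share the common limit $Z^{*}$. This gives $Z(\x^{**}) = Z^{*} = Z(\y^{**})$, contradicting the strict inequality. Hence $\x^{**} \in S$, which is the first claim.

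For the final assertion, assume $S = \{\bar{\x}\}$. Suppose, for contradiction, that the full sequence $\{\x_k\}$ does not converge to $\bar{\x}$. Then there exists an open neighborhood $U$ of $\bar{\x}$ and an infinite subsequence $\{\x_{k_m}\}$ with $\x_{k_m} \notin U$ for all $m$. Because $\{\x_{k_m}\} \subset D$ is compact, it has a further convergent sub-subsequence whose limit lies in $D \setminus U$; but by the part just proved, this limit must lie in $S = \{\bar{\x}\} \subset U$, a contradiction. Therefore $\x_k \to \bar{\x}$.

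The main technical obstacle is the careful passage to two nested subsequences in order to invoke closedness of $\A$: closedness is defined in the graph sense, so one needs both $\x_{k_{j_l}} \to \x^{**}$ and $\x_{k_{j_l} + 1} \to \y^{**}$ simultaneously, which requires taking the second extraction only after fixing the first. A secondary subtlety is that condition $3$ only assumes closedness \emph{outside} $S$, so the contradiction argument must be structured as a proof by contradiction that begins with the assumption $\x^{**} \notin S$, since otherwise closedness may fail at $\x^{**}$ and the limit-insertion step becomes invalid.
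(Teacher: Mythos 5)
Your proof is correct and is essentially the standard Zangwill-style argument for this theorem (monotone decrease of $Z$ plus compactness of $D$ to extract a limit, then closedness of $\A$ outside $S$ to force a contradiction with strict descent); the paper itself does not reproduce a proof but instead defers to Section~7.7 of \cite{luenberger1984linear}, where exactly this argument appears. Your handling of the nested subsequence extraction and the restriction of closedness to points outside $S$ are the right points of care, and the final single-point-solution-set argument via a neighborhood $U$ and sub-subsequence compactness is likewise sound.
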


Using Theorem \ref{thmglob} and {Lemmas} \ref{lem6}-\ref{lem8} we now establish the asymptotic convergence of the sequence $ \{\x_k\}$ to a local minimum.

\begin{theorem}\label{thmglobmain}
The iterate sequence $\{\x_k\}$ generated by Algorithm \ref{algo_1} on the class of $\mathcal{C}^2$ gradient and Hessian Lipschitz, coercive Morse functions has a convergent subsequence that converges to a local minimum. Since the local minimum is a fixed point of Algorithm \ref{algo_1}, the sequence  $\{\x_k\}$ also converges to this local minimum.
\end{theorem}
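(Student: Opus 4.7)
The plan is to apply the Global Convergence Theorem (Theorem \ref{thmglob}) to the algorithmic mapping $\A$ induced by Algorithm \ref{algo_1}, taking the descent function $Z = f$ and the solution set $S$ to be the set of local minima of $f$. First I would verify the three hypotheses of Theorem \ref{thmglob}. Hypothesis (1), that all $\x_k$ lie in a compact set $D \subset \mathbb{R}^n$, is immediate from Lemma \ref{lem8}, which itself combines coercivity of $f$ with the monotone decrease of $\{f(\x_k)\}$ guaranteed by Lemma \ref{functionseqmonotonic}. For hypothesis (2), note that $f$ is continuous, and by Lemma \ref{functionseqmonotonic} we have $f(\x_{k+1}) \leq f(\x_k)$ for every $k$. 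To strengthen this to strict inequality off $S$, I would distinguish three cases for $\x \notin S$: if $\norm{\nabla f(\x)} > L \epsilon$, the vanilla gradient descent step gives $f(\x^+) \leq f(\x) - \tfrac{1}{2L}\norm{\nabla f(\x)}^2 < f(\x)$; if $\x$ lies in a small gradient region but is not a critical point, the same gradient step continues to provide strict descent; and if $\x$ is a strict saddle point of the Morse function, the curvature check in Algorithm \ref{algo_1} triggers the constrained eigenvalue step along a direction of negative curvature of $\textbf{H}$, which strictly decreases $f$ since the step is along a negative-curvature direction with controlled norm (this is the content behind Lemma \ref{saddleescapelemma}). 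A non-minimum critical point of a Morse function must be a saddle or a local maximum; a local maximum cannot be a limit point of $\{\x_k\}$ because $f$ is monotonically decreasing along the sequence, and saddles are excluded by Lemma \ref{lem6}.

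For hypothesis (3), closedness of $\A$ outside $S$, I would argue as follows. Away from the curvature-check transition, the mapping is either the continuous gradient descent map $\x \mapsto \x - \alpha \nabla f(\x)$ or the solution of a constrained eigenvalue problem whose data ($\textbf{H}$, $\nabla f(\x)$, and the ball radius $\norm{\nabla f(\x)}/\beta$) depends continuously on $\x$. The only potential source of non-closedness is the branching on the inequalities $\norm{\nabla f(\x_k)} \lessgtr L\epsilon$ and on the curvature-check window in Step~\ref{algocurvaturecondition}. However, outside $S$ these thresholds are only active in a neighborhood of a critical point, and the Morse assumption together with Lemma \ref{lem8} guarantees that only finitely many saddle points are encountered; between saddles the mapping reduces to the continuous gradient descent map, which is manifestly closed. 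At points on the branching boundary that are not critical, both candidate branches yield strict decrease in $f$, so closedness holds in the generalized (set-valued) sense required by Theorem \ref{thmglob}.

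With the three hypotheses in place, Theorem \ref{thmglob} yields that the limit of any convergent subsequence of $\{\x_k\}$ is a local minimum. To upgrade subsequential convergence to full-sequence convergence, I would exploit that the critical points of a Morse function are isolated. Let $\bar{\x}$ be the limit of a convergent subsequence $\x_{k_j} \to \bar{\x}$ with $\bar{\x} \in S$. Since $\nabla^2 f(\bar{\x}) \succeq \beta \I$ by Assumption \textbf{A4}, there exists a neighborhood of $\bar{\x}$ in which the Polyak--{\L}ojasiewicz condition of Lemma \ref{polyaklem} (applied in the contraction-only regime that arises at a local minimum) yields geometric decay of $f(\x_k) - f(\bar{\x})$. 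Combined with monotone descent and isolation of $\bar{\x}$ from other critical points, once some iterate $\x_{k_j}$ lies sufficiently close to $\bar{\x}$ the entire tail $\{\x_k\}_{k \geq k_j}$ remains in this neighborhood and converges linearly to $\bar{\x}$. In particular, $\bar{\x}$ is a fixed point of $\A$ and the whole sequence converges to it.

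The main obstacle I anticipate is the closedness check, since Algorithm \ref{algo_1} is genuinely discontinuous across the curvature-test thresholds; the argument relies on combining Morse isolation of critical points with the observation that on the finitely many branching boundaries encountered, both branches still deliver strict descent, so Theorem \ref{thmglob} remains applicable with $Z = f$. The second delicate point is justifying that the geometric attraction near $\bar{\x}$ actually kicks in — this uses Lemma \ref{polyaklem} adapted to a local minimum, where the contractive phase persists indefinitely and the expansive phase analyzed in Theorem \ref{thm2} never occurs.
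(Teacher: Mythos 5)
Your choice of solution set is the issue. You take $S$ to be the set of local minima and then try to establish hypothesis (2) of Theorem \ref{thmglob} (strict descent for all $\x \notin S$) by case analysis, claiming in the third case that ``if $\x$ is a strict saddle point $\ldots$ the constrained eigenvalue step along a direction of negative curvature $\ldots$ strictly decreases $f$.'' This is false: at a critical point $\x^*$ the constraint in Step~\ref{ballconstraint} reads $\norm{\x-\x_k}=\norm{\nabla f(\x_k)}/\beta=0$, so the constrained eigenvalue problem returns $\x_{k+1}=\x_k$ and $f$ does not decrease. Every critical point of $f$ --- saddle or not --- is a fixed point of $\A$, and hypothesis (2) cannot hold on $S^c$ if $S$ excludes the saddles. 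The paper sidesteps this exactly by taking $S$ to be the set of \emph{all} critical points; Theorem \ref{thmglob} then only yields subsequential convergence to \emph{some} critical point, and the conclusion that this limit is a local minimum is obtained in a separate step from Lemma \ref{lem6}, which rules out convergence of any subsequence to a strict saddle via the no-return Lemma \ref{lemma4}. You cite Lemma \ref{lem6} in passing, but in your framework it would be doing redundant work, whereas in the paper's framework it is load-bearing precisely because $S$ is larger than the local minima. Choosing the smaller $S$ also burdens your closedness argument: with $S$ the set of local minima, $\A$ must be shown closed \emph{at} saddle points as well, not only away from them, which the paper avoids having to do.

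Two remarks on parts of your argument that are sound or even an improvement. First, your case analysis for strict descent when $\x$ is \emph{not} critical (large-gradient branch gives $f(\x^+) \leq f(\x)-\tfrac{1}{2L}\norm{\nabla f(\x)}^2$; small-gradient non-critical branch likewise decreases) is correct and essentially what underlies Lemma \ref{functionseqmonotonic}. Second, your upgrade from subsequential to full-sequence convergence --- using that Morse critical points are isolated together with geometric attraction in a strictly convex basin around the limiting minimum, so that once an iterate enters that basin it never leaves --- is a genuinely different route from the paper's, which instead asserts that the subsequential limit $\x^*$ appears infinitely often in the sequence and is a fixed point of $\A$. Your basin-trapping argument is arguably cleaner and more standard; it would be a useful alternative if the primary gap were fixed. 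But as written the proposal does not establish hypothesis (2) of Theorem \ref{thmglob} and so does not go through.
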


The proof of this theorem is given in Appendix \ref{Appendix J}.

\subsection{Global rate of convergence}
To develop rate of convergence of the sequence $\{\x_k\}$ to some local minimum $\x^{*}_{optimal}$ of $f(\cdot)$ we {first introduce certain propositions}.

\begin{proposition}\label{proposition1}
{
In some compact domain $\mathcal{U}$, let $\mathcal{S_{*}}$ be the set of all critical points of a function $f(\cdot)$ satisfying assumptions \textbf{A1-A4}, where $\x^*_j \in \mathcal{S_{*}}$ denotes the $j^{\text{th}}$ critical point with $\abs{\mathcal{S_{*}}}= l$ and $l$ is finite. Then the distance between any two critical points of the function $f(\cdot)$ is lower bounded by some $R>0$ where $R>\frac{1}{ \varsigma M } \frac{\bigg((1+\frac{\beta}{L})^2+ \frac{1}{4 (1+\frac{\beta}{L})^2}-\frac{5}{4}\bigg)}{6}$ for $\varsigma>2$, i.e., $\norm{\x^*_i - \x^*_j} \geq R$ for any $\x^*_i$ and $\x^*_j$ in $\mathcal{S_{*}}$ and $\xi$ is chosen such that $\xi \ll R $ where $\xi$ is bounded from Theorem \ref{thm3}.}
\end{proposition}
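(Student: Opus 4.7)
The plan is to split the argument into two parts: finiteness of $\mathcal{S_*}$ and the quantitative separation bound $R$.

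For finiteness, I would appeal to the Morse property furnished by Assumption \textbf{A4}: at every critical point $\x^*\in \mathcal{S_*}$, we have $\min_i |\lambda_i(\nabla^2 f(\x^*))|>\beta$, so $\nabla^2 f(\x^*)$ is invertible. Applying the inverse function theorem to the map $\nabla f : \mathbb{R}^n \to \mathbb{R}^n$ at each such $\x^*$ shows that $\nabla f$ is a local diffeomorphism there, hence $(\nabla f)^{-1}(\mathbf{0})$ is a discrete set. Intersecting with the compact domain $\mathcal{U}$ gives a discrete closed subset of a compact set, which must be finite. This yields $l = |\mathcal{S_*}|<\infty$.

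For the separation bound, I would adapt directly the contradiction argument sketched in Remark \ref{proprem}. Assume towards contradiction that there exist $\x^*_i, \x^*_j \in \mathcal{S_*}$ with $\x^*_i \neq \x^*_j$ and $\|\x^*_i - \x^*_j\| \leq \frac{1}{\varsigma M}\cdot\frac{(1+\beta/L)^2 + \frac{1}{4(1+\beta/L)^2} - \frac{5}{4}}{6}$ for some $\varsigma>2$. Then $\x^*_j$ lies inside the ball $\mathcal{B}_{\xi}(\x^*_i)$ for any $\xi$ in the range permitted by Theorem \ref{thm2}. Expanding $\nabla f$ around $\x^*_i$ with $\nabla f(\x^*_i)=\mathbf{0}$ and invoking the Hessian-Lipschitz property (Assumption \textbf{A3}) gives the Taylor-type identity $\nabla f(\x^*_j) = \nabla^2 f(\x^*_i)(\x^*_j - \x^*_i) + \mathcal{O}(M\|\x^*_j - \x^*_i\|^2)$. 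Combining this with the lower spectral bound $\min_i |\lambda_i(\nabla^2 f(\x^*_i))|>\beta$ (Assumption \textbf{A4}) through the intermediate inequality \eqref{interimbound4} used in the proof of Theorem \ref{thm2} produces the lower bound $\|\nabla f(\x^*_j)\|\geq \beta\|\x^*_i-\x^*_j\|>0$ in this regime, contradicting $\nabla f(\x^*_j)=\mathbf{0}$. Hence any two distinct elements of $\mathcal{S_*}$ must be separated by at least $R>\frac{1}{\varsigma M}\cdot\frac{(1+\beta/L)^2 + \frac{1}{4(1+\beta/L)^2} - \frac{5}{4}}{6}$ for some $\varsigma>2$. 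Finally, since $R$ is strictly positive, we are free to pick $\xi$ satisfying the bound of Theorem \ref{thm3} with $\xi \ll R$, thereby ensuring that the balls $\{\mathcal{B}_{\xi}(\x^*_j)\}_{\x^*_j\in \mathcal{S_*}}$ are pairwise disjoint.

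The main obstacle is bookkeeping rather than conceptual. A direct Taylor argument would only produce a separation of order $\beta/M$; to recover the sharper, step-size–dependent form $\frac{(1+\beta/L)^2 + \frac{1}{4(1+\beta/L)^2} - \frac{5}{4}}{6\varsigma M}$, one must re-use the exact constants appearing in the gradient-descent iteration analysis of Theorem \ref{thm2}, where the factors $(1+\beta/L)$ and $1/(4(1+\beta/L)^2)$ arise from the extremal singular values of the local iteration matrix $\mathbf{I}-\alpha\nabla^2 f(\x^*_i)$ evaluated at $\alpha=1/L$. Once \eqref{interimbound4} is cited verbatim, the rest of the argument is a clean one-step contradiction, so the real technical content resides entirely in Theorem \ref{thm2} already proved earlier.
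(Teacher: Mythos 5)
Your proof follows the same two-step route as the paper: finiteness of $\mathcal{S}_*$ from the Morse property, and separation by the contradiction argument of Remark~\ref{proprem} via \eqref{interimbound4}. For the finiteness step the paper simply cites the Morse-theoretic fact from \cite{matsumoto2002introduction}, whereas you reconstruct it from first principles through the inverse function theorem (invertible Hessian $\Rightarrow$ $\nabla f$ is a local diffeomorphism $\Rightarrow$ the zero set is discrete $\Rightarrow$ finite in a compact set); that is a clean and self-contained alternative. The separation step is, as you say, precisely Remark~\ref{proprem}.

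The closing paragraph, however, has the bookkeeping inverted. You suggest a Taylor expansion of $\nabla f$ about $\x^*_i$ would only yield a separation of order $\beta/M$, and that the specific constant $\frac{1}{\varsigma M}\frac{(1+\beta/L)^2 + \frac{1}{4(1+\beta/L)^2} - 5/4}{6}$ must be recovered by re-deriving the iteration-matrix singular values from Theorem~\ref{thm2}. Neither is the case. First, inequality \eqref{interimbound4}, $\norm{\nabla f(\x)} \geq \beta\norm{\x - \x^*}$, is obtained from the \emph{exact} integral representation of the gradient and Assumption~\textbf{A4}; it carries no $\mathcal{O}(M\norm{\x-\x^*}^2)$ truncation term and no iteration-matrix quantities, and it forbids a second critical point in any ball $\mathcal{B}_r(\x^*) \subset \mathcal{W}$. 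Second, the constant appearing in the proposition's lower bound on $R$ is not something the separation argument must reproduce --- it is simply the sequential-monotonicity radius $\xi$ from Theorem~\ref{thm2} and Remark~\ref{proprem}, inserted so that the $\xi$-balls around distinct critical points are pairwise disjoint in the later analysis. The separation established by \eqref{interimbound4} is generally much larger than this $\xi$; the proposition deliberately states a conservative bound, not a tight one. No ``step-size-dependent'' recovery is required, only the observation that this $\xi$-radius lies within $\mathcal{W}$.
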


\begin{proof}
{Since a Morse function on a compact manifold has finitely many critical points \cite{matsumoto2002introduction}, the compact domain $\mathcal{U}$ will have finitely many critical points. The lower bound $R>\frac{1}{ \varsigma M } \frac{\bigg((1+\frac{\beta}{L})^2+ \frac{1}{4 (1+\frac{\beta}{L})^2}-\frac{5}{4}\bigg)}{6}$ for $\varsigma>2$ follows from remark \ref{proprem}.}
\end{proof}

\begin{proposition}\label{proposition2}
{ Let the sequence $\{\x_k\}$ generated by Algorithm \ref{algo_1} on a function $f(\cdot)$ satisfying assumptions \textbf{A1-A4} converges to the local minimum $\x^{*}_{optimal} \in \mathcal{S_{*}}$ from Theorem \ref{thmglobmain} and we have $\norm{\x_0 - \x^{*}_{optimal}} \leq \zeta$ for some $\zeta>0$, where $\x_0$ is the initialization point for Algorithm \ref{algo_1}. Also, without loss of generality we can assume the following condition on the initialization:
		$$ \norm {\x_0 - \x^*_j} \leq \xi$$  for some strict saddle point $\x^*_j$.}
\end{proposition}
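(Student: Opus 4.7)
The proposition has two components. The first, existence of a finite $\zeta$ with $\norm{\x_0 - \x^{*}_{optimal}} \leq \zeta$, is essentially a boundedness statement that follows immediately from the compactness of the trajectory. The second, the WLOG reduction that the initialization may be taken inside some $\xi$-neighborhood of a strict saddle, is the substantive claim and requires a re-indexing argument.

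For the first part, I would invoke Lemma \ref{lem8}, which guarantees that $\{\x_k\}$ lies in a compact subset $\mathcal{K} \subset \mathbb{R}^n$. Coercivity (Assumption \textbf{A1}) together with the monotone descent of $\{f(\x_k)\}$ (Lemma \ref{functionseqmonotonic}) allows the concrete choice $\mathcal{K} = \{\x : f(\x) \leq f(\x_0)\}$, which is closed and bounded. Since both $\x_0$ and the limit $\x^{*}_{optimal}$ lie in $\mathcal{K}$, setting $\zeta := \mathbf{diam}(\mathcal{K}) < \infty$ finishes this part.

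For the WLOG reduction, the key idea is to relabel a later iterate as the effective initialization. If $\x_0$ does not already satisfy $\norm{\x_0 - \x^{*}_j} \leq \xi$ for some strict saddle $\x^{*}_j$, then because $\{\x_k\}$ converges to $\x^{*}_{optimal} \in \mathcal{S}_{*}$, there is a first index $k^{*}$ at which $\x_{k^{*}}$ enters the $\xi$-neighborhood of some critical point; under the hypothesis $\xi \ll R$ of Proposition \ref{proposition1}, the $\xi$-neighborhoods of distinct critical points are disjoint, so this critical point is well defined. If it is the strict saddle $\x^{*}_j$, we redefine $\x_0 := \x_{k^{*}}$; if instead it is $\x^{*}_{optimal}$, then the trajectory never visits any saddle neighborhood and the eventual convergence rate is governed solely by local behavior near the minimum, subsuming the pre-saddle WLOG. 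The discarded prefix $\{\x_k\}_{k=0}^{k^{*}-1}$ lies in the compact set $\mathcal{K} \backslash \bigcup_j \mathcal{B}_{\xi}(\x^{*}_j)$, which contains no critical points, so by continuity of $\nabla f$ the norm $\norm{\nabla f}$ attains a positive minimum $\gamma > 0$ on this set. Each gradient descent step on the prefix then reduces $f$ by at least $\gamma^{2}/(2L)$, so the prefix length is bounded by $2L(f(\x_0) - f(\x^{*}_{optimal}))/\gamma^{2} < \infty$; these finitely many iterations contribute only a constant additive term to the overall convergence rate, justifying the WLOG assumption.

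The main obstacle is justifying the uniform gradient lower bound $\gamma$ on the exterior region $\mathcal{K} \backslash \bigcup_j \mathcal{B}_{\xi}(\x^{*}_j)$. This reduces to a standard compactness argument once one knows that $\mathcal{K}$ contains only finitely many critical points (Lemma \ref{lem8} and Proposition \ref{proposition1}) and that each is isolated by at least $R$ (Proposition \ref{proposition1}); the hypothesis $\xi \ll R$ ensures the exterior set stays at positive distance from every critical point, so $\norm{\nabla f}$ is strictly positive on a compact set and therefore attains a positive infimum. A minor bookkeeping subtlety is that the second-order escape step of Algorithm \ref{algo_1} may occur on the prefix, but since it is executed only inside a saddle $\epsilon$-neighborhood and the prefix by construction avoids such neighborhoods, only vanilla gradient steps occur there and the descent estimate $\gamma^{2}/(2L)$ applies unchanged.
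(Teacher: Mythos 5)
Your treatment of the first claim ($\zeta < \infty$) mirrors the paper's own proof exactly: both invoke Lemma~\ref{lem8} to place the trajectory (and hence $\x_0$ and $\x^*_{optimal}$) inside a compact sublevel set, and then bound $\zeta$ by the diameter of that set.

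Where you differ is in the treatment of the ``without loss of generality'' clause. The paper's proof simply declares the WLOG and gives no justification; you actually supply one via a re-indexing argument: discard the finite prefix of iterates that lie outside every $\xi$-ball around a critical point, using the uniform lower bound on $\norm{\nabla f}$ over the exterior compact set (which exists by Proposition~\ref{proposition1}, the finiteness of critical points from Lemma~\ref{lem8}, and the hypothesis $\xi \ll R$) to conclude via the descent estimate $f(\x_{k+1}) - f(\x_k) \le -\gamma^2/(2L)$ that this prefix has bounded length and thus contributes only an additive constant to any convergence rate. This is a genuine gap in the paper's exposition that you fill. Your argument is essentially the same as the one the paper implicitly uses later inside the proof of Theorem~\ref{thm5} (the bound on $K_1$), so it is consistent with how the authors reason, but making it explicit here, where the WLOG is first asserted, is cleaner.

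Two small remarks on your bookkeeping. First, your claim that the second-order step never fires on the prefix rests on $\gamma > L\epsilon$; this does hold because $\gamma = \Omega(\xi)$ with $\xi > \epsilon$ (Proposition~\ref{proposition3}) and one can always shrink $\epsilon$ further without affecting $\xi$, but it is worth noting explicitly rather than leaving it as a ``subtlety.'' Second, the case in which the trajectory's first $\xi$-ball entry is into $\mathcal{B}_\xi(\x^*_{optimal})$ rather than a saddle neighborhood is not really subsumed by the WLOG as stated (which asserts $\x_0$ is near a \emph{strict saddle}); the honest reading is that in that case the claim of the proposition is vacuous and the downstream rate theorems reduce to the $T=0$ (no saddles encountered) case, which you effectively observe.
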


\begin{proof}
{From Theorem \ref{thmglobmain} the sequence $\{\x_k\}$ generated by Algorithm  \ref{algo_1} converges to some
 local minimum $\x^{*}_{optimal} $ and this local minimum lies in a compact set in $\mathbb{R}^n$ from Lemma \ref{lem8}. Hence the compact set can be taken to be the compact domain $\mathcal{U}$ from Proposition \ref{proposition1} where we have $\x_0 \in \mathcal{U}$ and $\x^{*}_{optimal} \in \mathcal{S_{*}} \subset \mathcal{U}$. Finally $\norm{\x_0 - \x^{*}_{optimal}} \leq \zeta$ follows from the compactness of $\mathcal{U}$.}
\end{proof}

\begin{proposition}\label{proposition3}
{ For any Morse function, the gradient magnitude at any $\x \in \mathcal{U} \backslash \bigcup_{j=1}^{l} \bar{\mathcal{B}}_{\xi}(\x^*_j)$ for any sufficiently small $\xi$ is lower bounded by some $\gamma$ where we have that:
		$$ \norm{\nabla f(\x)} \geq \gamma = \Omega (\xi)$$ and $\xi$ is bounded from Theorem \ref{thm3}. Further, for any sufficiently small $\epsilon$ where $\epsilon \ll 1$, we can write $\gamma = \Theta( \epsilon^{\upsilon})$ where $\upsilon \in [0,1)$ is a $\xi$ dependent parameter that controls the function geometry in regions away from its critical points\footnote{The value of $\upsilon$ cannot be greater than or equal to $1$ since by definition $\gamma = \Omega(\xi)$ and $\xi > \epsilon$ which implies $\gamma = \Omega(\epsilon)$.}. Hence, very small values of $\upsilon$ imply \emph{well-structured functions}, i.e., functions whose gradients are almost of constant order in regions away from its critical points whereas $\upsilon \uparrow 1$ implies \emph{ill-structured functions}, i.e., functions whose gradients are almost of $\epsilon$ order in regions away from their critical points.   }
\end{proposition}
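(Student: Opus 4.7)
The plan is to establish the lower bound in two regimes: points far from all critical points, where compactness gives a constant lower bound, and points just outside the saddle balls $\bar{\mathcal{B}}_{\xi}(\x^*_j)$, where a Taylor-expansion argument at the saddle yields the sharp $\Omega(\xi)$ scaling. Combining the two regimes gives the global lower bound on $\|\nabla f(\x)\|$ over $\mathcal{U}\setminus\bigcup_{j=1}^{l}\bar{\mathcal{B}}_{\xi}(\x^*_j)$.

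First, I would observe that $\mathcal{U}\setminus\bigcup_{j=1}^{l}\mathcal{B}_{\xi}(\x^*_j)$ is closed and bounded, hence compact, since $\mathcal{U}$ is compact and we are removing a finite union of open balls. By Assumption \textbf{A1}, $\nabla f$ is continuous, so $\|\nabla f\|$ attains its minimum on this set at some point $\tilde{\x}$. Since the set contains no critical point (by Proposition~\ref{proposition1}, critical points are isolated with separation $R \gg \xi$, so the only critical points in $\mathcal{U}$ are the $\x^*_j$), we have $\|\nabla f(\tilde{\x})\|>0$. This already yields a strictly positive $\gamma$; the remaining issue is showing the sharp dependence $\gamma=\Omega(\xi)$.

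Next, I would sharpen the bound near each critical point. Fix any $\x^*_j$ and any $\x$ with $\|\x-\x^*_j\|=\xi$. Writing $\u=\x-\x^*_j$ and using $\nabla f(\x^*_j)=\mathbf{0}$, the integral form of the gradient yields
\begin{align*}
\nabla f(\x) \;=\; \int_0^1 \nabla^2 f(\x^*_j+t\u)\u\, dt \;=\; \nabla^2 f(\x^*_j)\u + \mathbf{r}(\u),
\end{align*}
where by the Hessian-Lipschitz assumption \textbf{A3}, $\|\mathbf{r}(\u)\|\leq \tfrac{M}{2}\|\u\|^2$. By Assumption \textbf{A4}, every eigenvalue of $\nabla^2 f(\x^*_j)$ has magnitude at least $\beta$, so $\|\nabla^2 f(\x^*_j)\u\|\geq \beta\|\u\|=\beta\xi$. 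Consequently
\begin{align*}
\|\nabla f(\x)\|\;\geq\; \beta\xi - \tfrac{M}{2}\xi^2 \;=\; \xi\bigl(\beta - \tfrac{M}{2}\xi\bigr),
\end{align*}
which is $\Omega(\xi)$ for $\xi$ sufficiently small (e.g., $\xi<\beta/M$, which is consistent with the bound on $\xi$ from Theorem~\ref{thm3}). The same argument extends to any $\x$ in an annular shell around $\x^*_j$ of outer radius on the order of $\beta/M$, giving $\|\nabla f(\x)\|\geq \tfrac{\beta}{2}\|\x-\x^*_j\|\geq \tfrac{\beta}{2}\xi$ throughout the shell. Outside the union of these shells, the set of admissible $\x$ is compact and bounded away from every critical point by an $\Omega(1)$ margin, so by the compactness argument above, $\|\nabla f\|$ is bounded below by a positive constant, which is certainly $\Omega(\xi)$. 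Taking the minimum of the two bounds yields $\gamma=\Omega(\xi)$.

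Finally, the statement $\gamma=\Theta(\epsilon^{\upsilon})$ with $\upsilon\in[0,1)$ is essentially a definition: since $\xi>\epsilon$ and $\xi=\Omega(1)$ is permitted by Theorem~\ref{thm3}, we may parameterize $\gamma=\Theta(\epsilon^{\upsilon})$ with $\upsilon\in[0,1)$; $\upsilon=0$ captures the case of a constant-order lower bound (well-structured landscape), while $\upsilon\uparrow 1$ captures the degenerate case where $\xi\downarrow\epsilon$. The constraint $\upsilon<1$ is forced by the strict inequality $\xi>\epsilon$ combined with $\gamma=\Omega(\xi)$. The main technical obstacle I expect is the careful handling of the transition between the ``near-saddle'' Taylor expansion regime and the ``far-from-saddle'' compactness regime, particularly verifying that the lower bound $\tfrac{\beta}{2}\|\x-\x^*_j\|$ persists throughout a shell of radius consistent with the $\xi$ bound from Theorem~\ref{thm3} — but this is controlled directly by the condition $\xi<\beta/M$ and the strict-saddle separation $R$.
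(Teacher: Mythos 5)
Your proof is correct and substantially more rigorous than the paper's own argument, which essentially defines the parameter $\upsilon$ by fiat through a case classification and asserts that $\xi$ can be chosen so that $\gamma=\Omega(\xi)$ holds, without deriving that bound. You actually establish the $\Omega(\xi)$ lower bound via a two-regime decomposition: a Taylor/integral expansion near each critical point giving $\|\nabla f(\x)\|\geq(\beta-\tfrac{M}{2}\xi)\xi$, and a compactness argument away from all critical points giving a constant floor. The decomposition correctly handles the subtlety that the naive compactness minimum over $\mathcal{U}\setminus\bigcup\mathcal{B}_{\xi}(\x^*_j)$ degenerates to zero as $\xi\to0$; by splitting off a $\xi$-independent annular shell of outer radius $\Theta(\beta/M)$ around each $\x^*_j$, you isolate the $\xi$-dependence into a regime where the sharper expansion bound controls it, while the complement is genuinely $\xi$-independent and hence yields a uniform positive constant, which is trivially $\Omega(\xi)$.

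Two minor remarks. First, your near-saddle step is slightly weaker than necessary: the paper's \eqref{interimbound4} (in the proof of Lemma~\ref{polyaklem}) already gives $\|\nabla f(\x)\|\geq\beta\|\x-\x^*_j\|$ directly from Assumption~\textbf{A4} and the integral mean-value form of $\nabla f$, without paying the $\tfrac{M}{2}\xi^2$ Hessian-Lipschitz correction, so you could cite that and avoid the constraint $\xi<\beta/M$ entirely; it is subsumed by the bound on $\xi$ from Theorem~\ref{thm3} anyway. Second, for the final $\gamma=\Theta(\epsilon^{\upsilon})$ claim, your reading is the right one: this is really a definition of the parameter $\upsilon$ together with the observation that $\gamma=\Omega(\xi)$ and $\xi>\epsilon$ force $\upsilon<1$, which is exactly how the paper's footnote justifies the range $\upsilon\in[0,1)$. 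Your approach has the added benefit of showing the bound $\gamma=\Omega(\xi)$ holds for all sufficiently small $\xi$, whereas the paper merely claims $\xi$ \emph{can be chosen} to make this true.
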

	
\begin{proof}
{For any Morse function on a compact domain $ \mathcal{U}$, the region away from its critical points defined by $ \mathcal{U} \backslash \bigcup_{j=1}^{l} \bar{\mathcal{B}}_{\xi}(\x^*_j)$ can be categorized into three sub-regions on the basis of gradient magnitudes in these regions. Expressing the gradient magnitudes as function of $\epsilon$ and some $\xi$ where $\epsilon < \xi$ and $\epsilon \ll 1$, we can write $ \norm{\nabla f(\x)} \geq \gamma = \Theta( \epsilon^{\upsilon})$ for any $\x \in \mathcal{U} \backslash \bigcup_{j=1}^{l} \bar{\mathcal{B}}_{\xi}(\x^*_j)$. The parameter $\upsilon \geq 0$ is a function of $\xi$ which controls gradient magnitudes in regions away from the function's critical points. Since $\xi$ is a free variable that is bounded above from Theorem \ref{thm3}, we can choose $\xi$ such that $  \gamma = \Omega(\xi)$ so as to restrict $\upsilon$ in the interval $[0,1)$. Then based on the values of $\upsilon$ we have:
\begin{itemize}
    \item regions with ``large" gradient magnitudes when $\gamma = \Theta( \epsilon^{\upsilon})$ is a constant for $\upsilon \downarrow 0$,
    \item regions with ``moderate to small" gradient magnitudes when $\gamma = \Theta( \epsilon^{\upsilon})$ is moderate or small for $0 < \upsilon < 1$, and
    \item regions with sufficiently ``small" gradient magnitudes when $\gamma = \Theta( \epsilon^{\upsilon})$ is almost of $\epsilon$ order for $ \upsilon \uparrow 1$.
\end{itemize}
Since only the above three cases or their combinations are  possible in regions away from critical points, Proposition \ref{proposition3} captures every possible Morse function. When a function in regions away from its critical points satisfies a combination of two or more of these cases, then $\gamma$ is automatically the minimum of the occurring cases as $\norm{\nabla f(\x)}$ is lower bounded by $\gamma$.}
\end{proof}

Note that from Proposition \ref{proposition3} for $\upsilon $ close to $0$ the quantity {$\gamma$ is of constant order, i.e., $\gamma \approx \Theta(1)$  .} Since $ \gamma = \Omega(\xi)$ and $\gamma$ is of constant order hence we will have that $\gamma \gg \xi$ which implies $ \gamma > \frac{1}{\sqrt{2}}L\xi$ for moderate values of $\xi$ and therefore the \textbf{no-return condition} to such $\xi-$saddle neighborhood holds from Lemma \ref{lemma5}. For all other choices of $\upsilon $ we have $\gamma = \Theta (\epsilon^{\upsilon}) $ and therefore $\xi = \mathcal{O}(\epsilon^{\upsilon})$ where $\epsilon \ll 1$ due to which \textbf{no-return condition} to a small $\xi-$saddle neighborhood holds from Lemma \ref{lemma4}.

Our next lemma establishes the  Lipschitz continuity of $f(\cdot)$ in the compact domain $\mathcal{U}$.
\begin{lem}\label{lipschitzlem1}
As a consequence of Proposition \ref{proposition2}, the function $f(\cdot)$ is Lipschitz continuous in the compact domain $\mathcal{U}$, where the Lipschitz constant is given by $ L \textbf{diam}(\mathcal{U})$.
\end{lem}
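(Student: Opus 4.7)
The plan is to bound the gradient magnitude uniformly over $\mathcal{U}$ using the $L$-Lipschitz gradient property (Assumption \textbf{A2}), and then convert that uniform gradient bound into a Lipschitz bound on $f(\cdot)$ via the fundamental theorem of calculus applied along line segments in $\mathcal{U}$.

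First, I would establish that there exists at least one critical point of $f(\cdot)$ inside $\mathcal{U}$. This is immediate from Proposition \ref{proposition2}: the local minimum $\x^*_{optimal}$ lies in $\mathcal{S_{*}} \subset \mathcal{U}$, so $\nabla f(\x^*_{optimal}) = \mathbf{0}$. Then, for any $\z \in \mathcal{U}$, Assumption \textbf{A2} gives
\begin{align*}
\norm{\nabla f(\z)} = \norm{\nabla f(\z) - \nabla f(\x^*_{optimal})} \leq L \norm{\z - \x^*_{optimal}} \leq L \, \textbf{diam}(\mathcal{U}),
\end{align*}
where the last inequality uses that both $\z$ and $\x^*_{optimal}$ lie in $\mathcal{U}$. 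Hence $\sup_{\z \in \mathcal{U}} \norm{\nabla f(\z)} \leq L \, \textbf{diam}(\mathcal{U})$.

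Next, for any $\x, \y \in \mathcal{U}$, since $\mathcal{U}$ is a compact domain (which I will take to be convex, or otherwise restrict to a convex enclosing set of comparable diameter so that the segment $\{\y + t(\x - \y) : t \in [0,1]\}$ lies in a set where the gradient bound holds), the fundamental theorem of calculus yields
\begin{align*}
f(\x) - f(\y) = \int_0^1 \langle \nabla f(\y + t(\x-\y)), \x - \y \rangle \, dt.
\end{align*}
Applying Cauchy--Schwarz inside the integral and the uniform gradient bound established above gives
\begin{align*}
\abs{f(\x) - f(\y)} \leq \norm{\x - \y} \sup_{\z \in \mathcal{U}} \norm{\nabla f(\z)} \leq L \, \textbf{diam}(\mathcal{U}) \norm{\x - \y},
\end{align*}
which is exactly the claimed Lipschitz bound with constant $L \, \textbf{diam}(\mathcal{U})$.

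The only subtlety is ensuring the line segment $\y + t(\x - \y)$ stays in a region where the gradient bound applies; if $\mathcal{U}$ itself is not convex, one replaces $\mathcal{U}$ by its closed convex hull (still compact and of the same order of diameter up to constants, or uses the fact that on all of $\mathbb{R}^n$ the gradient bound $\norm{\nabla f(\z)} \leq L \norm{\z - \x^*_{optimal}}$ holds globally by \textbf{A2}, so the integrand is at most $L \cdot (\textbf{diam}(\mathcal{U}) + \norm{\x - \y})$ along the segment, which still delivers a Lipschitz bound of the claimed order). This is the only point that might require care; the remainder is a standard gradient-bound-implies-Lipschitz argument.
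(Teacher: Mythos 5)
Your proof is correct and takes essentially the same approach as the paper: bound $\norm{\nabla f}$ uniformly on $\mathcal{U}$ by applying Assumption \textbf{A2} relative to a critical point inside $\mathcal{U}$, then integrate (the paper simply cites the mean value theorem where you use the fundamental theorem of calculus with Cauchy--Schwarz). The convexity caveat you raise is a reasonable point that the paper's one-line invocation of the mean value theorem glosses over, but it does not change the substance of the argument.
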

\begin{proof}
By the gradient Lipschitz continuity of $f$ for any $\x \in \mathcal{U}$ where $\mathcal{U}$ has atleast one critical point $\x^*$ of $f$, we have the following bound:
\begin{align}
   \norm{\nabla f(\x)} &\leq L \norm{\x - \x^*} \leq L\textbf{diam}(\mathcal{U}) \\
   \implies \sup_{\x \in \mathcal{U}} \norm{\nabla f(\x)} &\leq L\textbf{diam}(\mathcal{U}).
\end{align}
	 From the Mean value theorem, for any $\x, \y$ in $\mathcal{U}$ we have that:
\begin{align}
    f(\x) - f(\y) &\leq \sup_{\x \in \mathcal{U}} \norm{\nabla f(\x)} \norm{\x -\y} \leq  {L} \textbf{diam}(\mathcal{U})  \norm{\x - \y}.  \label{functionlipschitz}
\end{align}
\end{proof}

{The above lemma will help us in developing global rates of convergence in terms of the iterate sequence $\{\x_k\}$. In the absence of this lemma global rates of convergence can still be obtained however such rates would be in terms of the function value sequence $\{f(\x_k)\}$. Since the condition $\x_k \to \x^*_{optimal}$ implies strong convergence whereas the condition $f(\x_k) \to f(\x^*_{optimal})$ implies weak convergence, lemma \ref{lipschitzlem1} becomes absolutely necessary for establishing a stronger convergence result.}

{Now that we are interested in developing convergence rates for the iterate sequence, we need a handle on the largest distance our iterate $\x_k$ can possibly travel from the initialization $\x_0$  within some compact domain $\mathcal{U}$ before converging to a neighborhood of $\x^*_{optimal}$. Quantifying this distance is essential since the total number of iterations or the travel time of any trajectory depends on how much distance it travelled before converging to some local minimum neighborhood. In the best case the trajectory could take a bee line path between $\x_0$ and  $\x^*_{optimal}$ whereas in the worst case a trajectory could possibly travel much farther than $\x^*_{optimal}$ before turning back and eventually converging. The next theorem provides a precise bound on the farthest distance any worst case trajectory could travel to before returning back for good. In doing so it also provides a handle on the number of saddle point neighborhoods encountered in the path of such trajectory.}

\begin{theorem} \label{thm4}
{On a function satisfying assumptions \textbf{A1-A4}, the trajectory generated from the iterate sequence $\{\x_k\}$ by Algorithm \ref{algo_1} that has escaped some ball $\mathcal{B}_{R_0}(\x^*_0)$ cannot escape the ball $\mathcal{B}_{R_{\omega}}(\x^*_0) \supset \mathcal{B}_{R_0}(\x^*_0)$ if it has to re-enter the ball $\mathcal{B}_{R_0}(\x^*_0)$ in finite number of iterations, where we have that $\x_0 \in \mathcal{B}_{\xi}(\x^*_0)$ and $\x^*_0 \in \mathcal{S}_*$ is a strict saddle point {provided that} the radius $R_{\omega}$ satisfies the condition:}
\begin{align}
    R_{\omega} & \leq R_0 +  2L\textbf{diam}(\mathcal{U}) \frac{R_0}{\gamma}+ N_0 \bigg( \frac{1}{\beta} + \frac{L}{2 \beta^2}\bigg) \frac{L^2\epsilon^2}{\gamma} +  N_0 ({K}_{exit}+K_{shell}) \xi
\end{align}
{where $N_0 = \frac{2L\textbf{diam}(\mathcal{U}) \frac{R_0}{R}}{\bigg( \frac{\gamma}{2}-\bigg( \frac{1}{\beta} + \frac{L}{2 \beta^2}\bigg) \frac{L^2\epsilon^2}{R} - \gamma ({K}_{exit}+K_{shell}) \frac{\xi}{R}\bigg)}$ is an upper bound on the number of {strict saddle neighborhoods of radius $\xi$} encountered by the trajectory of $\{\x_k\}$. Note that here $K_{exit}$ is upper bounded by \eqref{linearexittimebound}, $K_{shell} $ is upper bounded by Theorem \ref{thm3} and the compact domain $\mathcal{U}$ contains the ball $\mathcal{B}_{R_{\omega}}(\x^*_0) $, i.e.,  $\mathcal{U} \supset \mathcal{B}_{R_{\omega}}(\x^*_0)  $.}
\end{theorem}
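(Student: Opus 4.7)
The plan is to bound the maximum excursion $R_{\omega}=\max_{k\leq k_{r}}\norm{\x_{k}-\x^{*}_{0}}$ of the trajectory from $\x^{*}_{0}$ by controlling the total displacement along $\{\x_{k}\}$ up to the farthest iterate $\x_{k_{m}}$ (satisfying $\norm{\x_{k_{m}}-\x^{*}_{0}}=R_{\omega}$). Using $\norm{\x_{0}-\x^{*}_{0}}\leq\xi$ and the triangle inequality, $R_{\omega}\leq\xi+\sum_{k=0}^{k_{m}-1}\norm{\x_{k+1}-\x_{k}}$, and I would partition the indices $0\leq k<k_{m}$ into three classes: (a) gradient descent steps at iterates outside all saddle neighborhoods $\bigcup_{j}\mathcal{B}_{\xi}(\x^{*}_{j})$; (b) gradient descent steps inside some $\mathcal{B}_{\xi}(\x^{*}_{j})$; and (c) the second-order curvature steps triggered by Step~\ref{algocurvaturecondition} of Algorithm~\ref{algo_1}. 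Denoting the corresponding partial sums of per-step distances by $T_{a},T_{b},T_{c}$, the target then reads $R_{\omega}\leq\xi+T_{a}+T_{b}+T_{c}$.

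For class~(b), Lemmas~\ref{lemma4}--\ref{lemma5} (no-return to $\epsilon$- and $\xi$-saddle neighborhoods) imply that each strict-saddle neighborhood is visited at most once, while the linear exit-time bound \eqref{linearexittimebound} from Theorem~\ref{thm1} and the shell travel time from Theorem~\ref{thm3} cap the number of iterations per visit by $K_{exit}+K_{shell}$. Since $\norm{\nabla f(\x)}\leq L\xi$ on $\mathcal{B}_{\xi}(\x^{*}_{j})$ by gradient Lipschitz continuity, each per-step distance is $\norm{\nabla f(\x_{k})}/L\leq\xi$, yielding $T_{b}\leq N_{0}(K_{exit}+K_{shell})\xi$. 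Class~(c) contributes at most $N_{0}L\epsilon/\beta$ (step length $\norm{\nabla f(\x_{k})}/\beta$ with $\norm{\nabla f(\x_{k})}\leq L\epsilon$ at the trigger), which is absorbable into the $\xi$ term for small $\epsilon$.

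For class~(a), the standard descent lemma at $\alpha=1/L$ gives $\norm{\nabla f(\x_{k})}^{2}\leq 2L\bigl(f(\x_{k})-f(\x_{k+1})\bigr)$, and Proposition~\ref{proposition3} supplies $\norm{\nabla f(\x_{k})}\geq\gamma$ outside all saddle neighborhoods, so each per-step distance satisfies $\norm{\x_{k+1}-\x_{k}}=\norm{\nabla f(\x_{k})}/L\leq 2\bigl(f(\x_{k})-f(\x_{k+1})\bigr)/\gamma$. Summing telescopically produces $T_{a}\leq\tfrac{2}{\gamma}\bigl(f(\x_{0})-f(\x_{k_{m}})+\Delta_{c}\bigr)$, where $\Delta_{c}\leq N_{0}L^{2}\epsilon^{2}\bigl(\tfrac{1}{\beta}+\tfrac{L}{2\beta^{2}}\bigr)$ accounts for possible function-value increases at class-(c) steps, obtained by Taylor expansion using $\norm{\nabla f(\x_{k})}\leq L\epsilon$ and step length $\leq L\epsilon/\beta$. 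The monotone decrease of $\{f(\x_{k})\}$ (Lemma~\ref{functionseqmonotonic}) gives $f(\x_{k_{m}})\geq f(\x_{k_{r}})$, and Lemma~\ref{lipschitzlem1} with $\x_{0},\x_{k_{r}}\in\mathcal{B}_{R_{0}}(\x^{*}_{0})$ yields $f(\x_{0})-f(\x_{k_{m}})\leq L\,\textbf{diam}(\mathcal{U})\,R_{0}$ (using $\xi\ll R_{0}$). Plugging everything back gives the stated bound for $R_{\omega}$.

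To extract the closed-form expression for $N_{0}$, I would invoke Proposition~\ref{proposition1}: distinct critical points are separated by at least $R$, so between any two saddle visits the trajectory travels at least distance $R$, giving $N_{0}R\leq T_{a}+T_{b}+T_{c}$. Substituting the bounds derived above produces a self-referential inequality of the form $N_{0}R\leq A+B\,N_{0}$ with $A=\tfrac{2L\,\textbf{diam}(\mathcal{U})R_{0}}{\gamma}$ and $B=\tfrac{2L^{2}\epsilon^{2}(\tfrac{1}{\beta}+\tfrac{L}{2\beta^{2}})}{\gamma}+(K_{exit}+K_{shell})\xi$, which rearranges to the quoted formula for $N_{0}$. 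The main technical obstacle is precisely this self-reference: $R_{\omega}$ depends on $N_{0}$ both through $T_{a}$ (via $\Delta_{c}$) and through $T_{b}$, so the rearrangement is meaningful only when the denominator $\tfrac{\gamma}{2}-\bigl(\tfrac{1}{\beta}+\tfrac{L}{2\beta^{2}}\bigr)\tfrac{L^{2}\epsilon^{2}}{R}-\gamma(K_{exit}+K_{shell})\xi/R$ is strictly positive; verifying this positivity relies on the smallness conditions $\xi\ll R$ (Proposition~\ref{proposition1}) and $\epsilon$ small (Theorems~\ref{thm1} and~\ref{thm3}) already in force. A secondary subtlety is that Lemma~\ref{functionseqmonotonic} is phrased for behavior near a single saddle, so extending monotone decrease across the full trajectory requires combining it with the descent lemma for the plain gradient steps outside saddle neighborhoods, which is immediate at step-size $1/L$.
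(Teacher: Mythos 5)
Your proposal is correct and follows essentially the same route as the paper: both convert the sufficient-decrease inequality at $\alpha=1/L$ into a path-length bound for the large-gradient region, partition iterations into large-gradient gradient steps, saddle-neighborhood gradient steps, and second-order curvature steps, bound the saddle contributions via the no-return lemmas and the $K_{exit}+K_{shell}$ travel time, and close the self-referential loop for $N_0$ using the critical-point separation $R$ together with the positivity of the denominator you correctly flag as the crux. The paper's bookkeeping differs only in starting from the exit point on $\partial\mathcal{B}_{R_0}(\x^*_0)$ and lower-bounding the round-trip path length by $2(R_\omega-R_0)$ rather than measuring the one-way excursion from $\x_0$, which shifts the absolute constants by factors of two but not the structure of the argument or the order of the final bound.
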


The proof of this theorem is given in Appendix \ref{Appendix F}.

\begin{remark}
In order to characterize the convergence rate for Algorithm~\ref{algo1} we need to focus on the worst-case trajectories that can be generated by it. Theorem~\ref{thm4} helps capture the behavior of such worst-case trajectories by finding {the radius of the largest possible ball whose boundary can be reached} by such trajectories.
\end{remark}

We are now ready to state the final theorem of this work which quantifies the convergence rate of Algorithm \ref{algo_1} to some $\epsilon$-neighborhood of a local minimum.
\begin{theorem} \label{thm5}
{On a function satisfying assumptions \textbf{A1-A4}, the total time $K_{max}$ for the trajectory of $\{\x_k\}$ generated from Algorithm \ref{algo_1} to converge to a sufficiently small $\epsilon$-neighborhood of a local minimum $\x^*_{optimal}$ is bounded by:}
\begin{align}
    K_{max}
           & <  T\bigg( {K}_{exit}+K_{shell}  \bigg)+ 4L\textbf{diam}(\mathcal{U}) \frac{\zeta L}{\gamma^2} +  2T\bigg( \frac{1}{\beta} + \frac{L}{2 \beta^2}\bigg) \frac{\epsilon^2  }{\gamma^2} + \frac{\log\bigg(\frac{\xi}{\epsilon} \bigg)}{\log\bigg(1- \frac{\beta}{L} \bigg)^{-1}},
\end{align}
where $ T < \frac{2L\textbf{diam}(\mathcal{U}) \frac{\zeta}{R}}{\bigg( \frac{\gamma}{2}-\bigg( \frac{1}{\beta} + \frac{L}{2 \beta^2}\bigg) \frac{L^2\epsilon^2}{R} - \gamma ({K}_{exit}+K_{shell}) \frac{\xi}{R}\bigg)}$ is the total number of $\xi$ radius saddle neighborhoods encountered, $\epsilon$ and $\xi$ are bounded from Theorems \ref{thm1}, \ref{thm3} {and $\x_0$ is initialized in a $\xi$-neighborhood of any strict saddle point.}

\end{theorem}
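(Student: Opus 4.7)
The plan is to partition the trajectory of $\{\x_k\}$ into three kinds of segments and bound the number of iterations in each kind separately, then sum. Namely, (i) segments inside the small saddle balls $\mathcal{B}_\epsilon(\x^*_j)$, (ii) segments inside the shell $\mathcal{B}_\xi(\x^*_j)\backslash \mathcal{B}_\epsilon(\x^*_j)$ for each saddle encountered, and (iii) segments in the ``exterior'' region $\mathcal{U}\backslash \bigcup_j \bar{\mathcal{B}}_\xi(\x^*_j)$ of moderate-to-large gradient. Theorem~\ref{thm4} supplies the cap on the number $T$ of distinct strict-saddle neighborhoods visited (since Lemmas~\ref{lemma4}--\ref{lemma5} forbid re-entry, each saddle ball is visited at most once) as well as the radius of the largest ball that the trajectory can reach from its initialization; substituting $R_0=\xi$ and $\zeta$ for the worst-case travel distance then gives the expression for $T$ stated in the theorem.

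Next I would bound the time inside each saddle. By Lemma~\ref{saddleescapelemma} each traversal of $\mathcal{B}_\epsilon(\x^*_j)$ costs at most the linear exit time $K_{exit}$ from \eqref{linearexittimebound}, and by Theorem~\ref{thm3} the sojourn inside the surrounding shell costs at most $K_{shell}$. Summing over the at most $T$ saddle neighborhoods encountered yields the aggregate contribution $T(K_{exit}+K_{shell})$. For the exterior segments, Proposition~\ref{proposition3} guarantees $\|\nabla f(\x)\|\geq \gamma$; combined with the standard descent lemma for $\alpha=1/L$ each such step decreases $f$ by at least $\gamma^2/(2L)$. Since $f$ is Lipschitz with constant $L\,\textbf{diam}(\mathcal{U})$ on $\mathcal{U}$ (Lemma~\ref{lipschitzlem1}) and $\|\x_0-\x^*_{optimal}\|\leq \zeta$, the total function-value drop attributable to exterior steps is at most $L\,\textbf{diam}(\mathcal{U})\cdot 2\zeta$ (accounting for the round trips permitted by Theorem~\ref{thm4}); dividing by $\gamma^2/(2L)$ yields the $4L\,\textbf{diam}(\mathcal{U})\zeta L/\gamma^2$ term. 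A similar descent-lemma accounting, applied to the small extra cost $(1/\beta+L/(2\beta^2))L^2\epsilon^2$ incurred on entering/leaving each saddle (the same quantity already used in the proof of Theorem~\ref{thm4}), multiplied by $T$ and converted to iterations via $\gamma^2$, produces the $2T(1/\beta+L/(2\beta^2))\epsilon^2/\gamma^2$ term.

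Finally, once the trajectory settles inside a $\xi$-neighborhood of $\x^*_{optimal}$ (a \emph{local minimum}, by Theorem~\ref{thmglobmain}, so Assumption~\textbf{A4} makes $\nabla^2 f \succ \beta \I$ there), the iterates enter a purely contractive phase. By Lemma~\ref{polyaklem} the PL inequality holds with constant $\beta^2/L$, giving the standard geometric decay $f(\x_{k+1})-f(\x^*_{optimal})\leq (1-\beta/L)(f(\x_k)-f(\x^*_{optimal}))$ after applying the descent lemma. Iterating until $\|\x_k-\x^*_{optimal}\|\leq \epsilon$ and converting from function values back to distances through the $\beta$-growth of $f$ near $\x^*_{optimal}$ yields the final term $\log(\xi/\epsilon)/\log(1-\beta/L)^{-1}$. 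Summing the four contributions gives the claimed bound on $K_{max}$.

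The main obstacle will be the bookkeeping in the exterior phase: one must carefully combine the worst-case distance $\zeta$ from Proposition~\ref{proposition2} with the ``reach radius'' of Theorem~\ref{thm4} so that the exterior step count is truly proportional to $L\,\textbf{diam}(\mathcal{U})\zeta/\gamma^2$ and not to the much larger path length the trajectory could in principle trace out. This is precisely where the no-return Lemmas~\ref{lemma4}--\ref{lemma5} are essential, since they ensure each exterior excursion can be debited once against the monotone decrease of $f$ rather than repeatedly; wiring this observation cleanly into the descent-lemma tally is the delicate piece of the argument.
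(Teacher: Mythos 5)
Your decomposition into saddle-ball time, shell time, exterior time, and terminal contraction time is exactly the paper's plan, and your accounting for the first two phases via Lemma~\ref{saddleescapelemma}, Theorem~\ref{thm3}, and the saddle count $T$ from Theorem~\ref{thm4} matches the paper. For the exterior phase, the paper reuses the inequality \eqref{thm4bound4} (with $R_0=\zeta$) from the proof of Theorem~\ref{thm4} and then lower-bounds $\sum_{k\in\{k_{l_j}\}}\|\alpha\nabla f(\x_k)\|$ by $\alpha\gamma K_1$; your direct descent-lemma bookkeeping (each exterior step drops $f$ by at least $\gamma^2/(2L)$, total drop at most $2L\,\textbf{diam}(\mathcal U)\zeta$ by Lemma~\ref{lipschitzlem1}) is arithmetically the same argument, so no complaint there.

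There are two gaps worth flagging. First, for the terminal phase near $\x^*_{optimal}$, the paper does \emph{not} invoke the PL inequality. It bounds the iterate directly via
\begin{align}
\norm{\x_{k+1}-\x^*_{optimal}} \le \norm{\mathbf{I}-\alpha\textstyle\int_0^1\nabla^2 f(\x^*_{optimal}+p(\x_k-\x^*_{optimal}))\,dp}_2\,\norm{\x_k-\x^*_{optimal}} \le \Big(1-\tfrac{\beta}{L}\Big)\norm{\x_k-\x^*_{optimal}},
\end{align}
using $\alpha=1/L$ and $\lambda_{\min}\ge\beta$ in the locally convex region. Your route through Lemma~\ref{polyaklem} and the descent lemma gives $f(\x_{k+1})-f^*\le(1-\beta^2/L^2)(f(\x_k)-f^*)$, not the $(1-\beta/L)$ factor you wrote; and when you convert back from function values to distances through the quadratic growth, the per-iterate ratio you actually obtain is $\sqrt{1-\beta^2/L^2}$, which is strictly weaker than $1-\beta/L$. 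So the mechanism you invoke does not deliver the constant that appears in the theorem; the operator-norm contraction is what produces it.

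Second, all the exit-time bounds $K_{exit}$ you are summing were derived for the $\epsilon$-precision (first-order approximate) trajectory $\{\tilde\u_K\}$, whereas $K_{max}$ counts iterations of the \emph{exact} gradient trajectory. The paper closes this gap at the end of Appendix~\ref{Appendix F} by defining the true exit time $K_{exit}^{\omicron}$ and showing $K_{exit}^{\omicron}\lessapprox K_{exit}+3$ via the relative-error bound \eqref{relativeerrorprojectionbound}, the sequential-monotonicity Theorem~\ref{thm2}, and the shell rate from Theorem~\ref{thm3}. Without that bridge the total-time bound is formally about the wrong trajectory; you should add it.
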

The proof of this theorem is given in Appendix \ref{Appendix F}.

{In terms of the order notation, using \eqref{linearexittimebound} and \eqref{shellratelaw} followed by choosing some sufficiently small $\epsilon$ where $\epsilon$ is bounded by theorem \ref{thm1}, some moderately small $\xi$ from Propositions \ref{proposition1}, \ref{proposition3} and substituting $\gamma = \Theta( \epsilon^{\upsilon})$,  $K_{max}$ has the following {dependency on $\varepsilon$}: 
\begin{align}
    K_{max} = \mathcal{O}\bigg(T\log\bigg(\frac{1}{\epsilon}\bigg) \bigg)  + \mathcal{O} \bigg(T\log \bigg(\frac{\xi}{\epsilon}\bigg)\bigg)  + \mathcal{O}\bigg(\frac{1}{\epsilon^{2{\upsilon}}}\bigg)  \label{complexityrate}
\end{align}
 where $T= \mathcal{O}\bigg(\frac{1}{\epsilon^{\upsilon}}\bigg)$ is the number of saddles encountered and ${\upsilon}\in [0,1)$ is a parameter of the function $f(\cdot)$ defined in Proposition \ref{proposition3} which controls the function geometry in regions away from its critical points. The third term on the right hand side of \eqref{complexityrate} is $\mathcal{O}\bigg(\frac{1}{\epsilon^{2{\upsilon}}}\bigg)$ which quantifies the travel time of the trajectory in the region $ \mathcal{U} \backslash \bigcup_{j=1}^{l} \bar{\mathcal{B}}_{\xi}(\x^*_j)$ (for details, see proof of Theorem \ref{thm5} in Appendix \ref{Appendix F}).
}

{Observe that the dominant term in the expression of convergence rate from \eqref{complexityrate} is $\mathcal{O}\bigg(\frac{1}{\epsilon^{2{\upsilon}}}\bigg)$ where $\upsilon \in [0,1)$. Compared to the state of the art\footnote{{While Table \ref{table:2} lists various state-of-the-art algorithms, all those listed works except \cite{jin2017escape} use either accelerated gradient methods or Newton method as their base algorithm. Hence for sake of fairness, the rate comparison is done only with the Perturbed GD method of \cite{jin2017escape}.}} Perturbed GD method \cite{jin2017escape} which has a convergence rate of order $\mathcal{O}\bigg(\frac{1}{\epsilon^2} \log^4\bigg(\frac{1}{\epsilon^2}\bigg) \bigg)$, there is no poly-logarithmic dependence in our term $\mathcal{O}\bigg(\frac{1}{\epsilon^{2{\upsilon}}}\bigg)$ and in the worst case this term is still better than $\mathcal{O}\bigg(\frac{1}{\epsilon^{2}}\bigg)$ provided $\epsilon$ and $\xi$ are chosen to be sufficiently small from {Proposition} 
\ref{proposition3}. In particular, for well-structured functions which have large gradient magnitudes in regions away from critical points, we will have $\frac{1}{\epsilon^{2{\upsilon}}} \ll \frac{1}{\epsilon^{2}}$ thereby yielding a superior convergence rate to sufficiently small neighborhood of a local minimum. This improvement over the rate $\mathcal{O}\bigg(\frac{1}{\epsilon^{2}}\bigg)$ is only possible because of Theorem \ref{thm3} which gives a linear travel time within $\xi$ radius saddle neighborhoods. In the absence of Theorem \ref{thm3}, we would not have $\xi$ radius saddle neighborhoods within which fast travel is possible. Then we only have a much smaller $\epsilon$ radius saddle neighborhood from Theorem \ref{thm1} and outside such neighborhood, the travel time of the trajectory will be $\mathcal{O}\bigg(\frac{1}{\epsilon^{2}}\bigg)$. Existence of larger saddle neighborhoods from Theorem \ref{thm3} enables us to invoke Proposition \ref{proposition3} using which we can choose our $\epsilon$ sufficiently small and a certain $\xi$ so that the gradient magnitude in the region $ \mathcal{U} \backslash \bigcup_{j=1}^{l} \bar{\mathcal{B}}_{\xi}(\x^*_j)$ is lower bounded by $\gamma = \Omega(\xi) = \Theta(\epsilon^{\upsilon})$ for some $\upsilon \in [0,1)$. Then we get the improved rate of $\mathcal{O}\bigg(\frac{1}{\epsilon^{2{\upsilon}}}\bigg)$ in the region $ \mathcal{U} \backslash \bigcup_{j=1}^{l} \bar{\mathcal{B}}_{\xi}(\x^*_j)$ for our trajectory. It should however be noted that the value of parameter $\upsilon$ is not known explicitly since it depends on the function landscape in the region $ \mathcal{U} \backslash \bigcup_{j=1}^{l} \bar{\mathcal{B}}_{\xi}(\x^*_j)$. Specifying certain value for $\upsilon$ would require more assumptions on the function landscape which is beyond the scope of this work.}

\section{Numerical Results}
To test the efficacy of the proposed method, we simulate Algorithm \ref{algo_1} on two different problems, a modified Rastrigin function and a low-rank matrix factorization problem.

\subsection{Modified Rastrigin Function} The Rastrigin function is a nonconvex function that was first proposed in \cite{rastrigin1974systems} and the generalized versions appeared in \cite{muhlenbein1991parallel, hoffmeister1990genetic}. The function is given by
\begin{align}
    f(\x) & = An + \sum\limits_{i=1}^{n}(x_i^2 -  \cos{(2\pi x_i)}), \label{rastrigin}
\end{align}
where $A=10$ and $x_i \in [-5.12,5.12]$, and $f(\cdot)$ has a global minimum at $\x=\mathbf{0}$. In this section, we use a modified version of \eqref{rastrigin} given by:
\begin{align}
    f(\x) = \sum\limits_{i=1}^{n} a_i\cos{(b_i x_i)}, \label{rastrigin1}
\end{align}
where \eqref{rastrigin1} differs from \eqref{rastrigin} in the sense that \eqref{rastrigin1} does not have a quadratic term added to it (hence possibly some local minima are global minima). The modified formulation of the Rastrigin function is analytic and locally Morse at its critical point $\x^* = \mathbf{0}$ for the choice of parameters given below. It satisfies all the listed assumptions \textbf{A1-A4} in this work except coercivity due to the fact that we removed the quadratic growth term from it. {In particular, for the formulation \eqref{rastrigin1} we will have $L \leq \sum_i \abs{a_i b_i}$, $M \leq \sum_i\abs{a_i b_i^2}$ and $\beta$, $\delta$ are evaluated from the simulations.} This particular example highlights the fact that convergence to a local minimum is possible even without the coercivity assumption.

For simulations, we set $a_i = 1$ for $i=1$ and $a_i = -1$ elsewhere, $b_i = 1$ for $1 \leq i \leq \floor*{\frac{n}{2}}$ and $b_i = 0.4$ for $\floor*{\frac{n}{2}} +1 \leq i \leq n$. The point $\x^* = \mathbf{0}$ is a strict saddle point in our case and the initialization of the proposed CCRGD algorithm (Algorithm \ref{algo_1}) and the gradient descent (GD) method is done in an $\epsilon$ neighborhood of $\x^*$. Specifically, the iterate {$\x_0$} is initialized in an $\epsilon$ neighborhood of the strict saddle point $\x^*$ with a {very small unstable subspace projection value}, {i.e., $\frac{\norm{\pi_{\mathcal{E}_{US}}(\x_0-\x^*)}}{\norm{\x_0-\x^*}}<10^{-4}$ where $ \mathcal{E}_{US}$ is the unstable subspace of $\nabla^2f(\x^*)$} and the initialization point is same for both methods. In addition, the step-size for both methods is set to $\alpha =\frac{1}{L}$, where $L$ is the maximum absolute eigenvalue of the Hessian {we estimated} in the saddle neighborhood.

The results of our simulations are reported in Figures \ref{fig11}(a)--(d), where each subfigure has a total of three plots for a different combination of $(n,\epsilon)$. In each of the subfigures, the top-left plot shows that the gradient norm of the proposed CCRGD method first increases and then decreases while the GD method struggles to increase its gradient norm for many iterations. The top-right plot in each subfigure shows the initial and final eigenvalues of the Hessian at an iterate generated by the two methods, while the blue stem subplot in there shows the eigenvalue spectrum at the initialization (which is the same for both methods). It can be seen from the two plots in each subfigure that the GD method fails to converge to a second-order stationary point in the given number of iterations, while the CCRGD method easily converges to a local minimum.

Finally, the bottom plot in each subfigure shows the evolution of distance of the iterate from the initialization for the two methods. This plot also marks the iteration where the CCRGD method first exited the initial saddle neighborhood (this iteration index is the ``First Exit Time'') and also marks those iteration indices where the CCRGD method invoked the second-order Step \ref{algocurvaturecondition} in Algorithm \ref{algo1}.

\begin{figure}
\centering
\begin{tabular}{cc}
    \includegraphics[width=2.7in]{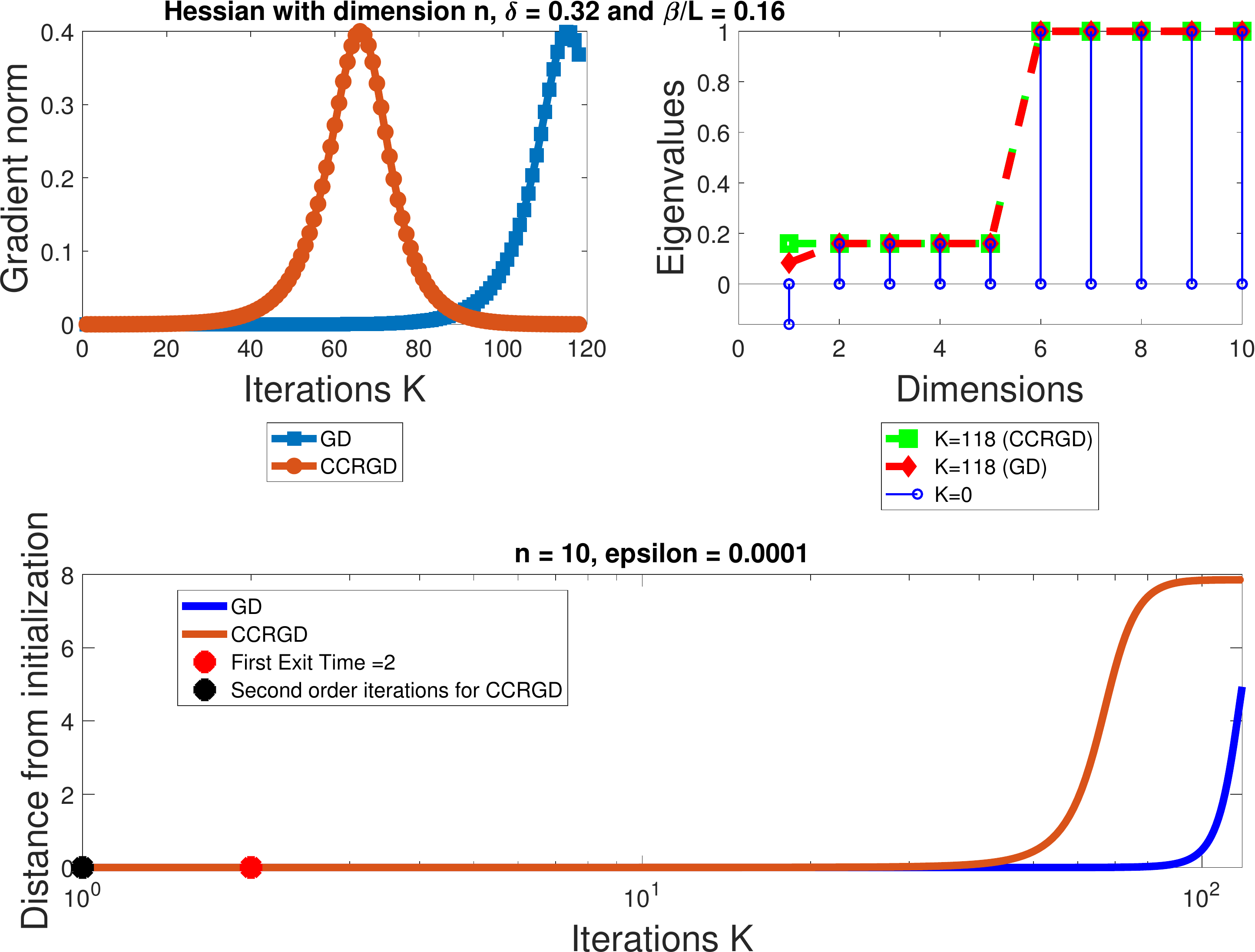} & \includegraphics[width=2.7in]{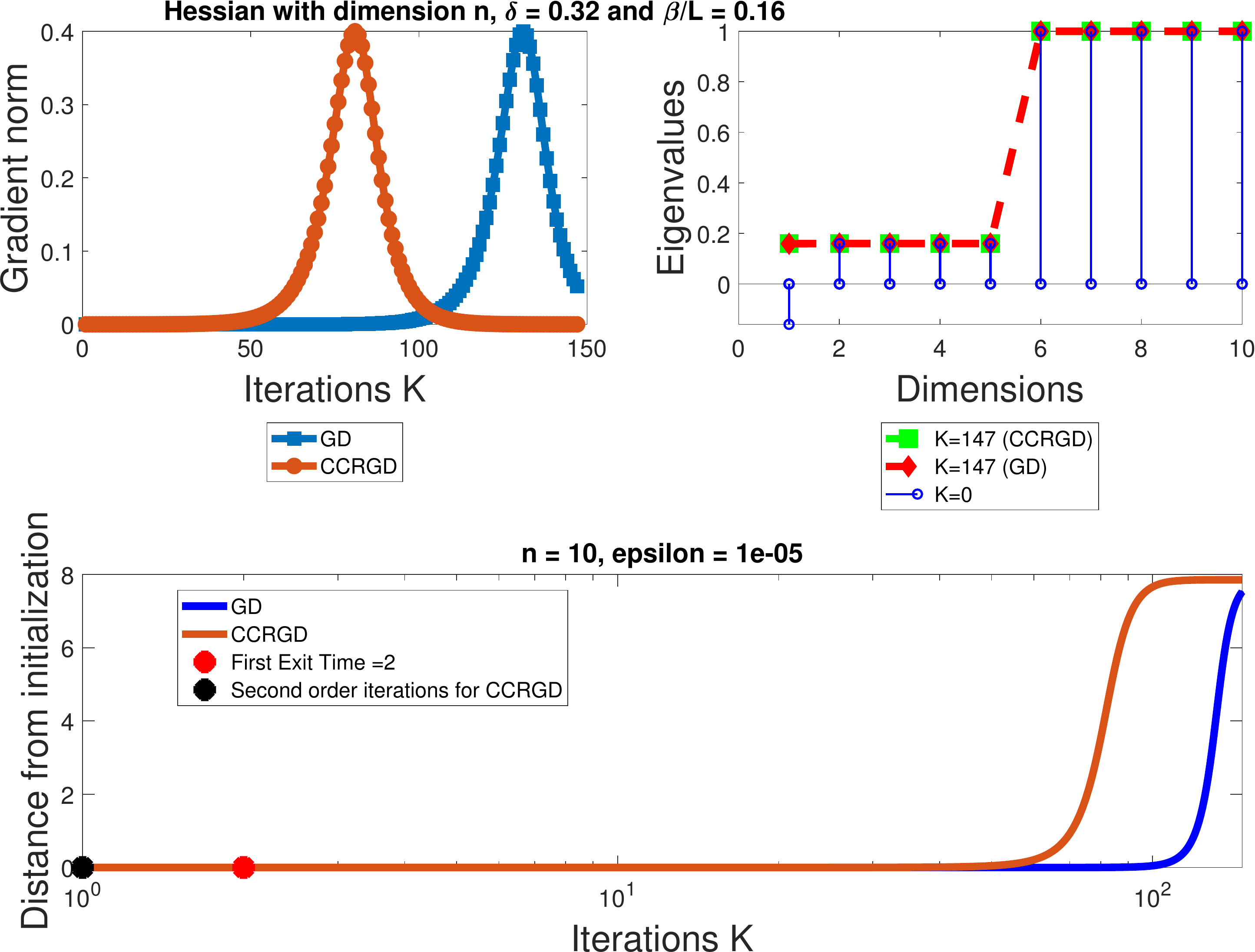} \\
    (a) & (b) \\
    & \\
    \includegraphics[width=2.7in]{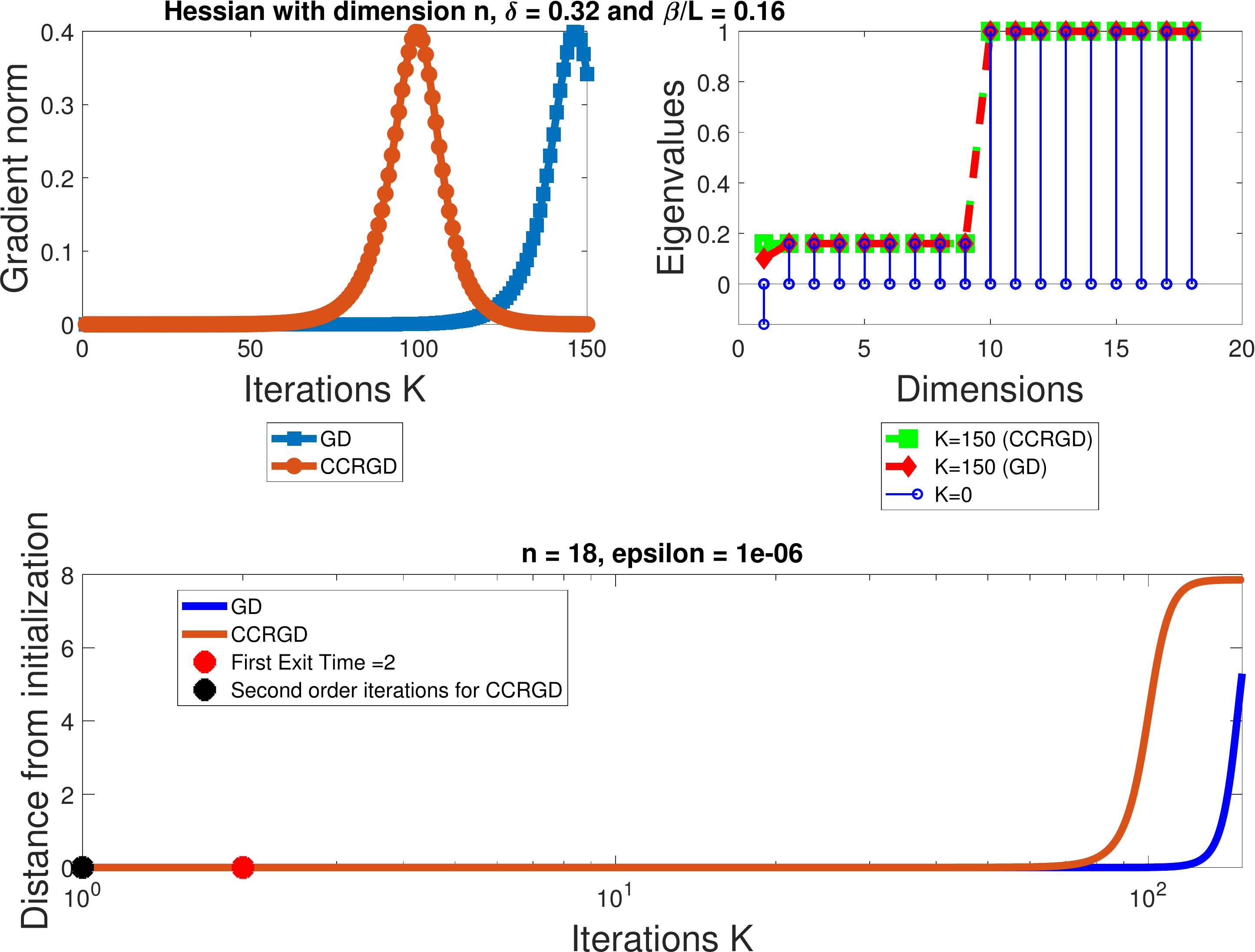} & \includegraphics[width=2.7in]{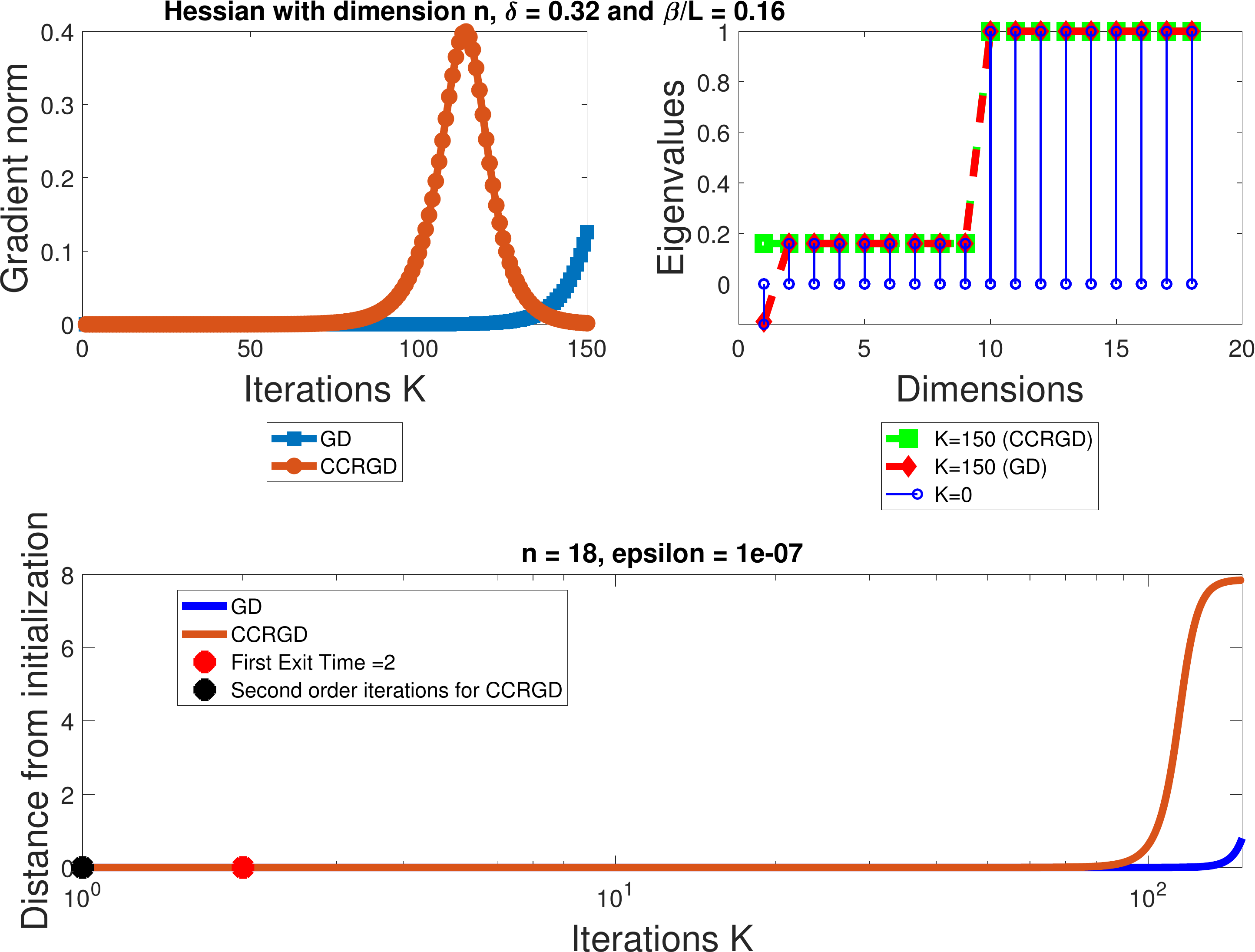} \\
    (c) & (d)
\end{tabular}
\caption{Simulation results on the modified Rastrigin function for various values of $n$ and $\epsilon$.}
\label{fig11}
\end{figure}

\subsection{Low-Rank Matrix Factorization}
The objective function for the problem in consideration is as follows:
\begin{align}
    f(\X_1,\X_2) = \frac{1}{4}\norm{\mathbf{M} - \X_1\X_2^T }^2_F + \varpi_1 \norm{\X_1}^2_F + \varpi_2 \norm{\X_2}^2_F, \label{simulate1}
\end{align}
where $\mathbf{M}\in \mathbb{R}^{n_1 \times n_2}$, $\X_1\in \mathbb{R}^{n_1 \times r}$ and $\X_2\in \mathbb{R}^{n_2 \times r}$ such that $r \leq \min \{n_1,n_2\}$ is the rank of matrix $\mathbf{M}$.

\begin{figure}
\centering
\begin{tabular}{cc}
     \includegraphics[width=2.7in]{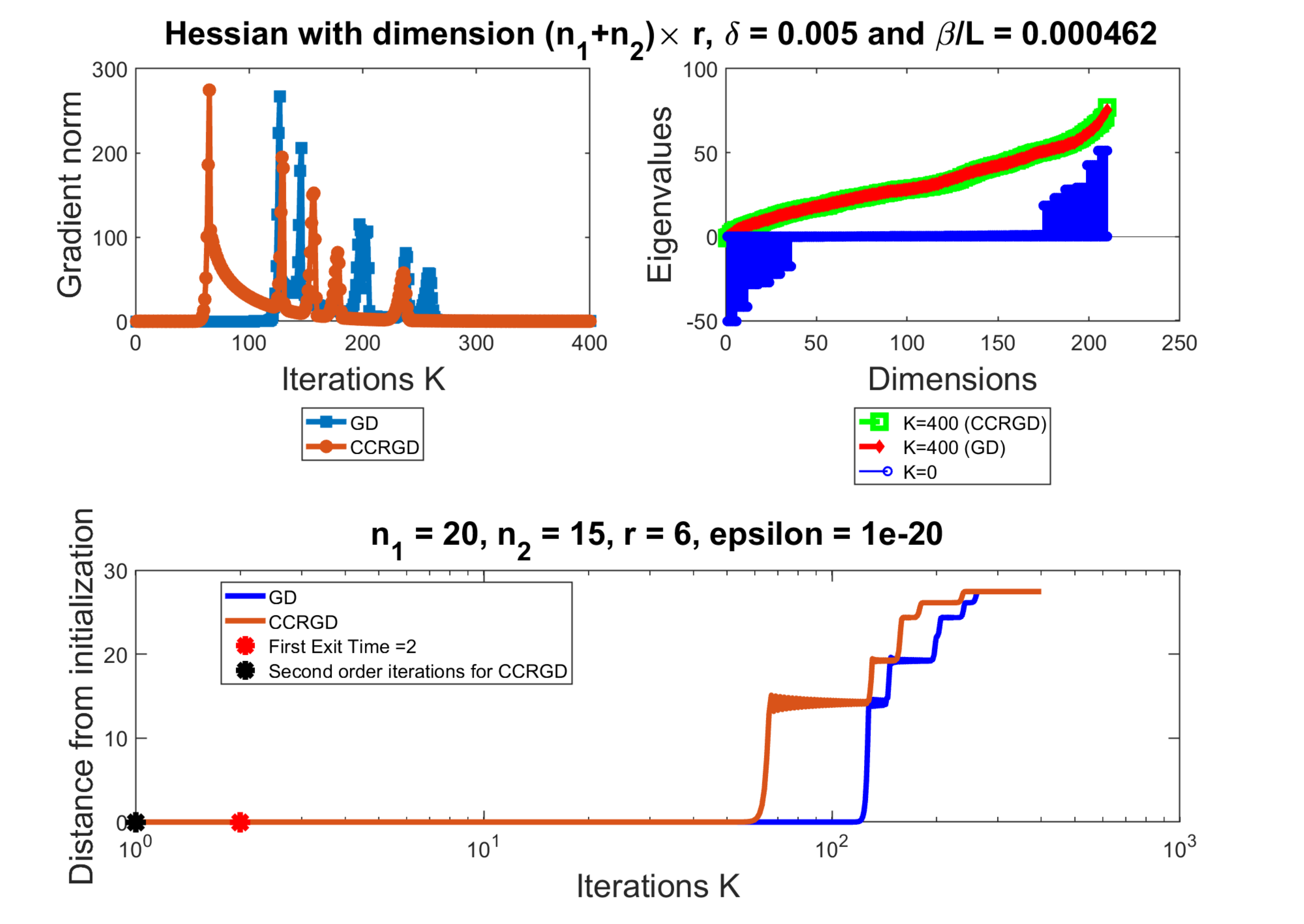} &  \includegraphics[width=2.7in]{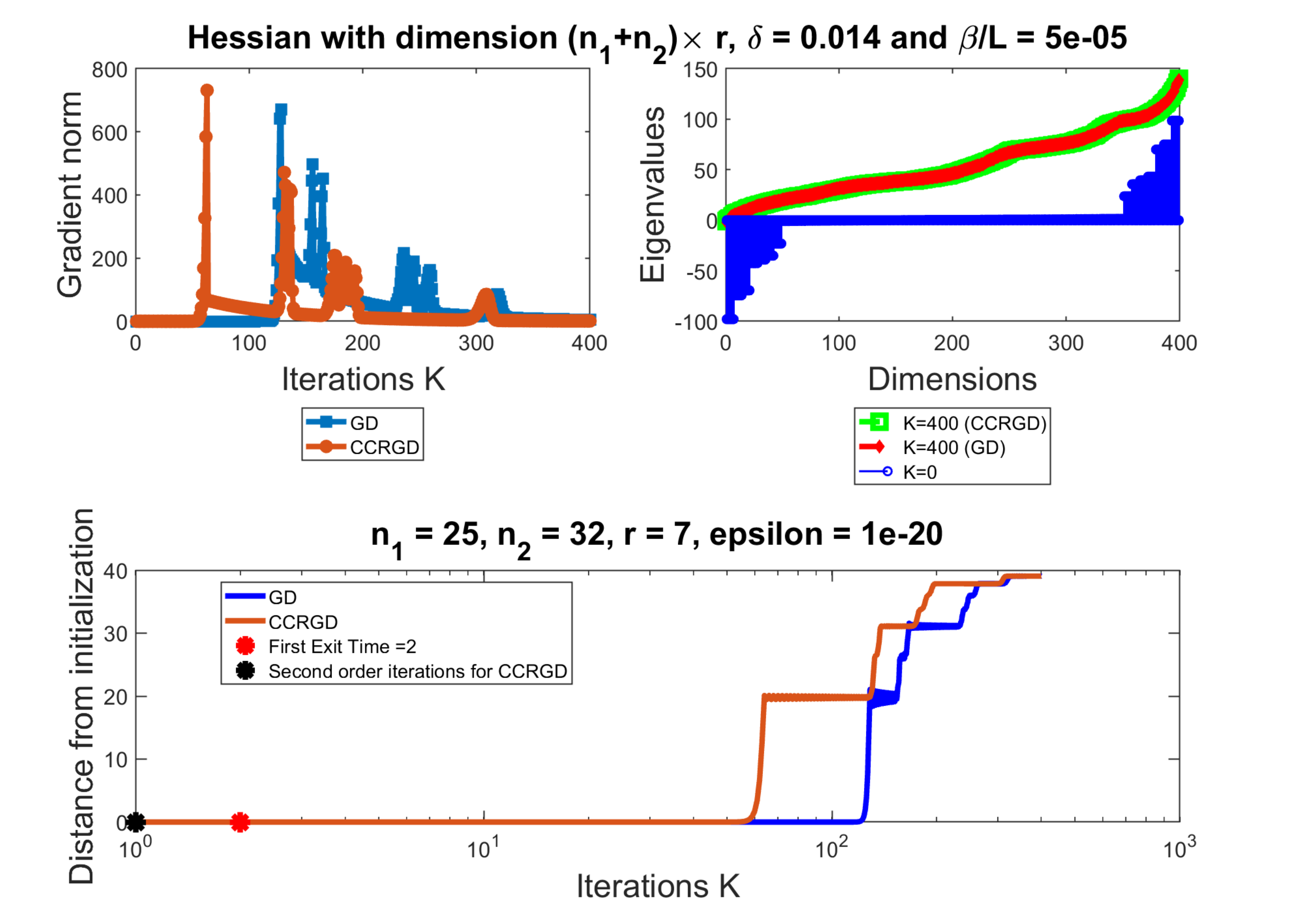}\\
     (a) & (b) \\
     & \\
     \includegraphics[width=2.7in]{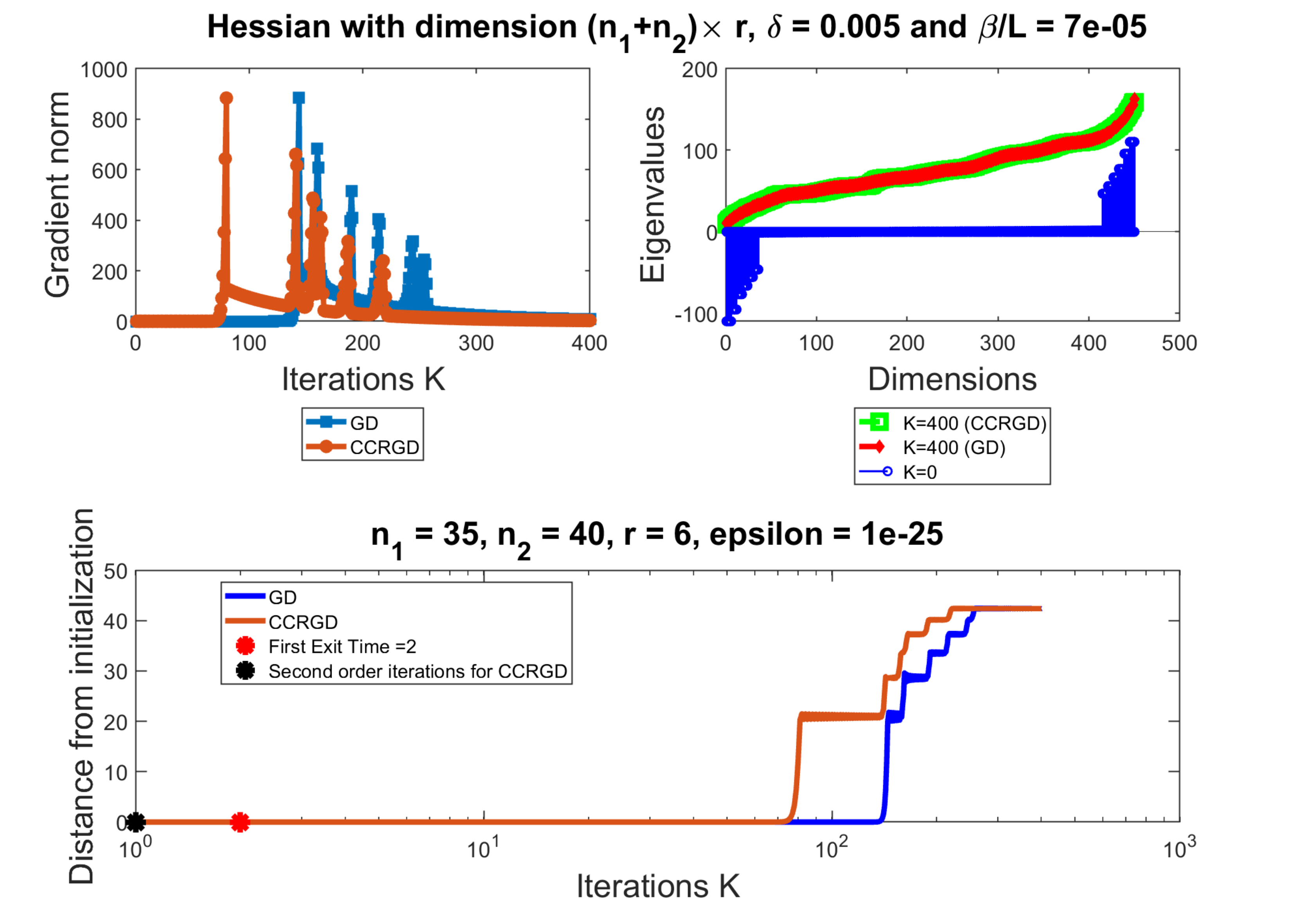} &  \includegraphics[width=2.7in]{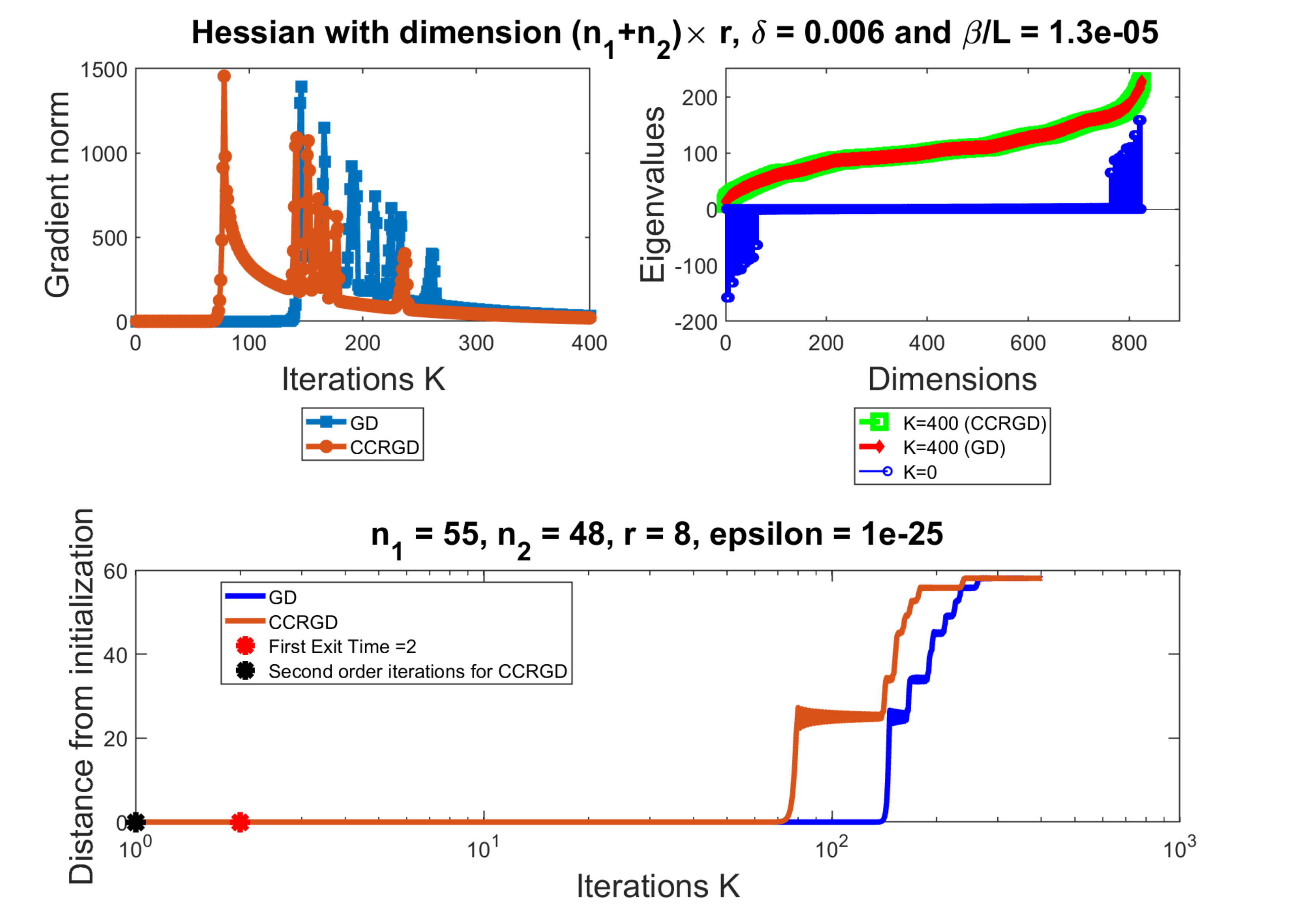}\\
     (c) & (d) \\
     & \\
     \includegraphics[width=2.7in]{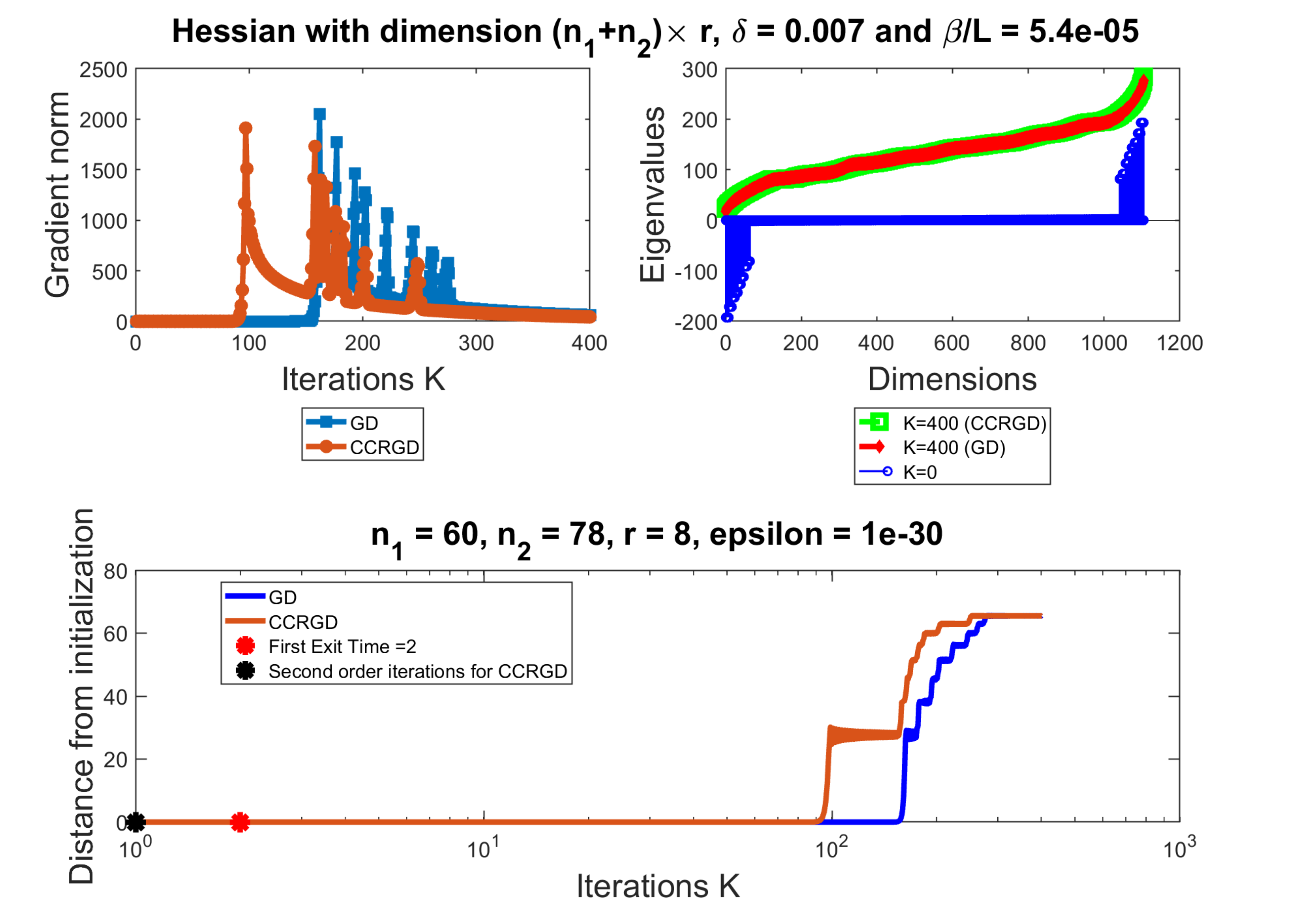} &  \includegraphics[width=2.7in]{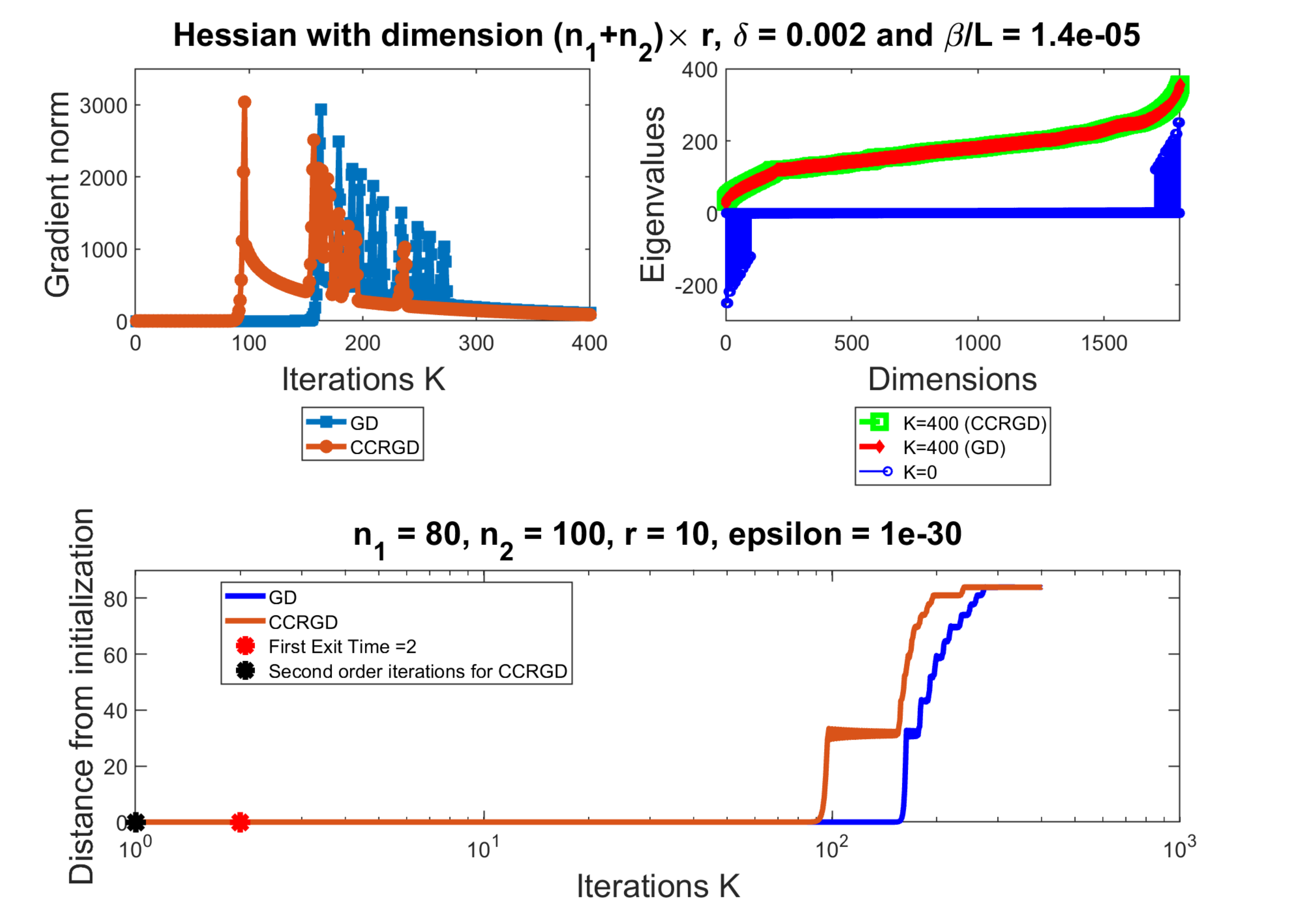}\\
     (e) & (f)
\end{tabular}
\caption{Simulation results of a low-rank matrix factorization problem for various values of $n_1$, $n_2$, $r$, and $\epsilon$.}
\label{fig12}
\end{figure}

To simplify the problem structure so as to make \eqref{simulate1} some function of a single variable $\X$, let $\X_1$ and $\X_2$ be blocks of the variable $\X$ such that
$$     \X = \left[\begin{array}{c}
\X_1\\
\X_2\\
\end{array}\right],
$$ where we have $\X_1 = \B_1 \X$ and $\X_2 = \B_2 \X$ with $ \B_1 = \left[\begin{array}{cc}
\I_{n_1 \times n_1} \hspace{0.2cm}\vert\hspace{0.2cm} \mathbf{0}_{n_1 \times n_2}\\
\end{array}\right]$ and $ \B_2 = \left[\begin{array}{cc}
\mathbf{0}_{n_2 \times n_1} \hspace{0.2cm}\vert\hspace{0.2cm} \I_{n_2 \times n_2}\\
\end{array}\right]$. Here $\I_{n_1 \times n_1}$, $\I_{n_2 \times n_2}$ represent the identity matrices and $\mathbf{0}_{n_1 \times n_2}$, $\mathbf{0}_{n_2 \times n_1}$ represent the null rectangular matrices of appropriate dimensions. Using this change of variable, \eqref{simulate1} can be written as a function of $\X$:
\begin{align}
     f(\X) =  \frac{1}{4}\norm{\mathbf{M} - \B_1\X\X^T\B_2^T }^2_F + \varpi_1 \norm{\B_1\X}^2_F + \varpi_2 \norm{\B_2\X}^2_F. \label{simulate2}
\end{align}
Next, $\nabla f(\X)$ can be given as follows:
\begin{align}
   \nabla f(\X) = &\frac{1}{2}(\B_1^T\B_1 \X \X^T \B_2^T\B_2 + \B_2^T\B_2 \X \X^T \B_1^T\B_1 )\X - \frac{1}{2}(\B_2^T \mathbf{M}^T \B_1 + \B_1^T \mathbf{M} \B_2)\X + \nonumber\\ &\qquad 2 \varpi_1 \B_1^T\B_1 \X +  2 \varpi_2 \B_2^T\B_2 \X. \label{gradientsimulate}
\end{align}
Since the gradient in \eqref{gradientsimulate} is a matrix, hence the corresponding Hessian will be a tensor, whereas our analysis assumes the Hessian to be a matrix. To circumvent this problem, we make use of \cite[Theorem 9]{magnus1985matrix} by vectorizing matrix $\X$ so that $\nabla^2 f(vec(\X))$ is a Jacobian matrix.

The closed form expression for the Jacobian is as follows:
\begin{align}
  \hspace{-1cm}   \nabla^2 f(vec(\X)) &= \frac{1}{2}\bigg( ((\X^T\B_2^T\B_2) \otimes \I_{n \times n})( (\X \otimes \I_{n \times n}) (\I_{r \times r} \otimes (\B_1^T\B_1)) + (\I_{n \times n} \otimes (\B_1^T \B_1 \X))) \nonumber \\ &\qquad + (\I_{r \times r} \otimes (\B_1^T\B_1 \X \X^T) )(\I_{r \times r} \otimes (\B_2^T\B_2))  \bigg)  + \frac{1}{2}\bigg( ((\X^T\B_1^T\B_1) \otimes \I_{n \times n})( (\X \otimes \I_{n \times n}) (\I_{r \times r} \otimes (\B_2^T\B_2)) \nonumber \\ &\qquad  + (\I_{n \times n} \otimes (\B_2^T \B_2 \X))) + (\I_{r \times r} \otimes (\B_2^T\B_2 \X \X^T) )(\I_{r \times r} \otimes (\B_1^T\B_1))  \bigg)  \nonumber \\ &\qquad - \frac{1}{2}\bigg( \I_{r \times r} \otimes ( \B_2^T \mathbf{M}^T \B_1 + \B_1^T \mathbf{M} \B_2 ) \bigg) + 2\bigg( \I_{r \times r} \otimes (\varpi_1 \B_1^T\B_1 + \varpi_1 \B_2^T\B_2) \bigg), \label{simulatehessian}
\end{align}
where $n=n_1 + n_2$. For simulations, matrix $\mathbf{M}$ was generated randomly using the relation $$ \mathbf{M} = \mathbf{U}_1 \mathbf{U}_2^T + \varrho^2 \mathbf{N},$$ where $ \mathbf{U}_1 \in \mathbb{R}^{n_1 \times r}$, $ \mathbf{U}_2 \in \mathbb{R}^{n_2 \times r}$ and the entries of these matrices were independently sampled from a standard normal distribution. Matrix $ \mathbf{N} \in \mathbb{R}^{n_1 \times n_2}$ is the additive noise generated from a normal  distribution whose variance is scaled by $\varrho$. The formulation \eqref{simulate1} is analytic and {the Hessian at the critical point $\X = \mathbf{0}$ is invertible} but the function at $\X = \mathbf{0}$ has a poor condition number {which will be evident from the simulations}. It is coercive, Hessian Lipschitz and satisfies all the assumptions in this work. The highly ill conditioned nature of the problem however could possibly make the function non-Morse at other critical points. Since the closed form expression of the Hessian in \eqref{simulatehessian} is very complex, we steer away from the computation of its eigenvalues at critical points other than $\X= \mathbf{0}$.

For the experiments, we use $\varpi_1 = \varpi_2 = 0.5$, $\varrho = 0.5$, and step-size $\alpha = \frac{1}{L}$ where $L = \lambda_{max}(\nabla^2 f(vec(\X)))$. Also, for the particular selection of parameters, $\X=\mathbf{0}$ is a strict saddle point. Hence, $\X$ is initialized on the boundary of ball $\mathcal{B}_{\epsilon}(\mathbf{0})$ and $\epsilon$ is varied in the simulations along with $n_1, n_2, r$. Finally, the proposed method is plotted against the standard gradient descent method where the metric is $\norm{\X_k - \X_{init}}_F$ with $\X_{init}$ being the common initialization for the two methods.

The simulation results for Algorithm \ref{algo_1} are presented in Figures \ref{fig12}(a)--(f) and comparisons are made with the GD method. For the sake of uniformity, the plots within each subfigure of Figure \ref{fig12} follow the same convention as the plots within each subfigure of Figure \ref{fig11}. From the plots, it is evident that the functions are not well-conditioned for different cases and both GD and CCRGD encounter cascaded saddles. Still CCRGD performs remarkably better than GD in terms of convergence to a local minimum, which is evident from the eigenvalues of the Hessian at final iterate. Moreover in every case CCRGD is able to escape the first saddle neighborhood much more faster than GD due to a single second order step which is invoked only once over all iterations.

\section{Conclusion}
This work focuses on the global analysis of gradient trajectories for a class of nonconvex functions that have strict saddle points in their geometry. Building on top of the results from our earlier work \cite{dixit2022exit}, sufficient boundary conditions are developed here that guarantee approximate linear exit time of gradient trajectories from saddle neighborhoods. Further, the gradient trajectories are analyzed in an augmented saddle neighborhood and it is proved that the trajectories exhibit sequential monotonicity. Using this result, bounds on the total travel time are given for trajectories in this region. A robust algorithm is also developed in this work that uses the sufficient boundary conditions to check whether a given trajectory will exit saddle neighborhood in linear time and invokes a second-order step otherwise. Several intuitive yet important lemmas are proved, characterizing the behaviour of gradient trajectories in saddle neighborhoods and two theorems are proved that provide rate of convergence of the algorithm to a local minimum.

\begin{appendices}
\section{} \label{Appendix A}

In order to prove Theorem \ref{thm1} we first establish 3 supporting lemmas.

\begin{lem}\label{suplem_1}
The smooth extension of the lower bound on the trajectory function ${\Psi}(K)$ (Theorem 3.1, \cite{dixit2022exit}) given by the function $\underline{\Psi}(K)$ for $\alpha= \frac{1}{L}$ slopes upward for some small positive values of $K$ and then it slopes downward for very large values of $K$, i.e., $\underline{\Psi}(K)$ becomes a decreasing function for large values of $K$ ($\underline{\Psi}(K) \to -\infty$ as $K \to \infty$) provided the initial unstable projection value satisfies the necessary condition $\sum_{j \in \mathcal{N}_{US}} ({\theta}^{us}_j)^2 > \Delta$ where $\Delta > \frac{\epsilon M L n}{\delta(L+\beta)}$.
\end{lem}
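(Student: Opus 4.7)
The plan is to analyze the smooth extension $\underline{\Psi}(K)$ of the lower bound from Theorem~3.1 of \cite{dixit2022exit} by treating $K$ as a continuous variable and computing $\underline{\Psi}'(K)$. Based on the structure of the Approximation Lemma recalled in Section~\ref{ssec:preface}, together with $\alpha = 1/L$ and the eigenvalue-factor bounds $1 + \beta/L - \epsilon M/(2L) \le c_j^{us}(k) \le 2 + \epsilon M/(2L)$, the lower bound will take the shape of a ``main'' exponentially growing piece driven by the unstable-subspace projection --- roughly $\epsilon^{2}(1+\beta/L-\epsilon M/(2L))^{2K} \sum_{j\in\mathcal{N}_{US}}(\theta^{us}_j)^2$ --- minus a ``perturbation'' piece of the form $C\epsilon^{3}K(2+\epsilon M/(2L))^{2K}$ that comes from the first-order correction $\epsilon\P_k$ in the matrix product expansion and is proportional to $Mn/\delta$ (reflecting the perturbation matrix $\mathbf{H}(\hat{\u})$ in \eqref{perturbation2paper1}). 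Both pieces are non-negative in $K$, but the subtracted piece carries an extra factor of $K$ and, crucially, a strictly larger exponential base, so its eventual dominance is what drives $\underline{\Psi}(K) \to -\infty$.

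First I will make the above form explicit by substituting $\alpha = 1/L$ in Theorem~3.1 of \cite{dixit2022exit} and isolating the contribution of the unstable-subspace projection $\sum_{j \in \mathcal{N}_{US}}(\theta^{us}_j)^2$. Differentiating with respect to $K$ gives a closed-form expression of the type
\[
\underline{\Psi}'(K) \;=\; 2\epsilon^{2}\log(\rho)\,\rho^{2K}\!\!\sum_{j \in \mathcal{N}_{US}}(\theta^{us}_j)^2 \;-\; C\epsilon^{3}\,G^{2K}\bigl(1 + 2K\log G\bigr),
\]
where $\rho = 1+\beta/L-\epsilon M/(2L)$ and $G = 2 + \epsilon M/(2L)$. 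Evaluating at $K=0$ yields $\underline{\Psi}'(0) = 2\epsilon^{2}\log(\rho)\sum_{j \in \mathcal{N}_{US}}(\theta^{us}_j)^2 - C\epsilon^{3}$. A short Taylor expansion of $\log\rho$ in $\beta/L$ together with the hypothesis $\sum_{j \in \mathcal{N}_{US}}(\theta^{us}_j)^2 > \Delta > \epsilon MLn/(\delta(L+\beta))$ turns the constant $C$ --- which is $\Theta(Mn/\delta)$ after bookkeeping --- into something dominated by the main term, so $\underline{\Psi}'(0) > 0$. By continuity, $\underline{\Psi}'$ remains strictly positive on some interval $[0,K_1)$, which is the ``upward sloping for small $K$'' claim.

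For the decreasing regime, since $G > \rho$ whenever $\beta < L$ and $\epsilon$ is small, the ratio $G^{2K}/\rho^{2K}$ grows unboundedly. Consequently the subtracted term $-C\epsilon^{3}K G^{2K}$ asymptotically dominates every other term in $\underline{\Psi}(K)$, forcing $\underline{\Psi}(K) \to -\infty$ and, more importantly, $\underline{\Psi}'(K) \to -\infty$. To turn the eventual negativity of $\underline{\Psi}'$ into a statement about a single sign-change, I will show that the ratio of the two summands in $\underline{\Psi}'(K)$, namely $(G/\rho)^{2K}(1+2K\log G)/\sum(\theta^{us}_j)^2$, is strictly increasing in $K$; this is immediate from log-convexity since $\log[(G/\rho)^{2K}(1+2K\log G)]$ has a positive first derivative. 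Therefore $\underline{\Psi}'$ has a unique positive root $K_2 \geq K_1$, to the right of which $\underline{\Psi}'$ is negative and $\underline{\Psi}$ is monotonically decreasing to $-\infty$.

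The main obstacle is the constant tracking in the step that turns the inequality $\sum_{j \in \mathcal{N}_{US}}(\theta^{us}_j)^2 > \Delta$ into $\underline{\Psi}'(0) > 0$: one must identify $C$ precisely in terms of $M,\,n,\,\delta,\,L,\,\beta$ by unravelling the perturbation matrix $\mathbf{H}(\hat{\u})$ and the sum-of-matrix-products error from the Approximation Lemma, and verify that the quoted threshold $\Delta > \epsilon MLn/(\delta(L+\beta))$ is exactly what the resulting inequality demands. Once this is done, the rest of the argument is a straightforward log-convexity and asymptotics exercise that does not interact with the fine details of the saddle geometry.
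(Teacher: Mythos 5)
Your proposal follows essentially the same route as the paper: substitute $\alpha = 1/L$ into the trajectory lower bound of Theorem~3.1 in \cite{dixit2022exit}, drop the $\mathcal{O}(\epsilon^{2})$ and $\mathcal{O}(K\epsilon)$ contributions, treat the resulting $\underline{\Psi}(K)$ as a differentiable function of $K$, and read off the qualitative shape from the sign of the derivative at $K=0$ (using the necessary condition $\sum_j(\theta^{us}_j)^2 > \Delta$) together with the fact that the base $2+\epsilon M/(2L)$ of the subtracted terms strictly dominates the base $1+\beta/L-\epsilon M/(2L)$ of the growing term, forcing $\underline{\Psi}(K)\to -\infty$. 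One small caution: your stylized two-term model for $\underline{\Psi}'(K)$ omits the stable-subspace group $\bigl[-4K\log c_2 - 2\bigr]c_2^{2K-1}\tfrac{\epsilon M n}{2\delta}\sum_i(\theta^{s}_i)^2$ with $c_2 = 1-\beta/L+\epsilon M/(2L) < 1$; this contribution is $\mathcal{O}(\epsilon)$ and exponentially decaying so it does not affect the lemma's conclusion, but it does mean your added log-convexity argument for a unique sign change of $\underline{\Psi}'$ is not fully justified as written (and is not needed here --- the paper only claims upward-then-downward behavior, with the local maximum handled later in the next lemma).
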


\begin{proof}
From Theorem 3.1 in \cite{dixit2022exit}, for every value of parameter $\tau$, there exists a lower bound on the squared radial distance $\norm{\tilde{\u}_K^{\tau}}^2$ for all $K$ in the range $1 \leq K \leq \sup_{\tau} \bigg\{K_{exit}^{\tau} \bigg\}  $ provided $K\epsilon \ll 1$. Moreover, this lower bound can be expressed using a function of $K$ called the trajectory function $\Psi(K)$. Formally, we have that:
	  \begin{align}
	   \epsilon^2 \geq\inf_{\tau}\norm{\tilde{\u}_K^{\tau}}^2 > &  \epsilon^2 \Psi(K) \label{inversionineq},
	  \end{align}
	  where the the trajectory function $\Psi(K)$ is given by:
\begin{align}
\hspace{-0.2cm}\Psi(K) =& \bigg(c_1^{2K} -2Kc_2^{2K-1}b_1 - b_2c_3^Kc_2^K - b_2c_3^{2K}\bigg)\sum_{i \in \mathcal{N}_{S}}({\theta}^{s}_i)^2 + \bigg( c_4^{2K} - 2Kc_3^{2K-1}b_1- b_2c_3^Kc_2^K -b_2c_3^{2K}\bigg)\sum_{j \in \mathcal{N}_{US}}({\theta}^{us}_j)^2
     \end{align}
with $c_1 = \bigg(1 - \alpha L - \frac{\alpha \epsilon M}{2}- \mathcal{O}(\epsilon^2) \bigg)$, $c_2 =\bigg(1 - \alpha \beta + \frac{\alpha \epsilon M}{2}+\mathcal{O}(\epsilon^2)\bigg) $, $c_3 = \bigg(1 + \alpha L + \frac{\alpha \epsilon M}{2}+\mathcal{O}(\epsilon^2)\bigg)$, $c_4 =\bigg(1 + \alpha \beta - \frac{\alpha \epsilon M}{2}-\mathcal{O}(\epsilon^2)\bigg)$, $b_1 = \bigg(\frac{\alpha\epsilon M L n}{2 \delta}+\mathcal{O}(\epsilon^2)\bigg)$, and $b_2 = \frac{\bigg(\frac{\alpha\epsilon M L n}{2 \delta}+\mathcal{O}(\epsilon^2)\bigg)\bigg(1+\mathcal{O}(K\epsilon)\bigg)}{ \bigg( \alpha L + \alpha\beta + \mathcal{O}(\epsilon^2)\bigg)}$.

Substituting these coefficients in the expression for $\Psi(K)$ followed by dropping order $\mathcal{O}(\epsilon^2)$ and $\mathcal{O}(K\epsilon)$ terms (for $K\epsilon \ll 1$) appearing on its right hand side, we get the following approximate expression for $\Psi(K)$:
\begin{align}
\hspace{-2cm} \Psi(K) \approx  &  \bigg(\bigg[ \bigg(1 - \alpha L - \frac{\alpha \epsilon M}{2}\bigg)^{2K}  - 2K\bigg(1 - \alpha \beta + \frac{\alpha \epsilon M}{2}\bigg)^{2K-1}  \frac{\alpha \epsilon M Ln}{2 \delta} \bigg] \sum_{i \in \mathcal{N}_{S}} ({\theta}^{s}_i)^2 + \nonumber \\ & \bigg[\bigg(1 + \alpha \beta - \frac{\alpha \epsilon M}{2}\bigg)^{2K} -  2K\bigg(1 + \alpha L + \frac{\alpha \epsilon M}{2}\bigg)^{2K-1}  \frac{\alpha \epsilon M Ln}{2 \delta}\bigg]  \sum_{j \in \mathcal{N}_{US}} ({\theta}^{us}_j)^2  \nonumber \\ &-\frac{\alpha\epsilon M L n}{2 \delta ( \alpha L + \alpha\beta )}\bigg(1 + \alpha L + \frac{\alpha \epsilon M}{2}\bigg)^K\bigg(1 - \alpha \beta + \frac{\alpha \epsilon M}{2}\bigg)^{K}\bigg(\sum_{i \in \mathcal{N}_{S}}({\theta}^{s}_i)^2 + \sum_{j \in \mathcal{N}_{US}}({\theta}^{us}_j)^2 \bigg)  \nonumber \\ & -\frac{\alpha\epsilon M Ln }{2 \delta ( \alpha L + \alpha\beta)}\bigg(1 + \alpha L + \frac{\alpha \epsilon M}{2}\bigg)^{2K}\bigg)\bigg(\sum_{i \in \mathcal{N}_{S}}({\theta}^{s}_i)^2 + \sum_{j \in \mathcal{N}_{US}}({\theta}^{us}_j)^2 \bigg)
 \\
\hspace{-2cm} \Psi(K) \gtrapprox  &  \bigg(\bigg[ \bigg(1 - \alpha L - \frac{\alpha \epsilon M}{2}\bigg)^{2K}  - 2K\bigg(1 - \alpha \beta + \frac{\alpha \epsilon M}{2}\bigg)^{2K-1}  \frac{\alpha \epsilon M Ln}{2 \delta} \bigg] \sum_{i \in \mathcal{N}_{S}} ({\theta}^{s}_i)^2 + \nonumber \\ & \bigg[\bigg(1 + \alpha \beta - \frac{\alpha \epsilon M}{2}\bigg)^{2K} -  2K\bigg(1 + \alpha L + \frac{\alpha \epsilon M}{2}\bigg)^{2K-1}  \frac{\alpha \epsilon M Ln}{2 \delta}\bigg]  \sum_{j \in \mathcal{N}_{US}} ({\theta}^{us}_j)^2   - \epsilon M Ln \frac{\bigg(1 + \alpha L + \frac{\alpha \epsilon M}{2}\bigg)^{2K} }{ \delta (L+\beta )}
\bigg) \label{approxtrajec1},
\end{align}
where in the last step we used the relation $\bigg(\sum_{i \in \mathcal{N}_{S}}({\theta}^{s}_i)^2 + \sum_{j \in \mathcal{N}_{US}}({\theta}^{us}_j)^2 \bigg) =1 $ and the inequality $\bigg(1 - \alpha \beta + \frac{\alpha \epsilon M}{2}\bigg)< \bigg(1 + \alpha L + \frac{\alpha \epsilon M}{2}\bigg)$. Now for $\alpha = \frac{1}{L}$, \eqref{approxtrajec1} becomes the following approximate inequality:
\begin{align}
\hspace{-2cm}\Psi(K) \gtrapprox  &  \bigg(\bigg[ \bigg( - \frac{ \epsilon M}{2L}\bigg)^{2K}  - 2K\bigg(1 - \frac{\beta}{L} + \frac{\epsilon M}{2L}\bigg)^{2K-1}  \frac{ \epsilon M n}{2 \delta} \bigg] \sum_{i \in \mathcal{N}_{S}} ({\theta}^{s}_i)^2 + \nonumber \\ & \bigg[\bigg(1 + \frac{\beta}{L} - \frac{\epsilon M}{2L}\bigg)^{2K} -  2K\bigg(2 + \frac{ \epsilon M}{2L}\bigg)^{2K-1}  \frac{\epsilon M n}{2 \delta}\bigg]  \sum_{j \in \mathcal{N}_{US}} ({\theta}^{us}_j)^2   - \epsilon M Ln \frac{\bigg(2 + \frac{ \epsilon M}{2L}\bigg)^{2K} }{ \delta (L+ \beta )}
\bigg)  \label{originallowerbound}\\
\hspace{-2cm}\Psi(K) \gtrapprox  &  \bigg(\bigg[  - 2K\bigg(1 - \frac{\beta}{L} + \frac{\epsilon M}{2L}\bigg)^{2K-1}  \frac{ \epsilon M n}{2 \delta} \bigg] \sum_{i \in \mathcal{N}_{S}} ({\theta}^{s}_i)^2 + \nonumber \\ & \bigg[\bigg(1 + \frac{\beta}{L} - \frac{\epsilon M}{2L}\bigg)^{2K} -  2K\bigg(2 + \frac{ \epsilon M}{2L}\bigg)^{2K-1}  \frac{\epsilon M n}{2 \delta}\bigg]  \sum_{j \in \mathcal{N}_{US}} ({\theta}^{us}_j)^2 - \epsilon M Ln \frac{\bigg(2 + \frac{ \epsilon M}{2L}\bigg)^{2K} }{ \delta ( L+\beta )}
\bigg).  \label{lbtranscend}
\end{align}

We first assume that the approximate lower bound on $\Psi(K)$ from \eqref{lbtranscend} is a continuous function of $K$ so as to allow differentiation of this lower bound with respect to variable $K$. This continuous extension is possible since the approximate lower bound on $\Psi(K)$ from \eqref{lbtranscend} is a well-defined function of $K$. Note that we do not use the lower bound from \eqref{originallowerbound} since we are looking for values of $K$ greater than $1$ and the derivative of $\bigg(-\frac{ \epsilon M}{2L}\bigg)^{2K}$ is of at most order $\mathcal{O}(\epsilon^{2K-1})$ for $K > 1$ with small $\epsilon$. Representing this approximate lower bound in \eqref{lbtranscend} as $\underline{\Psi}(K)$ where we have that $\Psi(K) \gtrapprox \underline{\Psi}(K)$, followed by differentiating it with respect to $K$ yields:
\begin{align}
\underline{\Psi}(K) = & \bigg(\bigg[  - 2K\bigg(1 - \frac{\beta}{L} + \frac{\epsilon M}{2L}\bigg)^{2K-1}  \frac{ \epsilon M n}{2 \delta} \bigg] \sum_{i \in \mathcal{N}_{S}} ({\theta}^{s}_i)^2 + \nonumber \\ & \bigg[\bigg(1 + \frac{\beta}{L} - \frac{\epsilon M}{2L}\bigg)^{2K} -  2K\bigg(2 + \frac{ \epsilon M}{2L}\bigg)^{2K-1}  \frac{\epsilon M n}{2 \delta}\bigg]  \sum_{j \in \mathcal{N}_{US}} ({\theta}^{us}_j)^2 - \epsilon M Ln \frac{\bigg(2 + \frac{ \epsilon M}{2L}\bigg)^{2K} }{ \delta (2 \beta - \epsilon M)}
\bigg) \label{trajectoryfunctionmodified}\\
\frac{d\underline{\Psi}(K)}{dK} & = \bigg(\bigg[ -4K \log\bigg(1 - \frac{\beta}{L} + \frac{\epsilon M}{2L}\bigg)  - 2 \bigg] \bigg(1 - \frac{\beta}{L} + \frac{\epsilon M}{2L}\bigg)^{2K-1}\frac{ \epsilon M n}{2 \delta}\sum_{i \in \mathcal{N}_{S}} ({\theta}^{s}_i)^2 + \nonumber \\ & \bigg[2\bigg(1 + \frac{\beta}{L} - \frac{\epsilon M}{2L}\bigg)^{2K}\log\bigg(1 + \frac{\beta}{L} - \frac{\epsilon M}{2L}\bigg) -  2\bigg(2 + \frac{ \epsilon M}{2L}\bigg)^{2K-1}   \frac{\epsilon M n}{2 \delta} -\nonumber \\ & 4K\bigg(2 + \frac{ \epsilon M}{2L}\bigg)^{2K-1}   \frac{\epsilon M n}{2 \delta} \log\bigg(2 + \frac{ \epsilon M}{2L}\bigg) \bigg]  \sum_{j \in \mathcal{N}_{US}} ({\theta}^{us}_j)^2 -  2\epsilon M Ln \frac{\bigg(2 + \frac{ \epsilon M}{2L}\bigg)^{2K}}{ \delta (2 \beta - \epsilon M)}
\log\bigg(2 + \frac{ \epsilon M}{2L}\bigg) \bigg) \label{derivative}
\end{align}
It can be inferred from the above equation \eqref{derivative} that for $\epsilon < \frac{2\beta}{M} $ and $\sum_{j \in \mathcal{N}_{US}} ({\theta}^{us}_j)^2 > \Delta$ where $\Delta > \frac{\epsilon M L n}{\delta(L+\beta)}$, the function $\underline{\Psi}(K)$ slopes upward for some small positive values of $K$ and then it slopes downward for very large values of $K$, i.e., $\underline{\Psi}(K)$ becomes a decreasing function for large values of $K$ ($\underline{\Psi}(K) \to -\infty$ as $K \to \infty$).
\end{proof}

\begin{lem}\label{suplem_2}
The sufficient condition (though not necessary) which guarantees the escape of the approximate lower bound $\underline{\Psi}(K)$ on the trajectory function $\Psi(K)$ from the ball $\mathcal{B}_{\epsilon}(\x^*)$ is as follows:
\begin{align}
1 & \leq \sup_{K \in  G_{\underline{\Psi}}}\bigg\{\underline{\Psi}(K)\bigg\}
\end{align}
where $ G_{\underline{\Psi}} = \bigg\{K   \bigg\vert K \in (0, K^{\iota}], \frac{d^2\underline{\Psi}(K)}{dK^2}  < 0 , \frac{d\underline{\Psi}(K)}{dK}  = 0 \bigg\} $ and $K^{\iota} = \mathcal{O}(\log(\epsilon^{-1})) $. Moreover, there exists some $K_0 = \mathcal{O}(\log(\epsilon^{-1}))$ in the set $G_{\underline{\Psi}} $ implying that the set $G_{\underline{\Psi}} $ is non empty.
\end{lem}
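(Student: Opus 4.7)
The proof plan proceeds in four steps. First, I would translate the escape condition for the approximate trajectory into a condition on $\underline{\Psi}$. From \eqref{inversionineq} one has $\inf_\tau \norm{\tilde{\u}_K^{\tau}}^2 > \epsilon^2 \Psi(K)$, and Lemma \ref{suplem_1} gives $\Psi(K) \gtrapprox \underline{\Psi}(K)$. Therefore if there exists some admissible $K$ with $\underline{\Psi}(K) \geq 1$, then $\inf_\tau \norm{\tilde{\u}_K^{\tau}}^2 \gtrapprox \epsilon^2$, meaning every $\tau$-parametrized $\epsilon$-precision trajectory has crossed the boundary of $\mathcal{B}_{\epsilon}(\x^*)$ by iteration $K$. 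The admissible range $K \in (0, K^\iota]$ is dictated by the validity regime $K\epsilon \ll 1$ of the ``Approximation Lemma'' used to derive $\Psi(K)$, and $K^\iota = \mathcal{O}(\log(\epsilon^{-1}))$ from \eqref{linearexittimebound}.

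Second, the sharpest sufficient condition from this strategy is obtained by maximizing $\underline{\Psi}$ over $(0, K^\iota]$. Because $\underline{\Psi}$ is smooth in $K$ and Lemma \ref{suplem_1} guarantees it first increases and subsequently decreases to $-\infty$, the supremum on any interval capturing the turning region is attained at an interior local maximum, i.e. at a point where $\underline{\Psi}'(K) = 0$ and $\underline{\Psi}''(K) < 0$. The collection of such maximizers is precisely $G_{\underline{\Psi}}$, so the sufficient condition becomes $\sup_{K \in G_{\underline{\Psi}}} \underline{\Psi}(K) \geq 1$.

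Third, I would show $G_{\underline{\Psi}}$ is non-empty via an intermediate-value argument on \eqref{derivative}. The slope-up phase at small $K$ established in Lemma \ref{suplem_1} gives $\underline{\Psi}'(K) > 0$ near zero, while the slope-down phase for large $K$ gives $\underline{\Psi}'(K) < 0$. Continuity of $\underline{\Psi}'$ then provides a root $K_0$ where it vanishes, and because the sign flips from positive to negative there, $\underline{\Psi}''(K_0) \leq 0$, with strict negativity granted by the non-degenerate exponential-polynomial structure of \eqref{derivative} under $\sum_{j \in \mathcal{N}_{US}}({\theta}^{us}_j)^2 > \Delta$ and $\epsilon < \tfrac{2\beta}{M}$.

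Fourth, and this is the crux, I would show $K_0 = \mathcal{O}(\log(\epsilon^{-1}))$ by asymptotically balancing the leading terms of $\underline{\Psi}'(K) = 0$. The dominant positive contribution at $K_0$ grows like $\bigl(1 + \tfrac{\beta}{L} - \tfrac{\epsilon M}{2L}\bigr)^{2K} \log\bigl(1 + \tfrac{\beta}{L} - \tfrac{\epsilon M}{2L}\bigr) \sum_{j \in \mathcal{N}_{US}}(\theta^{us}_j)^2$, while the dominant negative contribution grows like $K \, \epsilon \, \bigl(2 + \tfrac{\epsilon M}{2L}\bigr)^{2K-1} \tfrac{Mn}{\delta}$. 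Equating the two and inverting via a Lambert-$W$-type expansion yields $K_0 \sim \tfrac{\log(1/\epsilon)}{2\bigl(\log(2 + \epsilon M/(2L)) - \log(1 + \beta/L - \epsilon M/(2L))\bigr)}$, which matches the structure of the bound in \eqref{linearexittimebound}. Comparing with $K^\iota$ then confirms both $K_0 \in (0, K^\iota]$ and $K_0 = \mathcal{O}(\log(\epsilon^{-1}))$. The main obstacle is precisely this transcendental inversion: the stationarity equation mixes $K$, $e^{cK}$ and $\epsilon$-weighted factors, so care is required to show that the sub-dominant stable-subspace and cross terms do not alter the logarithmic scaling, and to verify $K_0 \leq K^\iota$ with the explicit constants (both quantities being of the same logarithmic order).
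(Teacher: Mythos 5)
Your proposal follows essentially the same route as the paper's proof: chain the inequality $\inf_{\tau}\|\tilde{\u}_K^{\tau}\|^2 > \epsilon^2 \Psi(K) \gtrapprox \epsilon^2\underline{\Psi}(K)$ to a sufficient condition $\underline{\Psi}(K) \geq 1$, reduce the search to interior local maxima of $\underline{\Psi}$ on $(0,K^{\iota}]$, establish non-emptiness by the sign change of $\underline{\Psi}'$ guaranteed by Lemma \ref{suplem_1}, and extract the order of the stationary point from a Lambert-$W$ inversion of the dominant balance in \eqref{derivative}. The paper carries out the same Lambert-$W$ inversion explicitly (leading to the closed form $\hat{K}_0$ in \eqref{transcendodebound}) and also does the extra bookkeeping of justifying that the stable-subspace term $F_1$ may be dropped at the solution because it is $\mathcal{O}(\epsilon^{1+a+b}\log(\epsilon^{-1}))$ with $a>1$ by \eqref{agreaterthan1}; your phrase about ``care being required to show that the sub-dominant stable-subspace and cross terms do not alter the logarithmic scaling'' is exactly that step, and your outline correctly flags it as the crux. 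The one point where both your plan and the paper are terse is the verification that $K_0 \leq K^{\iota}$ with the explicit constants (rather than merely both being $\Theta(\log(\epsilon^{-1}))$); the paper handles this by observing $K_0 < \hat{K}_0$ via the sign-change argument and then comparing $\hat{K}_0$ from \eqref{transcendodebound} against the bound on $K^{\iota}$ in \eqref{linearexittimebound}, where $\hat{K}_0$ has a strictly smaller argument inside the logarithm. If you make that comparison explicit, your argument is complete and matches the paper's.
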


\begin{proof}
Recall that from the condition  \eqref{inversionineq}, the exit time is obtained by evaluating the first $K$ where $\Psi(K)>1$.
From the inequality \eqref{lbtranscend}, by setting the right hand side greater than equal to $1$ for some given $K$ of order $\mathcal{O}(\log(\epsilon^{-1}))$, we will have $\Psi(K)\gtrapprox 1 $. Hence the sufficient condition on the unstable  projection value $ \sum_{j \in \mathcal{N}_{US}} ({\theta}^{us}_j)^2$ for escaping saddle with linear rate can be obtained from \eqref{lbtranscend} by setting its right hand side greater than equal to $1$. Notice that for very large $K$, the right hand side of \eqref{lbtranscend} is always less than $1$.
Moreover, there exists some $K_{min}\geq1$ and $K_{max}>1$ such that the approximate lower bound of \eqref{lbtranscend} can become greater than $1$ only in the interval $(K_{min}, K_{max})$.
 Therefore we only need to find some $K_0 \in (K_{min}, K_{max})$ where the function $\underline{\Psi}(K)$ has zero slope and the value $\underline{\Psi}(K_0)$ is greater than or equal to $1$ for guaranteeing escape. The condition $\underline{\Psi}(K_0) \geq 1 $ would imply ${\Psi}(K_0) \gtrapprox 1$ thereby approximately guaranteeing escape from the condition \eqref{inversionineq} which gets reversed for $K=K_0$.

The above condition can be achieved in many different ways. However, to ensure that the so-called sufficient conditions have minimal restrictions, we must have $K_0$ to be the local maximum of the function $\underline{\Psi}(K)$ on the interval $ K \in (0,C]$ where $C$ is some arbitrary positive finite value with $C \leq K_{max}$. Note that $K_0$ is a root of the equation $ \frac{d\underline{\Psi}(K)}{dK}  = 0$. The condition that $K_0$ is the local maximum of $\underline{\Psi}(K)$ on the interval $ K \in (0,C]$ ensures existence of at least one value of $K_0$ such that $\underline{\Psi}(K_0)\geq 1$ and hence ${\Psi}(K_0)\gtrapprox \underline{\Psi}(K_0)\geq 1$.

Next, recall that from Theorem 3.2 in \cite{dixit2022exit} we have the condition of $K_{exit} < K^{\iota} \lessapprox \mathcal{O}(\log(\epsilon^{-1}))$ for $\epsilon$--precision trajectories with linear exit time. Note that the linear exit time was obtained explicitly by solving for the roots of equation $\underline{\Psi}(K)=1$. Now $K_0$ is the local maximum of the function $\underline{\Psi}(K)$ on the interval $ K \in (0,C]$ and we have $\underline{\Psi}(K_0)\geq 1$ hence we can set $C= K^{\iota}$ which is valid since $C$ was arbitrary with $K_{exit} < C \leq K_{max}$. Similarly, $K_{max}$ was arbitrary hence we can set $K_{max} = 2K^{\iota}$. Therefore we will have $\norm{\tilde{\u}_{K_0}^{\tau}}^2 > \epsilon^2$ for all values of $\tau$ where $\{\tilde{\u}_K^{\tau}\}_{K=0}^{K_{exit}}$ was the $\epsilon$--precision trajectory defined in \cite{dixit2022exit}.

Then the sufficient condition (though not necessary) which guarantees the escape of the approximate lower bound $\underline{\Psi}(K)$ on the trajectory function $\Psi(K)$ from the ball $\mathcal{B}_{\epsilon}(\x^*)$ is as follows:
\begin{align}
1 & \leq \sup_{K \in  G_{\underline{\Psi}}}\bigg\{\underline{\Psi}(K)\bigg\} \label{sufficiency1}
\end{align}
where $ G_{\underline{\Psi}} = \bigg\{K   \bigg\vert K \in (0, K^{\iota}], \frac{d^2\underline{\Psi}(K)}{dK^2}  < 0 , \frac{d\underline{\Psi}(K)}{dK}  = 0 \bigg\} $.

The condition \eqref{sufficiency1} can be relaxed to obtain $ \underline{\Psi}(K_0) \geq 1$ for some $K_0 \in G_{\underline{\Psi}}$.
Note that the set $ G_{\underline{\Psi}}$ is non-empty since the function $\underline{\Psi}(K)$ slopes upwards for small positive $K$ whereas $\underline{\Psi}(K) \to -\infty$ as $K \to \infty$.
Simplifying the derivative condition \eqref{derivative} by setting it to $0$ we get the following:
\begin{align}
0=&\frac{d\underline{\Psi}}{dK}\bigg\vert_{K=K_0}  = \bigg(\bigg[ -4K_0 \log\bigg(1 - \frac{\beta}{L} + \frac{\epsilon M}{2L}\bigg)  - 2 \bigg] \bigg(1 - \frac{\beta}{L} + \frac{\epsilon M}{2L}\bigg)^{2K_0-1}\frac{ \epsilon M n}{2 \delta}\sum_{i \in \mathcal{N}_{S}} ({\theta}^{s}_i)^2 + \nonumber \\ & \bigg[2\bigg(1 + \frac{\beta}{L} - \frac{\epsilon M}{2L}\bigg)^{2K_0}\log\bigg(1 + \frac{\beta}{L} - \frac{\epsilon M}{2L}\bigg) -  2\bigg(2 + \frac{ \epsilon M}{2L}\bigg)^{2K_0-1}   \frac{\epsilon M n}{2 \delta} -\nonumber \\ &  4K_0\bigg(2 + \frac{ \epsilon M}{2L}\bigg)^{2K_0-1}   \frac{\epsilon M n}{2 \delta} \log\bigg(2 + \frac{ \epsilon M}{2L}\bigg) \bigg]  \sum_{j \in \mathcal{N}_{US}} ({\theta}^{us}_j)^2 -  2\epsilon M Ln \frac{\bigg(2 + \frac{ \epsilon M}{2L}\bigg)^{2K_0}}{ \delta (2 \beta - \epsilon M)}
\log\bigg(2 + \frac{ \epsilon M}{2L}\bigg) \bigg) \\
0 = & \bigg(\bigg[ -4K_0 \log\bigg(1 - \frac{\beta}{L} + \frac{\epsilon M}{2L}\bigg)  - 2 \bigg] \bigg(\frac{1 - \frac{\beta}{L} + \frac{\epsilon M}{2L}}{2 + \frac{ \epsilon M}{2L}}\bigg)^{2K_0}\bigg(1 - \frac{\beta}{L} + \frac{\epsilon M}{2L}\bigg)^{-1}\frac{ \epsilon M n}{2 \delta}\sum_{i \in \mathcal{N}_{S}} ({\theta}^{s}_i)^2 + \nonumber \\ & \bigg[2\bigg(\frac{1 + \frac{\beta}{L} - \frac{\epsilon M}{2L}}{2 + \frac{ \epsilon M}{2L}}\bigg)^{2K_0}\log\bigg(1 + \frac{\beta}{L} - \frac{\epsilon M}{2L}\bigg) -  2\bigg(2 + \frac{ \epsilon M}{2L}\bigg)^{-1}   \frac{\epsilon M n}{2 \delta} -\nonumber \\ &  4K_0\bigg(2 + \frac{ \epsilon M}{2L}\bigg)^{-1}   \frac{\epsilon M n}{2 \delta} \log\bigg(2 + \frac{ \epsilon M}{2L}\bigg) \bigg]  \sum_{j \in \mathcal{N}_{US}} ({\theta}^{us}_j)^2 -  2\epsilon M Ln \frac{1}{ \delta (2 \beta - \epsilon M)}
\log\bigg(2 + \frac{ \epsilon M}{2L}\bigg) \bigg)  \label{tk_0}
\end{align}
Observe that the roots of this equation cannot be explicitly computed due to the transcendental nature of this equation. However, the roots can be obtained if the order of $K_0$ is known with respect to $\epsilon$. Since $K_0 \in G_{\underline{\Psi}}$, we will have $K_0 < K^{\iota} \lessapprox \mathcal{O}(\log(\epsilon^{-1}))$. Therefore, we compute only those values of $K_0$ which are linear, i.e., $K_0 = \mathcal{O}(\log(\epsilon^{-1}))$. For such a $K_0$, setting $\frac{1}{\bigg(2 + \frac{ \epsilon M}{2L}\bigg)^{2K_0}} = \mu \epsilon^a $ where $\mu>0$, $a>0$ and $ \bigg(1 - \frac{\beta}{L} + \frac{\epsilon M}{2L}\bigg)^{2K_0} = \eta \epsilon^b$ where $\eta>0$, $b>0$ provided $\epsilon< \frac{2\beta}{M} $, the above equality \eqref{tk_0} becomes:
\begin{align}
0 = & \bigg(\underbrace{\bigg[ -4K_0 \log\bigg(1 - \frac{\beta}{L} + \frac{\epsilon M}{2L}\bigg)  - 2 \bigg] \bigg(1 - \frac{\beta}{L} + \frac{\epsilon M}{2L}\bigg)^{-1}\frac{\mu \eta \epsilon^{(1+a+b)} M n}{2 \delta}\sum_{i \in \mathcal{N}_{S}} ({\theta}^{s}_i)^2}_{F_1} + \nonumber \\ & \bigg[2\bigg(\frac{1 + \frac{\beta}{L} - \frac{\epsilon M}{2L}}{2 + \frac{ \epsilon M}{2L}}\bigg)^{2K_0}\log\bigg(1 + \frac{\beta}{L} - \frac{\epsilon M}{2L}\bigg) -  2\bigg(2 + \frac{ \epsilon M}{2L}\bigg)^{-1}   \frac{\epsilon M n}{2 \delta} -\nonumber \\ &  4K_0\bigg(2 + \frac{ \epsilon M}{2L}\bigg)^{-1}   \frac{\epsilon M n}{2 \delta} \log\bigg(2 + \frac{ \epsilon M}{2L}\bigg) \bigg]  \sum_{j \in \mathcal{N}_{US}} ({\theta}^{us}_j)^2 -  2\epsilon M Ln \frac{1}{ \delta (2 \beta - \epsilon M)}
\log\bigg(2 + \frac{ \epsilon M}{2L}\bigg) \bigg) \label{transcendode_beta} \\
  & \hspace{-1cm}\bigg(\frac{1 + \frac{\beta}{L} - \frac{\epsilon M}{2L}}{2 + \frac{ \epsilon M}{2L}}\bigg)^{2K_0} \approx   \bigg(2 + \frac{ \epsilon M}{2L}\bigg)^{-1}   \frac{\epsilon M n}{2 \delta} \frac{\log\bigg(2 + \frac{ \epsilon M}{2L}\bigg)}{\log\bigg(1 + \frac{\beta}{L} - \frac{\epsilon M}{2L}\bigg)} 2K_0   + \bigg(2 + \frac{ \epsilon M}{2L}\bigg)^{-1}   \frac{\epsilon M n}{2 \delta\log\bigg(1 + \frac{\beta}{L} - \frac{\epsilon M}{2L}\bigg)}+ \nonumber \\ & \hspace{2cm} \frac{\epsilon M Ln}{ \delta (2 \beta - \epsilon M)\log\bigg(1 + \frac{\beta}{L} - \frac{\epsilon M}{2L}\bigg)\sum_{j \in \mathcal{N}_{US}} ({\theta}^{us}_j)^2}
\log\bigg(2 + \frac{ \epsilon M}{2L}\bigg) \label{transcendode}
\end{align}
where in the last step, we dropped the term $F_1$ (since this term $F_1 = \mathcal{O}(K_0 \epsilon^{(1+a+b)}) = \mathcal{O}( \epsilon^{(1+a+b)}\log(\epsilon^{-1}))$) to obtain the approximate equality \eqref{transcendode}. The approximate solution for \eqref{transcendode} can be obtained using a transcendental equation of the form $q^x = cx + d$ where $x=2K_0$ and the coefficients are as follows:
\begin{align}
& q =  \bigg(\frac{1 + \frac{\beta}{L} - \frac{\epsilon M}{2L}}{2 + \frac{ \epsilon M}{2L}}\bigg) , c = \bigg(2 + \frac{ \epsilon M}{2L}\bigg)^{-1}   \frac{\epsilon M n}{2 \delta} \frac{\log\bigg(2 + \frac{ \epsilon M}{2L}\bigg)}{\log\bigg(1 + \frac{\beta}{L} - \frac{\epsilon M}{2L}\bigg)} \\
& d = \bigg(2 + \frac{ \epsilon M}{2L}\bigg)^{-1}   \frac{\epsilon M n}{2 \delta\log\bigg(1 + \frac{\beta}{L} - \frac{\epsilon M}{2L}\bigg)}+  \frac{\epsilon M Ln\log\bigg(2 + \frac{ \epsilon M}{2L}\bigg) }{ \delta (2 \beta - \epsilon M)\log\bigg(1 + \frac{\beta}{L} - \frac{\epsilon M}{2L}\bigg)\sum_{j \in \mathcal{N}_{US}} ({\theta}^{us}_j)^2}.
\end{align}
The solution for this equation is given by the following relation:
\begin{align}
x & = - \frac{W(-\frac{\log q}{c}q^{-\frac{d}{c}})}{\log q} - \frac{d}{c} \leq  \frac{\log(-\frac{\log q}{c}q^{-\frac{d}{c}})}{\log q^{-1}} - \frac{d}{c} = \frac{\log(-\frac{\log q}{c})}{\log q^{-1}}
\end{align}
where $W(.)$ is the Lambert W function and we have that $W(y) \leq \log(y)$ for large $y$. Substituting these coefficients in \eqref{transcendode}, we obtain the following approximate condition:

\begin{align}
K_0 \lessapprox \underbrace{\frac{1}{2}\log_{\bigg(\frac{2 + \frac{ \epsilon M}{2L}}{1 + \frac{\beta}{L} - \frac{\epsilon M}{2L}}\bigg)}\bigg(\frac{2 \delta\bigg(2 + \frac{ \epsilon M}{2L}\bigg)\log\bigg(1 + \frac{\beta}{L} - \frac{\epsilon M}{2L}\bigg)\log \bigg(\frac{2 + \frac{ \epsilon M}{2L}}{1 + \frac{\beta}{L} - \frac{\epsilon M}{2L}}\bigg)}{   \epsilon M n \log\bigg(2 + \frac{ \epsilon M}{2L}\bigg)}\bigg)}_{\hat{K}_0} \label{transcendodebound}
\end{align}
where ${\hat{K}_0}$ is the approximate upper bound on $K_0$. However, for the condition $K_0 \in G_{\underline{\Psi}}$ to hold, we also require $ \frac{d^2\underline{\Psi}}{dK^2}\bigg\vert_{K={K}_0} < 0$ condition to hold. It can be readily checked that $ \frac{d\underline{\Psi}}{dK}\bigg\vert_{K=\hat{K}_0} < 0$ whereas $\frac{d\underline{\Psi}}{dK}$ is positive for very small values of $K$. Hence, there must exist a local maximum at some $K_0 < {\hat{K}_0}$ which would imply $ \frac{d^2\underline{\Psi}}{dK^2}\bigg\vert_{K={K}_0} < 0$. Hence, it is not required to explicitly solve the condition $ \frac{d^2\underline{\Psi}}{dK^2}\bigg\vert_{K={K}_0} < 0$.

It is worth mentioning that dropping the term $F_1$ to obtain the approximate equality \eqref{transcendode} is justified. Observe that in the two approximate transcendental equations \eqref{transcendode_beta} and \eqref{transcendode} with $K_0$ as the variable, the right-hand sides will be greater than their left-hand sides respectively at the value $K_0={\hat{K}_0}$. Also, for small values of $K_0$ the respective left-hand sides of \eqref{transcendode_beta} and \eqref{transcendode} dominate, hence the approximate equality occurs for some $K_0<{\hat{K}_0}$. Now, we are only left to prove that the approximations \eqref{transcendode_beta} and \eqref{transcendode} are almost equal at $K_0={\hat{K}_0}$. This can be established by proving that the term $F_1=\mathcal{O}({\hat{K}_0} \epsilon^{(1+a+b)}) = \mathcal{O}( \epsilon^{(1+a+b)}\log(\epsilon^{-1}))$ is negligible w.r.t. other terms in \eqref{transcendode_beta} at $K_0={\hat{K}_0}$. From the particular approximate upper bound in \eqref{transcendodebound}, it can be verified that $a>1$. Using the substitution $\frac{1}{\bigg(2 + \frac{ \epsilon M}{2L}\bigg)^{2{\hat{K}_0}}} = \mu \epsilon^a $ where $\mu>0$, $a>0$, taking log both sides followed by substituting the approximate upper bound ${\hat{K}_0}$ from \eqref{transcendodebound} yields:
\begin{align}
a \log\bigg(\frac{1}{\sqrt[a]{\mu}\epsilon}\bigg) & = 2{\hat{K}_0} \log\bigg(2+ \frac{\epsilon M}{2L}\bigg) \\
a \log\bigg(\frac{1}{\sqrt[a]{\mu}\epsilon}\bigg) & = \frac{\log\bigg(\frac{2 \delta\bigg(2 + \frac{ \epsilon M}{2L}\bigg)\log\bigg(1 + \frac{\beta}{L} - \frac{\epsilon M}{2L}\bigg)\log \bigg(\frac{2 + \frac{ \epsilon M}{2L}}{1 + \frac{\beta}{L} - \frac{\epsilon M}{2L}}\bigg)}{   \epsilon M n \log\bigg(2 + \frac{ \epsilon M}{2L}\bigg)}\bigg)}{\log\bigg(\frac{2 + \frac{ \epsilon M}{2L}}{1 + \frac{\beta}{L} - \frac{\epsilon M}{2L}}\bigg)}\log \bigg(2+ \frac{\epsilon M}{2L}\bigg) \\
a & = \frac{\log \bigg(2+ \frac{\epsilon M}{2L}\bigg)}{\log\bigg(2 + \frac{ \epsilon M}{2L}\bigg) - \log \bigg({1 + \frac{\beta}{L} - \frac{\epsilon M}{2L}}\bigg)}>1, \label{agreaterthan1}
\end{align}
where in the last step we have that $\frac{1}{\sqrt[a]{\mu}\epsilon} = \frac{2 \delta\bigg(2 + \frac{ \epsilon M}{2L}\bigg)\log\bigg(1 + \frac{\beta}{L} - \frac{\epsilon M}{2L}\bigg)\log \bigg(\frac{2 + \frac{ \epsilon M}{2L}}{1 + \frac{\beta}{L} - \frac{\epsilon M}{2L}}\bigg)}{   \epsilon M n \log\bigg(2 + \frac{ \epsilon M}{2L}\bigg)}$. Now with $a>1$ we have the following condition for any $b>0$:
\begin{align}
\lim_{\epsilon \to 0^+} \frac{\epsilon^{(1+a+b)}\log(\epsilon^{-1})}{\epsilon^2} = 0.
\end{align}
Hence, for sufficiently small $\epsilon$, term $F_1$ can be of at most order $\mathcal{O}(\epsilon^2)$.
\end{proof}

\begin{lem}\label{suplem_3}
There exists some $K_0 = \mathcal{O}(\log(\epsilon^{-1}))$ in the set $G_{\underline{\Psi}} $ such that $\underline{\Psi}(K_0)\geq1 $ provided the lower bound on the unstable projection value $ \sum_{j \in \mathcal{N}_{US}} ({\theta}^{us}_j)^2$ has the following order:
\begin{align}
\sum_{j \in \mathcal{N}_{US}} ({\theta}^{us}_j)^2  \gtrapprox  \mathcal{O}\bigg(\frac{1}{\log(\epsilon^{-1})}\bigg).
\end{align}
\end{lem}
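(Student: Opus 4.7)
The plan is to show that the sufficient condition $\underline{\Psi}(K_0) \geq 1$ for some $K_0 \in G_{\underline{\Psi}}$ with $K_0 = \mathcal{O}(\log(\epsilon^{-1}))$, identified in Lemma~\ref{suplem_2}, can be enforced by controlling only the initial unstable projection $\sum_{j \in \mathcal{N}_{US}}(\theta^{us}_j)^2$. The key idea is that at any zero of $\frac{d\underline{\Psi}}{dK}$ one can use the derivative relation \eqref{transcendode_beta}--\eqref{transcendode} to \emph{eliminate} the exponential term $\bigl(1+\tfrac{\beta}{L}-\tfrac{\epsilon M}{2L}\bigr)^{2K_0}$ from the expression for $\underline{\Psi}(K_0)$, reducing the inequality $\underline{\Psi}(K_0)\geq 1$ to an explicit affine condition on $\sum_j(\theta^{us}_j)^2$ whose asymptotics in $\epsilon$ can be read off directly.

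First, I would evaluate $\underline{\Psi}(K_0)$ using the expression \eqref{trajectoryfunctionmodified}, and drop the contribution of $\sum_i (\theta^s_i)^2$, since it carries a factor $(1-\tfrac{\beta}{L}+\tfrac{\epsilon M}{2L})^{2K_0-1}$ which decays geometrically in $K_0$ (recall $K_0 = \Theta(\log \epsilon^{-1})$), giving at most a $\mathcal{O}(\epsilon^{1+b}\log\epsilon^{-1})$ correction. Next, using the transcendental identity derived from $\frac{d\underline{\Psi}}{dK}\bigl|_{K=K_0}=0$, namely
\begin{align*}
\bigl(1+\tfrac{\beta}{L}-\tfrac{\epsilon M}{2L}\bigr)^{2K_0} \approx \bigl[C_1 K_0 + C_2\bigr]\bigl(2+\tfrac{\epsilon M}{2L}\bigr)^{2K_0} + \frac{C_3\bigl(2+\tfrac{\epsilon M}{2L}\bigr)^{2K_0}}{\sum_j (\theta^{us}_j)^2},
\end{align*}
I would substitute this into the $\sum_j(\theta^{us}_j)^2$--coefficient of $\underline{\Psi}(K_0)$. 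The $\frac{C_3}{\sum_j}\bigl(2+\tfrac{\epsilon M}{2L}\bigr)^{2K_0}$ piece, after multiplication by $\sum_j(\theta^{us}_j)^2$, cancels a corresponding contribution against the stand-alone last term $-\tfrac{\epsilon M L n(2+\epsilon M/(2L))^{2K_0}}{\delta(2\beta-\epsilon M)}$ up to the factor $\bigl(\tfrac{\log(2+\epsilon M/(2L))}{\log(1+\beta/L-\epsilon M/(2L))}-1\bigr)>0$. This leaves the clean inequality
\begin{align*}
\bigl[(C_1 K_0+C_2)\bigl(2+\tfrac{\epsilon M}{2L}\bigr)^{2K_0} - K_0\tfrac{\epsilon M n}{\delta}\bigl(2+\tfrac{\epsilon M}{2L}\bigr)^{2K_0-1}\bigr]\sum_j(\theta^{us}_j)^2 \; \gtrapprox\; 1 + \mathcal{O}\bigl(\epsilon\bigl(2+\tfrac{\epsilon M}{2L}\bigr)^{2K_0}\bigr).
\end{align*}
Finally, invoking the substitution $\bigl(2+\tfrac{\epsilon M}{2L}\bigr)^{-2K_0}=\mu\epsilon^a$ with $a>1$ from \eqref{agreaterthan1} together with $K_0\approx \hat{K}_0$ from \eqref{transcendodebound}, I would solve for $\sum_j(\theta^{us}_j)^2$ and identify the lower bound stated in Theorem~\ref{thm1}. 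A quick dimensional check then yields the order statement: the numerator is $\Theta(\epsilon^{a-1})\cdot \epsilon\cdot \log\epsilon^{-1}\sim \mathrm{const}$, while the denominator contains $\tfrac{1}{a}\log(\epsilon^{-1}\cdot\mathrm{const})+1 = \Theta(\log\epsilon^{-1})$, giving $\sum_j(\theta^{us}_j)^2\gtrapprox \mathcal{O}(1/\log\epsilon^{-1})$.

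The principal obstacle is bookkeeping: one must verify that each approximation is carried out with error negligible compared with the surviving leading terms, in particular that (i) the dropped $\sum_i(\theta^s_i)^2$ contribution is asymptotically dominated by the retained $\sum_j(\theta^{us}_j)^2$ contribution, (ii) the cancellation of the $(2+\tfrac{\epsilon M}{2L})^{2K_0}$-scale terms between the $C_3/\sum_j$ substitution and the standalone negative term leaves a strictly positive residual so the reduction indeed produces an affine lower bound for $\sum_j(\theta^{us}_j)^2$, and (iii) the transcendental substitution is consistent with $K_0\in G_{\underline{\Psi}}$, which requires checking that the critical point of $\underline{\Psi}$ produced by this identity still lies in $(0,K^{\iota}]$ and is a local maximum (second-derivative sign), as argued at the end of the proof of Lemma~\ref{suplem_2}. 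Once these bookkeeping checks are done, the order bound $\mathcal{O}(1/\log\epsilon^{-1})$ follows immediately from the explicit form of $\hat{K}_0$.
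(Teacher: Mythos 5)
Your overall strategy is the same as the paper's: evaluate $\underline{\Psi}(K_0)$ at the critical point, drop the geometrically small stable-subspace contribution, use the vanishing of $\tfrac{d\underline{\Psi}}{dK}$ at $K_0$ (i.e., relation \eqref{transcendode}) to eliminate the dominant exponential, and solve the resulting affine inequality for $\sum_j(\theta^{us}_j)^2$. However, you have missed the key ingredient of the paper's argument, and this shows up as an inconsistency in your final dimensional check.

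The gap is that you set $K_0\approx\hat{K}_0$, i.e., you implicitly take $\chi=1$ in the parametrization $K_0=\chi\hat{K}_0$. But $\hat{K}_0$ is only an \emph{upper bound} on $K_0$ obtained via the Lambert-$W$ estimate; the actual critical point satisfies $K_0<\hat{K}_0$ strictly, and the surviving term in the numerator of \eqref{projectionloose} carries a factor $\mu\epsilon^{\chi a-1}$ whose exponent depends on $\chi$. With $\chi=1$ and $a>1$ from \eqref{agreaterthan1}, this factor is $\Theta(\epsilon^{a-1})\to 0$, so your claimed estimate ``$\Theta(\epsilon^{a-1})\cdot\epsilon\cdot\log\epsilon^{-1}\sim\mathrm{const}$'' is wrong: the product is $\epsilon^{a}\log\epsilon^{-1}\to 0$. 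The resulting lower bound on the projection would then be $\mathcal{O}(\epsilon^{a-1}/\log\epsilon^{-1})$, which, as the paper points out, can be \emph{dominated} (for $a>2$) by the relative-error requirement $\sqrt{\sum_j(\theta^{us}_j)^2}>\mathcal{O}(\epsilon^{-1/2}(\log\epsilon^{-1})\epsilon)$ needed for the $\epsilon$--precision trajectory framework to be valid at all. In that regime the ``sufficient condition'' you obtain does not guarantee that the approximation used to derive it holds, so the argument is circular.

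The paper resolves this by treating $\chi\in(0,1]$ as a free surrogate and showing that $\chi=1/a$ is the unique sweet spot: it makes $\epsilon^{\chi a-1}=\epsilon^0=1$ so the numerator of \eqref{projectionloose} becomes $\Theta(1)$, it keeps the neglected term $F_1$ of order $\mathcal{O}(\epsilon^{2+b}\log\epsilon^{-1})$ so that \eqref{transcendode} remains a valid approximation (which fails if $\chi$ is taken too small), and it produces a bound $\mathcal{O}(1/\log\epsilon^{-1})$ that strictly dominates the relative-error threshold $\mathcal{O}(\epsilon^{-1/2}(\log\epsilon^{-1})^2\epsilon^2)$ as $\epsilon\to 0$. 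Your bookkeeping checklist (i)--(iii) is sensible but does not identify this dependence of the final order on the unknown placement of $K_0$ within $(0,\hat{K}_0]$; you would need to add the $\chi$-parametrization and justify the choice $\chi=1/a$ (or an equivalent device) before the order statement can be established.
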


\begin{proof}
Recall that from the relaxation of condition \eqref{sufficiency1}, we require $\underline{\Psi}(K_0)\geq1$. Since $K_0$ is not explicitly available and we only have the approximate upper bound $\hat{K}_0$ from \eqref{transcendodebound}, hence we use the substitution $ K_0=\chi \hat{K}_0$ for some $ 0<\chi \leq 1$ and set the value of function $\underline{\Psi}$ at this point greater than equal to $1$.

Substituting $K_0=\chi\hat{K}_0$ from \eqref{transcendodebound} into the condition  $\underline{\Psi}({K}_0)\geq 1$, dropping the first term on the right hand side of \eqref{trajectoryfunctionmodified} (it is of order $\mathcal{O}(\chi\hat{K}_0 \epsilon^{(1+a+b)}) = \mathcal{O}( \epsilon^{(1+a+b)}\log(\epsilon^{-1}))$ as before, substituting $\frac{1}{\bigg(2 + \frac{ \epsilon M}{2L}\bigg)^{2{K}_0}} = \mu \epsilon^{\chi a} $ for $\mu>0, \epsilon>0$, using \eqref{transcendode} for $ K_0 = \chi \hat{K}_0$ followed by rearranging, we get:
\begin{align}
& \hspace{-0.5cm}\bigg( \bigg[\bigg(\frac{1 + \frac{\beta}{L} - \frac{\epsilon M}{2L}}{2 + \frac{ \epsilon M}{2L}}\bigg)^{2{K}_0} -  2{K}_0\bigg(2 + \frac{ \epsilon M}{2L}\bigg)^{-1}  \frac{\epsilon M n}{2 \delta}\bigg]  \sum_{j \in \mathcal{N}_{US}} ({\theta}^{us}_j)^2 -  \frac{\epsilon M Ln}{ \delta (2 \beta - \epsilon M)}
\bigg)  \gtrapprox \frac{1}{\bigg(2 + \frac{ \epsilon M}{2L}\bigg)^{2{K}_0}}  \\
& \hspace{-1.5cm}\bigg( \bigg(2 + \frac{ \epsilon M}{2L}\bigg)^{-1}   \frac{\epsilon M n}{2 \delta} \frac{\log\bigg(2 + \frac{ \epsilon M}{2L}\bigg)}{\log\bigg(1 + \frac{\beta}{L} - \frac{\epsilon M}{2L}\bigg)} 2{K}_0  - \bigg(2 + \frac{ \epsilon M}{2L}\bigg)^{-1}  \frac{\epsilon M n}{2 \delta}2{K}_0   + \bigg(2 + \frac{ \epsilon M}{2L}\bigg)^{-1}   \frac{\epsilon M n }{2 \delta\log\bigg(1 + \frac{\beta}{L} - \frac{\epsilon M}{2L}\bigg)} \bigg)\sum_{j \in \mathcal{N}_{US}} ({\theta}^{us}_j)^2     \gtrapprox \nonumber \\ & \hspace{1cm}  \mu \epsilon^{\chi a} + \frac{\epsilon M Ln}{ \delta (2 \beta - \epsilon M)} - \frac{\epsilon M Ln}{ \delta (2 \beta - \epsilon M)\log\bigg(1 + \frac{\beta}{L} - \frac{\epsilon M}{2L}\bigg)}
\log\bigg(2 + \frac{ \epsilon M}{2L}\bigg)
\end{align}
Since $ \bigg(  \frac{\epsilon M n}{2 \delta} \frac{\log\bigg(2 + \frac{ \epsilon M}{2L}\bigg)}{\log\bigg(1 + \frac{\beta}{L} - \frac{\epsilon M}{2L}\bigg)} 2{K}_0  - \frac{\epsilon M n}{2 \delta}2{K}_0 +    \frac{\epsilon M n }{2 \delta\log\bigg(1 + \frac{\beta}{L} - \frac{\epsilon M}{2L}\bigg)} \bigg) > 0$, dividing both sides by this quantity yields the following sufficient condition on unstable  projection value $\sum_{j \in \mathcal{N}_{US}} ({\theta}^{us}_j)^2 $:
\begin{align}
& \hspace{0.5cm}\sum_{j \in \mathcal{N}_{US}} ({\theta}^{us}_j)^2     \gtrapprox  \frac{\bigg(2 + \frac{ \epsilon M}{2L}\bigg)\bigg(\mu \epsilon^{\chi a} + \frac{\epsilon M Ln}{ \delta (2 \beta - \epsilon M)} - \frac{\epsilon M Ln}{ \delta (2 \beta - \epsilon M)\log\bigg(1 + \frac{\beta}{L} - \frac{\epsilon M}{2L}\bigg)}
\log\bigg(2 + \frac{ \epsilon M}{2L}\bigg) \bigg)}{\bigg(  \frac{\epsilon M n}{2 \delta} \frac{\log\bigg(2 + \frac{ \epsilon M}{2L}\bigg)}{\log\bigg(1 + \frac{\beta}{L} - \frac{\epsilon M}{2L}\bigg)} 2{K}_0  - \frac{\epsilon M n}{2 \delta}2{K}_0    +    \frac{\epsilon M n }{2 \delta\log\bigg(1 + \frac{\beta}{L} - \frac{\epsilon M}{2L}\bigg)} \bigg)}\\
& \hspace{0.5cm}\sum_{j \in \mathcal{N}_{US}} ({\theta}^{us}_j)^2    \gtrapprox  \frac{\bigg(2 + \frac{ \epsilon M}{2L}\bigg)\bigg(\frac{2 \delta \mu \epsilon^{(\chi a-1)}}{Mn} + \frac{ 1}{  (\frac{\beta}{L} - \frac{\epsilon M}{2L})} - \frac{	\log\bigg(2 + \frac{ \epsilon M}{2L}\bigg)}{  (\frac{\beta}{L} - \frac{\epsilon M}{2L})\log\bigg(1 + \frac{\beta}{L} - \frac{\epsilon M}{2L}\bigg)}
	\bigg)}{2\chi\hat{K}_0\bigg(  \frac{\log\bigg(2 + \frac{ \epsilon M}{2L}\bigg) }{\log\bigg(1 + \frac{\beta}{L} - \frac{\epsilon M}{2L}\bigg)}  - 1   \bigg)+  \frac{1 }{\log\bigg(1 + \frac{\beta}{L} - \frac{\epsilon M}{2L}\bigg)}}  \label{projectionloose}.
\end{align}

Now, recall that from \eqref{agreaterthan1} we have $a>1$ and we also know that $\hat{K}_0 \gtrapprox {K}_0 = \chi \hat{K}_0$. Since $K_0$ is not explicitly known we can choose a surrogate for $\chi$ to obtain the sufficient condition. Notice that $\chi$ is a quantity between $0$ and $1$. Choosing a large value for $\chi$ say close to $1$ will yield the following order bound $\sum\limits_{j \in \mathcal{N}_{US}} (\theta_j^{us})^2 \gtrapprox \mathcal{O}\bigg( \frac{\epsilon^{a -1}}{\log(\epsilon^{-1})}\bigg)$. Recall that from \eqref{relativeerrorprojectionbound} we require $ \sqrt{\sum_{j \in \mathcal{N}_{US} } ({\theta}_{j}^{us})^2} > \mathcal{O}\bigg(\frac{1}{\sqrt{\epsilon}}\bigg(\log\bigg(\frac{1}{\epsilon} \bigg)\epsilon\bigg)  \bigg)$. However this bound may then contradict the sufficient condition $\sum\limits_{j \in \mathcal{N}_{US}} (\theta_j^{us})^2 \gtrapprox \mathcal{O}\bigg( \frac{\epsilon^{a -1}}{\log(\epsilon^{-1})}\bigg)$ if $a>2$, i.e., we have $\mathcal{O}\bigg(\frac{1}{{\epsilon}}\bigg(\log\bigg(\frac{1}{\epsilon} \bigg)\epsilon\bigg)^2  \bigg) > \mathcal{O}\bigg( \frac{\epsilon^{a -1}}{\log(\epsilon^{-1})}\bigg)$ as $\epsilon \to 0$ (for well conditioned problems, i.e., $\frac{\beta}{L}$ close to $1$, it can be checked using \eqref{agreaterthan1} that $a$ becomes arbitrarily large). Next, choosing very small values of $\chi$ say close to $0$ will cause the approximation \eqref{transcendode} to fail since the term $F_1$ in \eqref{transcendode_beta} can no longer be dropped (this term is of order $\mathcal{O}(\epsilon)$ for $\chi = 0$).

However, the choice $\chi = \frac{1}{a}$ is able to strike a balance between both the requirements (dropping of the term $F_1$ in \eqref{transcendode_beta} and satisfying the bound on $ \sum\limits_{j \in \mathcal{N}_{US}} (\theta_j^{us})^2$ from \eqref{relativeerrorprojectionbound}). Observe that by setting $\chi = \frac{1}{a}$, we can get rid of the $\epsilon$ dependency in the numerator of \eqref{projectionloose} which generates the order bound $\sum\limits_{j \in \mathcal{N}_{US}} (\theta_j^{us})^2 \gtrapprox \mathcal{O}\bigg( \frac{1}{\log(\epsilon^{-1})}\bigg)$ that agrees with the condition $ \sqrt{\sum_{j \in \mathcal{N}_{US} } ({\theta}_{j}^{us})^2} > \mathcal{O}\bigg(\frac{1}{\sqrt{\epsilon}}\bigg(\log\bigg(\frac{1}{\epsilon} \bigg)\epsilon\bigg)  \bigg)$ from  \eqref{relativeerrorprojectionbound} for any $a>0$. Also, it can be easily checked that the term $F_1$ from \eqref{transcendode_beta} for $K_0 = \chi \hat{K}_0 = \frac{1}{a}\hat{K}_0$ has the order $\mathcal{O}(\epsilon^{(2+b)}\log(\epsilon^{-1}))$ for some $b>0$ hence the term $F_1$ can be dropped to get the approximation \eqref{transcendode}. Substituting $\hat{K}_0$ from \eqref{transcendodebound} and $ \chi = \frac{1}{a}$ in \eqref{projectionloose} followed by further simplification gives the following result:
\begin{align}
\hspace{0.2cm}\sum_{j \in \mathcal{N}_{US}} ({\theta}^{us}_j)^2     \gtrapprox \frac{\bigg(2 + \frac{ \epsilon M}{2L}\bigg)\bigg(\frac{2 \delta \mu\log\bigg(1 + \frac{\beta}{L} - \frac{\epsilon M}{2L}\bigg)}{ Mn} + \frac{ \log\bigg(1 + \frac{\beta}{L} - \frac{\epsilon M}{2L}\bigg)}{  (\frac{\beta}{L} - \frac{\epsilon M}{2L})} - \frac{	\log\bigg(2 + \frac{ \epsilon M}{2L}\bigg)}{  (\frac{\beta}{L} - \frac{\epsilon M}{2L})}
	\bigg)}{\frac{1}{a}\log \bigg(\frac{2 \delta\bigg(2 + \frac{ \epsilon M}{2L}\bigg)\log\bigg(1 + \frac{\beta}{L} - \frac{\epsilon M}{2L}\bigg)\log \bigg(\frac{2 + \frac{ \epsilon M}{2L}}{1 + \frac{\beta}{L} - \frac{\epsilon M}{2L}}\bigg)}{   \epsilon M n \log\bigg(2 + \frac{ \epsilon M}{2L}\bigg)}\bigg)+ 1}   \label{unstabletightbound01}
\end{align}

Finally, for $\epsilon < \frac{2\beta}{M} $, dropping the negative term $ \frac{ \log\bigg(1 + \frac{\beta}{L} - \frac{\epsilon M}{2L}\bigg)}{  (\frac{\beta}{L} - \frac{\epsilon M}{2L})} - \frac{	\log\bigg(2 + \frac{ \epsilon M}{2L}\bigg)}{  (\frac{\beta}{L} - \frac{\epsilon M}{2L})}$ from the numerator of \eqref{unstabletightbound01} and setting the condition:
\begin{align}
\hspace{0.2cm}\sum_{j \in \mathcal{N}_{US}} ({\theta}^{us}_j)^2     \gtrapprox \frac{\bigg(2 + \frac{ \epsilon M}{2L}\bigg)\bigg(\frac{2 \delta \mu\log\bigg(1 + \frac{\beta}{L} - \frac{\epsilon M}{2L}\bigg)}{ Mn}
	\bigg)}{\frac{1}{a}\log \bigg(\frac{2 \delta\bigg(2 + \frac{ \epsilon M}{2L}\bigg)\log\bigg(1 + \frac{\beta}{L} - \frac{\epsilon M}{2L}\bigg)\log \bigg(\frac{2 + \frac{ \epsilon M}{2L}}{1 + \frac{\beta}{L} - \frac{\epsilon M}{2L}}\bigg)}{   \epsilon M n \log\bigg(2 + \frac{ \epsilon M}{2L}\bigg)}\bigg)+ 1},  \label{unstabletightbound}
\end{align}
the approximate lower bound in  \eqref{unstabletightbound01} is guaranteed.
Now using the upper bound on $K_0$ from \eqref{transcendodebound} in the expression $\mu \epsilon^a = \frac{1}{\bigg(2+\frac{\epsilon M}{2 L}\bigg)^{2\hat{K}_0}}$, we have that:
\begin{align}
\sqrt[a]{\mu} &=  \frac{    M n \log\bigg(2 + \frac{ \epsilon M}{2L}\bigg)}{2 \delta\bigg(2 + \frac{ \epsilon M}{2L}\bigg)\log\bigg(1 + \frac{\beta}{L} - \frac{\epsilon M}{2L}\bigg)\log \bigg(\frac{2 + \frac{ \epsilon M}{2L}}{1 + \frac{\beta}{L} - \frac{\epsilon M}{2L}}\bigg)}
\end{align}
where $a=\frac{\log \bigg(2+ \frac{\epsilon M}{2L}\bigg)}{\log\bigg(2 + \frac{ \epsilon M}{2L}\bigg) - \log \bigg({1 + \frac{\beta}{L} - \frac{\epsilon M}{2L}}\bigg)}>1$.
Hence, the approximate lower bound on the unstable projection value $ \sum_{j \in \mathcal{N}_{US}} ({\theta}^{us}_j)^2$ has the following order:
\begin{align}
\sum_{j \in \mathcal{N}_{US}} ({\theta}^{us}_j)^2  \gtrapprox  \mathcal{O}\bigg(\frac{1}{\log(\epsilon^{-1})}\bigg). \label{projectionwellconditioned}
\end{align}
It is also worth mentioning that the lower bound on the unstable  projection value $ \sum_{j \in \mathcal{N}_{US}} ({\theta}^{us}_j)^2$ from \eqref{projectionwellconditioned} is an increasing function of $\epsilon$.
\end{proof}

\subsection*{Proof of Theorem \ref{thm1}}

Using Lemmas \ref{suplem_1}, \ref{suplem_2} and \ref{suplem_3} we have established that there exists some $K_0 = \mathcal{O}(\log(\epsilon^{-1}))$ in the set $G_{\underline{\Psi}} $ such that $\underline{\Psi}(K_0)\geq1 $ provided the initial condition of \eqref{projectionwellconditioned} holds. Since $K_0 \in G_{\underline{\Psi}} $ we will have $K_0 \leq K^{\iota}$ where $K^{\iota} $ is upper bounded by the linear exit time bound from \eqref{linearexittimebound}. Then using the fact that $\underline{\Psi}(K_0)\geq1 $ we get that $\Psi(K_0)> \underline{\Psi}(K_0)\geq1 $ implying $ \inf_{\tau}\norm{\tilde{\u}_{K_0}^{\tau}}^2 >   \epsilon^2 \Psi(K_0) > \epsilon^2$ from  \eqref{inversionineq}. Hence the approximate trajectories $\{\tilde{\u}_{K}^{\tau}\}$ exit $\mathcal{B}_{\epsilon}(\x^*)$ at $K< K_0 < K^{\iota}$ under the sufficient initial condition of \eqref{projectionwellconditioned}. This completes the proof of Theorem \ref{thm1}.

Finally, using the fact that $\epsilon < \frac{2\beta}{M} $ and Theorem 3.2 of \cite{dixit2022exit}, we can upper bound $\epsilon$ as follows:
\begin{align}
   \epsilon <\min \bigg\{\inf_{{\norm{\u}=1}}\bigg(\limsup_{j\to \infty} \sqrt[j]{\frac{r_j(\u)}{j!}}\bigg)^{-1},\frac{2L\delta}{M(2Ln^2 -\delta )} + \mathcal{O}(\epsilon^2), \frac{2\beta}{M}\bigg\}
\end{align}
where $r_j(\u) = \norm{\bigg(\frac{d^j }{dw^j}\nabla^2 f(\x^*+w\u)\bigg\vert_{w=0}\bigg)}_2$.

\hfill
 \qedhere
 \qedsymbol

\section{} \label{Appendix B}

We prove Theorem \ref{thm2} by first proving a sequence of lemmas.

\begin{lem}\label{suplem1}
For an iterative gradient mapping given by $\x^+ = \x - \alpha \nabla f(\x)$ in some neighborhood of $\x^*$, if $\norm{\x^+-\x^*} > \norm{\x-\x^*} $ then the following holds:
\begin{align}
\textbf{a.} \hspace{0.5cm} \norm{\x^{++}-\x^*} &\geq \bar{\rho}(\x)\norm{\x^+-\x^*} - \sigma(\x) \label{partb.}\\
\textbf{b.} \hspace{0.5cm}  \norm{\x^{++}-\x^*} &> \norm{\x^+-\x^*} \label{parta.}
\end{align}
where $\sigma(\x) = \mathcal{O}(\norm{\x-\x^*}^2)$, $\bar{\rho}(\x)>1 $ and \eqref{parta.} is termed as the sequential monotonicity property.
\end{lem}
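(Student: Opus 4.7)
The plan is to linearize the gradient map around $\x^*$ via the Hessian-Lipschitz assumption, then exploit the spectral gap of $\nabla^2 f(\x^*)$ to show that the strict expansion $\|\u^+\|>\|\u\|$ (with $\u:=\x-\x^*$) amplifies through one more application of the linearized operator, and finally absorb the quadratic remainders into $\sigma(\x)$. Since $\nabla f(\x^*)=\mathbf{0}$, Assumption~\textbf{A3} gives $\nabla f(\x)=\nabla^2 f(\x^*)\u+\r(\x)$ with $\|\r(\x)\|\le\tfrac{M}{2}\|\u\|^2$; setting $\A:=\I-\alpha\nabla^2 f(\x^*)$, the iterates satisfy
\begin{equation*}
\u^+=\A\u+\e_1,\qquad \u^{++}=\A\u^++\e_2=\A^2\u+\A\e_1+\e_2,
\end{equation*}
with $\|\e_1\|\le\tfrac{M}{2L}\|\u\|^2$ and $\|\e_2\|\le\tfrac{M}{2L}\|\u^+\|^2$.

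The core step is a spectral-gap identity. Expanding $\u=\sum_\ell\theta_\ell\v_\ell$ in the orthonormal eigenbasis of $\nabla^2 f(\x^*)$ and writing $c_\ell:=1-\alpha\lambda_\ell$ (so $c_\ell^2\le(1-\kappa)^2$ on the stable subspace and $c_\ell^2\ge(1+\kappa)^2$ on the unstable one, with $\kappa=\beta/L$), the identities $\|\A^2\u\|^2-\|\A\u\|^2=\sum_\ell c_\ell^2(c_\ell^2-1)\theta_\ell^2$ and $\|\A\u\|^2-\|\u\|^2=\sum_\ell(c_\ell^2-1)\theta_\ell^2$ combine with the termwise bounds $c_\ell^2(c_\ell^2-1)\ge(1+\kappa)^2(c_\ell^2-1)$ on the unstable subspace (since $c_\ell^2-1>0$ there) and $c_\ell^2(c_\ell^2-1)\ge(1-\kappa)^2(c_\ell^2-1)$ on the stable one (since $c_\ell^2-1<0$ and $c_\ell^2\le(1-\kappa)^2$), to yield after rearrangement
\begin{equation*}
\|\A^2\u\|^2-\|\A\u\|^2\;\ge\;(1+\kappa)^2\bigl(\|\A\u\|^2-\|\u\|^2\bigr)+4\kappa N,
\end{equation*}
with $N:=-\sum_{\ell\in\mathcal{N}_S}(c_\ell^2-1)\theta_\ell^2\ge 0$. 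The hypothesis $\|\u^+\|>\|\u\|$ together with the reverse-triangle estimate $\|\A\u\|\ge\|\u^+\|-\|\e_1\|$ forces $\|\A\u\|^2-\|\u\|^2>-O(\|\u\|^3)$, which for $\|\u\|$ small enough is dominated by the positive slack on the right-hand side above, giving $\|\A^2\u\|>\|\A\u\|$ strictly and hence $\bar{\rho}(\x):=\|\A\u^+\|/\|\u^+\|>1$ once the corrections via $\|\A\u^+\|\ge\|\A^2\u\|-\|\A\e_1\|$ are accounted for.

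Finally, part~(a) follows from $\|\u^{++}\|\ge\|\A\u^+\|-\|\e_2\|\ge\bar{\rho}(\x)\|\u^+\|-\sigma(\x)$ with $\sigma(\x):=\|\A\e_1\|+\|\e_2\|=O(\|\u\|^2)$, using $\|\A\|_2\le 2$ and $\|\u^+\|\le 2\|\u\|$; part~(b), the sequential monotonicity $\|\u^{++}\|>\|\u^+\|$, then follows from~(a) by restricting $\|\u\|<\xi$ so the strict linear gap $(\bar{\rho}(\x)-1)\|\u^+\|$ dominates the quadratic $\sigma(\x)$. The main obstacle is precisely this last step: the hypothesis only furnishes a strict but possibly tiny expansion gap, so keeping $\bar{\rho}(\x)>1$ robust against the $O(\|\u\|^2)$ remainder is delicate---this is exactly what the quantitative neighborhood bound on $\xi$ in Theorem~\ref{thm2} is calibrated to secure.
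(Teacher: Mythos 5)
Your overall strategy---linearize, exploit the spectral structure to propagate a strict expansion through one more step, and absorb quadratic remainders into $\sigma(\x)$---is the right idea, and your spectral inequality $\norm{\A^2\u}^2-\norm{\A\u}^2\geq(1+\kappa)^2(\norm{\A\u}^2-\norm{\u}^2)+4\kappa N$ is correct as stated. But there is a genuine gap, and it comes from a structural choice that the paper avoids: you linearize with $\A=\I-\alpha\nabla^2 f(\x^*)$ (the Hessian at the fixed point), whereas the paper linearizes with the \emph{averaged} Hessian $\mathbf{D}(\x)=\I-\alpha\int_0^1\nabla^2 f(\x^*+p\u)\,dp$. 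Because $\nabla f(\x)=\int_0^1\nabla^2 f(\x^*+p\u)\,dp\,\cdot\u$ exactly, the paper gets $\u^+=\mathbf{D}(\x)\u$ with no remainder, so the hypothesis $\norm{\u^+}>\norm{\u}$ translates \emph{exactly} into $\langle\hat\u,\mathbf{D}(\x)^2\hat\u\rangle>1$. Your version only yields $\norm{\A\u}>\norm{\u}-\mathcal{O}(\norm{\u}^2)$, so $\norm{\A\u}^2-\norm{\u}^2$ may actually be negative, and your spectral bound then only gives $\norm{\A^2\u}^2-\norm{\A\u}^2\geq-\mathcal{O}(\norm{\u}^3)+4\kappa N$ with $N\geq 0$ but possibly zero. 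That does not deliver the strict inequality, hence does not deliver $\bar\rho(\x)>1$ without further argument. You gesture at this at the end, but you conflate two distinct issues: the $\xi$-restriction in Theorem~\ref{thm2} is calibrated to make the linear margin $(\bar\rho(\x)-1)\norm{\u^+}$ dominate the quadratic $\sigma(\x)$ in part~(b); it is \emph{not} what secures $\bar\rho(\x)>1$ in part~(a). In the paper, $\bar\rho(\x)>1$ holds unconditionally for any $\u$ satisfying the hypothesis, regardless of how small the expansion gap is, precisely because the linearization is exact.

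The paper also uses a cleaner algebraic device you don't: with $X=\mathbf{D}(\x)^2\succeq\mathbf{0}$, the identity $X^2-X=(X-\I)^2+(X-\I)$ gives
\begin{align*}
\langle\hat\u,\mathbf{D}^4\hat\u\rangle-\langle\hat\u,\mathbf{D}^2\hat\u\rangle=\underbrace{\langle\hat\u,(\mathbf{D}^2-\I)^2\hat\u\rangle}_{\geq 0}+\underbrace{\langle\hat\u,(\mathbf{D}^2-\I)\hat\u\rangle}_{>0}>0,
\end{align*}
so $\bar\rho(\x)=\sqrt{\langle\hat\u,\mathbf{D}^4\hat\u\rangle}\big/\sqrt{\langle\hat\u,\mathbf{D}^2\hat\u\rangle}>1$ with no spectral-gap bookkeeping at all---the PSD of $(\mathbf{D}^2-\I)^2$ does all the work, and the strict positivity comes directly from the hypothesis. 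The error term ($\mathbf{P}(\x)$, coming from the Hessian-Lipschitz comparison of $\mathbf{D}(\x^+)$ to $\mathbf{D}(\x)$) enters only when passing from $\u^+$ to $\u^{++}$, which is where $\sigma(\x)=\mathcal{O}(\norm{\u}^2)$ belongs. Your termwise bounds with $(1\pm\kappa)^2$ are a valid alternative to that identity, and they become more useful later (indeed Lemma~\ref{suplem2} proves $\bar\rho(\x)$ is bounded away from~$1$ by just such a computation), but they are not what the existence of $\bar\rho(\x)>1$ rests on. To fix your proof you would either switch to the averaged-Hessian linearization, or else add an argument showing that when $\norm{\u}\lessapprox\kappa$ the $\mathcal{O}(\norm{\u}^3)$ slop cannot overturn the strict inequality---which is doable but is exactly the extra work the paper's exact linearization is designed to avoid.
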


\begin{proof}
For an iterative gradient mapping given by $\x^+ = \x - \alpha \nabla f(\x)$ in any neighborhood of $\x^*$, we have:
\begin{align}
   \nabla f(\x) = \bigg(\nabla f(\x^*) + \int_{p=0}^{p=1}\nabla^2 f(\x^* + p(\x-\x^*)) (\x-\x^*) dp\bigg). \label{gradtaylor}
\end{align}
provided function $f(\cdot)$ is twice continuously differentiable.
Using this substitution in the iterative gradient mapping, we have the following result:
\begin{align}
\norm{\x^+-\x^*} &= \norm{\x - \alpha \nabla f(\x) - \x^*}\\
& = \norm{(\x-\x^*) - \alpha\bigg(\nabla f(\x^*) + \int_{p=0}^{p=1}\nabla^2 f(\x^* + p(\x-\x^*)) (\x-\x^*) dp\bigg)} \\
& = \norm{(\x-\x^*) - \alpha \int_{p=0}^{p=1}\nabla^2 f(\x^* + p(\x-\x^*)) dp(\x-\x^*)} \\
& = \norm{\bigg(\mathbf{I} - \alpha \int_{p=0}^{p=1}\nabla^2 f(\x^* + p(\x-\x^*)) dp\bigg)(\x-\x^*)} \\
& = \sqrt{\bigg(\sum_{j \in \mathcal{I}_{US}}(\nu^{us}_j\langle \hat{\u}, \e_j^{us}\rangle)^2  + \sum_{i \in \mathcal{I}_{S}} (\nu^{s}_i  \langle \hat{\u}, \e_i^{s}\rangle)^2\bigg)}\norm{\x-\x^*}  \label{weyl}
\end{align}
where $\u = \x - \x^*$, $\hat{\u}= \frac{\u}{\norm{\u}}$, $\x - \x^* = \norm{\u}\bigg(\sum_{j \in \mathcal{I}_{US}}\langle \hat{\u}, \e_j^{us}\rangle \e_j^{us} + \sum_{i \in \mathcal{I}_{S}}  \langle \hat{\u}, \e_i^{s}\rangle\e_i^{s}\bigg)$ and $(\e_j^{us}, \nu_j^{us})$, $(\e_j^{s}, \nu_j^{s})$ are the eigenvector-eigenvalue pair of the matrix $\mathbf{D}(\x)$ where $\mathbf{D}(\x)=\bigg(\mathbf{I} - \alpha \int_{p=0}^{p=1}\nabla^2 f(\x^* + p(\x-\x^*)) dp\bigg)$ with $\nu^s_i<1$ for all $i \in \mathcal{I}_{S}$, $\nu^{us}_j\geq 1$ for all $j \in \mathcal{I}_{US}$ and $ \mathcal{I}_{US}, \mathcal{I}_{S}$ are the index sets associated respectively with these subspaces respectively.

We consider the case of strict expansive dynamics in the current iteration.
Given: $\norm{\x^+-\x^*} > \norm{\x-\x^*} $ or equivalently
\begin{align}
    \norm{\x^+-\x^*} =\sqrt{\bigg(\sum_{j \in \mathcal{I}_{US}}(\nu^{us}_j\langle \hat{\u}, \e_j^{us}\rangle)^2  + \sum_{i \in \mathcal{I}_{S}} (\nu^{s}_i  \langle \hat{\u}, \e_i^{s}\rangle)^2\bigg)}\norm{\u} > \norm{\u}. \label{Dx_eigen}
\end{align}

This implies:
\begin{align}
   \sqrt{\bigg(\sum_{j \in \mathcal{I}_{US}}(\nu^{us}_j\langle \hat{\u}, \e_j^{us}\rangle)^2  + \sum_{i \in \mathcal{I}_{S}} (\nu^{s}_i  \langle \hat{\u}, \e_i^{s}\rangle)^2\bigg)} > 1. \label{assertion1}
\end{align}
We now show that the claim in \eqref{partb.} holds.

Since $\x^{++} = \x^+ - \alpha \nabla f(\x^+)$, we have the following:
\begin{align}
\norm{\x^{++}-\x^*} &= \norm{\x^+ - \alpha \nabla f(\x^+) - \x^*} \\
& \hspace{-1cm}= \norm{(\x^+-\x^*) - \alpha\bigg(\nabla f(\x^*) + \int_{p=0}^{p=1}\nabla^2 f(\x^* + p(\x^+-\x^*)) (\x^+-\x^*) dp\bigg)} \\
& \hspace{-1cm}= \norm{(\x^+-\x^*) - \alpha \int_{p=0}^{p=1}\nabla^2 f(\x^* + p(\x^+-\x^*)) dp(\x^+-\x^*)} \\
& \hspace{-1cm}= \norm{\bigg(\mathbf{I} - \alpha \int_{p=0}^{p=1}\nabla^2 f(\x^* + p(\x^+-\x^*)) dp\bigg)(\x^+-\x^*)}\\
&\hspace{-1cm} = \norm{\bigg(\mathbf{I} - \alpha \int_{p=0}^{p=1}\nabla^2 f(\x^* + p(\x^+-\x^*)) dp\bigg)\bigg(\mathbf{I} - \alpha \int_{p=0}^{p=1}\nabla^2 f(\x^* + p(\x-\x^*)) dp\bigg)(\x-\x^*)}\\
& \hspace{-1cm}= \norm{\bigg(\mathbf{I} - \alpha \int_{p=0}^{p=1}\nabla^2 f(\x^* + p(\x-\x^*)) dp - \alpha \mathbf{P}(\x)\bigg)\bigg(\mathbf{I} - \alpha \int_{p=0}^{p=1}\nabla^2 f(\x^* + p(\x-\x^*)) dp\bigg)(\x-\x^*)} \label{checkpt1}
\end{align}
where in the last step we used the following substitution:
\begin{align}
 \int_{p=0}^{p=1}\nabla^2 f(\x^* + p(\x^+-\x^*))dp =  \int_{p=0}^{p=1}\nabla^2 f(\x^* + p(\x-\x^*))dp + \mathbf{P}(\x). \label{perturbation_1}
\end{align}
and we have that $ \norm{\mathbf{P}(\x)} = \mathcal{O}(\norm{\nabla f(\x)})$ which can be verified from Assumption \textbf{A3}. Rearranging \eqref{perturbation_1} and taking norm both sides we get:
\begin{align}
\norm{\mathbf{P}(\x)}_2 & = \norm{\int_{p=0}^{p=1}\bigg(\nabla^2 f(\x^* + p(\x^+-\x^*))-  \nabla^2 f(\x^* + p(\x-\x^*))\bigg)dp }_2 \\
& \leq \int_{p=0}^{p=1}\norm{\bigg(\nabla^2 f(\x^* + p(\x^+-\x^*))-  \nabla^2 f(\x^* + p(\x-\x^*))\bigg)}_2 dp \\
& \leq \int_{p=0}^{p=1}M\norm{p(\x^+ - \x)}dp \\
&= M  \norm{\x^+ - \x}\int_{p=0}^{p=1}p dp = \frac{M \alpha \norm{\nabla f(\x)}}{2}.
\end{align}
Now recall that  $\mathbf{D}(\x) = \bigg(\mathbf{I} - \alpha \int_{p=0}^{p=1}\nabla^2 f(\x^* + p(\x-\x^*)) dp\bigg)$ hence further simplifying \eqref{checkpt1} yields the following:
\begin{align}
\norm{\x^{++}-\x^*} & = \norm{\bigg(\mathbf{D}(\x)\bigg)^2(\x-\x^*) - \alpha\bigg(\mathbf{D}(\x)\mathbf{P}(\x)(\x - \x^*)\bigg)} \\
& \geq \sqrt{\bigg(\sum_{j \in \mathcal{I}_{US}}(\nu^{us}_j)^4(\langle \hat{\u}, \e_j^{us}\rangle)^2  + \sum_{i \in \mathcal{I}_{S}} (\nu^{s}_i)^4  (\langle \hat{\u}, \e_i^{s}\rangle)^2\bigg)}\norm{\x-\x^*} - \alpha\norm{\mathbf{D}(\x)}_2\norm{\mathbf{P}(\x)}_2 \norm{\x-\x^*} \\
& \geq \sqrt{\bigg(\sum_{j \in \mathcal{I}_{US}}(\nu^{us}_j)^4(\langle \hat{\u}, \e_j^{us}\rangle)^2  + \sum_{i \in \mathcal{I}_{S}} (\nu^{s}_i)^4  (\langle \hat{\u}, \e_i^{s}\rangle)^2\bigg)}\norm{\x-\x^*} -   \frac{\sup_{j}\{\nu^{us}_j\} M \alpha \norm{\nabla f(\x)}\norm{\x-\x^*}}{2} \\
& \geq \sqrt{\bigg(\sum_{j \in \mathcal{I}_{US}}(\nu^{us}_j)^4(\langle \hat{\u}, \e_j^{us}\rangle)^2  + \sum_{i \in \mathcal{I}_{S}} (\nu^{s}_i)^4  (\langle \hat{\u}, \e_i^{s}\rangle)^2\bigg)}\norm{\x-\x^*} -  \frac{\sup_{j}\{\nu^{us}_j\} M L\alpha \norm{\x-\x^*}^2}{2}\label{parta_partb}
\end{align}
where we used the fact that $\norm{\nabla f(\x)} \leq L \norm{\x - \x^*}$ by Lipschitz continuity of $\nabla f(\x)$. Now with $\sigma(\x) =\frac{\sup_{j}\{\nu^{us}_j\} M L\alpha \norm{\x-\x^*}^2}{2}  = \mathcal{O}(\norm{\x-\x^*}^2)$ we are left to prove:
 $$ \sqrt{\bigg(\sum_{j \in \mathcal{I}_{US}}(\nu^{us}_j)^4(\langle \hat{\u}, \e_j^{us}\rangle)^2  + \sum_{i \in \mathcal{I}_{S}} (\nu^{s}_i)^4  (\langle \hat{\u}, \e_i^{s}\rangle)^2\bigg)}\norm{\x-\x^*} > \norm{\x^+-\x^*} $$ or equivalently the following result:
 \begin{align}
 \sqrt{\bigg(\sum_{j \in \mathcal{I}_{US}}(\nu^{us}_j)^4(\langle \hat{\u}, \e_j^{us}\rangle)^2  + \sum_{i \in \mathcal{I}_{S}} (\nu^{s}_i)^4  (\langle \hat{\u}, \e_i^{s}\rangle)^2\bigg)}\norm{\u} &>\sqrt{\bigg(\sum_{j \in \mathcal{I}_{US}}(\nu^{us}_j\langle \hat{\u}, \e_j^{us}\rangle)^2  + \sum_{i \in \mathcal{I}_{S}} (\nu^{s}_i  \langle \hat{\u}, \e_i^{s}\rangle)^2\bigg)}\norm{\u} \\
  \sqrt{\bigg(\sum_{j \in \mathcal{I}_{US}}(\nu^{us}_j)^4(\langle \hat{\u}, \e_j^{us}\rangle)^2  + \sum_{i \in \mathcal{I}_{S}} (\nu^{s}_i)^4  (\langle \hat{\u}, \e_i^{s}\rangle)^2\bigg)} &>\sqrt{\bigg(\sum_{j \in \mathcal{I}_{US}}(\nu^{us}_j\langle \hat{\u}, \e_j^{us}\rangle)^2  + \sum_{i \in \mathcal{I}_{S}} (\nu^{s}_i  \langle \hat{\u}, \e_i^{s}\rangle)^2\bigg)}. \label{checkresult1}
 \end{align}

This will hold true if:
\begin{align}
\sqrt{\bigg(\sum_{j \in \mathcal{I}_{US}}(\nu^{us}_j\langle \hat{\u}, \e_j^{us}\rangle)^2  + \sum_{i \in \mathcal{I}_{S}} (\nu^{s}_i  \langle \hat{\u}, \e_i^{s}\rangle)^2\bigg)} > 1. \label{checkresult2}
\end{align}

Recall that $(\e_j^{us}, \nu_j^{us})$, $(\e_j^{s}, \nu_j^{s})$ are respectively the eigenvector-eigenvalue pair of the matrix $\mathbf{D}(\x) =\bigg(\mathbf{I} - \alpha \int_{p=0}^{p=1}\nabla^2 f(\x^* + p(\x-\x^*)) dp\bigg)$ with $\nu^s_i<1$ for all $i \in \mathcal{I}_{S}$, $\nu^{us}_j\geq 1$ for all $j \in \mathcal{I}_{US}$.
Then the condition \eqref{checkresult1} can be written as:
\begin{align}
\sqrt{\langle \hat{\u}, (\mathbf{D}(\x))^4\hat{\u} \rangle} & > \sqrt{\langle  \hat{\u}, (\mathbf{D}(\x))^2\hat{\u}\rangle} \\
\implies \langle \hat{\u}, (\mathbf{D}(\x))^4\hat{\u} \rangle & > \langle  \hat{\u}, (\mathbf{D}(\x))^2\hat{\u}\rangle
\end{align}
where $\hat{\u}$ is a unit vector. Also we are given \eqref{assertion1} that can be written as:
\begin{align}
\sqrt{\langle  \hat{\u}, (\mathbf{D}(\x))^2\hat{\u}\rangle} &> 1 = \sqrt{\langle \hat{\u}, \hat{\u} \rangle} \\
\implies \langle \hat{\u}, ((\mathbf{D}(\x))^2-\mathbf{I})\hat{\u} \rangle &> 0.
\end{align}
Now consider the following difference:
\begin{align}
\langle \hat{\u}, (\mathbf{D}(\x))^4\hat{\u} \rangle  - \langle  \hat{\u}, (\mathbf{D}(\x))^2\hat{\u}\rangle &= \underbrace{\langle \hat{\u}, ((\mathbf{D}(\x))^2-\mathbf{I})^2\hat{\u} \rangle}_{\geq 0} + \underbrace{\langle \hat{\u}, ((\mathbf{D}(\x))^2-\mathbf{I})\hat{\u} \rangle}_{>0} > 0 \label{checkproof1} \\
\implies \langle \hat{\u}, (\mathbf{D}(\x))^4\hat{\u} \rangle & > \langle  \hat{\u}, (\mathbf{D}(\x))^2\hat{\u}\rangle
\end{align}
which completes the proof for \eqref{checkresult1}.
We are now ready to prove the result $ \norm{\x^{++}-\x^*} \geq \bar{\rho}(\x)\norm{\x^+-\x^*} - \sigma(\x)$. Recall that from \eqref{weyl} we have that:
\begin{align}
\norm{\x^{+}-\x^*} &= \sqrt{\bigg(\sum_{j \in \mathcal{I}_{US}}(\nu^{us}_j\langle \hat{\u}, \e_j^{us}\rangle)^2  + \sum_{i \in \mathcal{I}_{S}} (\nu^{s}_i  \langle \hat{\u}, \e_i^{s}\rangle)^2\bigg)}\norm{\x-\x^*}\\
&= \sqrt{\langle  \hat{\u}, (\mathbf{D}(\x))^2\hat{\u}\rangle}  \norm{\x-\x^*} \label{subs_1}
\end{align}
Now from \eqref{parta_partb} we have the following:
\begin{align}
\norm{\x^{++}-\x^*} & \geq \sqrt{\bigg(\sum_{j \in \mathcal{I}_{US}}(\nu^{us}_j)^4(\langle \hat{\u}, \e_j^{us}\rangle)^2  + \sum_{i \in \mathcal{I}_{S}} (\nu^{s}_i)^4  (\langle \hat{\u}, \e_i^{s}\rangle)^2\bigg)}\norm{\x-\x^*} - \mathcal{O}(\norm{\x-\x^*}^2) \\
& = \sqrt{\langle  \hat{\u}, (\mathbf{D}(\x))^4\hat{\u}\rangle} \norm{\x-\x^*} - \sigma(\x) \\
& = \frac{\sqrt{\langle  \hat{\u}, (\mathbf{D}(\x))^4\hat{\u}\rangle} }{\sqrt{\langle  \hat{\u}, (\mathbf{D}(\x))^2\hat{\u}\rangle} } \sqrt{\langle  \hat{\u}, (\mathbf{D}(\x))^2\hat{\u}\rangle}  \norm{\x-\x^*} - \sigma(\x)  \\
& =\frac{\sqrt{\langle  \hat{\u}, (\mathbf{D}(\x))^4\hat{\u}\rangle} }{\sqrt{\langle  \hat{\u}, (\mathbf{D}(\x))^2\hat{\u}\rangle} }   \norm{\x^+-\x^*} - \sigma(\x)
\end{align}
where in the last step we used the substitution from \eqref{subs_1}. Next, note that $\langle \hat{\u}, (\mathbf{D}(\x))^4\hat{\u} \rangle  > \langle \hat{\u}, (\mathbf{D}(\x))^2\hat{\u} \rangle >1 $ and hence we can set $\bar{\rho}(\x)=\frac{\sqrt{\langle  \hat{\u}, (\mathbf{D}(\x))^4\hat{\u}\rangle} }{\sqrt{\langle  \hat{\u}, (\mathbf{D}(\x))^2\hat{\u}\rangle} }  > 1$ to complete the proof.

Next, we show that the claim in \eqref{parta.} holds, i.e., $\norm{\x^{++}-\x^*} > \norm{\x^+-\x^*} $ provided $ \norm{\x-\x^*}$ is bounded above. It can be done using \eqref{partb.} of the result where we lower bound the right hand side of \eqref{partb.} with $ \norm{\x^+-\x^*}$ to get:
\begin{align}
\norm{\x^{++}-\x^*} \geq \bar{\rho}(\x)\norm{\x^+-\x^*} - \sigma(\x) &> \norm{\x^+-\x^*} \\
\implies (\bar{\rho}(\x)-1)\norm{\x^+-\x^*} &> \sigma(\x) . \label{partb_variation}
\end{align}
Since $\sigma(\x) = \mathcal{O}(\norm{\x-\x^*}^2)$, hence $\norm{\x-\x^*}$ should be sufficiently small for \eqref{partb_variation} to hold. Now, if \eqref{partb_variation} condition holds true, then we will have the condition $$\norm{\x^{++}-\x^*} \geq \bar{\rho}(\x)\norm{\x^+-\x^*} - \sigma(\x) > \norm{\x^+-\x^*} $$ or equivalently $ \norm{\x^{++}-\x^*}  > \norm{\x^+-\x^*}$. Next, for some $k=K$ let $\x = \x_K$, $\x^+ = \x_{K+1}$, $ \x^{++} = \x_{K+2}$ and we have $\norm{\x_{K+1} - \x^*} > \norm{\x_K -\x^*}$ with the condition \eqref{partb_variation} satisfied, then we also have $\norm{\x_{K+2} - \x^*} > \norm{\x_{K+1} -\x^*}$. Using induction, we then get $\norm{\x_{k+1} - \x^*} > \norm{\x_k -\x^*}$  for all $ k \geq K+1 $ provided \eqref{partb_variation} holds true with $\x = \x_k$.

\end{proof}

Hence, the claim of sequential monotonicity has been proved partially, i.e., if a gradient trajectory has expansive dynamics w.r.t. stationary point $\x^*$ at some $k=K$, then it has expansive dynamics for all iterations $k>K$ provided $\norm{\x_k - \x^*}$ remains bounded above.\footnote{Notice that $\x^*$ can be any stationary point and not just the strict saddle point. Since the stationary points of the function are non-degenerate from our assumptions, the extension of this proof to other types of stationary points is left as an easy exercise to the reader.}
Now, we are only left with proving the complete claim, i.e., sequential monotonicity holds even if the gradient trajectory has non-contraction dynamics w.r.t. stationary point $\x^*$ at some $k=K$. Before completing the proof of this claim, we need to do provide a bound on the expansion factor $\bar{\rho}(\x)$.

\begin{lem}\label{suplem2}
The expansion factor $\bar{\rho}(\x)$ in \eqref{partb.} is bounded as $\bar{\rho}(\x)  >1 + \frac{\bigg((1+\frac{\beta}{L})^2+ \frac{1}{4 (1+\frac{\beta}{L})^2}-\frac{5}{4}\bigg)}{12} $.
\end{lem}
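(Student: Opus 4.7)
The plan is to analyse $\bar\rho(\x)^2 = \langle \hat{\u}, \mathbf{D}(\x)^4 \hat{\u}\rangle/\langle\hat{\u}, \mathbf{D}(\x)^2\hat{\u}\rangle$ in the eigenbasis of $\mathbf{D}(\x)$ produced by Lemma~\ref{suplem1} and to lower bound this Rayleigh-type ratio by separately controlling the stable and unstable spectral contributions. Writing $c_j = \langle\hat{\u}, \e_j\rangle^2$, $A := \sum_j c_j\nu_j^2$, and $B := \sum_j c_j\nu_j^4$, the expansion hypothesis of Lemma~\ref{suplem1} gives $A > 1$ and $\bar\rho(\x)^2 = B/A$, so the goal is a lower bound on $B/A$ strictly greater than one and having the specific form stated.

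First I would establish uniform spectral bounds on $\mathbf{D}(\x)$ for $\|\x-\x^*\| < \xi$. By Weyl's perturbation inequality and the Hessian Lipschitz condition (Assumption~\textbf{A3}), the eigenvalues of $\mathbf{D}(\x)$ differ from those of $\mathbf{I} - \alpha\nabla^2 f(\x^*)$ by at most $M\|\x-\x^*\|/(2L)$; combined with $\alpha = 1/L$ and Assumption~\textbf{A4} this yields, for $\xi$ chosen sufficiently small, the pair $\nu_j^{us} \ge 1 + \beta/L =: r$ on the unstable subspace (negative Hessian eigenvalues) and $\nu_i^s \in [0, 1-\beta/L]$ on the stable one. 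This restores the setup used in Lemma~\ref{suplem1} with explicit uniform constants.

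Next I would lower bound $B - A$ via the identity
\[
B - A \;=\; \sum_j c_j \nu_j^2(\nu_j^2 - 1),
\]
splitting the sum into an unstable contribution, bounded below by $c_{us}\, r^2(r^2-1)$ because $\nu_j^{us,2} \ge r^2$, and a stable contribution, bounded below by $-c_s/4$ using that $y(y-1) \ge -1/4$ on $[0,1]$ with minimum at $y = 1/2$. The core algebraic manoeuvre is to combine these two estimates with a weight $K = r^2$, leveraging the expansion hypothesis $A > 1$ (which forces $c_{us}$ to be large enough to overpower the stable penalty) so that after normalising by $A$ the stable $-1/4$ term gets rescaled into $-1/(4r^2)$; the algebraic identity $(r^2-1)\bigl(1 - 1/(4r^2)\bigr) = r^2 + 1/(4r^2) - 5/4$ then produces a lower bound of the form $\bar\rho(\x)^2 \ge 1 + c\cdot (r^2 + 1/(4r^2) - 5/4)$ for a positive absolute constant $c$. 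A final application of $\sqrt{1+x} \ge 1 + x/C$, valid for $x$ in the bounded range forced by $\beta \le L$ (so $r \le 2$), converts this into the stated bound on $\bar\rho(\x)$, with the denominator $12$ emerging from the product of the constants introduced in the two preceding steps.

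The main obstacle I anticipate is precisely the algebraic balancing in the second step. The strict-expansion hypothesis $A > 1$ is only one scalar constraint on the coefficients $\{c_j\}$, so squeezing out a combination of the unstable $r^2(r^2-1)$ bound and the stable $-1/4$ penalty that cleanly reconstructs the numerator $r^2 + 1/(4r^2) - 5/4$ without degenerating as $\beta/L \to 0$ requires a careful choice of the weight $K$ and patient bookkeeping of the $c_{us}, c_s, A_{us}, A_s$ contributions under $A > 1$. Locking the overall denominator down to precisely $12$ is where most of the constant-tracking lies, and I expect that the specific grouping into $(r^2-1)(1-1/(4r^2))$ will have to be forced through the inequality manipulations rather than arising from any single slick identity.
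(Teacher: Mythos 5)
Your proposal follows the same route as the paper's proof almost step for step: the decomposition $B-A=\sum_j c_j\nu_j^2(\nu_j^2-1)$, the elementary extremal facts $y^2(y^2-1)\ge r^2(r^2-1)$ for $y\ge r:=1+\beta/L$ and $y(y-1)\ge -\tfrac14$ on $[0,1]$, and the identity $(r^2-1)\bigl(1-\tfrac{1}{4r^2}\bigr)=r^2+\tfrac{1}{4r^2}-\tfrac54$ are exactly the ingredients there. The only cosmetic difference is the handling of square roots: the paper rationalises $\sqrt{B}-\sqrt{A}=(B-A)/(\sqrt B+\sqrt A)$ and divides by $\sqrt A$, so that $12$ arises as $6\cdot 2$ from $\sqrt B+\sqrt A<6$ and $\sqrt A<2$, while you first lower-bound $\bar\rho^2-1=(B-A)/A$ by $(B-A)/4$ and then apply $\sqrt{1+x}-1=x/(\sqrt{1+x}+1)\ge x/3$, so that $12=4\cdot3$. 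Same arithmetic, different order. (One small point: the paper gets the spectral bounds $\nu^{us}_j\ge r$, $\nu^s_i\le 1-\beta/L$ directly from the integral form of $\mathbf{D}(\x)$ and Assumption \textbf{A4}, with no slack; a Weyl-plus-Hessian-Lipschitz perturbation from $\nabla^2f(\x^*)$ would introduce an $\mathcal{O}(M\xi/L)$ error you would then have to carry.)

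The obstacle you flag in your last paragraph is the real one, and you should not assume it can be pushed through by more patient bookkeeping. You want $A>1$ to force $c_{us}$ up to $1/r^2$ so that, upon normalising, the stable penalty $-\tfrac14 c_s$ collapses into $-\tfrac{1}{4r^2}$. But $A>1$ only tells you that $4c_{us}+(1-\beta/L)^2 c_s>1$ is attainable, which with $c_{us}+c_s=1$ is already satisfied for $c_{us}$ as small as $\tfrac{1-(1-\beta/L)^2}{4-(1-\beta/L)^2}$; this quantity goes to $0$ as $\beta/L\to 0$, whereas $1/r^2\to 1$. At such splits the pointwise lower bound $r^2(r^2-1)c_{us}-c_s/4$ on $B-A$ is strictly negative, so the weighted combination you sketch cannot close. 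The paper makes essentially the same move: it replaces the true constraint $\sum_j c_j(\nu_j^2-1)>0$ with $(r^2-1)c_{us}-c_s>0$ by plugging in the \emph{lower} bounds $\nu^{us}_j\ge r$, $\nu^s_i\ge 0$ and calls this a relaxation, but substituting lower bounds yields a lower bound on the left side, and ``quantity $>0$'' does not imply ``lower bound on quantity $>0$.'' The resulting feasible region in $(c_{us},c_s)$ is a subset, not a superset, of the true one, so minimising over it gives an upper rather than a lower bound on the worst-case numerator. Your hesitation is well placed: I do not see how the claimed constant follows from $A>1$ alone via this decoupled projection-split argument, and any repair will need to use the coupling between $A$ and $B$ beyond the single scalar $A>1$, or a genuinely different decomposition.
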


\begin{proof}
From the condition \eqref{partb_variation}, we require $\sigma(\x)$ to be upper bounded. Notice that the upper bound on $\sigma(\x)$ goes to $0$ as $\bar{\rho}(\x)$ approaches $1$. Then, the particular theorem cannot be applied recursively since $\sigma(\x)$ is a positive quantity that comes from \eqref{parta_partb} and \eqref{partb_variation} would then fail to hold. Hence, in order to exploit the property \eqref{partb_variation}, we require $\bar{\rho}(\x)$ to be bounded away from $1$. Using \eqref{subs_1} in \eqref{partb_variation} and simplifying $\bar{\rho}(\x)$, we get that:
\begin{align}
(\bar{\rho}(\x)-1)\norm{\x^+-\x^*} &> \sigma(\x) \\
\implies \bigg( \frac{\sqrt{\langle  \hat{\u}, (\mathbf{D}(\x))^4\hat{\u}\rangle} }{\sqrt{\langle  \hat{\u}, (\mathbf{D}(\x))^2\hat{\u}\rangle} }-1\bigg)\sqrt{\langle  \hat{\u}, (\mathbf{D}(\x))^2\hat{\u}\rangle}\norm{\x-\x^*} &> \sigma(\x) \\
\implies \bigg(\sqrt{\langle  \hat{\u}, (\mathbf{D}(\x))^4\hat{\u}\rangle} - \sqrt{\langle  \hat{\u}, (\mathbf{D}(\x))^2\hat{\u}\rangle}\bigg)\norm{\x-\x^*} &> \sigma(\x) \label{sigmaerrorradii}
\end{align}
where we require the term $ \bigg(\sqrt{\langle  \hat{\u}, (\mathbf{D}(\x))^4\hat{\u}\rangle} - \sqrt{\langle  \hat{\u}, (\mathbf{D}(\x))^2\hat{\u}\rangle}\bigg)$ to be bounded away from $0$. This will hold true due to the following fact:
\begin{align}
    \sqrt{\langle  \hat{\u}, (\mathbf{D}(\x))^4\hat{\u}\rangle} &= \sqrt{ \bigg(\sum_{j \in \mathcal{I}_{US}}(\nu^{us}_j)^4(\langle \hat{\u}, \e_j^{us}\rangle)^2  + \sum_{i \in \mathcal{I}_{S}} (\nu^{s}_i)^4  (\langle \hat{\u}, \e_i^{s}\rangle)^2\bigg)} \\
    & \geq \bigg(\sum_{j \in \mathcal{I}_{US}}\sqrt{(\nu^{us}_j)^4}(\langle \hat{\u}, \e_j^{us}\rangle)^2  + \sum_{i \in \mathcal{I}_{S}} \sqrt{(\nu^{s}_i)^4}  (\langle \hat{\u}, \e_i^{s}\rangle)^2\bigg) \\
    & = \langle  \hat{\u}, (\mathbf{D}(\x))^2\hat{\u}\rangle > \sqrt{\langle  \hat{\u}, (\mathbf{D}(\x))^2\hat{\u}\rangle}
\end{align}
where we used the Jensen's inequality for square root function followed by the fact that $\langle  \hat{\u}, (\mathbf{D}(\x))^2\hat{\u}\rangle > 1$. But in order to develop a bound on the radius of ball inside which sequential monotonicity holds, we require something more. Notice that if we plug in the naive lower bound just obtained into \eqref{sigmaerrorradii}, all we can get is a projection dependent term which does not generalize to the class of functions being studied. The goal here is to obtain some bound that is independent of $\hat{\u}$ and solely depends on the function parameters like condition number, etc. The next steps develop a generalized lower bound for $\bigg(\sqrt{\langle  \hat{\u}, (\mathbf{D}(\x))^4\hat{\u}\rangle} - \sqrt{\langle  \hat{\u}, (\mathbf{D}(\x))^2\hat{\u}\rangle}\bigg) $ independent of $\hat{\u}$.

Since we have $ \sqrt{\langle  \hat{\u}, (\mathbf{D}(\x))^2\hat{\u}\rangle}>1$, we can write:
\begin{align}
 \bigg(\sqrt{\langle  \hat{\u}, (\mathbf{D}(\x))^4\hat{\u}\rangle} - \sqrt{\langle  \hat{\u}, (\mathbf{D}(\x))^2\hat{\u}\rangle}\bigg)& = \frac{{\langle  \hat{\u}, (\mathbf{D}(\x))^4\hat{\u}\rangle} - \langle  \hat{\u}, (\mathbf{D}(\x))^2\hat{\u}\rangle}{\sqrt{\langle  \hat{\u}, (\mathbf{D}(\x))^4\hat{\u}\rangle} + \sqrt{\langle  \hat{\u}, (\mathbf{D}(\x))^2\hat{\u}\rangle}} \label{rho_bound}
\end{align}
where we require $ \langle  \hat{\u}, (\mathbf{D}(\x))^4\hat{\u}\rangle >  \langle  \hat{\u}, (\mathbf{D}(\x))^2\hat{\u}\rangle$.
Next, substituting $ \langle  \hat{\u}, (\mathbf{D}(\x))^4\hat{\u}\rangle =  \bigg(\sum_{j \in \mathcal{I}_{US}}(\nu^{us}_j)^4(\langle \hat{\u}, \e_j^{us}\rangle)^2  + \sum_{i \in \mathcal{I}_{S}} (\nu^{s}_i)^4  (\langle \hat{\u}, \e_i^{s}\rangle)^2\bigg)$ and
 $\langle  \hat{\u}, (\mathbf{D}(\x))^2\hat{\u}\rangle =  \bigg(\sum_{j \in \mathcal{I}_{US}}(\nu^{us}_j\langle \hat{\u}, \e_j^{us}\rangle)^2  + \sum_{i \in \mathcal{I}_{S}} (\nu^{s}_i  \langle \hat{\u}, \e_i^{s}\rangle)^2\bigg)$ in the left-hand side of \eqref{rho_bound} followed by simplification yields:
\begin{align}
\frac{{\langle  \hat{\u}, (\mathbf{D}(\x))^4\hat{\u}\rangle} - \langle  \hat{\u}, (\mathbf{D}(\x))^2\hat{\u}\rangle}{\sqrt{\langle  \hat{\u}, (\mathbf{D}(\x))^4\hat{\u}\rangle} + \sqrt{\langle  \hat{\u}, (\mathbf{D}(\x))^2\hat{\u}\rangle}} &= \frac{\bigg(\sum_{j \in \mathcal{I}_{US}}\bigg({(\nu^{us}_j)^4}-(\nu^{us}_j)^2\bigg)(\langle \hat{\u}, \e_j^{us}\rangle)^2  + \sum_{i \in \mathcal{I}_{S}} \bigg({(\nu^{s}_i)^4}-(\nu^{s}_i)^2\bigg)( \langle \hat{\u}, \e_i^{s}\rangle)^2\bigg)}{\sqrt{\langle  \hat{\u}, (\mathbf{D}(\x))^4\hat{\u}\rangle} + \sqrt{\langle  \hat{\u}, (\mathbf{D}(\x))^2\hat{\u}\rangle}}. \label{rho_bound0}
\end{align}

Now recall that we had $\mathbf{D}(\x) = \bigg(\mathbf{I} - \alpha \int_{p=0}^{p=1}\nabla^2 f(\x^* + p(\x-\x^*)) dp\bigg)$, hence for any eigenvalue $\nu_l$ of the matrix $\mathbf{D}(\x)$ where $\nu_l = 1 - \alpha\lambda_l(\int_{p=0}^{p=1}\nabla^2 f(\x^* + p(\x-\x^*)) dp)$ and $1 \leq l\leq n$ with $\nu_l \geq 0$ and $\lambda_l$ is the corresponding eigenvalue of $\int_{p=0}^{p=1}\nabla^2 f(\x^* + p(\x-\x^*)) dp$, we have that:
\begin{align}
        {\norm{ \bigg( \int_{p=0}^{p=1}\bigg(\mathbf{I} -\alpha\nabla^2 f(\x^* + p(\x-\x^*)) \bigg)dp\bigg)^{-1}}_2^{-1}}
     & \leq    \nu_l  \leq \norm{  \int_{p=0}^{p=1}\bigg(\mathbf{I} -\alpha\nabla^2 f(\x^* + p(\x-\x^*)) \bigg) dp}_2 \\
      \int_{p=0}^{p=1}\norm{\bigg(\mathbf{I} -\alpha\nabla^2 f(\x^* + p(\x-\x^*)) \bigg)^{-1}}_2^{-1}dp
     & \leq    \nu_l  \leq  \int_{p=0}^{p=1}\norm{ \bigg(\mathbf{I} -\alpha\nabla^2 f(\x^* + p(\x-\x^*)) \bigg)}_2 dp \\
1 - \alpha \int_{p=0}^{p=1}\sup_{l}\lambda_l(\nabla^2 f(\x^* + p(\x-\x^*))) dp &\leq \nu_l \leq 1 - \alpha \int_{p=0}^{p=1}\inf_{l}\lambda_l(\nabla^2 f(\x^* + p(\x-\x^*))) dp.
\end{align}
Therefore, the bounds on $\nu^s_i$ and $\nu^{us}_j$ for $\alpha = \frac{1}{L}$ can be given by:
\begin{align}
1 - \alpha \int_{p=0}^{p=1}\sup_{\lambda_l>0}\lambda_l(\nabla^2 f(\x^* + p(\x-\x^*))) dp &\leq \nu^s_i \leq 1 - \alpha \int_{p=0}^{p=1}\inf_{\lambda_l>0}\lambda_l(\nabla^2 f(\x^* + p(\x-\x^*))) dp \\
1 - \alpha \int_{p=0}^{p=1}L dp &\leq \nu^s_i \leq 1 - \alpha \int_{p=0}^{p=1}\beta dp \\
0 &\leq \nu^s_i \leq 1 - \frac{\beta}{L}
\end{align}
\begin{align}
1 - \alpha \int_{p=0}^{p=1}\sup_{\lambda_l<0}\lambda_l(\nabla^2 f(\x^* + p(\x-\x^*))) dp &\leq \nu^{us}_j \leq 1 - \alpha \int_{p=0}^{p=1}\inf_{\lambda_l<0}\lambda_l(\nabla^2 f(\x^* + p(\x-\x^*))) dp \\
1 - \alpha \int_{p=0}^{p=1}-\beta dp &\leq \nu^{us}_j \leq 1 - \alpha \int_{p=0}^{p=1}-L dp \\
1 + \frac{\beta}{L} &\leq \nu^{us}_j \leq 2
\end{align}
where we used the fact that $ \inf_{l}\abs{\lambda_l(\nabla^2 f(\x^* + p(\x-\x^*)))} > \beta$, i.e., the minimum absolute eigenvalue of the function $f(\cdot)$ in a neighborhood of $\x^*$ is greater than $\beta$ from Assumption \textbf{A4}. Also, we used $ \sup_{l}\abs{\lambda_l(\nabla^2 f(\x^* + p(\x-\x^*)))} \leq L$, from Assumption \textbf{A2}.

Hence, the R.H.S. in \eqref{rho_bound0} can be lower bounded as:
\begin{align}
\frac{{\langle  \hat{\u}, (\mathbf{D}(\x))^4\hat{\u}\rangle} - \langle  \hat{\u}, (\mathbf{D}(\x))^2\hat{\u}\rangle}{\sqrt{\langle  \hat{\u}, (\mathbf{D}(\x))^4\hat{\u}\rangle} + \sqrt{\langle  \hat{\u}, (\mathbf{D}(\x))^2\hat{\u}\rangle}} &= \frac{\bigg(\sum_{j \in \mathcal{I}_{US}}\bigg({(\nu^{us}_j)^4}-(\nu^{us}_j)^2\bigg)(\langle \hat{\u}, \e_j^{us}\rangle)^2  + \sum_{i \in \mathcal{I}_{S}} \bigg({(\nu^{s}_i)^4}-(\nu^{s}_i)^2\bigg)( \langle \hat{\u}, \e_i^{s}\rangle)^2\bigg)}{\sqrt{\langle  \hat{\u}, (\mathbf{D}(\x))^4\hat{\u}\rangle} + \sqrt{\langle  \hat{\u}, (\mathbf{D}(\x))^2\hat{\u}\rangle}}\\
 &\geq \frac{\bigg(\sum_{j \in \mathcal{I}_{US}}\bigg({(1+\frac{\beta}{L})^4}-(1+\frac{\beta}{L})^2\bigg)(\langle \hat{\u}, \e_j^{us}\rangle)^2  -\frac{1}{4} \sum_{i \in \mathcal{I}_{S}}(  \langle \hat{\u}, \e_i^{s}\rangle)^2\bigg)}{\sqrt{\langle  \hat{\u}, (\mathbf{D}(\x))^4\hat{\u}\rangle} + \sqrt{\langle  \hat{\u}, (\mathbf{D}(\x))^2\hat{\u}\rangle}} \label{rho_bound1}
\end{align}
where we used the fact that ${\nu^{us}_j}\geq {(1+\frac{\beta}{L})}$ and $\bigg({(\nu^{s}_i)^4}-(\nu^{s}_i)^2\bigg)\geq- \frac{1}{4}$ for $\nu_i^s <1$ (minimum of $h(y)= y^4 -y^2$ for $0\leq y<1$ is $-\frac{1}{4}$).

Next we minimize the numerator of the R.H.S. in \eqref{rho_bound1} in a way so as to get rid of the dependency on $\hat{\u}$. Recall that the minimization of $ \bigg(\sum_{j \in \mathcal{I}_{US}}\bigg({(1+\frac{\beta}{L})^4}-(1+\frac{\beta}{L})^2\bigg)(\langle \hat{\u}, \e_j^{us}\rangle)^2  -\frac{1}{4} \sum_{i \in \mathcal{I}_{S}}(  \langle \hat{\u}, \e_i^{s}\rangle)^2\bigg)$ is constrained by $$\sum_{j \in \mathcal{I}_{US}}(\langle \hat{\u}, \e_j^{us}\rangle)^2  +\sum_{i \in \mathcal{I}_{S}}( \langle \hat{\u}, \e_i^{s}\rangle)^2 = 1$$ and
$$ \sum_{j \in \mathcal{I}_{US}}(\nu^{us}_j\langle \hat{\u}, \e_j^{us}\rangle)^2  +\sum_{i \in \mathcal{I}_{S}}(\nu^{s}_i \langle \hat{\u}, \e_i^{s}\rangle)^2 > 1 \iff \sum_{j \in \mathcal{I}_{US}}((\nu^{us}_j)^2-1)(\langle \hat{\u}, \e_j^{us}\rangle)^2  +\sum_{i \in \mathcal{I}_{S}}((\nu^{s}_i)^2-1)( \langle \hat{\u}, \e_i^{s}\rangle)^2 > 0$$
where the second constraint comes from \eqref{checkresult2}. Relaxing the second constraint by using the bounds $ \nu^{s}_i \geq 0$, $\nu^{us}_j \geq (1+\frac{\beta}{L}) $ we get:
$$((1+{\beta}/{L})^2-1)\sum_{j \in \mathcal{I}_{US}}(\langle \hat{\u}, \e_j^{us}\rangle)^2  -\sum_{i \in \mathcal{I}_{S}}( \langle \hat{\u}, \e_i^{s}\rangle)^2 > 0.$$
Let $a = \sum_{j \in \mathcal{I}_{US}}(\langle \hat{\u}, \e_j^{us}\rangle)^2, b = \sum_{i \in \mathcal{I}_{S}}( \langle \hat{\u}, \e_i^{s}\rangle)^2$ then from the two constraints we have the following minimization problem for the numerator term in \eqref{rho_bound1}:
\begin{align*}
    & \min_{a,b\geq 0} \bigg(\bigg((1+\frac{\beta}{L})^4-(1+\frac{\beta}{L})^2\bigg)a -\frac{1}{4} b\bigg) \\
    \text{s.t. } & a+b =1 \\
    & ((1+{\beta}/{L})^2-1)a - b > 0.
\end{align*}
Solving this geometrically we obtain that the minimum is attained at the intersection of lines $a+b =1 $ and $ ((1+{\beta}/{L})^2-1)a - b = 0$ which gives $a = \frac{1}{(1+{\beta}/{L})^2}$ and $ b = 1- \frac{1}{(1+{\beta}/{L})^2}$. Substituting $a, b$ in our function $\bigg(\bigg({(1+\frac{\beta}{L})^4}-(1+\frac{\beta}{L})^2\bigg)a -\frac{1}{4} b\bigg) $ yields the following lower bound in \eqref{rho_bound1}:
\begin{align}
   \frac{\bigg(\sum_{j \in \mathcal{I}_{US}}\bigg({(1+\frac{\beta}{L})^4}-(1+\frac{\beta}{L})^2\bigg)(\langle \hat{\u}, \e_j^{us}\rangle)^2  -\frac{1}{4} \sum_{i \in \mathcal{I}_{S}}(  \langle \hat{\u}, \e_i^{s}\rangle)^2\bigg)}{\sqrt{\langle  \hat{\u}, (\mathbf{D}(\x))^4\hat{\u}\rangle} + \sqrt{\langle  \hat{\u}, (\mathbf{D}(\x))^2\hat{\u}\rangle}} & >  \frac{\bigg((1+\frac{\beta}{L})^2+ \frac{1}{4 (1+\frac{\beta}{L})^2}-\frac{5}{4}\bigg)}{\sqrt{\langle  \hat{\u}, (\mathbf{D}(\x))^4\hat{\u}\rangle} + \sqrt{\langle  \hat{\u}, (\mathbf{D}(\x))^2\hat{\u}\rangle}} \\
   & >  \frac{\bigg((1+\frac{\beta}{L})^2+ \frac{1}{4 (1+\frac{\beta}{L})^2}-\frac{5}{4}\bigg)}{6} \label{rho_bound4}
\end{align}
where in the last step we used the fact that the maximum eigenvalue of $ (\mathbf{D}(\x))^2$ is $4$ which implies $\sqrt{\langle  \hat{\u}, (\mathbf{D}(\x))^2\hat{\u}\rangle}<2 $ and $\sqrt{\langle  \hat{\u}, (\mathbf{D}(\x))^4\hat{\u}\rangle}<4 $.

Now, it can be verified that for values of $\frac{\beta}{L} > 0$, the right-hand side of \eqref{rho_bound4} is bounded away from $0$. Since $ \bar{\rho}(\x) = \frac{\sqrt{\langle  \hat{\u}, (\mathbf{D}(\x))^4\hat{\u}\rangle} }{\sqrt{\langle  \hat{\u}, (\mathbf{D}(\x))^2\hat{\u}\rangle} }$, then using \eqref{rho_bound4} and $\sqrt{\langle  \hat{\u}, (\mathbf{D}(\x))^2\hat{\u}\rangle}<2 $ we can write $$ \bar{\rho}(\x) = 1 + \frac{\sqrt{\langle  \hat{\u}, (\mathbf{D}(\x))^4\hat{\u}\rangle} - \sqrt{\langle  \hat{\u}, (\mathbf{D}(\x))^2\hat{\u}\rangle}}{ \sqrt{\langle  \hat{\u}, (\mathbf{D}(\x))^2\hat{\u}\rangle}} >1 + \frac{\bigg((1+\frac{\beta}{L})^2+ \frac{1}{4 (1+\frac{\beta}{L})^2}-\frac{5}{4}\bigg)}{12}$$ which is an expansion factor for any $\frac{\beta}{L} > 0$.
\end{proof}

We now extend the claim of Lemma \ref{suplem1} to the case of non-contraction, i.e., $\norm{\x^+-\x^*} = \norm{\x-\x^*} $. In words, we show that sequential monotonicity property from \eqref{parta.} holds even if the gradient trajectory has non-contraction dynamics w.r.t. stationary point $\x^*$ at some $k=K$.

\begin{lem}\label{suplem3}
For an iterative gradient mapping given by $\x^+ = \x - \alpha \nabla f(\x)$ in some neighborhood of $\x^*$, if $\norm{\x^+-\x^*} = \norm{\x-\x^*} $ then the following holds:
\begin{align}
\textbf{a.} \hspace{0.5cm} \norm{\x^{++}-\x^*} &\geq \bar{\rho}(\x)\norm{\x^+-\x^*} - \sigma(\x) \\
\textbf{b.} \hspace{0.5cm}  \norm{\x^{++}-\x^*} &> \norm{\x^+-\x^*}
\end{align}
where $\sigma(\x) = \mathcal{O}(\norm{\x-\x^*}^2)$ and $\bar{\rho}(\x)>1 $.
\end{lem}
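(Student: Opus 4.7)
\textbf{Proof proposal for Lemma~\ref{suplem3}.}
The plan is to re-run the proof of Lemma~\ref{suplem1} with the boundary condition $\sqrt{\langle \hat{\u},(\mathbf{D}(\x))^2\hat{\u}\rangle}=1$ replacing the strict inequality used there, and to show that the identity
\begin{align}
\langle \hat{\u}, (\mathbf{D}(\x))^4\hat{\u}\rangle - \langle \hat{\u}, (\mathbf{D}(\x))^2\hat{\u}\rangle
= \langle \hat{\u}, ((\mathbf{D}(\x))^2-\mathbf{I})^2\hat{\u}\rangle + \langle \hat{\u}, ((\mathbf{D}(\x))^2-\mathbf{I})\hat{\u}\rangle \nonumber
\end{align}
remains strictly positive even after the second summand vanishes. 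Starting from the same gradient Taylor expansion \eqref{gradtaylor} that was used in Lemma~\ref{suplem1} and substituting $\x^{++} = \x^+ - \alpha\nabla f(\x^+)$, we get exactly the inequality \eqref{parta_partb}, namely
\begin{align}
\norm{\x^{++}-\x^*} \geq \sqrt{\langle\hat{\u},(\mathbf{D}(\x))^4\hat{\u}\rangle}\,\norm{\x-\x^*} \;-\; \sigma(\x), \nonumber
\end{align}
with the same $\sigma(\x) = \tfrac{1}{2}\sup_{j}\{\nu_j^{us}\}\,ML\alpha\,\norm{\x-\x^*}^2 = \mathcal{O}(\norm{\x-\x^*}^2)$. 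Since the equality hypothesis gives $\norm{\x^+-\x^*}=\sqrt{\langle\hat{\u},(\mathbf{D}(\x))^2\hat{\u}\rangle}\norm{\x-\x^*}=\norm{\x-\x^*}$, proving part~\textbf{a} reduces to establishing a quantitative lower bound on $\bar{\rho}(\x) = \sqrt{\langle\hat{\u},(\mathbf{D}(\x))^4\hat{\u}\rangle}/\sqrt{\langle\hat{\u},(\mathbf{D}(\x))^2\hat{\u}\rangle}$ that is strictly greater than $1$.

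To obtain that lower bound I would argue as follows. Under $\langle\hat{\u},(\mathbf{D}(\x))^2\hat{\u}\rangle = 1$ the cross term $\langle\hat{\u},((\mathbf{D}(\x))^2-\mathbf{I})\hat{\u}\rangle$ is exactly zero, so the identity above reduces to $\langle\hat{\u},((\mathbf{D}(\x))^2-\mathbf{I})^2\hat{\u}\rangle$. Using the spectral bounds derived in Lemma~\ref{suplem2} for $\alpha=1/L$, namely $0\le \nu_i^s \le 1-\beta/L$ and $1+\beta/L \le \nu_j^{us} \le 2$, every eigenvalue of $(\mathbf{D}(\x))^2$ lies either in $[0,(1-\beta/L)^2]$ or in $[(1+\beta/L)^2, 4]$, and each such eigenvalue is bounded away from $1$ by the spectral gap $\beta/L$. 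In particular $(\mathbf{D}(\x))^2 - \mathbf{I}$ is invertible with operator norm bounded away from zero, yielding $\langle\hat{\u},((\mathbf{D}(\x))^2-\mathbf{I})^2\hat{\u}\rangle > 0$ strictly. To turn this into the same quantitative bound as in Lemma~\ref{suplem2}, I would solve the constrained minimization
\begin{align}
\min_{a,b\ge 0,\; a+b=1}\Bigl(\bigl((1+\tfrac{\beta}{L})^4-(1+\tfrac{\beta}{L})^2\bigr)a - \tfrac{1}{4}b\Bigr)\quad \text{subject to}\quad \bigl((1+\tfrac{\beta}{L})^2-1\bigr)a - b = 0, \nonumber
\end{align}
which arises by replacing the strict feasibility constraint of Lemma~\ref{suplem2} with its equality version. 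The geometry is identical: the unique optimizer is $a=1/(1+\beta/L)^2$, $b=1-1/(1+\beta/L)^2$, producing exactly the same lower bound $\bar{\rho}(\x) > 1 + \tfrac{1}{12}\bigl((1+\beta/L)^2 + \tfrac{1}{4(1+\beta/L)^2} - \tfrac{5}{4}\bigr)$.

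For part~\textbf{b}, once part~\textbf{a} is in hand, the passage $\norm{\x^{++}-\x^*} > \norm{\x^+-\x^*}$ is immediate from $(\bar{\rho}(\x)-1)\norm{\x^+-\x^*} > \sigma(\x)$, which holds whenever $\norm{\x-\x^*}$ is sufficiently small (bounded above by a constant multiple of $\bar{\rho}(\x)-1$ divided by the product of $M$, $L$, $\alpha$), i.e., whenever $\norm{\x-\x^*}<\xi$ with $\xi$ as in Theorem~\ref{thm2}. The main obstacle is the transition at the boundary itself: in Lemma~\ref{suplem1} the strict inequality $\sqrt{\langle\hat{\u},\mathbf{D}^2\hat{\u}\rangle}>1$ was doing double duty, both producing $\bar{\rho}(\x)>1$ and guaranteeing the non-vanishing of the cross term. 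Here that route is closed, and the entire expansion factor must be extracted from the pure square term $\langle\hat{\u},((\mathbf{D}(\x))^2-\mathbf{I})^2\hat{\u}\rangle$ alone; the spectral-gap argument above is the key piece that makes this possible and prevents $\bar{\rho}(\x)$ from degenerating to $1$ at the non-contraction boundary.
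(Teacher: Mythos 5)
Your proposal is correct and follows essentially the same route as the paper: observe that the boundary hypothesis forces $\langle\hat\u,(\mathbf{D}(\x))^2\hat\u\rangle=1$, note that the decomposition $\langle\hat\u,\mathbf{D}^4\hat\u\rangle-\langle\hat\u,\mathbf{D}^2\hat\u\rangle = \langle\hat\u,(\mathbf{D}^2-\mathbf{I})^2\hat\u\rangle + \langle\hat\u,(\mathbf{D}^2-\mathbf{I})\hat\u\rangle$ loses its cross term, re-invoke the constrained minimization from Lemma~\ref{suplem2} with equality replacing the strict inequality, and then close part~\textbf{b} via \eqref{partb_variation}. The one place you add value over the paper's terse "it can be readily checked'' is the spectral-gap observation: since the eigenvalues of $(\mathbf{D}(\x))^2$ live in $[0,(1-\beta/L)^2]\cup[(1+\beta/L)^2,4]$ they never touch $1$, so $(\mathbf{D}(\x))^2-\mathbf{I}$ has minimum absolute eigenvalue at least $1-(1-\beta/L)^2>0$ and $\langle\hat\u,(\mathbf{D}^2-\mathbf{I})^2\hat\u\rangle\geq\bigl(1-(1-\beta/L)^2\bigr)^2>0$ for any unit $\hat\u$; this gives a self-contained, slightly cruder but cleaner lower bound than re-running the $(a,b)$ minimization. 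Two small points to tighten: (i) "operator norm bounded away from zero'' should read "smallest singular value / smallest absolute eigenvalue bounded away from zero,'' since the operator norm is trivially large; (ii) the paper actually extracts the sharper constant $\bar\rho(\x)\geq 1+\tfrac{1}{6}\bigl((1+\beta/L)^2+\tfrac{1}{4(1+\beta/L)^2}-\tfrac{5}{4}\bigr)$ here precisely because the denominator $\sqrt{\langle\hat\u,\mathbf{D}^2\hat\u\rangle}$ equals $1$ rather than being bounded by $2$; your $1+\tfrac{1}{12}(\cdots)$ is valid but wasteful. Neither slip affects the conclusion $\bar\rho(\x)>1$ that the lemma needs.
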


\begin{proof}
Notice that while obtaining \eqref{rho_bound4} from \eqref{rho_bound1}, we utilized the given condition of \eqref{checkresult2} according to which we have: $$ \sum_{j \in \mathcal{I}_{US}}(\nu^{us}_j\langle \hat{\u}, \e_j^{us}\rangle)^2  + \sum_{i \in \mathcal{I}_{S}} (\nu^{s}_i  \langle \hat{\u}, \e_i^{s}\rangle)^2 > 1. $$ This condition implies that we have $\norm{\x^+-\x^*} > \norm{\x-\x^*}$. However, it could be the case that we have $\norm{\x^+-\x^*} = \norm{\x-\x^*}$ which would imply $$\langle  \hat{\u}, (\mathbf{D}(\x))^2\hat{\u}\rangle = \sum_{j \in \mathcal{I}_{US}}(\nu^{us}_j\langle \hat{\u}, \e_j^{us}\rangle)^2  + \sum_{i \in \mathcal{I}_{S}} (\nu^{s}_i  \langle \hat{\u}, \e_i^{s}\rangle)^2 = 1. $$ Using this condition, it can be readily checked that \eqref{rho_bound4} will still hold but only with a non-strict inequality, i.e., we will have:
 $$ \bigg(\sqrt{\langle  \hat{\u}, (\mathbf{D}(\x))^4\hat{\u}\rangle} - \sqrt{\langle  \hat{\u}, (\mathbf{D}(\x))^2\hat{\u}\rangle}\bigg) \geq  \frac{\bigg((1+\frac{\beta}{L})^2+ \frac{1}{4 (1+\frac{\beta}{L})^2}-\frac{5}{4}\bigg)}{6}.$$ Now since $\bar{\rho}(\x)=\frac{\sqrt{\langle  \hat{\u}, (\mathbf{D}(\x))^4\hat{\u}\rangle} }{\sqrt{\langle  \hat{\u}, (\mathbf{D}(\x))^2\hat{\u}\rangle} } =  \sqrt{\langle  \hat{\u}, (\mathbf{D}(\x))^4\hat{\u}\rangle}$, we will have that:
 \begin{align}
 \bar{\rho}(\x)& \geq \sqrt{\langle  \hat{\u}, (\mathbf{D}(\x))^2\hat{\u}\rangle} + \frac{\bigg((1+\frac{\beta}{L})^2+ \frac{1}{4 (1+\frac{\beta}{L})^2}-\frac{5}{4}\bigg)}{6} \\
 &= 1 + \frac{\bigg((1+\frac{\beta}{L})^2+ \frac{1}{4 (1+\frac{\beta}{L})^2}-\frac{5}{4}\bigg)}{6} > 1 + \frac{\bigg((1+\frac{\beta}{L})^2+ \frac{1}{4 (1+\frac{\beta}{L})^2}-\frac{5}{4}\bigg)}{12}. \label{rhomonotonic1}
 \end{align}
Now if $\sigma(\x)$ satisfies the condition \eqref{partb_variation} for this $ \bar{\rho}(\x)$ then we are guaranteed to have $ \norm{\x^{++}-\x^*}> \norm{\x^+ - \x^*}$ even when $ \norm{\x^{+}-\x^*}= \norm{\x - \x^*}$. This completes the proof of the claim.
\end{proof}

Now that we have established the result that if $ \norm{\x^{+}-\x^*}\geq \norm{\x - \x^*}$, then we are guaranteed to have $ \norm{\x^{++}-\x^*}> \norm{\x^+ - \x^*}$ provided $\sigma(\x)$ satisfies the condition \eqref{partb_variation}, we can apply this result recursively for any gradient trajectory generated by the sequence $\{\x_k\}$ in some neighborhood of $\x^*$. The next lemma provides a handle on the radius of this neighborhood inside which the sequential monotonicity property holds.

\begin{lem}\label{suplem4}
The sequential monotonicity property from Lemma \ref{suplem1} and \ref{suplem3} holds for the tuple $\{\x,\x^+,\x^{++}\}$ whenever $   \norm{\x-\x^*} \leq \frac{1}{ \varsigma M } \frac{\bigg((1+\frac{\beta}{L})^2+ \frac{1}{4 (1+\frac{\beta}{L})^2}-\frac{5}{4}\bigg)}{6}$ for some $\varsigma> 2$.
\end{lem}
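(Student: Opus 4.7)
The plan is to combine the three ingredients already in hand and solve for the largest radius compatible with the sufficient condition \eqref{partb_variation}. Lemmas \ref{suplem1} and \ref{suplem3} reduce the whole claim to verifying
\begin{align}
(\bar{\rho}(\x)-1)\norm{\x^+-\x^*} > \sigma(\x), \nonumber
\end{align}
since this inequality, together with part \textbf{a.} of those lemmas, immediately yields $\norm{\x^{++}-\x^*} > \norm{\x^+ - \x^*}$ in both the strict-expansion and non-contraction cases. So all that remains is to convert the abstract expression for $\sigma(\x)$ and the lower bound for $\bar{\rho}(\x)-1$ into a concrete ball radius around $\x^*$.

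First I would upper bound $\sigma(\x)$ explicitly. Recall from \eqref{parta_partb} that $\sigma(\x)=\tfrac{1}{2}\sup_j\{\nu^{us}_j\} M L \alpha \norm{\x-\x^*}^2$. Using the spectral bound $\nu^{us}_j \le 2$ on $\mathbf{D}(\x)$ established in the proof of Lemma \ref{suplem2}, and the step-size $\alpha = 1/L$, this collapses to $\sigma(\x) \le M\,\norm{\x-\x^*}^2$. Next, from Lemma \ref{suplem2},
\begin{align}
\bar{\rho}(\x)-1 > \frac{\bigl((1+\tfrac{\beta}{L})^2+\tfrac{1}{4(1+\tfrac{\beta}{L})^2}-\tfrac{5}{4}\bigr)}{12}. \nonumber
\end{align}
Finally, since the hypothesis of Lemmas \ref{suplem1} and \ref{suplem3} gives $\norm{\x^+-\x^*}\ge\norm{\x-\x^*}$, the sufficient condition \eqref{partb_variation} will hold as soon as
\begin{align}
\frac{\bigl((1+\tfrac{\beta}{L})^2+\tfrac{1}{4(1+\tfrac{\beta}{L})^2}-\tfrac{5}{4}\bigr)}{12}\norm{\x-\x^*} > M \norm{\x-\x^*}^2. \nonumber
\end{align}
Cancelling one factor of $\norm{\x-\x^*}$ (the trivial case $\x=\x^*$ being vacuous) gives $\norm{\x-\x^*} < \tfrac{1}{M}\cdot \tfrac{1}{12}\bigl((1+\tfrac{\beta}{L})^2+\tfrac{1}{4(1+\tfrac{\beta}{L})^2}-\tfrac{5}{4}\bigr)$, which is precisely the $\varsigma = 2$ borderline case of the stated bound.

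To obtain the \emph{strict} inequality in the lemma statement for \emph{every} $\x$ in the closed ball of radius $\tfrac{1}{\varsigma M}\cdot\tfrac{1}{6}\bigl((1+\tfrac{\beta}{L})^2+\tfrac{1}{4(1+\tfrac{\beta}{L})^2}-\tfrac{5}{4}\bigr)$, I would simply take $\varsigma > 2$ as in the hypothesis: this shrinks the admissible radius strictly below the borderline, providing the margin needed so that $(\bar{\rho}(\x)-1)\norm{\x^+-\x^*} - \sigma(\x)$ is strictly positive uniformly on the ball. I do not expect a serious obstacle here; the only subtle point is bookkeeping the constants correctly, i.e.\ making sure the factor of $2$ gained by passing from the radius $1/(12M)\cdot(\cdots)$ to $1/(6M)\cdot(\cdots)$ is exactly the factor absorbed by requiring $\varsigma > 2$ rather than $\varsigma \ge 1$. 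Once this accounting is done, the proof closes by chaining the three ingredients in one line.
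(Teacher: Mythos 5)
Your proof is correct and takes essentially the same approach as the paper's (both start from the condition~\eqref{partb_variation}, upper bound $\sigma(\x)$ via $\sup_j\{\nu^{us}_j\}\le 2$ and $\alpha=1/L$, lower bound the left-hand side, and solve for the admissible radius). The one place you diverge is in how you lower bound the product $(\bar{\rho}(\x)-1)\norm{\x^+-\x^*}$: you bound the two factors separately, using $\bar{\rho}(\x)-1 > \tfrac{1}{12}\bigl((1+\tfrac{\beta}{L})^2+\tfrac{1}{4(1+\beta/L)^2}-\tfrac{5}{4}\bigr)$ and $\norm{\x^+-\x^*}\ge\norm{\x-\x^*}$, which loses a factor of two. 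The paper instead keeps the product intact, using the identity $(\bar{\rho}(\x)-1)\norm{\x^+-\x^*}=\bigl(\sqrt{\langle\hat{\u},\mathbf{D}(\x)^4\hat{\u}\rangle}-\sqrt{\langle\hat{\u},\mathbf{D}(\x)^2\hat{\u}\rangle}\bigr)\norm{\x-\x^*}$ from \eqref{sigmaerrorradii} so that the $\sqrt{\langle\hat{\u},\mathbf{D}(\x)^2\hat{\u}\rangle}$ factors cancel, and then applies the sharper bound \eqref{rho_bound4} directly, yielding the constant $\tfrac{1}{6}(\cdots)$ rather than $\tfrac{1}{12}(\cdots)$. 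Consequently your derivation naturally lands on the $\varsigma=2$ borderline, whereas the paper's version would already certify the conclusion for any $\varsigma\ge 1$; both are consistent with the lemma statement because the stated constraint $\varsigma>2$ is deliberately conservative (the slack is needed downstream, in particular to make the expansion factor in \eqref{expansionfactorexactbound} strictly greater than $1$). So your bookkeeping worry at the end is resolved: the factor of $2$ you lose by splitting the product is precisely covered by $\varsigma>2$, and both routes establish the lemma.
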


\begin{proof}
To identify the radius of this neighborhood, we use \eqref{partb_variation} where we substitute $\sigma(\x)$ from \eqref{parta_partb} and $\bar{\rho}(\x)=\frac{\sqrt{\langle  \hat{\u}, (\mathbf{D}(\x))^4\hat{\u}\rangle} }{\sqrt{\langle  \hat{\u}, (\mathbf{D}(\x))^2\hat{\u}\rangle} } $ to get the condition:
\begin{align}
  (\bar{\rho}(\x)-1)\norm{\x^+-\x^*}>&\sigma(\x) =\frac{\sup_{j}\{\nu^{us}_j\} M L\alpha \norm{\x-\x^*}^2}{2} \\
  \implies \bigg(\sqrt{\langle  \hat{\u}, (\mathbf{D}(\x))^4\hat{\u}\rangle} - \sqrt{\langle  \hat{\u}, (\mathbf{D}(\x))^2\hat{\u}\rangle}\bigg)\norm{\x-\x^*} >&\sigma(\x) =\frac{\sup_{j}\{\nu^{us}_j\} M L\alpha \norm{\x-\x^*}^2}{2} \label{radiusmonotonic}
\end{align}
Now, in order to guarantee the condition \eqref{radiusmonotonic}, for some $\varsigma>2$,  we set $ \bigg(\sqrt{\langle  \hat{\u}, (\mathbf{D}(\x))^4\hat{\u}\rangle} - \sqrt{\langle  \hat{\u}, (\mathbf{D}(\x))^2\hat{\u}\rangle}\bigg)$ equal to $ \frac{1}{\varsigma}$ times its lower bound from \eqref{rho_bound4} and set $\sigma(\x)$ to its upper bound in \eqref{radiusmonotonic} to get the condition:
\begin{align}
\frac{1}{\varsigma} \frac{\bigg((1+\frac{\beta}{L})^2+ \frac{1}{4 (1+\frac{\beta}{L})^2}-\frac{5}{4}\bigg)}{6}\norm{\x-\x^*} &\geq \frac{2 M L\alpha \norm{\x-\x^*}^2}{2} \geq \frac{\sup_{j}\{\nu^{us}_j\} M L\alpha \norm{\x-\x^*}^2}{2} = \sigma(\x) \\
\frac{1}{ \varsigma M } \frac{\bigg((1+\frac{\beta}{L})^2+ \frac{1}{4 (1+\frac{\beta}{L})^2}-\frac{5}{4}\bigg)}{6} &\geq  \norm{\x-\x^*} \label{radiusmonotonic1}
\end{align}
where we used $\alpha = \frac{1}{L}$ and the bound $ \sup_{j}\{\nu^{us}_j\}= 1+ \alpha L \leq 2$. Now for $\frac{\beta}{L}>0$, if \eqref{radiusmonotonic1} is satisfied then the condition \eqref{radiusmonotonic} will hold true. Hence any gradient descent trajectory with $\alpha = \frac{1}{L}$ inside the ball $\mathcal{B}_{\xi}(\x^*)$ will exhibit strictly monotonic expansive dynamics once it has a non-contractive dynamics at any instant.
\end{proof}

Finally combining Lemmas \ref{suplem1}, \ref{suplem2}, \ref{suplem3} and \ref{suplem4}, Theorem \ref{thm2} is established.

\section{Proof of Lemma \ref{polyaklem}} \label{Appendix C}
Before starting the proof of Lemma \ref{polyaklem} we first show that unlike the expansion phase of the trajectory where the iterates satisfy strong monotonicity property from \eqref{partb.}, the iterates belonging to the contraction phase of the trajectory may not necessarily satisfy such property.
 From theorem \ref{thm2} it was established that a gradient trajectory $\{\x_k\}$ with $\x_k \in {\mathcal{B}}_{\xi}(\x^*)$ has expansive dynamics for all $k>K$ if at $k=K$, the gradient trajectory has non-contraction dynamics\footnote{Note: here we assume that $ \x_0 \in \bar{\mathcal{B}}_{\xi}(\x^*) \backslash {\mathcal{B}}_{\xi}(\x^*)$.}. Let there be some $k=K_{\tau}$ such that the sequence $\{\norm{\x_k - \x^*}\}$ is strictly decreasing for all $k \leq K_{\tau}$ and is non-decreasing for $k=K_{\tau}$. Then from Theorem \ref{thm2} we have that $\{\norm{\x_k - \x^*}\}$ is strictly increasing for all $k > K_{\tau}$ provided $\x_k \in {\mathcal{B}}_{\xi}(\x^*)$. Since $\norm{\x_{K_{\tau}} - \x^*}$ is the minimum of the sequence $\{\norm{\x_{k} - \x^*}\}$ with $\x_k \in {\mathcal{B}}_{\xi}(\x^*)$, let $k=K_c$ and $k=K_e$ be the indices with $K_c \leq K_{\tau} \leq K_e$ defined as follows:
\begin{align}
K_c &= \sup \bigg\{k \leq K_{\tau} \bigg\vert  \x_k \in \bar{\mathcal{B}}_{\xi}(\x^*) \backslash \mathcal{B}_{\epsilon}(\x^*) \bigg\} \\
K_e &= \inf \bigg\{k \geq K_{\tau} \bigg\vert   \x_k \in \bar{\mathcal{B}}_{\xi}(\x^*) \backslash \mathcal{B}_{\epsilon}(\x^*) \bigg\}.
\end{align}
 Let the gradient trajectory exit the ball $\mathcal{B}_{\xi}(\x^*)$ at some iteration $\hat{K}_{exit}$. Then the total sojourn time for the gradient trajectory inside the compact shell $\bar{\mathcal{B}}_{\xi}(\x^*) \backslash \mathcal{B}_{\epsilon}(\x^*)$ is $K_c + (\hat{K}_{exit} - K_e) $.

 Since $K_c \leq K_{\tau}$, we have the condition that $\norm{\x_k -\x^*}$ is monotonically decreasing for all $0<k\leq K_c$. However, even with the monotonically decreasing sequence, it cannot be guaranteed that $\norm{\x_k -\x^*}$ will decrease with a geometric rate. This can checked very easily from \eqref{checkproof1} in the proof of theorem \ref{thm2}. From that condition, we are guaranteed geometric expansion since the factor $\bar{\rho}(\x) = \frac{\sqrt{\langle \hat{\u}, (\mathbf{D}(\x))^4\hat{\u} \rangle}}{\sqrt{\langle \hat{\u}, (\mathbf{D}(\x))^2\hat{\u} \rangle}} > 1$ from the inequality:
 \begin{align}
     \langle \hat{\u}, (\mathbf{D}(\x))^4\hat{\u} \rangle  - \langle  \hat{\u}, (\mathbf{D}(\x))^2\hat{\u}\rangle &= \underbrace{\langle \hat{\u}, ((\mathbf{D}(\x))^2-\mathbf{I})^2\hat{\u} \rangle}_{\geq 0} + \underbrace{\langle \hat{\u}, ((\mathbf{D}(\x))^2-\mathbf{I})\hat{\u} \rangle}_{>0} &> 0
 \end{align}
 provided $\norm{\x^+ - \x^*} > \norm{\x-\x^*}$ or equivalently $\langle \hat{\u}, ((\mathbf{D}(\x))^2-\mathbf{I})\hat{\u} \rangle > 0$. Recall that $\norm{\x^+ - \x^*}= \sqrt{\langle  \hat{\u}, (\mathbf{D}(\x))^2\hat{\u}\rangle}  \norm{\x-\x^*}$ from \eqref{subs_1}. However, when we have $\norm{\x^+ - \x^*} < \norm{\x-\x^*}$ or equivalently $\langle \hat{\u}, ((\mathbf{D}(\x))^2-\mathbf{I})\hat{\u} \rangle < 0$ then \eqref{checkproof1} becomes:
  \begin{align}
     \langle \hat{\u}, (\mathbf{D}(\x))^4\hat{\u} \rangle  - \langle  \hat{\u}, (\mathbf{D}(\x))^2\hat{\u}\rangle &= \underbrace{\langle \hat{\u}, ((\mathbf{D}(\x))^2-\mathbf{I})^2\hat{\u} \rangle}_{\geq 0} + \underbrace{\langle \hat{\u}, ((\mathbf{D}(\x))^2-\mathbf{I})\hat{\u} \rangle}_{<0} & \lessgtr 0
 \end{align}
 and therefore it cannot be stated with certainty that $ \bar{\rho}(\x) < 1$ when we have $\norm{\x^+ - \x^*} < \norm{\x-\x^*}$. Hence, we work with the function value sequence $\{f(\x_k)\}$ instead of the iterate sequence $\{\x_k\}$ in order to develop best possible rate of contraction.

 We now prove Lemma \ref{polyaklem}. Taking norm on \eqref{gradtaylor}, using the substitution $\mathbf{G} = \nabla^2 f(\x^* + p(\x-\x^*)) $ followed by taking the lower bound yields:
 \begin{align}
     \norm{\nabla f(\x)} &= \norm{\bigg( \int_{p=0}^{p=1}\nabla^2 f(\x^* + p(\x-\x^*))  dp\bigg)(\x-\x^*)} \\
     \implies \norm{\nabla f(\x)} &\geq  \norm{\bigg( \int_{p=0}^{p=1}\nabla^2 f(\x^* + p(\x-\x^*))  dp\bigg)^{-1}}_2^{-1}\norm{\x-\x^*} \\
          \implies \norm{\nabla f(\x)} &\geq  \bigg( \int_{p=0}^{p=1}\norm{\bigg(\nabla^2 f(\x^* + p(\x-\x^*))\bigg)^{-1}}_2^{-1}  dp \bigg)\norm{\x-\x^*} \\
     \implies \norm{\nabla f(\x)} & \geq  \bigg(\int_{p=0}^{p=1}\lambda_{min} \bigg ( \sqrt{\mathbf{G}\mathbf{G}^T}\bigg )dp \bigg)\norm{\x-\x^*} \\
     \implies \norm{\nabla f(\x)} &\geq  \beta \norm{\x-\x^*} \label{interimbound4}
 \end{align}
 where we used the fact that $ \lambda_{min} \bigg ( \sqrt{\mathbf{G}\mathbf{G}^T}\bigg ) = \beta $ since $ \lambda_{min} \bigg ( \nabla^2 f(\x^* + p(\x-\x^*)) \bigg ) = \beta$ for any $\x^* + p(\x-\x^*) \in \mathcal{W} $ from \textbf{Assumption A4}.

 Next, using gradient Lipschitz condition on $f(\cdot)$ for $\x_k$ and $\x^*$ along with \eqref{interimbound4} we get:
\begin{align}
    f(\x_k) - f(\x^*) \leq \frac{L}{2}\norm{\x_k - \x^*}^2  \leq \frac{L}{2 \beta^2} \norm{\nabla f(\x_k)}^2 \label{optimalsk1}
\end{align}
where \eqref{optimalsk1} holds for any $\x_k \in \mathcal{W}$.

It is important to note that though \eqref{optimalsk1} holds in general for any $\x_k \in \mathcal{W}$, yet it cannot be called the Polyak--{\L}ojasiewicz condition \cite{karimi2016linear} when $\{\x_k\}$ has expansive dynamics locally w.r.t. $\x^*$ because then $ f(\x_k) - f(\x^*)$ may not be positive. In particular Lemma \ref{lemma3} shows that $f(\x_{K_{exit}}) < f(\x^*)$ where $K_{exit}$ is the exit time from the ball $\mathcal{B}_{\epsilon}(\x^*)$ so $f(\x_k) < f(\x^*)$ for all $k> K_{exit}$ by monotonicity of $\{f(\x_k)\}$. Hence \eqref{optimalsk1} becomes trivial in the expansion phase of the trajectory inside the shell $\bar{\mathcal{B}}_{\xi}(\x^*) \backslash \mathcal{B}_{\epsilon}(\x^*)$ due to the fact that $ f(\x_k) - f(\x^*)<0$ for $k>K_{exit}$.

Finally it remains to show that $  f(\x_k) - f(\x^*) > 0$ for the contraction phase provided $ K_c < K_e$ so that \eqref{optimalsk1} is indeed the Polyak--{\L}ojasiewicz condition in this case. We accomplish this by lower bounding the term $ f(\x_{K_c})- f(\x^*)$. Then $  f(\x_k) - f(\x^*) > 0$ for $k < K_c$ will follow immediately from the monotonicity of the sequence $\{f(\x_k)\}$. Observe that the trajectory $\{\x_k\}$ will enter the ball $\mathcal{B}_{\epsilon}(\x^*)$ when $ K_c < K_e$ and to do so it has to contract at $k=K_c$ since $K_c$ is the last iteration for which the trajectory contracts inside the shell $\bar{\mathcal{B}}_{\xi}(\x^*) \backslash \mathcal{B}_{\epsilon}(\x^*)$. Therefore we have that $\norm{\x_{K_c} -\x^*} > \norm{\x_{K_c+1} -\x^*} $. Further simplifying this condition we get:
\begin{align}
    \norm{\x_{K_c} -\x^*}^2 &> \norm{\x_{K_c+1} -\x^*}^2 \label{switchpolyak}\\
    \implies \norm{\x_{K_c} -\x^*}^2 &> \norm{\x_{K_c} - \alpha \nabla f(\x_{K_c}) -\x^*}^2 \\
   \implies \norm{\x_{K_c} -\x^*}^2 &> \norm{\x_{K_c} -\x^*}^2 + \norm{\alpha\nabla f(\x_{K_c})}^2 - 2\langle \alpha \nabla f(\x_{K_c}), \x_{K_c} -\x^*\rangle \\
   \implies \langle \x_{K_c} -\x^* , \nabla f(\x_{K_c}) \rangle & > \frac{\alpha}{2}\norm{\nabla f(\x_{K_c})}^2 \\
   \implies \bigg\langle \x_{K_c} -\x^* , \bigg(\int_{p=0}^1\nabla^2 f(\x^* + p(\x_{K_c}-\x^*))dp \bigg) (\x_{K_c} -\x^*) \bigg \rangle & > \frac{\alpha}{2}\norm{\nabla f(\x_{K_c})}^2 \\
    \implies \bigg\langle \x_{K_c} -\x^* , \nabla^2 f(\x^* )  (\x_{K_c} -\x^*) \bigg \rangle + \frac{M}{2}\norm{\x_{K_c} -\x^*}^3 & > \frac{\alpha}{2}\norm{\nabla f(\x_{K_c})}^2 \label{newbound11}
\end{align}
where we used the substitution $ \nabla f(\x_{K_c}) = \bigg(\int_{p=0}^1\nabla^2 f(\x^* + p(\x_{K_c}-\x^*))dp \bigg) (\x_{K_c} -\x^*)$ and the following bound:
\begin{align}
  \norm{ \bigg(\int_{p=0}^1\nabla^2 f(\x^* + p(\x_{K_c}-\x^*))dp \bigg) -\nabla^2 f(\x^*)} &\leq \int_{p=0}^1 \norm{ \nabla^2 f(\x^* + p(\x_{K_c}-\x^*))dp  -\nabla^2 f(\x^*)}dp\\ &\leq \int_{p=0}^1 Mp\norm{ \x_{K_c}-\x^*}dp \\
  & = \frac{M}{2}\norm{\x_{K_c} -\x^*}
\end{align}
in the last step.
Using Hessian Lipschitz condition on $\x_{K_c}$ and $ \x^*$ followed by substituting the bound \eqref{newbound11} we have that:
\begin{align}
    f(\x_{K_c}) & \geq f(\x^*) + \bigg\langle \x_{K_c} -\x^* , \nabla^2 f(\x^* )  (\x_{K_c} -\x^*) \bigg \rangle - \frac{M}{6}\norm{\x_{K_c} -\x^*}^3 \\
    \implies  f(\x_{K_c}) - f(\x^*) & \geq  \frac{\alpha}{2}\norm{\nabla f(\x_{K_c})}^2 - \frac{M}{2}\norm{\x_{K_c} -\x^*}^3 -\frac{M}{6}\norm{\x_{K_c} -\x^*}^3 \\
   \implies  f(\x_{K_c}) - f(\x^*) & \geq  \frac{\beta^2}{2L}\norm{\x_{K_c}-\x^*}^2 - \frac{2M}{3}\norm{\x_{K_c} -\x^*}^3 \label{newbound13}
\end{align}
where in the last step we used \eqref{interimbound4} and the substitution $\alpha = \frac{1}{L}$. Hence for $\norm{\x_{K_c} - \x^*} < \frac{3\beta^2}{4ML}$ we will have $ f(\x_{K_c}) - f(\x^*) > 0$.

\hfill
 \qedhere
 \qedsymbol

\section{Proof of Theorem \ref{thm3}} \label{Appendix CC}
We prove Theorem \ref{thm3} by first upper bounding $K_c$ and $ \hat{K}_{exit} - K_e$.

 \subsection*{Bound on $K_c$}

Using gradient Lipschitz condition on $f(\cdot)$ for $\x_k$ and $\x_{k+1}$ where $ \x_{k+1} = \x_{k} - \frac{1}{L} \nabla f(\x_k)$ followed by Lemma \ref{polyaklem} and inducting from $k=0$ to $k=K_c$ gives:
\begin{align}
    f(\x_{k+1}) - f(\x_{k}) \leq -\frac{1}{2L}\norm{\nabla f(\x_k)}^2 &\leq - \frac{\beta^2}{L^2} (f(\x_{k}) - f(\x^*)) \\
   \implies f(\x_{k+1}) - f(\x^*) & \leq \bigg( 1 - \frac{\beta^2}{L^2}\bigg) \bigg( f(\x_{k}) - f(\x^*) \bigg) \\
   \implies f(\x_{K_c}) - f(\x^*) & \leq \bigg( 1 - \frac{\beta^2}{L^2}\bigg)^{K_c} \bigg( f(\x_{0}) - f(\x^*) \bigg) \\
   \implies K_c &\leq \frac{\log(f(\x_{K_c}) - f(\x^*)) - \log(f(\x_{0}) - f(\x^*)) }{\log\bigg( 1 - \frac{\beta^2}{L^2}\bigg)}. \label{optimalsk2}
\end{align}
By gradient Lipschitz condition for $\x_0$ and $\x^*$, we have the condition:
\begin{align}
    f(\x_{0}) - f(\x^*) \leq \frac{L}{2}\norm{\x_0 - \x^*} = \frac{L}{2}\xi^2 \label{newbound12}
\end{align}
where we used the fact that the iterate $\x_0$ sits on the boundary of the ball $\mathcal{B}_{\xi}(\x^*)$.
Finally substituting the bounds \eqref{newbound12}, \eqref{newbound13} into \eqref{optimalsk2} yields the following contraction rate:
\begin{align}
     K_c &\leq \frac{\log\bigg(\frac{L}{2}\xi^2\bigg) - \log\bigg(\frac{\beta^2}{2L}\norm{\x_{K_c}-\x^*}^2 - \frac{2M}{3}\norm{\x_{K_c} -\x^*}^3\bigg) }{\log\bigg( 1 - \frac{\beta^2}{L^2}\bigg)^{-1}}.
\end{align}
Since $\epsilon \leq \norm{\x_{K_c} -\x^* } < \xi$, we can further upper bound $K_c$ as:
\begin{align}
    K_c &\leq \frac{\log\bigg(\frac{L}{2}\xi^2\bigg) - \log\bigg(\frac{\beta^2}{2L}\epsilon^2 - \frac{2M}{3}\epsilon^3\bigg) }{\log\bigg( 1 - \frac{\beta^2}{L^2}\bigg)^{-1}}. \label{contractionratenew}
\end{align}
Notice that while developing \eqref{contractionratenew} we used Lemma \ref{polyaklem} which requires $K_c < K_e$. For the case when $K_c = K_e$ the trajectory never enters the ball $\mathcal{B}_{\epsilon}(\x^*)$ and Lemma \ref{polyaklem} no longer holds true. However in that case one can repeat the argument from \eqref{switchpolyak} onward in the proof of Lemma \ref{polyaklem} by considering $K_c-1$ instead of $K_c$ and get the same upper bound \eqref{contractionratenew} on $K_c -1$. Therefore combining the two cases we effectively get:
\begin{align}
    K_c &\leq \frac{\log\bigg(\frac{L}{2}\xi^2\bigg) - \log\bigg(\frac{\beta^2}{2L}\epsilon^2 - \frac{2M}{3}\epsilon^3\bigg) }{\log\bigg( 1 - \frac{\beta^2}{L^2}\bigg)^{-1}} + 1.
\end{align}

The bound on $\epsilon$ given by $\epsilon < \frac{3\beta^2}{4ML}$ follows from Lemma \ref{polyaklem} and the fact that $\epsilon \leq \norm{\x_{K_c} -\x^* } $.

  \subsection*{Bound on $\hat{K}_{exit} - K_e$}
  Recall that from \eqref{partb.} in theorem \ref{thm2} we have $\norm{\x^{++}-\x^*} > \bar{\rho}(\x)\norm{\x^+-\x^*} - \sigma(\x)$ whenever $\norm{\x^{+}-\x^*} \geq \norm{\x-\x^*}$. Now for $ K_e \leq k \leq \hat{K}_{exit}$, the sequence $\{\norm{\x_k-\x^*}\}$ is non-decreasing from the definition of $K_e$. Hence, \eqref{partb.} holds for all such $\x_k$ which have $ K_e \leq k \leq \hat{K}_{exit}$. Using \eqref{partb.} with $\x^+ = \x_{k-1}$ and $\x^{++}= \x_{k}$ for $ K_e +1 \leq k \leq \hat{K}_{exit}$ yields:
  \begin{align}
      \norm{\x_{k}-\x^*} &> \bar{\rho}(\x_{k-2})\norm{\x_{k-1}-\x^*} - \sigma(\x_{k-2})  \\
       \norm{\x_{k}-\x^*} &> \bar{\rho}(\x_{k-2})\norm{\x_{k-1}-\x^*} - M \norm{\x_{k-2}-\x^*}^2 \\ \norm{\x_{k}-\x^*} + M \norm{\x_{k-2}-\x^*}^2 &> \bar{\rho}(\x_{k-2})\norm{\x_{k-1}-\x^*}   \\
       \norm{\x_{k}-\x^*} + M \norm{\x_{k}-\x^*}^2 &> \bar{\rho}(\x_{k-2})\norm{\x_{k-1}-\x^*}   \\
   \norm{\x_{k}-\x^*} &> \frac{\bar{\rho}(\x_{k-2})}{1 + M \norm{\x_{k}-\x^*}}\norm{\x_{k-1}-\x^*}  > \frac{\bar{\rho}(\x_{k-2})}{1 + M \xi}\norm{\x_{k-1}-\x^*}  \label{expansionrate1}
  \end{align}
  where we used the bound on $\sigma(\x)$ from \eqref{parta_partb} given by $\sigma(\x_{k-2}) = M \norm{\x_{k-2}-\x^*}^2 \leq M (\xi)^2$ followed by the condition $ \norm{\x_{k}-\x^*}>\norm{\x_{k-2}-\x^*}$ arising from the fact that $\{\norm{\x_{k}-\x^*}\}$ is a monotonically increasing sequence for $  K_e +2 \leq k \leq \hat{K}_{exit}$ and finally the substitution $ \norm{\x_{\hat{K}_{exit}}-\x^*} = \xi$. Now applying the bound \eqref{expansionrate1} recursively for $  K_e +2 \leq k \leq \hat{K}_{exit}$ yields:
  \begin{align}
      \norm{\x_{\hat{K}_{exit}}-\x^*} &>  \prod_{k=K_e+2}^{\hat{K}_{exit}-1} \frac{\bar{\rho}(\x_{k-2})}{1 + M \xi}\norm{\x_{K_e+1}-\x^*} \\
      \norm{\x_{\hat{K}_{exit}}-\x^*} &>   \bigg(\frac{\inf_{K_e +2 \leq k \leq \hat{K}_{exit}}\{\bar{\rho}(\x_{k-2})\}}{1 + M \xi}\bigg)^{\hat{K}_{exit} - K_e-2}\norm{\x_{K_e+1}-\x^*} \\
     \hat{K}_{exit} - K_e -2&< \frac{\log \bigg(\norm{\x_{\hat{K}_{exit}}-\x^*}\bigg) - \log \bigg(\norm{\x_{K_e+1}-\x^*}\bigg)}{\log \bigg(\frac{\inf\{\bar{\rho}(\x_{k-2})\}}{1 + M \xi}\bigg)} <  \frac{\log (\xi) - \log (\epsilon)}{\log \bigg(\frac{\inf\{\bar{\rho}(\x_{k-2})\}}{1 + M \xi}\bigg)} \\
     \hat{K}_{exit} - K_e &<   \frac{\log (\xi) - \log (\epsilon)}{\log \bigg(\frac{\inf\{\bar{\rho}(\x_{k-2})\}}{1 + M \xi}\bigg)} + 2
     \label{expansionrate}
  \end{align}
  where in the last step we used $\norm{\x_{\hat{K}_{exit}}-\x^*} = \xi $, $\norm{\x_{K_e+1}-\x^*} \geq \epsilon $ and the range of infimum is omitted after second step. Note that we require the condition $\bigg(\frac{\inf\{\bar{\rho}(\x_{k-2})\}}{1 + M \xi}\bigg) > 1$, however this is trivially satisfied which can be easily checked from \eqref{rho_bound4} and \eqref{radiusmonotonic1}.

For $\xi  \leq  \frac{1}{ \varsigma  M } \frac{\bigg((1+\frac{\beta}{L})^2+ \frac{1}{4 (1+\frac{\beta}{L})^2}-\frac{5}{4}\bigg)}{6}$ where $\varsigma > 2$, we get the condition:
\begin{align}
   \frac{\bar{\rho}(\x)}{1+M\xi}   &> \frac{1 + \frac{\bigg((1+\frac{\beta}{L})^2+ \frac{1}{4 (1+\frac{\beta}{L})^2}-\frac{5}{4}\bigg)}{12}}{1 + \frac{\bigg((1+\frac{\beta}{L})^2+ \frac{1}{4 (1+\frac{\beta}{L})^2}-\frac{5}{4}\bigg)}{6\varsigma}}>1. \label{expansionfactorexactbound}
\end{align}
  Finally adding \eqref{contractionratenew} and \eqref{expansionrate}, we get the following bound:
  \begin{align}
    K_{shell}  & \leq \frac{\log\bigg(\frac{L}{2}\xi^2\bigg) - \log\bigg(\frac{\beta^2}{2L}\epsilon^2 - \frac{2M}{3}\epsilon^3\bigg) }{\log\bigg( 1 - \frac{\beta^2}{L^2}\bigg)^{-1}} + \frac{\log (\xi) - \log (\epsilon)}{\log \bigg(\frac{\inf\{\bar{\rho}(\x_{k-2})\}}{1 + M \xi}\bigg)} + 2 \label{exittime_hat}
  \end{align}
  where $ K_{shell} =  K_c + \hat{K}_{exit} - K_e$.
\hfill
 \qedhere
 \qedsymbol

\section{Proof of Lemmas \ref{lemma1}-\ref{lemma5}} \label{Appendix D}
Before proving Lemma \ref{lemma1} and \ref{lemma2} we need the relative error bound on zeroth order approximation of the gradient trajectory.
Expanding the expression $  \u_K = \prod_{k=0}^{K-1} \bigg[\A_k + \epsilon \P_k   \bigg] \u_0$ from section \ref{sectionrelativeerror} to zeroth order we get the following bound on tail error:
\begin{align}
    \u_K &= \prod_{k=0}^{K-1} \bigg[\A_k + \epsilon \P_k   \bigg] \u_0\\
	    &= \prod_{k=0}^{K-1} \A_k \u_{0} + \mathcal{O}\bigg(\norm{\A}_2^K(K\epsilon) \frac{\norm{\P}_2}{\norm{\A}_2} \norm{\u_0} \bigg)\\
	\implies \norm{ \u_K - \prod_{k=0}^{K-1} \A_k \u_{0}}   &=   \mathcal{O}\bigg(\norm{\A}_2^K(K\epsilon) \epsilon \bigg)
\end{align}
where the above bound is obtained by following steps similar to \eqref{errormargin1}. Then using this tail error bound along with \eqref{errormargin2} we get the following bound on relative error for zeroth order approximation:
\begin{align}
     \frac{\norm{ \u_K - \prod_{k=0}^{K-1} \A_k \u_{0}}}{\norm{\u_K}} &\leq \frac{1}{\epsilon\bigg( 1 + \frac{\beta}{L} - \frac{\epsilon M}{2L}\bigg)^K \sqrt{\sum_{j \in \mathcal{N}_{US} } ({\theta}^{us}_{j})^2}  - \mathcal{O}\bigg(\norm{\A}_2^{K}(K\epsilon) \epsilon \bigg)}\mathcal{O}\bigg(\bigg(2 + \frac{\epsilon M}{2 L}\bigg)^K(K\epsilon) \epsilon \bigg) \\
    &\leq \frac{1}{ \sqrt{\sum_{j \in \mathcal{N}_{US} } ({\theta}^{us}_{j})^2}  - \mathcal{O}\bigg(\frac{\bigg(2 + \frac{\epsilon M}{2 L}\bigg)^K}{\bigg( 1 + \frac{\beta}{L} - \frac{\epsilon M}{2L}\bigg)^K}(K\epsilon) \bigg)}\mathcal{O}\bigg(\frac{\bigg(2 + \frac{\epsilon M}{2 L}\bigg)^K}{\bigg( 1 + \frac{\beta}{L} - \frac{\epsilon M}{2L}\bigg)^K}(K\epsilon) \bigg) \\
    &\leq \frac{1}{ \sqrt{\sum_{j \in \mathcal{N}_{US} } ({\theta}^{us}_{j})^2}  - \mathcal{O}\bigg(\frac{1}{\sqrt{\epsilon}}\bigg(\log\bigg(\frac{1}{\epsilon} \bigg)\epsilon\bigg)  \bigg)}\mathcal{O}\bigg(\frac{1}{\sqrt{\epsilon}}\bigg(\log\bigg(\frac{1}{\epsilon} \bigg)\epsilon\bigg)  \bigg) \label{relerrornew}
\end{align}
where we have substituted the upper bound on $K_{exit}$ from \eqref{linearexittimebound} into $K$. Hence for $\sqrt{\sum_{j \in \mathcal{N}_{US} } ({\theta}^{us}_{j})^2}  > \mathcal{O}\bigg(\frac{1}{\sqrt{\epsilon}}\bigg(\log\bigg(\frac{1}{\epsilon} \bigg)\epsilon\bigg)  \bigg) $ we have that:
\begin{align}
 \norm{\u_K}\bigg( 1 - \mathcal{O}\bigg(\frac{1}{\sqrt{\epsilon}}\bigg(\log\bigg(\frac{1}{\epsilon} \bigg)\epsilon\bigg)  \bigg) \bigg)  \leq  \norm{ \prod_{k=0}^{K-1} \A_k \u_{0}} \leq \norm{\u_K}\bigg( 1 + \mathcal{O}\bigg(\frac{1}{\sqrt{\epsilon}}\bigg(\log\bigg(\frac{1}{\epsilon} \bigg)\epsilon\bigg)  \bigg) \bigg). \label{zerothorder}
\end{align}
Now $\A_k = \sum\limits_{i \in \mathcal{N}_S} c_i^s(k)\v_i \v_i^T + \sum\limits_{j \in \mathcal{N}_{US}} c_j^{us}(k)\v_j \v_j^T $ where $\v_i$ and $\v_j$ are the eigenvectors corresponding to the stable and unstable subspaces of $\nabla^2 f(\x^*)$ and for $\alpha = \frac{1}{L}$ we have the bounds $ 1 + \frac{\beta}{L} - \frac{\epsilon M}{2L}  \leq c_j^{us}(k) \leq 2 + \frac{\epsilon M}{2L}$ and $  -\frac{\epsilon M}{2L} \leq c_i^{s}(k) \leq 1 - \frac{\beta}{L} + \frac{\epsilon M}{2L}$. Therefore we also get the bound:
\begin{align}
  \inf \norm{ \prod_{k=0}^{K-1} \A_k \u_{0}}  \leq  \norm{ \prod_{k=0}^{K-1} (\mathbf{I}-\alpha \nabla^2 f(\x^*)) \u_{0}} \leq \sup \norm{ \prod_{k=0}^{K-1} \A_k \u_{0}}.
\end{align}
Combining this with \eqref{zerothorder} we get:
\begin{align}
    \norm{\u_K}\bigg( 1 - \mathcal{O}\bigg(\frac{1}{\sqrt{\epsilon}}\bigg(\log\bigg(\frac{1}{\epsilon} \bigg)\epsilon\bigg)  \bigg) \bigg)  \leq  \norm{ \prod_{k=0}^{K-1} (\mathbf{I}-\alpha \nabla^2 f(\x^*)) \u_{0}} \leq \norm{\u_K}\bigg( 1 + \mathcal{O}\bigg(\frac{1}{\sqrt{\epsilon}}\bigg(\log\bigg(\frac{1}{\epsilon} \bigg)\epsilon\bigg)  \bigg) \bigg). \label{zerothordernew}
\end{align}
\subsection*{Proof of Lemma \ref{lemma1}}
For values of $\epsilon$ sufficiently small and $\sqrt{\sum_{j \in \mathcal{N}_{US} } ({\theta}^{us}_{j})^2}  > \mathcal{O}\bigg(\frac{1}{\sqrt{\epsilon}}\bigg(\log\bigg(\frac{1}{\epsilon} \bigg)\epsilon\bigg)  \bigg) $, using \eqref{zerothordernew} we have the following approximation:
\begin{align}
 \frac{\norm{ \prod_{k=0}^{K-1} (\mathbf{I}-\alpha \nabla^2 f(\x^*)) \u_{0}}}{\bigg( 1 + \mathcal{O}\bigg(\frac{1}{\sqrt{\epsilon}}\bigg(\log\bigg(\frac{1}{\epsilon} \bigg)\epsilon\bigg)  \bigg) \bigg)}   \leq   \norm{\u_K} &\leq \frac{\norm{ \prod_{k=0}^{K-1} (\mathbf{I}-\alpha \nabla^2 f(\x^*)) \u_{0}}}{\bigg( 1 - \mathcal{O}\bigg(\frac{1}{\sqrt{\epsilon}}\bigg(\log\bigg(\frac{1}{\epsilon} \bigg)\epsilon\bigg)  \bigg) \bigg)}   \\
\implies  \norm{\u_K} & \approx \norm{ \prod_{k=0}^{K-1} (\mathbf{I}-\alpha \nabla^2 f(\x^*)) \u_{0}} = \norm{  (\mathbf{I}-\alpha \nabla^2 f(\x^*))^K \u_{0}}
\end{align}
where $ \mathcal{O}\bigg(\frac{1}{\sqrt{\epsilon}}\bigg(\log\bigg(\frac{1}{\epsilon} \bigg)\epsilon\bigg)  \bigg) \bigg)$ term is neglected w.r.t. $1$ for sufficiently small $\epsilon$ and $K < K_{exit} \lessapprox \mathcal{O}(\log(\epsilon^{-1}))$.
Now, if $\u_0$ has a  projection value close to $0$ on the unstable subspace of $\nabla^2 f(\x^*)$, then $\norm{\u_{K}}$ first  approximately decreases exponentially such that $\x_{K}$ reaches some  $\x_{critical}$ and from there onward it approximately increases exponentially until saddle region is escaped.
For the case when $\x_{critical} \to \x^*$, we will have $\norm{\x_{critical} - \x^*} \to 0$. The escape time for the $\epsilon$--precision trajectories from this region $\mathcal{B}_{\epsilon^{\prime}}(\x^*)$ where $\epsilon^{\prime} = \norm{\x_{critical} - \x^*}$ will be upper bounded by $K < \mathcal{O}(\log({\epsilon^{\prime}}^{-1})) $ from \eqref{linearexittimebound}. This upper bound goes to infinity when $\epsilon^{\prime} \to 0$ hence $\epsilon$--precision trajectories fail to escape the saddle neighborhood when $\x_{critical} = \x^*$.
It should also be noted that if for some $K$, $\u_K = \mathbf{0}$ or in other words $\x_{critical} = \x^*$, then for all $J > K$ we have $\u_{J} = \textbf{0}$ since $\nabla f(\x_J) = \mathbf{0}$ and the gradient trajectory can never escape the saddle region.

\hfill
 \qedhere
 \qedsymbol

\subsection*{Proof of Lemma \ref{lemma2}}
Let $\{\u_K\}$ be any gradient trajectory with linear exit time that satisfies the condition $\sqrt{\sum_{j \in \mathcal{N}_{US} } ({\theta}^{us}_{j})^2}  > \mathcal{O}\bigg(\frac{1}{\sqrt{\epsilon}}\bigg(\log\bigg(\frac{1}{\epsilon} \bigg)\epsilon\bigg)  \bigg) $. Now if this trajectory curves around $\x^*$ then the vectors $\u_{0}$ and $\u_{K}$ will form an obtuse angle for some finite values of $K$. Therefore in order to prove the first part, it is sufficient to show that:
$$\langle \u_{K}, \u_{0}\rangle \geq 0$$ for any value of $K$ such that $\norm{\u_{K}} < \epsilon$. Now, for sufficiently small $\epsilon$ where $\epsilon$ is upper bounded by Theorem \ref{thm1}, from \eqref{relerrornew} we have $\u_{K}= \prod_{k=0}^{K-1} \A_k \u_0 + \mathcal{O}\bigg(\frac{1}{\sqrt{\epsilon}}\bigg(\log\bigg(\frac{1}{\epsilon} \bigg)\epsilon^2\bigg)  \bigg) \approx  (\mathbf{I} - \alpha \nabla^2 f(\x^*))^K\u_0 $ where we used the fact that $ \prod_{k=0}^{K-1} \A_k = (\mathbf{I} - \alpha \nabla^2 f(\x^*))^K + \mathcal{O}( K\epsilon) $ and dropped the term $ \mathcal{O}\bigg(\frac{1}{\sqrt{\epsilon}}\bigg(\log\bigg(\frac{1}{\epsilon} \bigg)\epsilon^2\bigg)  \bigg)$ for sufficiently small $\epsilon$. Using this approximate $\u_{K}$ we get:
\begin{align}
\langle \u_{K}, \u_{0}\rangle & \approx \u_{0}^{T}(\mathbf{I} - \alpha \nabla^2 f(\x^*))^K\u_0 \geq 0
\end{align}
where the last inequality comes from the fact that $(\mathbf{I} - \alpha \nabla^2 f(\x^*))^K$ will be a positive semi-definite matrix for $\alpha \leq \frac{1}{L}$. Therefore, vectors $\u_{0}$ and $\u_{K}$ will form an acute angle between them for all values of $K$ such that $\norm{\u_{K}} < \epsilon$ and $K \leq K_{exit} = \mathcal{O}(\log(\epsilon^{-1}))$. Hence, the trajectory can never curve around $\x^*$.

The proof for second part follows the same method. Let us take any two points on the gradient trajectory denoted by vectors $\u_{K_1}$ and $\u_{K_2}$ w.r.t. stationary point $\x^*$. Then we have the following inner product:
\begin{align}
    \langle \u_{K_1}, \u_{K_2}\rangle & \approx \langle \u_{0},(\mathbf{I} - \alpha \nabla^2 f(\x^*))^{K_1+K_2}\u_0\rangle \geq 0
\end{align}
for $ K_1 + K_2 \leq \mathcal{O}(\log(\epsilon^{-1}))$.
Now with $ \langle \u_{K_1}, \u_{K_2}\rangle \gtrapprox 0$ for any $K_1,K_2$ where $ K_1 + K_2 \leq \mathcal{O}(\log(\epsilon^{-1}))$ such that $\norm{\u_{K_1}} < \epsilon$ and $\norm{\u_{K_2}} < \epsilon$, the angle between the vectors $\u_{K_1}$ and $\u_{K_2}$ can never approximately exceed $\frac{\pi}{2}$. Hence the entire gradient descent trajectory approximately lies inside some orthant of the ball $\mathcal{B}_{\epsilon}(\x^*)$.

\hfill
 \qedhere
 \qedsymbol

\subsection*{Proof of Lemma \ref{lemma3}}
Let us denote the exit point on the ball $\mathcal{B}_{\epsilon}(\x^*)$ by $\x_{K+1}$ where $\norm{\x_{K}-\x^*} \leq \epsilon$ and $\norm{\x_{K+1}-\x^*} > \epsilon$. Also, $ \norm{\x_{K+1}-\x^*} \leq \norm{\x_{K}-\x^*} + \frac{1}{L} \norm{\nabla f(\x_k)} \leq 2 \norm{\x_{K}-\x^*}$ which implies $ \norm{\x_{K}-\x^*} \geq \frac{\norm{\x_{K+1}-\x^*}}{2} \geq \frac{\epsilon}{2}$. Now applying the Hessian Lipschitz condition around $\x^*$ for $\x_K$, we get the following:
\begin{align}
f(\x_{K}) \leq & f(\x^*) + \langle \nabla f(\x^*), \x_{K}-\x^*\rangle + \frac{1}{2}\langle(\x_{K}-\x^*), \nabla^2 f(\x^*) (\x_{K}-\x^*)\rangle + \frac{M}{6}\norm{\x_{K}-\x^*}^3 \\
\leq & f(\x^*)  + \frac{\langle\x_{K}-\x^*, \nabla f(\x_K) \rangle }{2} +\frac{1}{2}\bigg\langle(\x_{K}-\x^*),\bigg(\nabla^2 f(\x^*)-\nabla^2 f(\x^*)-\mathcal{O}(\epsilon) \bigg) (\x_{K}-\x^*)\bigg\rangle + \frac{M}{6}\norm{\x_{K}-\x^*}^3 \\
  \leq & f(\x^*) + \frac{\langle\x_{K}-\x^*, \nabla f(\x_K) \rangle }{2} + \mathcal{O}(\epsilon^3)
\label{lemmatrivial1}
 \end{align}
where we have used $\nabla f(\x_K) =   \bigg(\nabla^2 f(\x^*)+\mathcal{O}(\epsilon) \bigg)(\x_{K}-\x^*)$ from Lemma \ref{lemma1} and substituted $\norm{\x_{K}-\x^*} \leq \epsilon$ in the last step.

Let us first analyze the term $  \frac{\langle\x_{K}-\x^*, \nabla f(\x_K) \rangle }{2}$. Now, $\norm{\x_K- \x^*} < \norm{\x_{K+1}- \x^*}$ since the gradient descent trajectory is exiting the ball $\mathcal{B}_{\epsilon}(\x^*)$ at iteration $K+1$ and therefore it has expansive dynamics at this iteration\footnote{Exit at iteration $K+1$ implies $\norm{\x_{K}- \x^*} < \norm{\x_{K+1}- \x^*}$.}. Squaring the condition $\norm{\x_K- \x^*} < \norm{\x_{K+1}- \x^*}$ yields:
\begin{align}
    \norm{\x_K- \x^*}^2 &< \norm{\x_{K+1}- \x^*}^2  \\
    \norm{\x_K- \x^*}^2 &< \norm{\x_{K}- \x^*}^2 + \norm{\alpha \nabla f(\x_K)}^2 - 2 \alpha\langle\x_{K}-\x^*, \nabla f(\x_K) \rangle \\
    \langle\x_{K}-\x^*, \nabla f(\x_K) \rangle &<  \frac{\alpha}{2}\norm{ \nabla f(\x_K)}^2. \label{lemmatrivial2}
\end{align}
Next, by the gradient Lipschitz continuity for $\x_{K}$ and $\x_{K+1}$, we have that:
\begin{align}
    f(\x_{K+1}) & \leq f(\x_{K}) + \langle \nabla f(\x_K), \x_{K+1} -\x_K\rangle + \frac{L}{2}\norm{\x_{K+1} -\x_K}^2  \\
        f(\x_{K+1}) & \leq f(\x_{K}) - \alpha\norm{\nabla f(\x_K)}^2  + \frac{L}{2}\norm{\alpha \nabla f(\x_K)}^2  \\
        f(\x_{K+1}) + \frac{1}{2L}\norm{ \nabla f(\x_K)}^2 & \leq f(\x_{K}) \label{lemmatrivial3}
\end{align}
where we substituted $\alpha = \frac{1}{L}$. Combining \eqref{lemmatrivial3} with \eqref{lemmatrivial1} followed by substitution of \eqref{lemmatrivial2} yields:
\begin{align}
    f(\x_{K+1}) + \frac{1}{2L}\norm{ \nabla f(\x_K)}^2 & \leq f(\x_{K})  \leq  f(\x^*) + \frac{\langle\x_{K}-\x^*, \nabla f(\x_K) \rangle }{2} + \mathcal{O}(\epsilon^3) \\
\implies f(\x_{K+1}) + \frac{1}{2L}\norm{ \nabla f(\x_K)}^2 & \leq  f(\x^*) + \frac{\alpha}{4}\norm{ \nabla f(\x_K)}^2 + \mathcal{O}(\epsilon^3) \\
\implies f(\x_{K+1})  & \leq   f(\x^*) - \frac{1}{4L}\norm{ \nabla f(\x_K)}^2 + \mathcal{O}(\epsilon^3).  \label{lemmatrivial4}
\end{align}
Next, using the bound $ \norm{ \nabla f(\x_K)} \geq \beta \norm{\x_K-\x^*}$ from \eqref{interimbound4} in \eqref{lemmatrivial4} and the fact that $ \norm{\x_K-\x^*} \geq \frac{\epsilon}{2}$ we obtain:
\begin{align}
    f(\x_{K+1})  & \leq   f(\x^*) - \frac{\beta^2}{4L}\norm{\x_K-\x^*}^2 + \mathcal{O}(\epsilon^3) \leq   f(\x^*) - \frac{\beta^2}{16L}\epsilon^2 + \mathcal{O}(\epsilon^3) \\
    \implies  f(\x_{K+1})  & <  f(\x^*)
\end{align}
for sufficiently small $\epsilon$.

\hfill
 \qedhere
 \qedsymbol

\subsection*{Proof of Lemma \ref{lemma4}}
Let us take any two points $\x_1, \x_2$ in the closed ball $\mathcal{\bar{B}}_{{\epsilon}}(\x^*)$. Using gradient Lipschitz condition, we get the following inequalities:
\begin{align}
f(\x_1) & \leq f(\x^*) + \langle \nabla f(\x^*),\x_1-\x^*\rangle + \frac{L}{2}\norm{\x_1 - \x^*}^2 \\
& \leq f(\x^*) + \frac{L}{2}\norm{\x_1 - \x^*}^2 \label{lemma5_ineq1}
\end{align}	
and
\begin{align}
    f(\x^*) & \leq f(\x_2) - \langle \nabla f(\x^*),\x_2-\x^*\rangle + \frac{L}{2}\norm{\x_2 - \x^*}^2 \\
    & \leq f(\x_2)  + \frac{L}{2}\norm{\x_2 - \x^*}^2 \label{lemma5_ineq2}
\end{align}
 Now adding \eqref{lemma5_ineq1} and \eqref{lemma5_ineq2} yields:
\begin{align}
   f(\x_1) -  f(\x_2) & \leq \frac{L}{2}\norm{\x_2 - \x^*}^2 + \frac{L}{2}\norm{\x_1 - \x^*}^2 . \label{functiongap}
\end{align}
Next, using the fact that $\norm{\x_2 - \x^*}\leq \epsilon$, $\norm{\x_1 - \x^*}\leq \epsilon$ in \eqref{functiongap}, we get the following upper bound:
\begin{align}
	f(\x_1) -  f(\x_2) & \leq  L\epsilon^2 .\label{improvedfunctiongap}
\end{align}Formally, this upper bound states that the function value gap between any two points in the closed ball $\mathcal{\bar{B}}_{{\epsilon}}(\x^*)$ surface cannot be more than $L \epsilon^2$. Also notice that the result in \eqref{improvedfunctiongap} only depends on the gradient Lipschitz condition and therefore will hold true for any $\epsilon$.
Next, we assume that our gradient trajectory is currently exiting the ball $\mathcal{B}_{{\epsilon}}(\x^*)$ at point $\x_{K}$ s.t. $ \norm{\x_{K-1} -\x^*} \leq \epsilon$ and $ \norm{\x_{K} -\x^*} > \epsilon$. Let us further assume that $\hat{K}$ iterations after the current iteration, the gradient trajectory re-enters the ball $\mathcal{B}_{{\epsilon}}(\x^*)$, i.e., $ \norm{\x_{K+\hat{K}} -\x^*} \leq \epsilon$ and $ \norm{\x_{K+\hat{K}-1} -\x^*} > \epsilon$. Using the update equation $\x_{k+1} = \x_{k} - \alpha \nabla f(\x_{k})$ for $0 \ll \alpha \leq \frac{1}{L}$ together with gradient Lipschitz condition, we get:
\begin{align}
	f(\x_{k+1}) & \leq f(\x_{k}) + \langle \nabla f(\x_{k}), \x_{k+1} - \x_{k}\rangle + \frac{L}{2}\norm{\x_{k+1} - \x_{k}}^2 \\
	\implies 	f(\x_{k+1}) & \leq f(\x_{k}) - \frac{\alpha L}{2}\bigg(\frac{2}{L} -\alpha\bigg)\norm{\nabla f(\x_{k})}^2
\end{align}
Taking the telescopic sum for these inequalities from $k=K$ to $k=K + \hat{K} -1$ gives the following lower bound on $f(\x_{K})-f(\x_{K+\hat{K}})$:
\begin{align}
f(\x_{K+\hat{K}}) & \leq f(\x_{K}) - \frac{\alpha L}{2}\bigg(\frac{2}{L} -\alpha\bigg)\sum_{k=K}^{K+\hat{K}-1}\norm{\nabla f(\x_{k})}^2 \label{telescope1}\\
\frac{\alpha L \beta^2}{2}\bigg(\frac{2}{L} -\alpha\bigg)\hat{K}\epsilon^2  &< \frac{\alpha L}{2}\bigg(\frac{2}{L} -\alpha\bigg)\sum_{k=K}^{K+\hat{K}-1}\norm{\nabla f(\x_{k})}^2  \leq f(\x_{K}) - f(\x_{K+\hat{K}}) \leq f(\x_{K-1}) - f(\x_{K+\hat{K}}) \label{tempcheck11}
\end{align}
where $f(\x_{K}) \leq f(\x_{K-1}) $ from monotonicity of $ \{f(\x_{K})\}$ and we have substituted the lower bound $$\norm{\nabla f(\x_{k})}\geq \beta \norm{\x_{k} - \x^*} \geq \beta\epsilon$$ from \eqref{interimbound4} since $\norm{\x_{k} - \x^*} > \epsilon$ for all $K \leq k \leq K+\hat{K}-1$.
Combining \eqref{tempcheck11} with \eqref{improvedfunctiongap} for $ \x_{K-1}, \x_{K+\hat{K}} \in \mathcal{B}_{\epsilon}(\x^*) $ yields the following condition on $\hat{K}$:
\begin{align}
	\frac{\alpha L \beta^2}{2}&\bigg(\frac{2}{L} -\alpha\bigg)\hat{K}\epsilon^2  <  L \epsilon^2 \\
	\hat{K} & < \frac{2}{\alpha \beta^2 \bigg(\frac{2}{L} -\alpha\bigg)}.
\end{align}
Now, for sake of simplicity we substitute $\alpha = \frac{1}{L}$\footnote{It is to be noted that we can carry out a similar analysis for any other $\alpha$ s.t. $0\ll\alpha \leq \frac{1}{L}$ and still obtain the same inference.}. This yields the following bound on $\hat{K}$:
\begin{align}
\hat{K} & < \frac{2}{\kappa^2}  \label{contradictbound}
\end{align}
where $\kappa = \frac{\beta}{L}$. This inequality claims that if the gradient trajectory re-enters the ball $\mathcal{B}_{{\epsilon}}(\x^*)$, it has to do so in fewer than $\frac{2}{\kappa^2}$ iterations. From here onward we will develop a proof which contradicts this claim.

Let us first define some $\xi > \epsilon$ such that $\xi = 2^{\frac{2}{\kappa^2}} \epsilon(1+b)$ where $\kappa = \frac{\beta}{L}$, $b=\frac{\norm{\x_K -\x^*}}{\epsilon}-1$ is a positive value and $\xi$ is upper bounded from theorem \ref{thm3}. Note that $\x_K$ as defined earlier is the exit point of the gradient trajectory, i.e., $ \norm{\x_{K-1}-\x^*} \leq \epsilon$ and $\norm{\x_K - \x^*}> \epsilon$. Now for any $ \epsilon \ll 2^{-\frac{2}{\kappa^2}}$ we will have $\xi = \mathcal{O}(\epsilon)$. Therefore a gradient trajectory moving outwards from the ball $\mathcal{B}_{{\epsilon}}(\x^*)$ is also bound to move out from the ball $\mathcal{B}_{\xi}(\x^*)$ since we have already proved this in Theorem \ref{thm2} for trajectories with expansive dynamics.

Under these conditions, let $J$ represent the minimum number of iterations required to exit the ball $\mathcal{B}_{\xi}(\x^*)$ for a trajectory which is just exiting $\mathcal{B}_{{\epsilon}}(\x^*)$ and is currently at the point $\x_{K}$ s.t. $ \norm{\x_K-\x^*}>\epsilon$. To this end, we rewrite the update equation of radial vector $\u_{k}$ for any $k \in \{K,K+1,...,K+J-1\}$:
\begin{align}
\u_{k+1} &= \u_{k} + (\x_{k+1} - \x_{k}) = \u_{k} - \alpha \nabla f(\x_{k}) \label{radialupdate}
\end{align}
where we have that $\u_{k} = \x_{k} - \x^*$. From the gradient Lipschitz condition we have the following bound for any $\u_{k}$:
\begin{align}
\norm{\nabla f(\x_{k})} \leq L \norm{\u_{k}}  \label{exactgradienttightbound}
\end{align}
where $\u_k = \x_k - \x^*$. Applying norm to \eqref{radialupdate} followed by triangle inequality and using the upper bound from \eqref{exactgradienttightbound} yields:
\begin{align}
\norm{\u_{k+1}} &= \norm{\u_{k} + (\x_{k+1} - \x_{k})}   \leq \norm{\u_{k}} + \alpha \norm{\nabla f(\x_{k})} \leq 2\norm{\u_{k}}
\end{align}
for $\alpha = \frac{1}{L}$. Applying this bound recursively from $k=K$ to $k=K+J-1$ and substituting $\norm{\u_{K}} = \epsilon(1+b)$, we have:
\begin{align}
\norm{\u_{K+J}} \leq 2^{J} \norm{\u_{K}} = 2^{J} \epsilon(1+b).
\end{align}
Since $J$ is the minimum number of iterations required to exit the $\xi$ radius ball for a trajectory which is just exiting the $\epsilon$ ball, we can set $2^{J} \epsilon(1+b) = \xi$. This yields:
\begin{align}
2^{J} \epsilon(1+b) = \xi &=  2^{\frac{2}{\kappa^2}} \epsilon(1+b) \\
J &= \frac{2}{\kappa^2}.
\end{align}
Now, the $\hat{K}$ we defined as the time to re-enter the ball $\mathcal{B}_{{\epsilon}}(\x^*)$ should be definitely greater than $J$ since any trajectory will certainly take more than $J$ iterations to traverse the shell present in between the concentric $\xi$ and $\epsilon$ radii balls.
\begin{align}
\hat{K} > J &= \frac{2}{\kappa^2}.
\end{align}
However, this inequality contradicts the claim that $\hat{K} < \frac{2}{\kappa^2}$ from \eqref{contradictbound} which completes our proof.

\hfill
 \qedhere
 \qedsymbol

\subsection*{Proof of Lemma \ref{lemma5}}
Recall that from \eqref{functiongap} and \eqref{improvedfunctiongap} in previous lemma, for any $\x_1, \x_2 \in \bar{\mathcal{B}}_{\xi}(\x^*)$ we have that:
\begin{align}
	f(\x_1) -  f(\x_2) & \leq L(\xi)^2. \label{improvedfunctiongaplarge}
\end{align}
Next, let $\hat{K}$ be the minimum number of iterations in which the gradient trajectory re-enters the ball $\mathcal{B}_{\xi}(\x^*)$. Then following the same set of steps as in the previous lemma for obtaining \eqref{telescope1}, we get:
\begin{align}
    f(\x_{K+\hat{K}}) & \leq f(\x_{K}) - \frac{\alpha L}{2}\bigg(\frac{2}{L} -\alpha\bigg)\sum_{k=K}^{K+\hat{K}-1}\norm{\nabla f(\x_{k})}^2 \label{telescope2}\\
\implies \frac{\alpha  L^3}{4}\bigg(\frac{2}{L} -\alpha\bigg)\hat{K}(\xi)^2  &< \frac{\alpha L}{2}\bigg(\frac{2}{L} -\alpha\bigg)\sum_{k=K}^{K+\hat{K}-1}\norm{\nabla f(\x_{k})}^2  \leq f(\x_{K}) - f(\x_{K+\hat{K}}) \leq f(\x_{K-1}) - f(\x_{K+\hat{K}}) \label{minimumreturn}
\end{align}
where we substituted $ \norm{\nabla f(\x_k)} \geq \gamma > \frac{1}{\sqrt{2}}L  \xi$ and $ f(\x_{K}) \leq f(\x_{K-1}) $ from monotonicity of $\{f(\x_k)\}$. Now if the trajectory re-enters the ball $\mathcal{B}_{\xi}(\x^*)$ in $ \hat{K}$ iterations, then $\x_{K-1}, \x_{K+\hat{K}} \in {\mathcal{B}}_{\xi}(\x^*)$ and hence $\x_{K-1}, \x_{K+\hat{K}}$ satisfy \eqref{improvedfunctiongaplarge}. Therefore combining \eqref{minimumreturn} with \eqref{improvedfunctiongaplarge} yields the bound:
\begin{align}
\frac{\alpha  L^3}{4}\bigg(\frac{2}{L} -\alpha\bigg)\hat{K}(\xi)^2  &<  L(\xi)^2 \\
\implies	\hat{K} & < \frac{4}{\alpha L^2 \bigg(\frac{2}{L} -\alpha\bigg)}. \end{align}
 Now for $\alpha = \frac{1}{L}$, we have that $ \hat{K}  <  4$. Therefore the gradient trajectory has to re-enter the ball $\mathcal{B}_{\xi}(\x^*)$ in three or less iterations. We now show that the gradient trajectory cannot return in three or less iterations.

Let the current iterate for the gradient trajectory be $\x^-$ such that $\norm{\x^--\x^*} < \xi$ and $\norm{\x -\x^*} \geq \xi$, i.e., the iterate $\x$ exits the ball $\mathcal{B}_{\xi}(\x^*)$ where $\xi$ is bounded from Theorem \ref{thm3}. Next, from Theorem \ref{thm2}, the iterate $\x^+$ in the sequence $\{\x^-,\x, \x^+\} $ will also have expansive dynamics, i.e., $\norm{\x^+-\x^*} > \norm{\x-\x^*}$. Let $\x^{++}$ denote the next iterate in the sequence $\{\x^-,\x,\x^+\}$. Now, if the following condition:
\begin{align}
    \langle \x^{++} - \x^{+}, \x^+ - \x^* \rangle \geq 0 \label{entryviolation1}
\end{align}
is satisfied, then $\x^{++} \not\in {\mathcal{{B}}}_{\xi}(\x^*)$. To check this, let the condition \eqref{entryviolation1} be given and we have the contradiction $\x^{++} \in {\mathcal{{B}}}_{\xi}(\x^*)$, i.e., $\norm{\x^{++} - \x^*} < \xi$. Then we can write the following inequality:
\begin{align}
  \norm{\x^{++} - \x^*}^2 &< (\xi)^2 \\
  \implies \norm{\x^{++} -\x^{+} + \x^{+}  - \x^*}^2 &< (\xi)^2  \\
  \implies \underbrace{\norm{\x^{++} -\x^{+}}^2}_{>0} + \underbrace{\norm{\x^{+}  - \x^*}^2}_{\geq (\xi)^2} + 2 \underbrace{\langle \x^{++} -\x^{+}, \x^{+}  - \x^* \rangle}_{\geq 0}  &< (\xi)^2
\end{align}
which is not possible (left hand side is greater than right hand side). Hence, $\x^{++} \not\in \bar{\mathcal{{B}}}_{\xi}(\x^*)$.

Now, we are left to prove \eqref{entryviolation1} condition, i.e., $\langle \x^{++} - \x^{+}, \x^+ - \x^* \rangle \geq 0$. Manipulating the left hand side of this condition and using the substitutions $\x^{++} -\x^{+} = - \alpha \nabla f(\x^+) $, $\x^{+} -\x = - \alpha \nabla f(\x) $ and $\nabla f(\x^+) = \nabla f(\x) + \bigg(\int_{p=0}^{1} \nabla^2 f(\x + p(\x^+-\x))dp\bigg) (\x^+ - \x)$, we obtain:
\begin{align}
    \langle \x^{++} - \x^{+}, \x^{+} - \x^* \rangle & = \langle -\alpha \nabla f(\x^+), \x^{+} - \x^* \rangle \\
    & = -\alpha \bigg \langle  \nabla f(\x) + \bigg(\int_{p=0}^{1} \nabla^2 f(\x + p(\x^+-\x))dp\bigg) (\x^+ - \x), \x^{+} - \x^* \bigg \rangle \\
    & = -\alpha \bigg \langle  \bigg(\mathbf{I} - \alpha \int_{p=0}^{1} \nabla^2 f(\x + p(\x^+-\x))dp\bigg) \nabla f(\x), \x^+ - \x^* \bigg \rangle \\
     & =  \bigg \langle  \x^+ - \x^*,\bigg(\mathbf{I} - \alpha \int_{p=0}^{1} \nabla^2 f(\x + p(\x^+-\x))dp\bigg) (-\alpha\nabla f(\x))  \bigg \rangle\\
     & =  \bigg \langle  \x^+ - \x^*,\bigg(\mathbf{I} - \alpha \int_{p=0}^{1} \nabla^2 f(\x + p(\x^+-\x))dp\bigg) (\x^{+}-\x)  \bigg \rangle \label{entryviolation2}
\end{align}
where $ \bigg(\mathbf{I} - \alpha \int_{p=0}^{1} \nabla^2 f(\x + p(\x^+-\x))dp\bigg) $ is a positive semi-definite matrix for $\alpha \leq \frac{1}{L}$.
Next, recall that from \eqref{expansionrate1} in the proof for theorem \ref{thm3} for any tuple $ \{\x^{-},\x,\x^{+}\}$ generated by the gradient descent method where $\x^- \in \mathcal{B}_{\xi}(\x^*)$, we have that:
\begin{align}
    \norm{\x^+ - \x^*} &\geq \bigg(\frac{\bar{\rho}(\x^-)}{1+M\xi}\bigg)\norm{\x - \x^*} >\norm{\x - \x^*}  \label{entryviolation3}
\end{align}
for $\frac{\bar{\rho}(\x^-)}{1+M\xi}>1 $.

Using this fact that $ \norm{\x^+ - \x^*}  >\norm{\x - \x^*}$ followed by the cosine identity of triangles we get:
\begin{align}
   \frac{\langle  \x^+ - \x^*, \x^{+}-\x   \rangle}{\norm{\x^+ - \x^*}\norm{\x^{+}-\x}} &=  \frac{\norm{\x^{+}-\x}^2 + \norm{\x^+ - \x^*}^2 - \norm{\x - \x^*}^2}{2\norm{\x^+ - \x^*}\norm{\x^{+}-\x}}>0 \\
   \implies \langle  \x^+ - \x^*, \x^{+}-\x   \rangle & > 0.
\end{align}

For any vectors $\a$ and $\b$ and any positive semi-definite matrix $\mathbf{A}$, if $\langle \a, \b \rangle \geq 0$ then $\langle \a, \mathbf{A}\b \rangle \geq 0$. Using this property for $\mathbf{A} =\bigg(\mathbf{I} - \alpha \int_{p=0}^{1} \nabla^2 f(\x + p(\x^+-\x))dp\bigg)$, $\b= \x^+ - \x$ and $\a= \x^+ - \x^*$, we get that $\bigg \langle  \x^+ - \x^*,\bigg(\mathbf{I} - \alpha \int_{p=0}^{1} \nabla^2 f(\x + p(\x^+-\x))dp\bigg) (\x^+ - \x)  \bigg \rangle \geq 0$ since $\langle \x^+ - \x, \x^+ - \x^* \rangle \geq 0$. Hence from \eqref{entryviolation2}, we have that:
\begin{align}
\langle \x^{++} - \x^{+}, \x^+ - \x^* \rangle
     & =  \bigg \langle  \x^+ - \x^*,\bigg(\mathbf{I} - \alpha \int_{p=0}^{1} \nabla^2 f(\x + p(\x^+-\x))dp\bigg) (\x^+ - \x)  \bigg \rangle \geq 0
\end{align}
which completes the proof.

\hfill
 \qedhere
 \qedsymbol

\section{}\label{Appendix E}
\subsection*{Proof of Lemma \ref{saddleescapelemma}}

To establish the linear exit time of the proposed algorithm from any strict saddle neighborhood it is sufficient to prove the curvature condition (refer Step \ref{algocurvaturecondition} from Algorithm \ref{algo_1}). Now, for $\norm{\nabla {f}(\x)} \leq \epsilon$ and $\Xi = 0$, we have that:
\begin{align}
\nabla f(\x) = \bigg(\nabla f(\x^*) + \int_{p=0}^{p=1}\nabla^2 f(\x^* + p(\x-\x^*))dp \bigg) (\x-\x^*) . \label{algohessian_1}
\end{align}
With $\epsilon$ very small and upper bounded by Theorem \ref{thm1}, using Lemma 3.3 from \cite{dixit2022exit} we can approximate the Hessian $\nabla^2 f(\x^* + p(\x-\x^*)) = \nabla^2 f(\x^*) + \mathcal{O}(\epsilon)  \approx \nabla^2 f(\x^*)$ for any $\x \in \mathcal{B}_{\epsilon}(\x^*)$. This is a valid approximation since we are no longer solving for rates of convergence and just need to approximately determine the unstable projection value. Therefore, the equation \eqref{algohessian_1} for $\x = \x_k$ is approximated as:
 \begin{align}
 \nabla f(\x_k) = \bigg(\nabla^2 f(\x^*) + \mathcal{O}(\epsilon)\bigg)(\x_k-\x^*)   \approx  \nabla^2 f(\x^*) (\x_k-\x^*)  \label{algohessian_2}
 \end{align}
where $\nabla f(\x^*)$ is zero vector.
With $\y_0 = \x_k$, $\y_1 = \x_{k+1}$ and the approximation \eqref{algohessian_2}, we have the following terms:
\begin{align}
\y_1 &= \x_{k+1} =\x_k - \alpha \nabla f(\x_k)\\
&= \x_k - \alpha \bigg(\nabla^2 f(\x^*) + \mathcal{O}(\epsilon)\bigg)(\x_k - \x^*)  \label{algohessian_3}\\
&\approx \x_k - \alpha \nabla^2 f(\x^*) (\x_k-\x^*) , \label{algohessian_4}
\end{align}
\begin{align}
\nabla f(\y_1) & = \nabla f(\x_{k+1}) = \bigg(\nabla^2 f(\x^*) + \mathcal{O}(\epsilon)\bigg)(\x_{k+1}-\x^*)\\
&= \bigg(\nabla^2 f(\x^*) + \mathcal{O}(\epsilon)\bigg)\bigg(\x_k - \alpha \bigg(\nabla^2 f(\x^*) + \mathcal{O}(\epsilon)\bigg)(\x_k - \x^*)-\x^*\bigg)\\
& \approx  \nabla^2 f(\x^*) \bigg(\x_k - \alpha \nabla^2 f(\x^*) (\x_k-\x^*)-\x^*\bigg). \label{algohessian_5}
\end{align}
Note that in the second last step we used the substitution from \eqref{algohessian_3}.
Now, we define the terms $V_1, V_2$ using $\y_0, \y_1$:
\begin{align}
V_1 = & \langle \y_1 - \y_0, \y_1 - \y_0\rangle \approx (\x_k-\x^*)^T  (\alpha\nabla^2 f(\x^*))^2 (\x_k-\x^*) \\
V_2 = & \alpha \langle \y_1 - \y_0, \nabla {f}(\y_1) - \nabla {f}(\y_0)\rangle \approx (\x_k-\x^*)^T  (\alpha\nabla^2 f(\x^*))^3 (\x_k-\x^*)
\end{align}
Next we use the following substitution:
\begin{align}
     \x_k - \x^* = \norm{\x_k-\x^* }\bigg(  \sum_{i \in \mathcal{N}_{S} }{\theta}^{s}_{i} \v_i(0) + \sum_{j \in \mathcal{N}_{US} }{\theta}^{us}_{j}\v_j(0) \bigg) \label{algoeigensub}
\end{align}
where $ \norm{\x_k-\x^* } \theta^{s}_{i}  = \langle (\x_k-\x^* ), \v_i(0)\rangle$, $ \norm{\x_k-\x^* } \theta^{us}_{j}  = \langle (\x_k-\x^* ), \v_j(0)\rangle$ and $\v_i(0), \v_j(0)$ are the eigenvectors of the scaled Hessian $\alpha \nabla^2 f(\x^*)$. On further simplifying $V_1, V_2$ using \eqref{algoeigensub} we get:
\begin{align}
V_1 \approx & \norm{\x_{k} - \x^*}^2 \bigg(\sum_{i \in \mathcal{N}_{S} }(\lambda_i^{s})^2({\theta}^{s}_{i})^2  + \sum_{j \in \mathcal{N}_{US} }(\lambda_j^{us})^2 ({\theta}^{us}_{j})^2 \bigg) \\
V_2 \approx & \norm{\x_{k} - \x^*}^2 \bigg( \sum_{i \in \mathcal{N}_{S} }(\lambda_i^{s})^3({\theta}^{s}_{i})^2  + \sum_{j \in \mathcal{N}_{US} }(\lambda_j^{us})^3 ({\theta}^{us}_{j})^2 \bigg)
\end{align}
where $\lambda_i^{s}$ and $\lambda_j^{us}$ are the eigenvalues of stable subspace $\mathcal{E}_{S}$ and unstable subspace $\mathcal{E}_{US}$ of the scaled Hessian $ \alpha\nabla^2 f(\x^*)$ respectively. These eigenvalues are bounded by :
\begin{align}
\frac{\beta}{L} & \leq \lambda_i^{s} \leq 1  \\
-1 & \leq \lambda_j^{us} \leq -\frac{\beta}{L}.
\end{align}
Evaluating $V_1 - V_2$ and using the fact that $\norm{\x_{k} - \x^*} \leq \frac{1}{\beta}\norm{\nabla f(\x_{k})} \leq \frac{L\epsilon}{\beta}$ from \eqref{interimbound4}, we get the following expression:
\begin{align}
V_1 - V_2 \lessapprox & \frac{\epsilon^2}{\kappa^2} \bigg(\sum_{i \in \mathcal{N}_{S} }((\lambda_i^{s})^2-(\lambda_i^{s})^3)({\theta}^{s}_{i})^2  + \sum_{j \in \mathcal{N}_{US} }((\lambda_j^{us})^2-(\lambda_j^{us})^3) ({\theta}^{us}_{j})^2 \bigg) \label{conditionbound}
\end{align}
where $\kappa = \frac{\beta}{L}$.
Now, the function $h(y)=y^2 - y^3$ attains a maximum value of $\frac{4}{27}$ in the interval $y \in (0,1]$ and a maximum value of $2$ in the interval $y \in [-1,0)$. Substituting $y = \lambda_i^{s}$ in the interval $y \in (0,1]$ and $y = \lambda_j^{us}$ in the interval  $y \in [-1,0)$, the upper bound for \eqref{conditionbound} becomes:
\begin{align}
V_1 - V_2 \lessapprox & \frac{\epsilon^2}{\kappa^2} \bigg(\sum_{i \in \mathcal{N}_{S} }\frac{4}{27}({\theta}^{s}_{i})^2  + \sum_{j \in \mathcal{N}_{US} }2 ({\theta}^{us}_{j})^2 \bigg)  \\
V_1 - V_2 \lessapprox & \frac{\epsilon^2}{\kappa^2} \bigg(\frac{4}{27}-\frac{4}{27}\bigg(\sum_{j \in \mathcal{N}_{US} }({\theta}^{us}_{j})^2\bigg)  + 2\sum_{j \in \mathcal{N}_{US} } ({\theta}^{us}_{j})^2 \bigg) \\
V_1 - V_2 \lessapprox & \frac{\epsilon^2}{\kappa^2} \bigg(\frac{4}{27}+\frac{50}{27}\bigg(\sum_{j \in \mathcal{N}_{US} }({\theta}^{us}_{j})^2\bigg) \bigg) \\
\sum_{j \in \mathcal{N}_{US} }({\theta}^{us}_{j})^2 \gtrapprox & \frac{\frac{27(V_1-V_2)\kappa^2}{\epsilon^2}  - 4}{50} . \label{conditionboundtight}
\end{align}

The right-hand side in \eqref{conditionboundtight} can be considered as as the lower bound estimate for $ \sum_{j \in \mathcal{N}_{US} }({\theta}^{us}_{j})^2$. Now, the sufficient condition for escaping the saddle neighborhood comes from the minimum unstable subspace  projection value in \eqref{projectionwellconditioned}. Let $ P_{min}(\epsilon)$ be a function of $\epsilon$ equal to the lower bound from \eqref{projectionwellconditioned}, then with the condition $  \frac{\frac{27(V_1-V_2)\kappa^2}{\epsilon^2}  - 4}{50} > P_{min}(\epsilon) $ and \eqref{conditionboundtight}, we can guarantee $ \sum_{j \in \mathcal{N}_{US} }({\theta}^{us}_{j})^2\gtrapprox P_{min}(\epsilon) $  which implies that we have a sufficient unstable  projection value to escape saddle region in almost linear time.

Notice that the curvature condition from the step  \ref{algocurvaturecondition} in Algorithm \ref{algo_1} checks the inequality $  \frac{\frac{27(V_1-V_2)\kappa^2}{\epsilon^2}  - 4}{50} < P_{min}(\epsilon) $ which if true could imply $ \sum_{j \in \mathcal{N}_{US} }({\theta}^{us}_{j})^2< P_{min}(\epsilon) $. Then the gradient trajectory may not necessarily have linear exit time from saddle neighborhood. Hence, we solve the eigenvector problem given by:
\begin{align}
    \x_{k+1} \in \argmin_{\norm{\x -\x_k}= \frac{\norm{\nabla f(\x_k)}}{\beta}  } \bigg( \frac{1}{2}(\x - \x_k)^{T}\textbf{H}(\x - \x_k)  \bigg) \label{constrainedeigen}
\end{align}
 which gives a solution with sufficient unstable projection. Notice that a possible solution to the unconstrained problem:
\begin{align}
    \x_{k+1} \in \argmin_{\x} \bigg( \frac{1}{2}(\x - \x_k)^{T}\textbf{H}(\x - \x_k)  \bigg) \label{unconstrainedeigen}
\end{align}
can be given by $\x_{k+1} -\x_k= b\norm{\x_k-\x^*}\e_j^{us}$ where $\e_j^{us}$ is any eigenvector of the scaled Hessian $\textbf{H} = \alpha\nabla^2 f(\x_k) \approx \alpha\nabla^2 f(\x^*) $ corresponding to its least eigenvalue and $b$ is any scalar. Although any vector in the subspace formed by the eigenvectors corresponding to the minimum eigenvalue can be used instead of $ \e_j^{us}$, for sake of  simplicity of the proof, we use the direction $ \e_j^{us}$. Hence from the unconstrained eigenvector problem \eqref{unconstrainedeigen}, we can write $\x_{k+1} - \x^* = \x_k - \x^* + b\norm{\x_k-\x^*}\e_j^{us}$. Using the substitution $  \x_k - \x^* = \norm{\x_k-\x^* }\bigg(  \sum_{i \in \mathcal{N}_{S} }{\theta}^{s}_{i} \v_i(0) + \sum_{j \in \mathcal{N}_{US} }{\theta}^{us}_{j}\v_j(0) \bigg)$ as before from \eqref{algoeigensub} we get:
\begin{align}
    \x_{k+1} - \x^*  &= \norm{\x_k-\x^* }\bigg(  \sum_{i \in \mathcal{N}_{S} }{\theta}^{s}_{i} \v_i(0) + \sum_{j \in \mathcal{N}_{US} }{\theta}^{us}_{j}\v_j(0) \bigg) + b\norm{\x_k-\x^*}\e_j^{us} \\
    &= \norm{\x_k-\x^* }\bigg(  \sum_{i \in \mathcal{N}_{S} }{\theta}^{s}_{i} \v_i(0) + \sum_{j \in \mathcal{N}_{US} }{\theta}^{us}_{j}\v_j(0) \bigg) + b\norm{\x_k-\x^*}\bigg(\v_l(0) + \mathcal{O}(\epsilon)\bigg) \\
     &= {\norm{\x_k-\x^* }}{\sqrt{1+ b^2}}\bigg(  \sum_{i \in \mathcal{N}_{S} }\frac{{\theta}^{s}_{i}}{\sqrt{1+ b^2}} \v_i(0) + \sum_{j \in \mathcal{N}_{US} }\frac{{\theta}^{us}_{j}}{\sqrt{1+ b^2}}\v_j(0) + \frac{b}{\sqrt{1+ b^2}} \v_l(0) \bigg) +  \mathcal{O}(\epsilon^2)  \label{algohessian6}\\
     &= \norm{\x_k-\x^* }\sqrt{1+ b^2}\bigg(  \sum_{i \in \mathcal{N}_{S} }\tilde{\theta}^{s}_{i} \v_i(0) + \sum_{j \in \mathcal{N}_{US} }\tilde{\theta}^{us}_{j}\v_j(0) \bigg) + \mathcal{O}(\epsilon^2) . \label{algohessian7}
\end{align}
where we have  $\sum_{i \in \mathcal{N}_{S} }(\tilde{\theta}^{s}_{i})^2 + \sum_{j \in \mathcal{N}_{US} }(\tilde{\theta}^{us}_{j})^2 = 1$ for some positive $\tilde{\theta}^{s}_{i}, \tilde{\theta}^{us}_{j} $. Notice that we used the eigenvector perturbation bound $\e_j^{us} = \v_l(0) + \mathcal{O}(\epsilon) $ in the second step and $\v_l(0)$ corresponds to the eigenvector for the smallest eigenvalue of $ \alpha \nabla^2 f(\x^*) $. Notice that $l \in \mathcal{N}_{US}$ where $l$ is the index of $\v_l(0)$ provided $\x_k$ lies within some saddle neighborhood and not in a local minimum neighborhood. If $\x_k$ were in a local minimum neighborhood, then the unstable subspace would have been the null space. Finally, in the second last step we normalized by dividing with $\sqrt{1+b^2}$ because we require the condition:
\begin{align}
    \sum_{i \in \mathcal{N}_{S} }\bigg(\frac{{\theta}^{s}_{i}}{\sqrt{1+ b^2}}\bigg)^2 + \underbrace{\sum_{j \in \mathcal{N}_{US} }\bigg(\frac{{\theta}^{us}_{j}}{\sqrt{1+ b^2}}\bigg)^2+ \bigg(\frac{b}{\sqrt{1+ b^2}}\bigg)^2}_{U_1} &= 1  \label{algoeigenparam}
\end{align}
where we have that $ \sum_{i \in \mathcal{N}_{S} }({\theta}^{s}_{i})^2 + \sum_{j \in \mathcal{N}_{US} }({\theta}^{us}_{j})^2 = 1$.
From \eqref{algohessian6} and \eqref{algohessian7} using coefficient comparison, it can be checked that $ \frac{{\theta}^{s}_{i}}{\sqrt{1+ b^2}} = \tilde{\theta}^{s}_{i}+ \mathcal{O}(\epsilon^2)$ for all $ i \in \mathcal{N}_{S} $. Using this relation in \eqref{algoeigenparam} we get that $U_1 = \sum_{j \in \mathcal{N}_{US} }(\tilde{\theta}^{us}_{j})^2 + \mathcal{O}(\epsilon^2)$. Next, dropping $\mathcal{O}(\epsilon^2)$ term from the right-hand side of  \eqref{algohessian7}, we have:
\begin{align}
  \x_{k+1}-\x^*
   & \approx \norm{\x_k-\x^* }\sqrt{1+ b^2}\bigg(  \sum_{i \in \mathcal{N}_{S} }\tilde{\theta}^{s}_{i} \v_i(0) + \sum_{j \in \mathcal{N}_{US} }\tilde{\theta}^{us}_{j}\v_j(0) \bigg)
\end{align}
where $\sum_{j \in \mathcal{N}_{US} }(\tilde{\theta}^{us}_{j})^2$ can be considered as the new unstable  projection value of $(\x_{k+1}-\x^*)$ and $\norm{ \x_{k+1}-\x^*} \approx  \norm{\x_k-\x^* }\sqrt{1+ b^2}$.
Now, we require that the future gradient trajectory that starts from the point $\x_{k+1}$ escapes the ball $\mathcal{B}_{\tilde{\epsilon}}(\x^*)$ in linear time where $\tilde{\epsilon} = \norm{\x_k-\x^* }\sqrt{1+ b^2}$. Therefore we get that:
\begin{align}
    U_1 \approx \sum_{j \in \mathcal{N}_{US} }(\tilde{\theta}^{us}_{j})^2 &\geq {P}_{min}(\tilde{\epsilon}) \\
  \implies  \sum_{j \in \mathcal{N}_{US} }\bigg(\frac{{\theta}^{us}_{j}}{\sqrt{1+ b^2}}\bigg)^2+ \bigg(\frac{b}{\sqrt{1+ b^2}}\bigg)^2 &\gtrapprox {P}_{min}(\tilde{\epsilon}) \\
    &= P_{min}(\norm{\x_k-\x^* }\sqrt{1+ b^2}) \\
    &> P_{min}\bigg(\norm{\nabla f(\x_k) }\frac{\sqrt{1+ b^2}}{L}\bigg) \label{algohessian8}
\end{align}
where in the last step we used $P_{min}(\norm{\x_k-\x^* }\sqrt{1+ b^2}) > P_{min}\bigg(\norm{\nabla f(\x_k) }\frac{\sqrt{1+ b^2}}{L}\bigg) $ due to the fact that the function $ P_{min}(\epsilon)$ monotonically increases with $\epsilon$ from \eqref{projectionwellconditioned} along with the property that $ \norm{\nabla f(\x_k) } \leq L \norm{\x_k - \x^*}$. Now \eqref{algohessian8} will hold true whenever:
\begin{align}
      \bigg(\frac{b}{\sqrt{1+ b^2}}\bigg)^2  &> P_{min}\bigg(\norm{\nabla f(\x_k) }\frac{\sqrt{1+ b^2}}{L}\bigg) \\
       b &> \frac{\sqrt{P_{min}\bigg(\norm{\nabla f(\x_k) }\frac{\sqrt{1+ b^2}}{L}\bigg)}}{\sqrt{1-P_{min}\bigg(\norm{\nabla f(\x_k) }\frac{\sqrt{1+ b^2}}{L}\bigg)}}.
      \label{algohessian9}
\end{align}
It can be checked that \eqref{algohessian9} will hold true for any positive $b$ as long as it is bounded away from $\epsilon$. Finally in the substitution $\x_{k+1} -\x_k= b\norm{\x_k-\x^*}\e_j^{us}$, we can use the lower bound $ \norm{\nabla f(\x_k) } \geq \beta \norm{\x_k - \x^*}$ from \eqref{interimbound4} and the gradient Lipschitz bound $ \norm{\nabla f(\x_k) } \leq L \norm{\x_k - \x^*}$ to get the range $\frac{\norm{\nabla f(\x_k) }}{L \norm{\x_k - \x^*}} \leq  b \leq \frac{\norm{\nabla f(\x_k) }}{\beta \norm{\x_k - \x^*}} $. Selecting the upper bound of $b$ gives $\x_{k+1} -\x_k= \frac{\norm{\nabla f(\x_k)}}{\beta}\e_j^{us}$ provided $\frac{\beta}{L} \gg 0$. This particular choice of $b$ is less conservative though it should be selected carefully and the selection criterion may vary from one problem to another. For the particular case of well-conditioned saddle neighborhood, a large $b$ and hence a large step-size can be afforded. Notice that  $\frac{\beta}{L}\leq b \leq \frac{L}{\beta}$ and any $b$ in this range will satisfy \eqref{algohessian9} provided $ \frac{\beta}{L} \gg 0$. Since $\x_{k+1}$ is the desired solution, taking norm on both sides of $\x_{k+1} -\x_k= \frac{\norm{\nabla f(\x_k)}}{\beta}\e_j^{us}$ gives the constraint $ \norm{ \x_{k+1} -\x_k} = \frac{\norm{\nabla f(\x_k)}}{\beta}$ in the Step \ref{ballconstraint} of Algorithm \ref{algo_1}.

Since evaluating the eigenvector $\e_j^{us}$ will involve Hessian inversion operations, it will be solved in polynomial time though this step is invoked only once in the saddle neighborhood if required and hence does not add much computational complexity per iteration (only $\mathcal{O}(n^2 \log n)$ complexity per saddle point).

Recall that the entire algorithmic analysis was carried out assuming there is just one eigenvector $\e_j^{us}$ corresponding to the smallest eigenvalue of the Hessian $\nabla^2 f(\x^*)$. However, the same analysis can be done for the case of a subspace corresponding to the smallest eigenvalue. The bounds on $b$ will still be the same however the steps involved are somewhat tedious and lengthy hence purposefully left out from the proof.

 For the case of a local minimum we will have $\sum_{j \in \mathcal{N}_{US} }({\theta}^{us}_{j})^2 = 0$ since there is no unstable subspace. Substituting it in \eqref{conditionboundtight} yields:
\begin{align}
    \frac{4\epsilon^2}{27\kappa^2}\gtrapprox V_1 - V_2. \label{localmin1}
\end{align}
Hence for $ \frac{4\epsilon^2}{27\kappa^2} \lessapprox V_1 - V_2$ we cannot have a local minimum neighborhood. Hence if \eqref{localmin1} holds, then the region can be both a saddle neighborhood or a local minimum region. Therefore, the Step \ref{algocurvaturecondition} in Algorithm \ref{algo_1} also checks if $ \frac{4\epsilon^2}{27\kappa^2} < V_1 - V_2$ so as to rule out the possibility of local minimum. If however we have the inequality $ \frac{4\epsilon^2}{27\kappa^2} > V_1 - V_2$ then a secondary condition $\lambda_{min} ( \textbf{H} ) < 0 $ ascertains it as a saddle neighborhood.
This completes the proof.

\hfill
 \qedhere
 \qedsymbol

\subsection*{Proof of Lemma \ref{functionseqmonotonic} }\label{monotonicdecreasef}
It can be very easily established that $ f(\x_{K+1}) \leq f(\x_K)$ where $\x_{K+1}$ comes from the Step \ref{ballconstraint} in Algorithm \ref{algo_1}.

Since $\x_{K+1}$ is generated from Step \ref{ballconstraint} of Algorithm \ref{algo1} we can use the particular update $ \x_{K+1} - \x_{K} = \frac{\norm{\nabla f(\x_K)}}{\beta}\e_j^{us}  $ (the more general update \ref{ballconstraint} is avoided for sake of simplicity) where $\e_j^{us}$ is an eigenvector of $\nabla^2 f(\x_K)$ belonging to its unstable subspace and $\langle \e_j^{us}, \x_K- \x^* \rangle \lessapprox \mathcal{O}(\frac{\epsilon} {\sqrt{\log(\epsilon^{-1})}}) $ (this approximate bound implies $\x_K-\x^*$ does not have the required unstable subspace  projection value from Theorem \ref{thm1}). As a consequence we will have $\langle \nabla f(\x_K), \x_{K+1}-\x_K \rangle \lessapprox \mathcal{O}(\frac{\epsilon^2} {\sqrt{\log(\epsilon^{-1})}}) $ from the following steps where we use the substitutions $ \nabla f(\x_K) = (\nabla^2 f(\x^*) + \mathcal{O}(\epsilon) )(\x_{K}-\x^*)$ and $ \nabla^2 f(\x_K) = (\nabla^2 f(\x^*) + \mathcal{O}(\epsilon) )$ from matrix perturbation theory.
\begin{align}
   \langle \nabla f(\x_K), \x_{K+1}-\x_K \rangle = & \langle \nabla f(\x_K),  \frac{\norm{\nabla f(\x_K)}}{\beta}\e_j^{us} \rangle \\
   = & \frac{\norm{\nabla f(\x_K)}}{\beta}\langle  \e_j^{us} , (\nabla^2 f(\x^*) + \mathcal{O}(\epsilon) )(\x_{K}-\x^*)\rangle \\
   = & \frac{\norm{\nabla f(\x_K)}}{\beta}\langle  \e_j^{us} , (\nabla^2 f(\x_K) + \mathcal{O}(\epsilon) )(\x_{K}-\x^*)\rangle \\
    = & \frac{\norm{\nabla f(\x_K)}}{\beta}\langle  \lambda_j^{us} \e_j^{us} , (\x_{K}-\x^*)\rangle + \mathcal{O}(\epsilon^3)
    \lessapprox \mathcal{O}(\frac{\epsilon^2} {\sqrt{\log(\epsilon^{-1})}}) \label{seqmontone3}
\end{align}
where $ \nabla^2 f(\x_K)\e_j^{us} = \lambda_j^{us} \e_j^{us} $ and $ \mathcal{O}(\frac{\epsilon^2} {\sqrt{\log(\epsilon^{-1})}}) > \mathcal{O}(\epsilon^3) $.

Finally using Hessian Lipschitz condition for $\x_{K+1}$ about $\x_K$ along with \eqref{seqmontone3} we get:
\begin{align}
    f(\x_{K+1}) \leq & f(\x_K) + \langle \nabla f(\x_K), \x_{K+1}-\x_K\rangle + \underbrace{\frac{1}{2}\langle(\x_{K+1}-\x_K), \nabla^2 f(\x_K)  (\x_{K+1}-\x_K)\rangle}_{<0} + \underbrace{\frac{M}{6}\norm{\x_{K+1}-\x_K}^3}_{\mathcal{O}(\epsilon^3)} \\
  \leq  & f(\x_K)  + \mathcal{O}(\frac{\epsilon^2} {\sqrt{\log(\epsilon^{-1})}})  + \frac{\norm{\nabla f(\x_K)}^2}{2 \beta^2} \underbrace{\langle \e_j^{us},\nabla^2 f(\x_K)\e_j^{us}\rangle }_{<- \beta} + \mathcal{O}(\epsilon^3)\\
  \leq & f(\x_K) + \mathcal{O}(\frac{\epsilon^2} {\sqrt{\log(\epsilon^{-1})}}) - \mathcal{O}(\norm{\nabla f(\x)}^2) + \mathcal{O}(\epsilon^3) \\
  \leq & f(\x_K) + \mathcal{O}(\frac{\epsilon^2} {\sqrt{\log(\epsilon^{-1})}}) - \mathcal{O}(\epsilon^2) + \mathcal{O}(\epsilon^3) = f(\x_K) + \mathcal{O}(\frac{\epsilon^2} {\sqrt{\log(\epsilon^{-1})}}) - \mathcal{O}(\epsilon^2) \label{seqmontone2}
\end{align}
where we used the facts that $\norm{\x_{K+1} - \x_K} =  \mathcal{O}(\epsilon)$, $ \norm{\nabla f(\x_K)} =  \mathcal{O}(\epsilon)$, $ \langle \e_j^{us},\nabla^2 f(\x_K)\e_j^{us}\rangle = \lambda_j^{us} < - \beta $ and $\frac{1}{2}\langle(\x_{K+1}-\x_K), \nabla^2 f(\x_K)  (\x_{K+1}-\x_K)\rangle < 0$ from the Step \ref{ballconstraint} of Algorithm \ref{algo1}. Now for sufficiently small $\epsilon$, the term $ \frac{\epsilon^2} {\sqrt{\log(\epsilon^{-1})}} \to 0$ much faster than $\epsilon^2$ goes to $0$. Hence for sufficiently small $\epsilon$ we will have $ f(\x_{K+1}) \leq f(\x_K)$.
For all other iterations when gradient descent update is used, the sequence $\{f(\x_k)\}$ decreases monotonically.

\hfill
 \qedhere
 \qedsymbol

 \section{Asymptotic convergence}\label{Appendix J}

 \subsection*{Proof of Lemma \ref{lem6} }

 Let $\{\x_k\}$ be the sequence generated by Algorithm \ref{algo_1}. Then by Lemma \ref{saddleescapelemma} this sequence exits the $\epsilon$ neighborhood of any strict saddle point $\x^*$ of a locally analytic Morse function in approximately linear time where $\epsilon$ is bounded from Theorem \ref{thm1}. Further, $\epsilon$ can be chosen in a way such that if the iterate $\x_k$ exits the ball $  \mathcal{B}_{\epsilon}(\x^*)$ at some $k=K$ then the trajectory of $\{\x_k\}$ cannot return to this neighborhood $  \mathcal{B}_{\epsilon}(\x^*)$ for any $k>K$. Such a choice of $\epsilon$ is guaranteed from Lemma \ref{lemma4}. Hence the sequence $\{\x_k\}$ cannot converge to the strict saddle point $\x^*$ which completes the proof of the first part of the lemma.

For the second part notice that if any subsequence $ \{\x_{m_k}\}$ of the sequence $ \{\x_{k}\}$ converges to $\x^*$ then $\x^* \in \{\x_{m_k}\} \hspace{0.1cm} \text{i.o.} $ or equivalently $\x^* \in \{\x_{k}\} \hspace{0.1cm} \text{i.o.} $. Since $\x^*$ is a fixed point of the iteration $ \x_{k+1} = \x_k - \alpha \nabla f(\x_k)$, this would imply that if $ \x_k = \x^* $ for some $k=K$ then $ \x_k = \x^* $ for all $k>K$ or equivalently $\x_k \to \x^*$, a contradiction. Therefore no subsequence $ \{\x_{m_k}\}$ of the sequence $ \{\x_{k}\}$ can converge to the strict saddle point $\x^*$ which completes the proof.

 \hfill
 \qedhere
 \qedsymbol

 \subsection*{Proof of Lemma \ref{lem7} }

 The sequence $\{f(\x_k)\}$ decreases monotonically from Lemma \ref{functionseqmonotonic}. 
Since $f$ is coercive i.e. $ \lim_{\norm{\x}\to \infty} f(\x) = \infty$ and $f$ is continuous (and hence lower semi-continuous), we will have $f(\x) \geq \inf_{\x} f(\x)> - \infty$ i.e. the infimum of the function {values} exists \cite{kinderlehrer2000introduction}. Then by the monotone convergence theorem, $\lim_{k \to \infty} f(\x_k)$ exists and is finite. Since $f$ is coercive and continuous, its sublevel sets given by $\{\x \hspace{0.1cm} \vert \hspace{0.1cm} f(\x)\leq b\}$ for any $b < \infty$ are compact. Since $\lim_{k \to \infty} f(\x_k)$ exists and is finite, by the monotonicity of $\{f(\x_k)\}$ it will belong to the  compact sublevel set $ \{\x \hspace{0.1cm} \vert \hspace{0.1cm} f(\x)\leq f(\x_0)\}$, which completes the proof.

 \hfill
 \qedhere
 \qedsymbol

\subsection*{Proof of Lemma \ref{lem8} }

Let $\x_0$ be the initialization of Algorithm \ref{algo_1}, then by the previous lemma the sequence $\{f(\x_k)\}$ converges over the compact sublevel set $ \{\x \hspace{0.1cm} \vert \hspace{0.1cm} f(\x)\leq f(\x_0)\}$. Combining this fact and the monotonicity of the sequence $\{f(\x_k)\}$ we have that $\x_k \in \{\x \hspace{0.1cm} \vert \hspace{0.1cm} f(\x)\leq f(\x_0)\}$ for all $k$.
Since a Morse function on a compact manifold has finitely many critical points \cite{matsumoto2002introduction}, the compact sublevel set $ \{\x \hspace{0.1cm} \vert \hspace{0.1cm} f(\x)\leq f(\x_0)\}$ can have at most finitely many saddle points.

\hfill
 \qedhere
 \qedsymbol

\subsection*{Proof of Theorem \ref{thmglobmain} }
 In order to prove asymptotic convergence of the sequence $\{\x_k\}$ generated by Algorithm \ref{algo_1} to a critical point we only need to show that the sequence $\{\x_k\}$ satisfies all the conditions from Theorem \ref{thmglob}. First, from Lemma \ref{lem8} all points of the sequence $\{\x_k\}$ are contained in a compact set $ {D} \subset {X}$ where $ D = \{\x \hspace{0.1cm} \vert \hspace{0.1cm} f(\x)\leq f(\x_0)\}$ and $X = \mathbb{R}^n$. Next, the continuous function $Z = f$ satisfies the strict monotonicity property where $\{f(\x_k)\}$ is a strictly decreasing sequence provided $\x_k \not\in S $ and the solution set $S \subset D$ is the set of critical points of $f$ with $f(\x_k) = f(\x_{k+1})$ for $\x_k \in S$.

Finally we are left to show that the mapping $\A$ where $\x_{k+1} = \A \x_k$ is closed outside $S$. It is easy to check that the mapping $\A$ from Algorithm \ref{algo_1} is compact {when $\A := \mathrm{id} - \alpha \nabla f $}. Notice that for the gradient descent update, the map $\A := \mathrm{id} - \alpha \nabla f$ is continuous due to $f \in \mathcal{C}^2$. Since $\x_k \in D = \{\x \hspace{0.1cm} \vert \hspace{0.1cm} f(\x)\leq f(\x_0)\}$ for all $k$, the map $\A := \mathrm{id} - \alpha \nabla f $ takes $D$ to itself, i.e. $ \A : D \mapsto D $ where $D$ is compact and Hausdorff \footnote{A Hausdorff space is a topological space with a separation property: any two distinct points can be separated by disjoint open sets.}. Then by the closed map lemma (Lemma A.52 in \cite{lee2013smooth}), $\A := \mathrm{id} - \alpha \nabla f$ is a closed map in $D$ and hence closed in $D \backslash S$.

From the second-order step in Algorithm \ref{algo_1}, $\x_{k+1} \in \argmin_{\norm{\x -\x_k}= \frac{\norm{\nabla f(\x_k)}}{\beta}  } \bigg( \frac{1}{2}(\x - \x_k)^{T}\nabla^2 f(\x_k)(\x - \x_k)  \bigg) = \A(\x_k)$ and it remains to show that this mapping is continuous. The second-order step can be simplified as $ \x_{k+1} \in \x_k - \frac{\norm{\nabla f(\x_k)}}{\beta} \argmin_{\norm{\x}>0 } \frac{\x^T\nabla^2 f(\x_k)\x}{\norm{\x}^2}$. Since $f$ is Hessian Lipschitz, the eigenvectors of $\nabla^2 f(\x)$ will vary continuously with $\x$; hence $ \argmin_{\norm{\x}>0 } \frac{\x^T\nabla^2 f(\cdot)\x}{\norm{\x}^2}$ is a continuous function and $\norm{\nabla f(\cdot)}$ is a continuous function by continuity of $\nabla f(\cdot)$ and norm. Product of continuous functions is continuous therefore the map $\A$ associated with the second order step is continuous. As before the map $\A$ takes $D$ to itself where $D$ is compact and Hausdorff. Then by the closed map lemma, for the second order step, $\A$ is closed in $D \backslash S$. Since $\{\x_k\} \subset D$, which is compact, there exists a convergent subsequence $ \{\x_{m_k}\}$ of $\{\x_{k}\}$ and from Theorem \ref{thmglob} we have $\lim_{k \to \infty}\x_{m_k} \in S \subset D$ where $S$ is the set of critical points of $f$.

Finally from Lemma \ref{lem6}, since $\{\x_{m_k}\}$ does not converge to any strict saddle point, we have $ \x_{m_k} \to \x^*$, where $\x^*$ is a local minimum. Since $\x^* \in \{\x_{m_k}\}$ i.o. hence $\x^* \in \{\x_{k}\}$ i.o., but $\x^*$ is a fixed point of $ \A := \mathrm{id} - \alpha \nabla f$ (at the fixed point of Algorithm \ref{algo_1} the mapping $\A$ is identically $\mathrm{id} - \alpha \nabla f $). Hence $\x_k = \x^*$ for all $k \geq K$ for some large $K$, implying $\x_{k} \to \x^* $ and this completes the proof.

\hfill
 \qedhere
 \qedsymbol

\section{Convergence Rate to a Local Minimum (Theorem \ref{thm4} and \ref{thm5})} \label{Appendix F}

\subsection*{Proof of Theorem \ref{thm4} }
For any $\x, \y$ in $ \bar{\mathcal{B}}_{R_0}(\x^*_0) $ using \eqref{functionlipschitz} we have the following condition:
\begin{align}
   f(\x) - f(\y) \leq L\textbf{diam}(\mathcal{U}) \norm{\x - \y} \leq 2L\textbf{diam}(\mathcal{U}) R_0. \label{thm4bound1}
\end{align}
Next, let the trajectory re-enter the ball $\mathcal{B}_{R_0}(\x^*_0)$ after $J$ iterations and the current iteration index be $K$ where we have that $\x_K, \x_{K+J}$ belong to $ \bar{\mathcal{B}}_{R_0}(\x^*_0) $ whereas $ \x_{K+J-1} \not\in \bar{\mathcal{B}}_{R_0}(\x^*_0)$. Using gradient Lipschitz continuity on $\x_k$ and $\x_{k+1}$ we get:
\begin{align}
   & f(\x_{k+1}) -f(\x_k) \leq   \langle \nabla f(\x_k) , \x_{k+1}- \x_k\rangle + \frac{L}{2}\norm{\x_{k+1} - \x_k}^2 \\
  & \sum\limits_{k=K}^{K+J-1} \bigg(\langle \nabla f(\x_k) ,  \x_k- \x_{k+1}\rangle - \frac{L}{2}\norm{\x_{k+1} - \x_k}^2\bigg) \leq \sum\limits_{k=K}^{K+J-1} \bigg( f(\x_k)  - f(\x_{k+1}) \bigg) \\
 & \sum\limits_{k=K}^{K+J-1} \bigg(\langle \nabla f(\x_k) ,  \x_k- \x_{k+1}\rangle - \frac{L}{2}\norm{\x_{k+1} - \x_k}^2\bigg) \leq  f(\x_K)  - f(\x_{K+J}) \leq 2L\textbf{diam}(\mathcal{U}) R_0 \label{thm4bound2}
\end{align}
where in the last step we used \eqref{thm4bound1}. Now from Algorithm \ref{algo_1} let $\{k_l\}$ be the subsequence of $\mathcal{I}$ where $ \mathcal{I} = \{K,\dots,K+J-1\} $ for which we have the update $\x_{k+1} = \x_k - \alpha \nabla f(\x_k)$ and $\mathcal{I} \backslash \{k_l\}$ be the subsequence for which we have $\x_{k+1} -\x_k= \frac{\norm{\nabla f(\x_k)}}{\beta}\e_j^{us}$ (this update is a particular case of the Step \ref{ballconstraint} from Algorithm \ref{algo_1})\footnote{The more general update Step \ref{ballconstraint} from Algorithm \ref{algo_1} will also yield the same bound after taking norm but is not used here in the interest of simplifying analysis}. Further let $\{k_{l_j}\}$ be the subsequence of $\{k_l\}$ where $\norm{\nabla f(\x_k)} > \gamma$ and let $r_k = \langle \nabla f(\x_k) ,  \x_k- \x_{k+1}\rangle - \frac{L}{2}\norm{\x_{k+1} - \x_k}^2$. Now the left-hand side of \eqref{thm4bound2} can be written as:
\begin{align}
\sum_{k \in \mathcal{I}} r_k   &= \sum_{k \in \{k_{l_j}\}} r_k +\sum_{k \in \{k_l\} \backslash \{k_{l_j}\}} r_k + \sum_{k \in \mathcal{I} \backslash \{k_l\}} r_k \\
\sum_{k \in \mathcal{I}} r_k & = \sum_{k \in \{k_{l_j}\}} \bigg(\frac{1}{\alpha}\langle \x_k- \x_{k+1}  ,  \x_k- \x_{k+1}\rangle - \frac{L}{2}\norm{\x_{k+1} - \x_k}^2\bigg) +\sum_{k \in \{k_l\} \backslash \{k_{l_j}\}} \frac{1}{2L}\norm{\nabla f(\x_k)}^2  \nonumber \\& +\sum_{k \in \mathcal{I} \backslash \{k_l\}} \bigg( \langle \nabla f(\x_k) ,  \frac{\norm{\nabla f(\x_k)}}{\beta}\e_j^{us}\rangle - \frac{L}{2}\norm{\frac{\norm{\nabla f(\x_k)}}{\beta}\e_j^{us}}^2\bigg) \\
\sum_{k \in \mathcal{I}} r_k & = \sum_{k \in \{k_{l_j}\}}  \frac{1}{2}\norm{\nabla f (\x_k)}\norm{\x_{k+1} -\x_k} +\sum_{k \in \{k_l\} \backslash \{k_{l_j}\}} \frac{1}{2L}\norm{\nabla f(\x_k)}^2  \nonumber \\& + \sum_{k \in \mathcal{I} \backslash \{k_l\}}\bigg( \langle \nabla f(\x_k) ,  \frac{\norm{\nabla f(\x_k)}}{\beta}\e_j^{us}\rangle - \frac{L}{2\beta^2}\norm{\nabla f(\x_k)}^2\bigg) \\
\sum_{k \in \mathcal{I}} r_k & > \frac{\gamma}{2}  \sum_{k \in \{k_{l_j}\}} \norm{\x_{k+1} -\x_k} +\sum_{k \in \{k_l\} \backslash \{k_{l_j}\}} \frac{1}{2L}\norm{\nabla f(\x_k)}^2  - \sum_{k \in \mathcal{I} \backslash \{k_l\}} \bigg( \frac{1}{\beta} + \frac{L}{2 \beta^2}\bigg) \norm{\nabla f(\x_k)}^2.  \label{thm4bound3}
\end{align}
Substituting \eqref{thm4bound3} into \eqref{thm4bound2} yields:
\begin{align}
   & \frac{\gamma}{2}  \sum_{k \in \{k_{l_j}\}} \norm{\x_{k+1} -\x_k} +\sum_{k \in \{k_l\} \backslash \{k_{l_j}\}} \frac{1}{2L}\norm{\nabla f(\x_k)}^2  - \sum_{k \in \mathcal{I} \backslash \{k_l\}} \bigg( \frac{1}{\beta} + \frac{L}{2 \beta^2}\bigg) \norm{\nabla f(\x_k)}^2  \leq 2L\textbf{diam}(\mathcal{U}) R_0 \\
        & \frac{\gamma}{2}  \sum_{k \in \{k_{l_j}\}} \norm{\x_{k+1} -\x_k}  - \sum_{k \in \mathcal{I} \backslash \{k_l\}} \bigg( \frac{1}{\beta} + \frac{L}{2 \beta^2}\bigg) \norm{\nabla f(\x_k)}^2    \leq 2L\textbf{diam}(\mathcal{U}) R_0 \\
        & \frac{\gamma}{2}  \sum_{k \in \{k_{l_j}\}} \norm{\x_{k+1} -\x_k}  - \sum_{k \in \mathcal{I} \backslash \{k_l\}} \bigg( \frac{1}{\beta} + \frac{L}{2 \beta^2}\bigg) L^2\epsilon^2   \leq 2L\textbf{diam}(\mathcal{U}) R_0   \label{thm4bound4}
\end{align}
where in the last step we used the fact that $ \norm{\nabla f(\x_k)} \leq L \epsilon$ for $k \in \mathcal{I} \backslash \{k_l\} $. Also note that for all $k \in \mathcal{I} \backslash \{k_{l}\} $ we will have $ \x_k \in \bigcup_{\substack{\x^*_i \in \mathcal{S}_* \\ \norm{\x^*_i - \x^*_0} > R_0}} \mathcal{B}_{{\epsilon}}(\x^*_i) $. Similarly for all $k \in \mathcal{I} \backslash \{k_{l_j}\} $ we will have $ \x_k, \x_{k+1}$ in the region $  \bigcup_{\substack{\x^*_i \in \mathcal{S}_* \\ \norm{\x^*_i - \x^*_0} > R_0}} \mathcal{B}_{\xi}(\x^*_i) $ along with $ \mathcal{B}_{\xi}(\x^*_r) \cap \mathcal{B}_{\xi}(\x^*_s) = \phi$ for any $ \x^*_r,\x^*_s$ in $\mathcal{S}_*$.

Now adding $ \frac{\gamma}{2}  \sum_{k\in \mathcal{I} \backslash \{k_{l_j}\}} \norm{\x_{k+1} -\x_k}$ to both sides of \eqref{thm4bound4} we get:
\begin{align}
     &\frac{\gamma}{2}  \sum_{k\in \mathcal{I} \backslash \{k_{l_j}\}} \norm{\x_{k+1} -\x_k}+\frac{\gamma}{2}  \sum_{k \in \{k_{l_j}\}} \norm{\x_{k+1} -\x_k}     \leq 2L\textbf{diam}(\mathcal{U}) R_0+ \sum_{k \in \mathcal{I} \backslash \{k_l\}}\bigg( \frac{1}{\beta} + \frac{L}{2 \beta^2}\bigg) L^2\epsilon^2 \nonumber \\ & \hspace{8cm}+ \frac{\gamma}{2}  \sum_{k\in \mathcal{I} \backslash \{k_{l_j}\}} \norm{\x_{k+1} -\x_k} \\
    &\frac{\gamma}{2}  \sum_{k\in \mathcal{I} } \norm{\x_{k+1} -\x_k}     \leq 2L\textbf{diam}(\mathcal{U}) R_0+ \sum_{k \in \mathcal{I} \backslash \{k_l\}}\bigg( \frac{1}{\beta} + \frac{L}{2 \beta^2}\bigg) L^2\epsilon^2 + \frac{\gamma}{2}  \sum_{k\in \mathcal{I} \backslash \{k_{l_j}\}} \norm{\x_{k+1} -\x_k}  \\
     & \frac{\gamma}{2}  \sum_{k\in \mathcal{I} } \norm{\x_{k+1} -\x_k}     \leq 2L\textbf{diam}(\mathcal{U}) R_0+ \sum_{k \in \mathcal{I} \backslash \{k_l\}}\bigg( \frac{1}{\beta} + \frac{L}{2 \beta^2}\bigg) L^2\epsilon^2 + \gamma  \sum_{k\in \mathcal{I} \backslash \{k_{l_j}\}} \xi \label{thm4bound5}
\end{align}
where in the last step we used the fact that $\norm{(\x_{k+1} -\x_k)} \leq 2\xi $ since $ \x_k, \x_{k+1}$ lie inside some ball $\mathcal{B}_{\xi}(\x^*_i) $ for $k \in \mathcal{I} \backslash \{k_{l_j}\} $. If the trajectory $\{\x_k\}$ encounters $N$ such $\mathcal{B}_{\xi}(\x^*_i) $ balls then \eqref{thm4bound5} can be further simplified as:
\begin{align}
  \frac{\gamma}{2}  \sum_{k\in \mathcal{I} } \norm{\x_{k+1} -\x_k}   &   \leq 2L\textbf{diam}(\mathcal{U}) R_0+ N \bigg( \frac{1}{\beta} + \frac{L}{2 \beta^2}\bigg) L^2\epsilon^2   + \gamma  N ({K}_{exit}+K_{shell}) \xi \label{thm4bound6}
\end{align}
where exit time from $\mathcal{B}_{{\epsilon}}(\x^*) $ ball is $ K_{exit}$ from Theorem 3.2 of \cite{dixit2022exit}, exit time from $\mathcal{B}_{\xi}(\x^*) $ ball is $ {K}_{exit}+K_{shell}$ after adding results from Theorem \ref{thm3} and Theorem 3.2 of \cite{dixit2022exit}, and we have that $ \sum_{k \in \mathcal{I} \backslash \{k_l\}} \leq N $, $ \sum_{k\in \mathcal{I} \backslash \{k_{l_j}\}} \leq N ({K}_{exit}+K_{shell})$.

Note that $ \sum_{k\in \mathcal{I} } \norm{\x_{k+1} -\x_k}$ is the total path length of the trajectory inside the shell $ \mathcal{B}_{R_{\omega}}(\x^*_0) \backslash \mathcal{B}_{R_{0}}(\x^*_0)$ where we have that $ R_{\omega} = \max_{k \in \mathcal{I}} \norm{\x_k - \x^*_0}$ and $ R_{0} = \norm{\x_K - \x^*_0} = \norm{\x_{K+J} - \x^*_0}$. Hence, for some $K_{\omega} = \arg\max_{k \in \mathcal{I}} \norm{\x_k - \x^*_0}$ we will have the condition:
\begin{align}
 \sum_{k\in \mathcal{I} } \norm{\x_{k+1} -\x_k} &= \sum_{k=K}^{ K_{\omega}-1} \norm{\x_{k+1} -\x_k} + \sum_{k= K_{\omega}}^{K+J} \norm{\x_{k+1} -\x_k}  \\
& \geq \norm{\sum_{k=K}^{ K_{\omega}-1} \x_{k+1} -\x_k} + \norm{\sum_{k= K_{\omega}}^{K+J} \x_{k+1} -\x_k}  \\
& \geq \norm{ \x_{K_{\omega}} -\x_K} + \norm{ \x_{K+J} - \x_{K_{\omega}}} \\
& \geq \norm{ \x_{K_{\omega}} -\x^*_0} - \norm{ \x_K - \x^*_0}  + \norm{ \x_{K_{\omega}} -\x^*_0} -\norm{ \x_{K+J} -\x^*_0} \\
& = 2( R_{\omega} - R_0). \label{thm4bound7}
\end{align}
Substituting \eqref{thm4bound7} into \eqref{thm4bound6} yields:
\begin{align}
     \gamma( R_{\omega} - R_0) \leq \frac{\gamma}{2}  \sum_{k\in \mathcal{I} } \norm{\x_{k+1} -\x_k}   &   \leq 2L\textbf{diam}(\mathcal{U}) R_0+ N \bigg( \frac{1}{\beta} + \frac{L}{2 \beta^2}\bigg) L^2\epsilon^2  + \gamma  N ({K}_{exit}+K_{shell}) \xi.  \label{thm4bound8}
\end{align}
Next, recall that the distance between any two stationary points is greater than $R$. Hence, between two points $\x, \y$ with $\norm{\x-\y} \leq D$, there can be at most $\frac{D}{R}$ stationary points along the straight line joining $\x, \y$. Now if the points $\x, \y$ are connected by a path formed from the sequence of points $\{\v_k\}_{k=1}^{P}$ then there can be at most $\frac{\sum\limits_{p=1}^{P-1} \norm{\v_{k+1} -\v_k} }{R}$ stationary points on the path connecting $\x, \y$. Using this result in \eqref{thm4bound8} yields the following bound on $N$:
\begin{align}
    & \frac{\gamma}{2}N  \leq \frac{\gamma \sum_{k\in \mathcal{I} } \norm{\x_{k+1} -\x_k}}{2R}   \leq  2L\textbf{diam}(\mathcal{U}) \frac{R_0}{R} + N \bigg( \frac{1}{\beta} + \frac{L}{2 \beta^2}\bigg) \frac{L^2\epsilon^2}{R}  +  \gamma  N ({K}_{exit}+K_{shell}) \frac{\xi}{R} \\
    & N \bigg( \frac{\gamma}{2}-\bigg( \frac{1}{\beta} + \frac{L}{2 \beta^2}\bigg) \frac{L^2\epsilon^2}{R} - \gamma ({K}_{exit}+K_{shell}) \frac{\xi}{R}\bigg) \leq  2L\textbf{diam}(\mathcal{U}) \frac{R_0}{R} \\
    & N \leq \frac{2L\textbf{diam}(\mathcal{U}) \frac{R_0}{R}}{\bigg( \frac{\gamma}{2}-\bigg( \frac{1}{\beta} + \frac{L}{2 \beta^2}\bigg) \frac{L^2\epsilon^2}{R} - \gamma ({K}_{exit}+K_{shell}) \frac{\xi}{R}\bigg)} \label{thm4bound9}
\end{align}
provided $\bigg( \frac{\gamma}{2}-\bigg( \frac{1}{\beta} + \frac{L}{2 \beta^2}\bigg) \frac{L^2\epsilon^2}{R} - \gamma ({K}_{exit}+K_{shell}) \frac{\xi}{R}\bigg) > 0 $ which will hold true for $\xi \ll R$.

Finally, combining \eqref{thm4bound8} and \eqref{thm4bound9} yields the result:
\begin{align}
    R_{\omega} & \leq R_0 +  2L\textbf{diam}(\mathcal{U}) \frac{R_0}{\gamma}+ N_0 K_{exit}\bigg( \frac{1}{\beta} + \frac{L}{2 \beta^2}\bigg) \frac{L^2\epsilon^2}{\gamma}  +  N_0 ({K}_{exit}+K_{shell}) \xi
\end{align}
where $N_0 = \frac{2L\textbf{diam}(\mathcal{U}) \frac{R_0}{R}}{\bigg( \frac{\gamma}{2}-\bigg( \frac{1}{\beta} + \frac{L}{2 \beta^2}\bigg) \frac{L^2\epsilon^2}{R} - \gamma ({K}_{exit}+K_{shell}) \frac{\xi}{R}\bigg)}$ is the upper bound on the number of stationary point neighborhoods encountered by the trajectory of $\{\x_k\}$.

\hfill
 \qedhere
 \qedsymbol

\subsection*{Proof of Theorem \ref{thm5}}
To obtain the total number of iterations in which the sequence $\{\x_k\}$ converges to some $\epsilon$ neighborhood of a local minimum which is within a $\zeta$ neighborhood of $\x_0$, we first obtain the number of iterations the sequence $\{\x_k\}$ spends in the region $\mathcal{U} \backslash \bigcup_{j=1}^{l} \bar{\mathcal{B}}_{\xi}(\x^*_j) $, i.e., the region with $\norm{\nabla f(\x)}> \gamma$. Let $K_1$ be the number of such iterations and $T$ be the number of saddle neighborhoods encountered by the trajectory of $\{\x_k\}$.

In order to obtain $K_1$ we make use of \eqref{thm4bound4} for $R_0 = \zeta$ to get:
\begin{align}
    &\frac{\gamma}{2}  \sum_{k \in \{k_{l_j}\}} \norm{\x_{k+1} -\x_k}  - \sum_{k \in \mathcal{I} \backslash \{k_l\}} \bigg( \frac{1}{\beta} + \frac{L}{2 \beta^2}\bigg) L^2\epsilon^2   \leq 2L\textbf{diam}(\mathcal{U}) \zeta \\
    \implies &\frac{\gamma}{2}  \sum_{k \in \{k_{l_j}\}} \norm{\alpha \nabla f(\x_k)}  -  T\bigg( \frac{1}{\beta} + \frac{L}{2 \beta^2}\bigg) L^2\epsilon^2   \leq 2L\textbf{diam}(\mathcal{U}) \zeta \\
    \implies &  K_1   \leq 4L\textbf{diam}(\mathcal{U}) \frac{\zeta L}{\gamma^2} +  2T\bigg( \frac{1}{\beta} + \frac{L}{2 \beta^2}\bigg) \frac{\epsilon^2  }{\gamma^2} \label{k1time}
\end{align}
where we used the fact that $  \sum_{k \in \{k_{l_j}\}} \norm{ \nabla f(\x_k)} > \gamma K_1$ by definition of the subsequence $\{k_{l_j}\} $ in \eqref{thm4bound4} and $ \sum_{k \in \mathcal{I} \backslash \{k_l\}} = T < N_0 =\frac{2 L\textbf{diam}(\mathcal{U}) \frac{\zeta}{R}}{\bigg( \frac{\gamma}{2}-\bigg( \frac{1}{\beta} + \frac{L}{2 \beta^2}\bigg) \frac{L^2\epsilon^2}{R} - \gamma ({K}_{exit}+K_{shell}) \frac{\xi}{R}\bigg)}$ by Theorem \ref{thm4} for $R_0=\zeta$ where $T$ is the number of saddle neighborhoods encountered by the trajectory of $\{\x_k\}$. Since we have a bound on the number of saddle neighborhoods $T$ and we also know the travel time within each saddle neighborhood we are only left to find the rate within the neighborhood of a local minimum.

\subsection*{Local minimum neighborhood}
When the trajectory $\{\x_k\}$ is within a $\xi$ neighborhood of local minimum $\x^*_{optimal}$ for some $k=K$, we have linear rate of convergence to the neighborhood $\mathcal{B}_{\epsilon}(\x^*_{optimal})$ from the following steps:
\begin{align}
    \x_{k+1} - \x^*_{optimal} & = \bigg(\mathbf{I}- \alpha \int_{p=0}^{p=1}\nabla^2 f(\x^*_{optimal} + p(\x_k-\x^*_{optimal}))dp \bigg)(\x_{k} - \x^*_{optimal}) \\
    \implies \norm{ \x_{k+1} - \x^*_{optimal}} & \leq \underbrace{\norm{\mathbf{I}- \alpha\bigg( \int_{p=0}^{p=1}\nabla^2 f(\x^*_{optimal} + p(\x_k-\x^*_{optimal}))dp \bigg)}_2}_{= 1- \frac{\beta}{L}} \norm{\x_{k} - \x^*_{optimal}} \\
    \implies \norm{ \x_{K+K_{convex}} - \x^*_{optimal}} & \leq \bigg(1- \frac{\beta}{L} \bigg)^{K_{convex}}\norm{\x_{K} - \x^*_{optimal}} \\
    \implies K_{convex} & \leq \frac{\log(\norm{\x_{K} - \x^*_{optimal}})-  \log(\norm{\x_{K+K_{convex}} - \x^*_{optimal}})}{\log\bigg(1- \frac{\beta}{L} \bigg)^{-1}} \leq \frac{\log\bigg(\frac{\xi}{\epsilon} \bigg)}{\log\bigg(1- \frac{\beta}{L} \bigg)^{-1}} \label{convexrate}
\end{align}
where $\x_{K} \in \mathcal{B}_{\xi}(\x^*_{optimal})$ and $ \norm{\x_{K+K_{convex}} - \x^*_{optimal}} = \epsilon$.
Note that in the second step we used the facts that $\alpha = \frac{1}{L}$, $\lambda_{min}(\int(.) ) \geq \int \lambda_{min}(.)$ and $\lambda_{min}\bigg( \nabla^2 f(\x^*_{optimal} + p(\x_k-\x^*_{optimal})) \bigg) = \beta$ for any $\x^*_{optimal} + p(\x_k-\x^*_{optimal})$ in the convex neighborhood $ \mathcal{B}_{\xi}(\x^*_{optimal})$ from \textbf{Assumption A4}.

Finally putting everything together and using Theorem 3.2 from \cite{dixit2022exit}, Theorem \ref{thm3}, travel time from \eqref{k1time} and the convergence rate within a convex neighborhood from \eqref{convexrate}, the total time for the trajectory of $\{\x_k\}$ to converge to an $\epsilon$ neighborhood of $\x^*_{optimal}$ is bounded by:
\begin{align}
    K_{max} & \leq  T\bigg( {K}_{exit}+K_{shell}  \bigg) + K_1 + K_{convex} \\
           & <  T\bigg( {K}_{exit}+K_{shell}  \bigg)+ 4L\textbf{diam}(\mathcal{U}) \frac{\zeta L}{\gamma^2} +  2T\bigg( \frac{1}{\beta} + \frac{L}{2 \beta^2}\bigg) \frac{\epsilon^2  }{\gamma^2} + \frac{\log\bigg(\frac{\xi}{\epsilon} \bigg)}{\log\bigg(1- \frac{\beta}{L} \bigg)^{-1}}
\end{align}
where $ T < \frac{2L\textbf{diam}(\mathcal{U}) \frac{\zeta}{R}}{\bigg( \frac{\gamma}{2}-\bigg( \frac{1}{\beta} + \frac{L}{2 \beta^2}\bigg) \frac{L^2\epsilon^2}{R} - \gamma ({K}_{exit}+K_{shell}) \frac{\xi}{R}\bigg)}$ is the total number of saddle neighborhoods encountered.

We complete the proof of Theorem \ref{thm5} by proving one last claim. Recall that $K_{exit}$ was the exit time of the $\epsilon$--precision trajectory from the ball $\mathcal{B}_{\epsilon}(\x^*)$ while we proved Theorem \ref{thm5} for the exact gradient trajectory. Hence, we need to justify the use of the upper bound on $K_{exit}$ from \eqref{linearexittimebound} in Theorem \ref{thm5}.

Let $K_{exit}^{\omicron}$ be the actual exit time of the gradient trajectory $\{\u_K\}$ from the ball $\mathcal{B}_{\epsilon}(\x^*)$, i.e., $K_{exit}^{\omicron} = \inf_{K>0}\bigg\{K \bigg\vert \norm{\u_K} \geq \epsilon\bigg\} $ where $\u_K = \x_K -\x^*$ is the radial vector and $\norm{\u_0} = \epsilon$. Since $K_{exit}$ is the exit time of the $\epsilon$--precision trajectory $\{\tilde{\u}_K\}$ from the ball $\mathcal{B}_{\epsilon}(\x^*)$, i.e., $K_{exit} = \inf_{K>0}\bigg\{K \bigg\vert \norm{\tilde{\u}_K} \geq \epsilon\bigg\} $, by the definition of exit time we have that $\norm{ \tilde{\u}_{K_{exit}}} \geq \epsilon$.

Now if the initial unstable subspace  projection value $ \sum_{j \in \mathcal{N}_{US} } ({\theta}^{us}_{j})^2$ satisfies the condition of Theorem \ref{thm1} then from the relative error bound \eqref{relativeerrorprojectionbound} we have that:
\begin{align}
     \frac{\norm{\u_K - \tilde{\u}_K}}{\norm{\u_K}} &\leq \mathcal{O}\bigg(\frac{1}{\sqrt{\epsilon}}\bigg(\log\bigg(\frac{1}{\epsilon} \bigg)\epsilon\bigg)^2  \bigg) \\
    \implies 1 -  \mathcal{O}\bigg(\frac{1}{\sqrt{\epsilon}}\bigg(\log\bigg(\frac{1}{\epsilon} \bigg)\epsilon\bigg)^2  \bigg) \leq \frac{ \norm{\tilde{\u}_K}}{\norm{\u_K}}   &\leq 1 +  \mathcal{O}\bigg(\frac{1}{\sqrt{\epsilon}}\bigg(\log\bigg(\frac{1}{\epsilon} \bigg)\epsilon\bigg)^2  \bigg) \\
    \implies \frac{ \norm{\tilde{\u}_K}}{1 +  \mathcal{O}\bigg(\frac{1}{\sqrt{\epsilon}}\bigg(\log\bigg(\frac{1}{\epsilon} \bigg)\epsilon\bigg)^2  \bigg)}     \leq  \norm{\u_{K}}  & \leq \frac{ \norm{\tilde{\u}_K}}{1 -  \mathcal{O}\bigg(\frac{1}{\sqrt{\epsilon}}\bigg(\log\bigg(\frac{1}{\epsilon} \bigg)\epsilon\bigg)^2  \bigg)} \label{boundedvariation1a}\\
     \implies \frac{ \epsilon}{1 +  \mathcal{O}\bigg(\frac{1}{\sqrt{\epsilon}}\bigg(\log\bigg(\frac{1}{\epsilon} \bigg)\epsilon\bigg)^2  \bigg)}     \leq  \norm{\u_{K_{exit}}}  & \leq \frac{ (1+d)\epsilon}{1 -  \mathcal{O}\bigg(\frac{1}{\sqrt{\epsilon}}\bigg(\log\bigg(\frac{1}{\epsilon} \bigg)\epsilon\bigg)^2  \bigg)} \label{boundedvariation1b}
\end{align}
where we substituted $K= K_{exit}$ and used the bound $(1+d)\epsilon \geq \norm{ \tilde{\u}_{K_{exit}}} \geq \epsilon$ for some $d>0$ in the last step.
Next, from the definition of $K_{exit}^{\omicron}$ we have that $\norm{ {\u}_{K_{exit}^{\omicron}}} \geq \epsilon$. Hence, unless we have $ \norm{ {\u}_{K_{exit}}} \geq \epsilon$ (which implies $ K_{exit}^{\omicron} \leq K_{exit}$), the gradient trajectory $\{\u_K\}$ will take not more than $ K_{exit}^{\omicron} - K_{exit}$ iterations to travel the shell $\mathcal{B}_{\epsilon}(\x^*) \backslash \mathcal{B}_{\norm{ {\u}_{K_{exit}}}}(\x^*) $. Next, $ K_{exit}^{\omicron} - K_{exit}$ can be upper bounded by Theorem \ref{thm3} provided the gradient trajectory has expansive dynamics at $K_{exit}$ (from Theorem \ref{thm2}).

Now for sufficiently small $\epsilon$ and $K_{exit} \geq 2$ (the minimal condition that ensures the gradient trajectory at-least enters the ball $ \mathcal{B}_{\epsilon}(\x^*)$), there exists some $K=K^{\upsilon}$ with $K^{\upsilon} < K_{exit} $ such that:
\begin{align}
    \frac{ \norm{\tilde{\u}_{K^{\upsilon}}}}{1 -  \mathcal{O}\bigg(\frac{1}{\sqrt{\epsilon}}\bigg(\log\bigg(\frac{1}{\epsilon} \bigg)\epsilon\bigg)^2  \bigg)} & \leq \frac{ \norm{\tilde{\u}_{K_{exit}}}}{1 +  \mathcal{O}\bigg(\frac{1}{\sqrt{\epsilon}}\bigg(\log\bigg(\frac{1}{\epsilon} \bigg)\epsilon\bigg)^2  \bigg)}. \label{boundedvariation1c}
\end{align}
Combining \eqref{boundedvariation1c} with \eqref{boundedvariation1a} for $ K = K_{exit}$ and $K=K^{\upsilon}$ we get:
\begin{align}
     \norm{{\u}_{K^{\upsilon}}} \leq \frac{ \norm{\tilde{\u}_{K^{\upsilon}}}}{1 -  \mathcal{O}\bigg(\frac{1}{\sqrt{\epsilon}}\bigg(\log\bigg(\frac{1}{\epsilon} \bigg)\epsilon\bigg)^2  \bigg)} & \leq  \frac{ \norm{\tilde{\u}_{K_{exit}}}}{1 +  \mathcal{O}\bigg(\frac{1}{\sqrt{\epsilon}}\bigg(\log\bigg(\frac{1}{\epsilon} \bigg)\epsilon\bigg)^2  \bigg)} \leq  \norm{{\u}_{K_{exit}}} \\
     \implies \norm{{\u}_{K^{\upsilon}}} &\leq \norm{{\u}_{K_{exit}}} .
\end{align}
which implies that the gradient trajectory has expansive dynamics at $K= K_{exit}$ from Theorem \ref{thm2}. Hence, the gradient trajectory will also have expansive dynamics from $K= K_{exit}$ to $K= K_{exit}^{\omicron}$. Using Theorem \ref{thm3} for $\xi = \norm{\u_{ K_{exit}^{\omicron}-1}}$, $\epsilon =\norm{\u_{ K_{exit}}}  $, $\hat{K}_{exit} = K_{exit}^{\omicron}-1  $ and $K_e=K_{exit}   $ we get:
\begin{align}
    K_{exit}^{\omicron}-1 - K_{exit} = \hat{K}_{exit}  - K_e &\leq \frac{\log (\norm{\u_{ K_{exit}^{\omicron}-1}}) - \log (\norm{\u_{ K_{exit}}})}{\log \bigg(\frac{\inf\{\bar{\rho}(\x_{k-2})\}}{1 + M \xi}\bigg)} + 2 \\
    & < \frac{\log (1 +  \mathcal{O}\bigg(\frac{1}{\sqrt{\epsilon}}\bigg(\log\bigg(\frac{1}{\epsilon} \bigg)\epsilon\bigg)^2  \bigg)) }{\log \bigg(\frac{\inf\{\bar{\rho}(\x_{k-2})\}}{1 + M \xi}\bigg)} + 2
    \lessapprox 2 \label{gfunction}
\end{align}
where we used the bound $ \norm{\u_{ K_{exit}^{\omicron}-1}} < \epsilon$ from the definition of $K_{exit}^{\omicron} $, the lower bound on $ \norm{\u_{ K_{exit}}}$ from \eqref{boundedvariation1b} in the second last step and dropped the term $ \log (1 +  \mathcal{O}\bigg(\frac{1}{\sqrt{\epsilon}}\bigg(\log\bigg(\frac{1}{\epsilon} \bigg)\epsilon\bigg)^2  \bigg))$ for sufficiently small $\epsilon$. Hence we have the condition $ K_{exit}^{\omicron}  \lessapprox  K_{exit}+ 3 $ where the constant $3$ can be dropped w.r.t. order $\mathcal{O}(\log(\epsilon^{-1}))$ term after substituting the upper bound on $K_{exit}$ from \eqref{linearexittimebound}. This completes the proof.

\hfill
 \qedhere
 \qedsymbol
\end{appendices}


\end{document}